\documentclass[11pt,a4paper]{amsart}

\usepackage{amsfonts}
\usepackage{amsmath,amscd,amsthm}
\usepackage{fullpage}
\usepackage{amssymb}
\usepackage{centernot} 
\usepackage{enumerate} 
\usepackage{pb-diagram} 
\usepackage{mathrsfs}
\usepackage[OT2,T1]{fontenc}
\usepackage{seqsplit}
\usepackage{tikz}
\usepackage{tikz-cd}
\usepackage{color}
\usepackage[autostyle]{csquotes}
\usepackage{array}
\usepackage{verbatim}
\usepackage{url}
\usepackage{multirow}
\usepackage[scr=boondoxo]{mathalpha}
\usepackage{colortbl}
\usepackage{graphicx}
\usepackage{adjustbox}

\definecolor{pinegreen}{HTML}{16392C}

\DeclareSymbolFont{cyrletters}{OT2}{wncyr}{m}{n}
\DeclareMathSymbol{\Sha}{\mathalpha}{cyrletters}{"58}

\definecolor{refkey}{rgb}{1,1,1}
\definecolor{labelkey}{rgb}{1,1,1}
\definecolor{cite}{rgb}{0.9451,0.2706,0.4941}
\definecolor{ruri}{rgb}{0.0078,0.4022,0.8010}

\usepackage[%
bookmarks=true,bookmarksnumbered=true,%
colorlinks=true,linkcolor=ruri,citecolor=red%
 ]{hyperref}

\makeindex \setcounter{tocdepth}{1}

\def\F{{\rm \mathbb{F}}}
\def\Z{{\rm \mathbb{Z}}}

\def\Q{{\rm \mathbb{Q}}}

\def\G{{\rm \mathbb{G}}}

\def\C{{\rm \mathbb{C}}}

\def\R{{\rm \mathbb{R}}}

\def\P{{\rm \mathbb{P}}}

\def\p{{\rm \mathfrak{p}}}
\def\m{{\rm \mathfrak{m}}}

\def\O{{\rm \mathcal{O}}}

\def\A{{\rm \mathbb{A}}}

\def\Nm{{\rm Nm}}

\def\un{{\rm un}}

\def\avg{{\rm avg}}

\def\inv{{\rm inv}}
\def\Aut{{\rm Aut}}

\def\Tr{{\rm Tr}}

\def\Br{{\rm Br}}

\def\Cl{{\rm Cl}}

\def\Diff{{\rm Diff}}
\def\Disc{{\rm Disc}}
\def\disc{{\rm disc}}

\def\SL{{\rm SL}}
\def\SO{{\rm SO}}

\def\Res{{\rm Res}}
\def\Sym{{\rm Sym}}

\def\GL{{\rm GL}}

\def\Gal{{\rm Gal}}

\def\rk{{\rm rk}}

\def\Hom{{\rm Hom}}

\def\Sel{{\rm Sel}}

\def\coker{{\rm coker \hspace{1mm}}}

\numberwithin{equation}{section}

\newtheorem{theorem}{Theorem}[section]
\newtheorem{lemma}[theorem]{Lemma}

\newtheorem{remark}[theorem]{Remark}
\newtheorem{definition}[theorem]{Definition}
\newtheorem{example}[theorem]{Example}
\newtheorem{conjecture}[theorem]{Conjecture}
\newtheorem{corollary}[theorem]{Corollary}
\newtheorem{proposition}[theorem]{Proposition}

\begin{document}
\title{Hecke reciprocity and class groups}

\author{Ari Shnidman}
\address{Institute for Advanced Study, 1 Einstein Drive, Princeton NJ 08540}
\email{ari.shnidman@gmail.com}
\author{Artane Siad}
\address{Institute for Advanced Study, 1 Einstein Drive, Princeton NJ 08540}
\email{ajsiad@ias.edu}
\date{\today}
\subjclass{11R16, 11R29, 11R45}
\makeatletter
\DeclareRobustCommand{\pmods}[1]{\mkern4mu({\operator@font mod}\mkern6mu#1)}
\makeatother

\begin{abstract}
We compute the average size of $\Cl_F[2]$ in the family of cubic fields $F = \Q(\sqrt[3]{n})$. Specifically, as $F$ varies over the subfamily of wildly (resp.\ tamely) ramified fields $\Q(\sqrt[3]{n})$, the average size of $\Cl_F[2]$ is $3/2$ (resp.\ $2$). This tame/wild dichotomy is not accounted for by the class group heuristics in the literature.  Analogously, when the extensions $F = K(\sqrt[3]{n})$ of $K = \Q(\sqrt{-3})$ are ordered by the norm of $n \in \O_K$, we show that the average size of $\Cl_F[2]$ is $3/2$, as is predicted by the Cohen--Martinet heuristics for $C_3$-extensions of $K$.   

Underlying our proofs is a reciprocity law for the relative class groups $\Cl_{F/K}[2]$ of odd degree extensions of number fields $F/K$. 
This leads us to propose class group heuristics for families of $K$-extensions with a fixed Galois $K$-group that explains the aberrant  behavior in the family $\Q(\sqrt[3]{n})$ and predicts similar behavior in other special families. The other main ingredient is the work of Alp\"oge--Bhargava--Shnidman on the number of integral $G(\Q)$-orbits in a $G$-invariant quadric with bounded invariants.  
\end{abstract}
\maketitle

\tableofcontents

\makeatletter
\makeatother

\section{Introduction}
    Given a subgroup $G \subset S_m$, a prime $\ell \nmid m$, and a number field $K$, the Cohen--Lenstra--Martinet heuristics predict the distribution of the $\ell$-part of the relative class group $\Cl_{F/K}[\ell^\infty]$, as $F$ varies over $K$-extensions of degree $m$ with Galois group $G$ \cite{CohenLenstra1983,CohenMartinet90}. Despite extensive evidence for these heuristics in the function field setting \cite{EllenbergVenkateshWesterland,LiuWoodZureickBrown,LandesmanLevy}, little is known over number fields beyond the seminal works of Davenport--Heilbronn \cite{DavenportHeilbronn} and Bhargava \cite{BhargavaQuartics} and their generalizations \cite{DatskovskyWright,BhargavaShankarWangGlobalFields,LemkeOliverWangWood}.

Our first result confirms a prediction of the Cohen--Martinet heuristics for $G = C_3$ and $K = \Q(\sqrt{-3})$.  Note that each $C_3$-extension of $K$ is of the form $F_n = K(\sqrt[3]{n})$, for some $n \in \O_K$. 

\begin{theorem}\label{thm: over Q(omega)}
    Let $K = \Q(\sqrt{-3})$ and let $N(X) = \{n \in \O_K \mbox{ cubefree}: 1 < \Nm(n) < X\}$. Then 
    \[\displaystyle\lim_{X \to \infty}\frac{1}{|N(X)|} \sum_{n \in N(X)} |\Cl_{F_n}[2]| = 3/2.\]
\end{theorem}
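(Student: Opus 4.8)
The plan is to reduce the absolute class group to the relative one, use the reciprocity law to convert $2$-torsion classes into objects of a Selmer group over $K$, parametrize those by integral orbits, and invoke the Alp\"oge--Bhargava--Shnidman count. First, since $K = \Q(\sqrt{-3})$ has class number one, we have $\Cl_{F_n/K} = \Cl_{F_n}$, and in particular $\Cl_{F_n}[2] = \Cl_{F_n/K}[2]$. Let $\sigma$ generate $C_3 = \Gal(F_n/K)$. Because $\Cl_K = 1$, the algebraic norm $1 + \sigma + \sigma^2$ annihilates $\Cl_{F_n}$; on the $C_3$-invariant subgroup of the $2$-group $\Cl_{F_n}[2]$ this operator acts as multiplication by $3$, which is invertible, so the invariants vanish. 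Hence $\Cl_{F_n}[2]$ is a module over $\F_2[C_3]/(1+\sigma+\sigma^2) = \F_4$, i.e.\ an $\F_4$-vector space, and $|\Cl_{F_n}[2]| = 4^{r_n}$ with $r_n = \dim_{\F_4}\Cl_{F_n}[2]$. The target is therefore $\lim_X |N(X)|^{-1}\sum_n 4^{r_n} = 3/2$, and it suffices to understand the distribution of $r_n$.

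Next I would apply the reciprocity law to identify $\Cl_{F_n/K}[2]$ with the relevant isotypic part of a Selmer group $\Sel_n$ cut out by local conditions over $K$, whose nonzero classes classify everywhere-unramified quadratic covers of $F_n$ (equivalently, torsors) compatibly with the $C_3$-action. The content of the reciprocity law is that these classes are exactly those represented by integral $G$-orbits in a fixed representation whose resolvent ring is $\O_{F_n}$ and whose invariants are pinned down by $n$; I expect the representation to be one of the prehomogeneous spaces governing $2$-descent on cubic rings (e.g.\ pairs of ternary quadratic forms, or a suitable twist thereof over $\O_K$), with the integrality and local conditions encoding that the cover is unramified. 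Crucially, as $n$ ranges over cubefree elements with $\Nm(n) < X$, the invariants of the orbit problem range over a region of size $\asymp X$, placing the count squarely in the regime of a $G$-invariant quadric with bounded invariants.

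I would then write $\sum_n |\Cl_{F_n}[2]| = |N(X)| + \sum_n\big(|\Cl_{F_n}[2]| - 1\big)$, where the second sum counts nonzero $2$-torsion classes and, via the parametrization, equals the number of relevant integral $G(\O_K)$-orbits with invariants up to $X$ (with the $\F_4^\times$-action accounting for the multiplicity $3$ within each line). Applying the Alp\"oge--Bhargava--Shnidman theorem gives the asymptotic $c' X + o(X)$ for this orbit count, while a lattice-point count with a cubefree sieve gives $|N(X)| = cX + o(X)$; the theorem then amounts to the identity $c'/c = 1/2$. The ratio $c'/c$ is a product of local masses (densities of orbits over $\O_{K_v}$ for each place $v$), and I would compute these place-by-place, paying special attention to $v = 2$ (inert in $K$, so $\O_K/2 = \F_4$), $v = 3$ (ramified), and the archimedean place, matching the global volume so as to produce exactly $1/2$.

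The main obstacle is the reciprocity-and-parametrization step: establishing a clean bijection, with correct multiplicities and local conditions, between nonzero classes of $\Cl_{F_n/K}[2]$ and the integral orbits that Alp\"oge--Bhargava--Shnidman count, so that the everywhere-unramified condition on covers of $F_n$ translates precisely into the integrality and quadric conditions on orbits. Subtleties include the $\F_4$-module structure and the attendant $C_3$-action, the effect of the unit group $\O_K^\times = \mu_6$ and of the identification $F_n = F_{n^2}$ on the bookkeeping, and---on the analytic side---obtaining a uniform tail bound ruling out escape of mass, so that interchanging the sum over $n$ with the orbit count is valid in the limit; the local mass computation must then deliver the precise constant $1/2$, whence the average $3/2$.
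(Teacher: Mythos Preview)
Your outline --- Selmer group, orbit parametrization, Alp\"oge--Bhargava--Shnidman count, local densities --- is the right skeleton, but several load-bearing pieces are misidentified.

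First, the representation: it is not pairs of ternary quadratic forms but pairs of binary cubic forms, $V = K^2 \otimes \Sym^3(K^2)$ under $G = \SL_2^2/\mu_2$, restricted to the quadric $A_1 = 0$. The stabilizer of a vector with $A_3 = n$ is the order-$4$ group scheme $M_n = \ker(\Res_{F_n/K}\mu_2 \to \mu_2)$, and arithmetic invariant theory identifies $G(K)$-orbits with invariant $n$ with $\ker(q_n)$ for a specific quadratic map $q_n \colon H^1(K,M_n) \to \Br(K)[2]$.

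Second, you have the role of reciprocity backwards. The parametrization does \emph{not} follow from Hecke reciprocity; rather, Hecke reciprocity is the obstacle, and the key point over $K = \Q(\sqrt{-3})$ is that it is \emph{vacuous}. Since $3$ is ramified in $K$, the local ramification index of $K_\p(\zeta_3)/K_\p$ is $1$ (odd), so by the Hecke-prime classification there are no Hecke primes in $F_n/K$ at all. This forces $V_{n,p} \subset \ker(q_{n,p})$ for every $p$, whence every unramified Selmer class lands in $\ker(q_n)$ and corresponds to an orbit. Over $\Q$ this fails and produces the tame/wild dichotomy; over $\Q(\sqrt{-3})$ it simply does not arise. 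You should say this explicitly rather than hoping reciprocity ``is'' the parametrization.

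Third, the $\F_4$-structure and the ``multiplicity $3$ from $\F_4^\times$'' are a red herring. The parametrization is a bijection between elements of $\Sel_2^\un(F_n/K) \simeq \Cl_{F_n}^\vee[2]$ (not $\F_4$-lines) and $G(K)$-orbits; each nontrivial class is one orbit, with no factor of $3$. The final computation is
\[
\avg_n |\Cl_{F_n}[2]| = 1 + \tau(G)\prod_v \nu_v = 1 + 2 \cdot \tfrac14 \cdot \prod_{v \nmid \infty} 1 = \tfrac32,
\]
where $\tau(G) = 2$ is the Tamagawa number, $\nu_v = |V_{n,v}|/|M_n(K_v)| = 1$ at finite places, and $\nu_\infty = 1/|M_n(\C)| = 1/4$ at the complex place (since $V_{n,\infty} = 0$). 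Your proposed attention to $v=2$ and $v=3$ is unnecessary: both give $\nu_v = 1$.
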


Our second result is the analogous result over $\Q$, for the family of pure cubic fields $F = \Q(\sqrt[3]{n})$.  These are non-Galois, with Galois group $S_3$, so we are in the subtle case where $\ell \nmid [F \colon \Q]$ but $\ell \mid |G|$. Here we observe phenomena that is unaccounted for by the heuristics in the literature. 
\begin{theorem}\label{main thm: over Q} Let $W(X)$ $($resp.\ $T(X))$ be the set of cubefree integers $1 < n < X$ congruent $($resp.\ not congruent$)$ to $\pm1\pmod{9}$, and let $F_n = \Q(\sqrt[3]{n})$. Then

    \begin{enumerate}
        \item \[\lim_{X \to \infty}\frac{1}{|W(X)|} \sum_{n \in W(X)} |\Cl_{F_n}[2]| = 2;\]
        \item \[\lim_{X \to \infty}\frac{1}{|T(X)|} \sum_{n \in T(X)} |\Cl_{F_n}[2]| = 3/2.\]
    \end{enumerate}
In particular, for at least $50\%$ of cubefree $n \equiv \pm 1\pmod9$, the class number of $\Q(\sqrt[3]{n})$ is odd.
\end{theorem}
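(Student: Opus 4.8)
The plan is to prove both averages simultaneously by counting the nontrivial elements of $\Cl_{F_n}[2]$ across the family, combining the reciprocity law for the relative group $\Cl_{F_n/\Q}[2]$ with an orbit parametrization and the Alp\"oge--Bhargava--Shnidman count. The first observation is that $\Cl_\Q = 0$, so $\Cl_{F_n/\Q}[2] = \Cl_{F_n}[2]$, and the degree-$3$ (hence odd) extension $F_n/\Q$ is directly in the scope of the reciprocity law; in particular there is no need to pass to the $S_3$-closure and take square roots of averages. I would use a Bhargava-style parametrization (in the spirit of Bhargava--Varma) to identify $\Cl_{F_n}[2]$, up to a controlled correction by units and the narrow class group, with the set of integral $G(\Z)$-orbits in a coregular representation whose resolvent invariant is the binary cubic $x^3 - n y^3$ and whose resolvent ring is the maximal order $\O_{F_n}$. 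As $n$ varies, the pure cubic fields are exactly those with resolvent of this special shape, so the relevant orbits lie on a single $G$-invariant subvariety; fixing the shape presents the family as a $G$-invariant quadric with $n$ as the bounded invariant, which is precisely the object counted by Alp\"oge--Bhargava--Shnidman.

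The role of the reciprocity law here is twofold. First, it pins down which orbits correspond to genuine $2$-torsion ideal classes by imposing the correct local conditions place by place, and it isolates the global (Hecke-reciprocity) obstruction distinguishing $\Cl_{F_n}[2]$ from the a priori larger Selmer-type group produced by the parametrization. Second, and crucially, it localizes the tame/wild dichotomy entirely at the prime $3$: every pure cubic field is ramified at $3$, but the local Galois-module structure of $\Cl_{F_n}[2]$ there --- equivalently the local orbit condition on the quadric at $3$, governed by the ramification of $3$ in $F_n$ and thus by the image of $n$ in $(\O_K/9)^\times$ for $K = \Q(\sqrt{-3})$ --- depends on whether $n \equiv \pm 1 \pmod 9$. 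I would compute the two corresponding $3$-adic orbit densities separately, obtaining a wild factor for $n \in W(X)$ and a tame factor for $n \in T(X)$; these are the only local data that differ between the two families.

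With the parametrization and local conditions in hand, I would run the geometry-of-numbers count. Summing the ABS orbit count over $n < X$ on the quadric yields an asymptotic for $\sum_n |\Cl_{F_n}[2]|$ whose main term factors as an archimedean volume times a product of local densities $\prod_p \mu_p$. Imposing the congruence $n \equiv \pm 1 \pmod 9$ (or its complement) modifies only $\mu_3$, while a sieve to cubefree $n$ together with the passage from $\Z[\sqrt[3]{n}]$ to the maximal resolvent ring $\O_{F_n}$ handles the remaining places, the requisite uniformity and tail estimates being supplied by the ABS machinery. Dividing by $|W(X)|$ and $|T(X)|$ and evaluating, the away-from-$3$ contribution should reproduce the generic complex-cubic average, the tame factor at $3$ contributing the extra $1/2$ to give $3/2$, and the wild factor contributing the extra $1$ to give $2$.

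The main obstacle will be the local analysis at $3$ in the wild case $n \equiv \pm 1 \pmod 9$: computing the $3$-adic orbit density on the quadric in the presence of wild ramification, where the naive Cohen--Lenstra--Martinet local factor fails and the reciprocity law must be invoked to control the contribution exactly, is the delicate heart of the argument and the source of the aberrant average $2$. Secondary difficulties are establishing the uniformity and tail estimates needed to run geometry of numbers in this thin one-parameter family ordered by $n$ rather than by $\disc F_n$, sieving correctly to cubefree $n$ and to maximal orders, and making precise the correction relating the counted Selmer-type orbits to $\Cl_{F_n}[2]$ itself (the unit and narrow-class contributions), so that the identity orbit and any reducible loci are subtracted cleanly.
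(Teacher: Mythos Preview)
Your high-level strategy agrees with the paper's: realize $\Cl_{F_n}^\vee[2]$ as the unramified $2$-Selmer group $\Sel_2^{\un}(F_n)$ for the module $M_n=\ker(\Res_{F_n/\Q}\mu_2\to\mu_2)$, parametrize its elements by $G(\Q)$-orbits on the quadric $\{A_1=0\}$ in the space of pairs of binary cubic forms (here $G=\SL_2^2/\mu_2$, with $A_3$-invariant equal to $n$), and apply the Alp\"oge--Bhargava--Shnidman count to obtain an expression $1+\tau(G)\prod_p\nu_p$ with $\tau(G)=2$, $\nu_\infty=1/2$, $\nu_p=1$ for finite $p\ne 3$, and only $\nu_3$ depending on the subfamily.

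However, you have the two families and the role of reciprocity essentially inverted, and this is the substantive gap. The condition $n\equiv\pm1\pmod 9$ means $n$ is a cube in $\Q_3^\times$, so $3$ factors as $\mathfrak p_1^2\mathfrak p_2$ in $F_n$: this is the \emph{tamely} ramified family, and it is the \emph{delicate} case. Here the nontrivial class in $V_{n,3}\subset H^1(\Q_3,M_n)$ does \emph{not} lie in $\ker(q_{n,3})$, so it has no local orbit. A priori this breaks the parametrization. What saves it is Hecke reciprocity: $\mathfrak p_1$ is the \emph{unique} Hecke prime of $F_n/\Q$, so every global class in $\Sel_2^{\un}(F_n)$ must be locally a square at $\mathfrak p_1$, hence locally trivial in $V_{n,3}$, hence unobstructed. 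Thus reciprocity is not used to cut a larger Selmer group down to $\Cl_{F_n}[2]$ (the identification $\Cl_{F_n}^\vee[2]\simeq\Sel_2^{\un}(F_n)$ is already exact); it is used to certify that every Selmer element nonetheless has a global orbit. The local density that enters the ABS count is then $\nu_3=|V_{n,3}\cap\ker q_{n,3}|/|M_n(\Q_3)|=1/2$, and the average is $1+2\cdot\tfrac12\cdot\tfrac12=3/2$.

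By contrast, when $n\not\equiv\pm1\pmod 9$ the field $F_n$ is \emph{wildly} (totally) ramified at $3$, there are no Hecke primes, reciprocity is vacuous, and $V_{n,3}\subset\ker(q_{n,3})$ holds automatically. This is the straightforward case: $\nu_3=1$ and the average is $1+2\cdot\tfrac12=2$. So the ``aberrant'' value $2$ is not the output of a hard obstruction computation; it arises precisely from the \emph{absence} of the Hecke constraint. Finally, your framing via resolvent cubic rings and unit/narrow-class corrections is not needed: the paper's isomorphism $\Cl_{F_n}^\vee[2]\simeq\Sel_2^{\un}(F_n)$ is direct (class field theory plus the Hilbert symbol), and integrality of orbits comes by comparing local conditions at $p>3$ with those of $\Sel_2$ of the elliptic curve $y^2=x^3+n^2$.
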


Theorem \ref{main thm: over Q}(2) is in accordance with the Cohen--Martinet--Malle heuristics \cite{Malle2010} for the larger family of complex $S_3$-cubic fields.\footnote{Their heuristics were originally stated with respect to the discriminant ordering, whereas we use an ordering that is specific to Kummer extensions.  Nowadays, one expects these heuristics to hold for most reasonable orderings, where what is  ``reasonable'' may depend on the family; indeed, the Cohen--Lenstra heuristics are known to not always hold in the discriminant ordering \cite{BartelJohnstonLenstra}. 
}
However, Theorem \ref{main thm: over Q}(1) shows that the groups $\Cl_{F_n}[2]$ are governed by a {\it different} distribution.  This phenomenon was observed empirically by many of the pioneers of class group statistics. For example, Cohen--Martinet \cite{CohenMartinet87} write:
\blockquote{\em However, there is another phenomenon which has been stressed several times
 (\cite{ShanksWilliams}, \cite{TennenhouseWilliams}) and which we repeat here: If one considers only $\Q(\sqrt[3]{p})$ with $p\equiv 2\pmod 3$ prime (so as not to be bothered by the $3$-part), and if one distinguishes
 between $p\equiv -1 \pmod9$ and $p\equiv 2, 5 \pmod 9$, one notes a marked distinction in the
 behavior of the class group. For example, class number 1 seems to occur with
 probability $0.60$ for $p \equiv -1 \pmod9$, but with probability $0.40$ for $p \equiv 2, 5 \pmod9$. This is apparently due to the higher 2-part of the class group in the second case, and
 although a sort of reinterpretation of this phenomenon has been given in \cite{EisenbeisFreyOmmerbon}, no
 satisfactory heuristic explanation has yet been found.}

Our proof of Theorem \ref{main thm: over Q} explains this mysterious phenomenon and leads us to predict similar behavior in other families of extensions $F/K$.  The new insight is that the group $\Cl_{F/K}[2]$ is not as random as it seems: it obeys a global reciprocity law that we call \textit{Hecke reciprocity}.

\subsection{Hecke reciprocity}

Let $F/K$ be an odd degree extension of number fields. Class field theory gives a bijection from $\Cl_{F/K}^\vee[2] = \Hom(\Cl_{F/K},\Z/2\Z)$ to the set of unramified extensions $F(\sqrt{t})/F$ of square norm, i.e.\ such that $\Nm_{F/K}(t) \in K^{\times2}$. Roughly speaking, Hecke reciprocity constrains the number of ramified primes that are inert in $F(\sqrt{t})/F$.  

To state it precisely, let $\Disc_{F/K}\subset \O_K$ and $\Diff_{F/K} \subset \O_F$ be the discriminant and different ideals of $F/K$, and define the {\it Hecke ideal} 
 \[H_{F/K} := \Disc_{F/K} \mathrm{Diff}_{F/K}^{-1} \subset \O_F.\] As $F/K$ has odd degree, the norm of $H_{F/K}$ is a square (as an $\O_K$-ideal). Moreover,  
 a well-known theorem of Hecke \cite[pg. 291]{WeilBasicNumberTheory}, says that $[H_{F/K}]$ is a square in $\Cl_{F/K}$.  

 \begin{definition}
 {\em 
     We say $F/K$ is {\it Hecke ramified} if $H_{F/K}$ is {\it not} the square of an $\O_F$-ideal.
     }
 \end{definition}
 
 Write $H_{F/K} = \prod_w w^{h_w}$ for the decomposition of $H_{F/K}$ into prime powers.  
 \begin{definition}
  {\em   A prime $w$ of $F$, lying over a prime $v$ of $K$, is a {\it Hecke prime} if $h_w$ is odd and if, locally, $w$ is inert in some unramified quadratic extension of $F \otimes_K K_v$ of square norm.  
    }  
 \end{definition}

 Thus, a Hecke prime $w$ is a witness to the fact that $F/K$ is Hecke ramified.  The secondary condition rules out primes $w$ that split in all extensions $F(\sqrt{t}) \in \Cl_{F/K}^\vee[2]$ for local reasons. One checks that $F/K$ is Hecke ramified if and only if $F/K$ admits a Hecke prime (Lemma \ref{lem: hecke ramified implies hecke prime}).   
 Moreover, if $\Disc_{F/K}$ has valuation $1$ at some prime $v$ of $K$, then there is a unique Hecke prime $w$ above $v$. This shows that a ``random'' odd degree extension $F/K$ has many Hecke primes. 
 
With this definition in hand, the Hecke reciprocity law states:

\begin{theorem}[Hecke reciprocity]\label{thm: reciprocity law for odd degree intro}
    Let $F/K$ be an odd degree extension of number fields and let $F(\sqrt{t})/F$ be an everywhere unramified quadratic extension of square norm. Then the number of Hecke primes $w$ of $F$ that are inert in $F(\sqrt{t})$ is even.  
\end{theorem}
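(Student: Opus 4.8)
The plan is to reformulate the statement as the evaluation of a single quadratic character on the Hecke ideal, computed in two ways. Let $L = F(\sqrt{t})$ and let $\psi \in \Cl_{F/K}^\vee[2]$ be the character attached to $L/F$ under the class field theory bijection recalled above. Pulling $\psi$ back along the natural surjection $\Cl_F \to \Cl_{F/K}$ produces the Artin character of the everywhere-unramified extension $L/F$; in particular, for every prime $w$ of $F$ (all of which are unramified in $L$) this pullback sends the class $[w]$ to the Frobenius $\mathrm{Frob}_w \in \Gal(L/F) = \{\pm 1\}$, which equals $-1$ precisely when $w$ is inert in $L$ and $+1$ when $w$ splits. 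Writing $\tilde\psi$ for this character of $\Cl_F$, the whole theorem reduces to computing $\tilde\psi([H_{F/K}])$.

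First I would evaluate $\tilde\psi([H_{F/K}])$ directly from the factorization $H_{F/K} = \prod_w w^{h_w}$. By multiplicativity, $\tilde\psi([H_{F/K}]) = \prod_w \mathrm{Frob}_w^{\,h_w} = \prod_{h_w \text{ odd}} \mathrm{Frob}_w$, since the primes with $h_w$ even contribute trivially. The second evaluation uses that $L/F$ has square norm: by the class field theory bijection this is exactly the condition that $\tilde\psi$ factors through $\Cl_{F/K}$, so $\tilde\psi([H_{F/K}]) = \psi(\overline{[H_{F/K}]})$, the value of $\psi$ on the image of $[H_{F/K}]$ in $\Cl_{F/K}$. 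By Hecke's theorem that image is a square in $\Cl_{F/K}$, and since $\psi$ is quadratic it is trivial on squares; hence $\tilde\psi([H_{F/K}]) = 1$. Comparing the two evaluations gives $\prod_{h_w \text{ odd}} \mathrm{Frob}_w = 1$.

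It remains to show that the odd-$h_w$ primes which are \emph{not} Hecke primes drop out of this product, so that $\prod_{w \text{ Hecke}} \mathrm{Frob}_w = 1$ and the number of inert Hecke primes is even. This is where the secondary condition in the definition of a Hecke prime does the work. If $w$ lies over $v$ with $h_w$ odd but is not a Hecke prime, then by definition $w$ is inert in \emph{no} unramified quadratic extension of $F \otimes_K K_v$ of square norm. The base change of $L/F$ to $F \otimes_K K_v$ is such an extension — it is unramified, and its norm is a square because $\Nm_{F/K}(t) \in K^{\times 2} \subset K_v^{\times 2}$ — so $w$ cannot be inert in it, i.e.\ $\mathrm{Frob}_w = 1$. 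Thus every non-Hecke prime with $h_w$ odd contributes $+1$, the product collapses onto the Hecke primes, and we are done.

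I expect the main obstacle to be precisely this last local-to-global step: verifying that the local components of the global square-norm extension $L/F$ genuinely lie among the ``unramified quadratic extensions of $F \otimes_K K_v$ of square norm'' appearing in the definition of a Hecke prime, and that global square norm descends to square norm on each semilocal \'etale algebra $\prod_{w' \mid v} F_{w'}$. The other point requiring care is the normalization of the class field theory bijection, namely the compatibility between the abstract character $\psi \in \Cl_{F/K}^\vee[2]$ and the physical Frobenius elements of $L/F$, so that indeed $\tilde\psi([w]) = \mathrm{Frob}_w$; granting this together with Hecke's theorem, the two evaluations of $\tilde\psi([H_{F/K}])$ close the argument.
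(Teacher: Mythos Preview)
Your proposal is correct and essentially identical to the paper's proof: both evaluate the quadratic Artin character of $L/F$ on the class of $H_{F/K}$, invoke Hecke's theorem (Corollary~\ref{cor: hecke ideal}) to see this is trivial, and then use the secondary condition in the definition of a Hecke prime to drop the non-Hecke contributions from the product. Your detour through $\Cl_{F/K}$ is unnecessary---the paper works directly in $\Cl_F$, where $[H_{F/K}]$ is already a square---and the two ``obstacles'' you flag are immediate from the definitions, but these are minor stylistic points, not gaps.
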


Theorem \ref{thm: reciprocity law for odd degree intro} follows quickly from Hecke's theorem (see \S\ref{subsec: Hecke reciprocity}). Its relevance to class group heuristics is that it causes ``unexpected congruence conditions'' on elements of $\Cl_{F/K}^\vee[2]$. For example, if $F$ is a cubic field in which a prime $p$ splits as  $(p) = \p_1^2\p_2$ and $\p_1$ is the only Hecke  prime for $F/\Q$, then Theorem \ref{thm: reciprocity law for odd degree intro} implies that for all $F(\sqrt{t}) \in \Cl_F^\vee[2]$, the prime $\p_1$ splits in $F(\sqrt{t})$,  even though there is no good local reason for this! 
This explains the dichotomy in Theorem \ref{main thm: over Q}: when $n \not\equiv \pm1 \pmod{9}$, the field $F_n$ is wildly ramified at $3$ and has no Hecke primes, whereas for $n \equiv \pm 1\pmod{9}$ the field $F_n$ is tamely ramified and admits exactly one Hecke prime, the prime $\p_1$ above $(3) = \p_1^2\p_2$. In the tame case, Hecke reciprocity systematically obstructs the non-trivial unramified local square-class $t$ at $\p_1$ from appearing in a global unramified square-class. Since there are two unramified square-classes at $\p_1$, we expect half as many non-zero classes in $\Cl_F^\vee[2]$ in the tame family compared to the wild family, which indeed follows from Theorem \ref{main thm: over Q}.

\subsection{Methods}

We view $\Cl_{F/K}^\vee[2]$ as a Selmer group for the $\Gal_K$-module
\[M = \ker\left(\Res^F_K \mu_2 \stackrel{\Nm}{\longrightarrow} \mu_2\right)\]
of order $2^{[F \colon K]-1}$, where $\Res^F_K \mu_2$ denotes Weil restriction.  More precisely, class field theory gives an isomorphism from $\Cl_{F/K}^\vee[2]$ to the {\it unramified $2$-Selmer group} of $F/K$, which is the subgroup
\[\Sel^\mathrm{un}_2(F/K) \subset H^1(K, M) \subset F^\times/F^{\times2}\]
of everywhere unramified classes $t \in H^1(K, M)$ (see Theorem \ref{thm: canonical isomorphism}).
This endows $\Cl_{F/K}^\vee[2]$ with a symmetric bilinear pairing coming from the cup product
\[(\, , \, )\colon H^1(K, M) \times H^1(K, M) \longrightarrow H^2(K, \mu_2)[2] = \Br(K)[2].\]
Key to our proof is the notion of a {\it quadratic refinement} of $(\, , \, )$, which is by definition a quadratic map  $q \colon H^1(K,M) \to \Br(K)[2]$ such that $(x,y) = q(x + y) - q(x) - q(y)$.

When $F/K$ is cubic, we show that there exists a quadratic refinement $q = \sum_v q_v$ with the following property:  for any Hecke prime $w \mid v$ of $F$, and for any non-trivial $t \in H^1_{\un}(K_v, M)$, we have $\inv_v(q_v(t)) = 1/2$, where $\inv_v$ is the isomorphism  $\Br(K_v)[2] \simeq \Z/2\Z$.  It follows from reciprocity in the Brauer group that for each $t \in \Sel_2^\un(F/K)$, the number of Hecke primes inert in $F(\sqrt{t})$ is even. This gives a purely algebraic proof of Theorem \ref{thm: reciprocity law for odd degree intro} (and Hecke's theorem as well) for cubic extensions. It is this connection with Brauer groups that is needed in our proof of Theorems \ref{thm: over Q(omega)} and \ref{main thm: over Q}.
Namely, we exploit the fact that for Kummer extensions $F_n = K(\sqrt[m]{n})$, there is a natural choice of quadratic refinement: the map $q^n \colon H^1(K, M_n) \to \Br(K)[2]$ sending $[t]$ to the class of the quadratic form $\frac{1}{m}\Tr_{F/K}(t x^2)$ in $H^1(K, \mathrm{SO}(m)) \simeq \Br(K)[2]$. 

When $m = 3$, we {\it attempt} to parameterize the elements of $\Sel_2^\un(F_n/K)$, as follows. The space $V = \Sym^3(K^2) \otimes K^2$ of pairs of binary cubic forms has a natural action by $G = \SL_2^2/\mu_2$. The ring $K[V]^G$ is generated by two invariants $A_1$ and $A_3$, and the Galois module $M_n$ is isomorphic to the stabilizer $\mathrm{Stab}_G(v_n)$ of any vector $v_n$ with invariants $A_1 = 0$ and $A_3 = n$. By arithmetic invariant theory, the $G(K)$-orbits with these invariants are in bijection with the kernel of the map $ H^1(K, \mathrm{Stab}_G(v_n)) \to H^1(K,G) \simeq \Br(K)[2]$. An elegant geometric argument of Bhargava-Gross \cite{BhargavaGross2013} shows that the latter map may be identified with $q^n = \sum_v q^n_v$. We then check by explicit computation that $H^1_\un(K_v, M_n)\not\subset\ker(q^n_v)$ if and only if $F_n/K$ is Hecke ramified at $v$ if and only if $3\mid v$ and $F_n$ is tamely ramified at $v$.  If $F_n/K$ is Hecke unramified, this gives a robust parameterization since all elements of $\Sel_2^\un(F_n/K)$ correspond to orbits of $V(K)$. However, if $F_n/K$ is Hecke ramified at two or more primes $v$ of $K$, this parameterization does not work, as elements of $\Sel_2^\un(F_n)$ need not lie in $\ker(q^n)$, hence need not correspond to an orbit of $V(K)$.  Luckily, if $K$ has  only  one prime above $3$, we obtain a parameterization of all Selmer elements $\Sel_2^\un(F_n/K)$, for all $n \in K^\times$. For example, when $K = \Q$ each $t \in \Sel_2^\un(F_n)$ lies in $\ker(q^n_3)$, even if $F_n/\Q$ is tamely ramified at $3$, by Hecke reciprocity.  Hence  $t \in \ker(q^n)$ and  $t$ corresponds to a $G(K)$-orbit on $V(K)$.       

The orbits corresponding to $\Sel_2^\un(F_n/\Q)$ have integral representatives. Thus, to finish the proof of Theorem \ref{main thm: over Q}, we estimate the number of integral orbits lying on the quadric $A_1 = 0$ and with $A_3$-invariant bounded by $X$, using a counting result of Alp\"oge, Bhargava, and the first author in  \cite{AlpogeBhargavaShnidman}.    This last step should work over any number field over which our parameterization described above is available. However, we restrict ourselves to the cases $K = \Q$ and $K$ an imaginary quadratic field of class number $1$, as the arguments in \cite{AlpogeBhargavaShnidman} generalize straightforwardly in the latter case.  We note that it would be more natural to use the five dimensional representation of $\SO(3)$, as in \cite{BhargavaGross2013} and \cite{AlpogeThesis}, but we use $G = \SL_2^2/\mu_2$ in order to invoke  \cite{AlpogeBhargavaShnidman}, which handles the details of the circle method argument and in the generality needed. 

In the wild family $n \not\equiv \pm 1\pmod{9}$,  applying the counting result of \cite{AlpogeBhargavaShnidman} yields the formula 
\[\avg_n |\Cl_{F_n}[2]| = \avg_n |\Sel_2^\un(F_n)| = 1 + \tau(G)\cdot \prod_{p \leq \infty} \nu_p = 1 + 2 \cdot \frac12 \prod_{p < \infty}1 = 1 + 1 = 2,\] 
where  $\nu_p$ is the (weighted) average number of local unramified classes and $\tau(G)$ is the Tamagawa number. In the tame family $n \equiv \pm1 \pmod{9}$, the non-trivial class $t \in H^1_\un(\Q_3,M_n)$ is ``obstructed'' by Hecke reciprocity, aligning with the fact that it does not correspond to a local orbit in $V(\Q_3)$. This gives $\nu_3 = \frac12$ and hence the global average is $1 + 1/2 = 3/2$. Over $K = \Q(\sqrt{-3})$ this dichotomy disappears because odd degree Galois extensions are Hecke unramified (Proposition \ref{prop: G odd}). 

It is interesting to compare our proof with Bhargava's proof that $\mathrm{avg}_F |\Cl_F[2]| = 3/2$ in the family of all complex $S_3$-cubic fields \cite{BhargavaQuartics}. His parametrization uses  $G' =\SL_2 \times \GL_3$ which has $\tau(G') = 1$. The factor $\tau(G) = 2$ in our proof is, in effect, canceled out by Hecke reciprocity, so that both proofs arrive at the same answer, at least in the family of tame pure cubic fields.

\subsection{Heuristics for $G$-extensions}

In  \cite{Malle}, Malle points out that the Cohen--Lenstra--Martinet heuristics should be modified when $K$ contains the $\ell$-th roots of unity $\mu_\ell$; see also \cite{Achter06, Garton2015, LipnowskiSawinTsimerman}. Sawin and Wood \cite{SawinWood} recently proposed the following heuristic in that case, assuming $\ell \nmid |G|$ as before. Let $S = \Z_\ell[G]/\sum_{\gamma \in G} \gamma$, let $r$ be maximal such that $\mu_{\ell^r} \subset K$, and let $u$ be the number of infinite places of $K$.  Sawin and Wood predict that for any finite $S$-module $V$, we have:
\begin{equation} \label{eqn: Sawin--Wood}
    \mathbb{E}_{F \in \mathcal{F}_{G,K}}|\mathrm{Surj}(\Cl_{F/K}[\ell^\infty],V)| = \dfrac{| (\wedge^2 V)^G[\ell^r]|}{|V|^u},
\end{equation}
for any ``reasonable'' ordering on the family $\mathcal{F}_{G,K}$ of $G$-extensions of $K$.\footnote{This heuristic implies heuristics for families of degree $m$ non-Galois fields $F/K$ with fixed Galois group $G_{F/K} \subset S_m$, so as long as $\ell \nmid |G_{F/K}|$, it is enough to consider the Galois case.}  Theorem \ref{thm: over Q(omega)} proves $(\ref{eqn: Sawin--Wood})$ for $(G,K,V) = (C_3,\Q(\sqrt{-3}),\F_4)$. It appears to be the first proven instance of $(\ref{eqn: Sawin--Wood})$ aside from the work of Datskovsky--Wright \cite{DatskovskyWright} for $(G,V) = (C_2,\Z/3\Z)$, generalizing Davenport--Heilbronn's \cite{DavenportHeilbronn}; see also \cite{LemkeOliverWangWood}.

Theorem \ref{main thm: over Q} concerns the non-Galois case where $\ell \mid |G_{F/K}|$ but $\ell \nmid [F \colon K]$.  The heuristics are murkier in this case, but Malle has made conjectures for $\Cl_{F/K}[2]$ in certain families of odd degree extensions with fixed Galois group $G \subset S_m$ of even order \cite{Malle2010}.  For example, let $m \geq 3$ be odd and consider the family $\mathcal{F}$ of $S_m$-extensions $F/\Q$ of degree $m$ and unit rank $u = r_1 + r_2 - 1$.  Malle predicts that for every finite $\F_2$-vector space $V$, 
\begin{equation} \label{eqn: Malle moments}
    \mathbb{E}_{F \in \mathcal{F}}(|\mathrm{Surj}(\Cl_{F/\Q}[2],V)|) = \dfrac{| \wedge^2 V|}{|V|^u},
\end{equation}
In \S\ref{sec: random model for Cl[2]}, we use Hecke reciprocity to give a new model for $\Cl_{F/\Q}[2]$ as a cokernel of an alternating matrix over $\F_2$ modulo $u + 1$ random relations.  This recovers Malle's distribution, albeit via different reasoning (and without appealing to the function field case).  The role of Hecke reciprocity here amounts to adding an additional relation.  This applies to other families $F/K$ with fixed Galois group $G \subset S_m$ and signature, as long as the typical member of the family is Hecke ramified.

\subsection{Heuristics for $\Gamma$-extensions}\label{subsec: Gamma extension heuristics intro}
On the other hand, Theorem \ref{main thm: over Q} shows that  certain natural families of odd degree extensions $F/K$ do {\it not} fit into Malle's model for $\Cl_{F/K}[2]$. To account for this, we generalize Malle's heuristics by fixing not just the Galois group $G_{F/K} \subset S_m$, but part of the natural $\Gal_K$-action on $G_{F/K}$ as well; here $\Gal_K = \Gal(\overline{K}/K)$.

Let $\Gamma$ be a finite $K$-group, i.e.\ $\Gamma$ is a finite group $\Gamma_0$ endowed with an action of $\Gal_K$.
\begin{definition} 
{\em 
A {\it $\Gamma$-extension} is an extension $F/K$ together with an embedding $\Gamma \hookrightarrow G_{F/K}$ of $K$-groups.  We denote by $\mathcal{F}_\Gamma$ the family of $\Gamma$-extensions $F/K$ of degree $|\Gamma|$.
}
\end{definition}
 If $\Gamma$ has trivial $\Gal_K$-action, then $\mathcal{F}_\Gamma$ is simply the family $\mathcal{F}_{\Gamma_0, K}$ of  $\Gamma_0$-extensions. In general, we define the {\it resolvent field} of $\Gamma$ to be the Galois extension $R/K$ trivializing the $\Gal_K$-action on $\Gamma$. We assume for simplicity that $m:= |\Gamma|$ is odd and coprime to $[R \colon K]$. In this case, we have $G_{F/K} \simeq \Gamma \rtimes H$, where $H := G_{R/K}$.  It follows that $\mathcal{F}_\Gamma$ is the family of degree $m$ extensions with Galois group $G_\Gamma := \Gamma_0 \rtimes H$ and resolvent field $R$. For example, if $K= \Q$ and $\Gamma = \mu_3$, then $R = \Q(\mu_3)$, $G_\Gamma = S_3$, and $\mathcal{F}_\Gamma$ is the family $\Q(\sqrt[3]{n})$ appearing in Theorem \ref{main thm: over Q}. 

When $|G_\Gamma|$ is odd, all fields in $\mathcal{F}_{G_\Gamma,K}$  are Hecke unramified (Proposition \ref{prop: G odd}), so Hecke reciprocity is not relevant. In this case, we simply predict that the distribution of $\Cl_{F/K}[2]$ for $F \in \mathcal{F}_\Gamma$ is the one predicted by Sawin--Wood in the larger family $\mathcal{F}_{G_\Gamma,K}$.

 Our more novel prediction is when $|G_\Gamma|$ is even.  For simplicity, we assume here that $G_{F/K} \subset S_m$ is absolutely irreducible in the sense of \cite{SawinWood}. Write $\mathcal{F}_\Gamma = \mathcal{F}_\Gamma^1 \coprod \mathcal{F}_\Gamma^2$, where $\mathcal{F}_\Gamma^1$ (resp.\ $\mathcal{F}_\Gamma^2$) is the subfamily of $\Gamma$-extensions $F/K$ that are Hecke unramified (resp.\ Hecke ramified). Then we predict: \begin{equation} \label{eqn: altered Malle}
    \mathbb{E}_{F \in \mathcal{F}_\Gamma^1}(|\mathrm{Surj}(\Cl_{F/K}[2],V)|) = \dfrac{| \wedge^2 V|}{|V|^{u-1}};
\end{equation}
\begin{equation} \label{eqn: Malle again}
    \mathbb{E}_{F \in \mathcal{F}_\Gamma^2}(|\mathrm{Surj}(\Cl_{F/K}[2],V)|) = \dfrac{| \wedge^2 V|}{|V|^{u}},
\end{equation}
accounting for the vacuity of Hecke reciprocity on $\mathcal{F}_\Gamma^1$. The parameter $u$ is the one defined by Cohen--Martinet, as predicted by Malle in the larger family $\mathcal{F}_{G_\Gamma,K}$ \cite[\S2]{Malle2010}.  The subfamily $\mathcal{F}_\Gamma^1 \subset \mathcal{F}_\Gamma$ is obtained by imposing finitely many local conditions (Proposition \ref{prop: hecke primes divide the resolvent disc}), so $\mathcal{F}_\Gamma^1$ either has positive relative density in $\mathcal{F}_\Gamma$ or is empty.  If $\mathcal{F}_\Gamma^1$ is empty, our heuristics again simply say that the distribution on $\mathcal{F}_\Gamma$ should match that on $\mathcal{F}_{G_\Gamma,K}$.  When $\mathcal{F}_\Gamma^1$ is non-empty, we say that $\Gamma$ is {\it aberrant}.  For aberrant $\Gamma$, our predicted distribution differs from Malle's distribution for $\mathcal{F}_{G_\Gamma,K}$. 

The simplest example of an aberrant $\Q$-group is $\mu_p$ (for a prime $p > 2$). The corresponding family $\mathcal{F}_{\mu_p}$ is the family of pure fields $\Q(\sqrt[p]{n})$, and $\mathcal{F}_{\mu_p}^1$ is the subfamily where $\Q(\sqrt[p]{n})$ is wildly ramified at $p$. Thus, we predict that the  dichotomy proven in Theorem \ref{main thm: over Q} persists for all primes $p > 2$.  In \S\ref{sec: Gamma-extension heuristics}, we give other examples of aberrant $K$-groups $\Gamma$. We suspect that there are only finitely many aberrant $K$-groups $\Gamma$ with a given Galois group $G_\Gamma$. For example, when $G_\Gamma = D_p$ the dihedral group of order $2p$, the aberrant $\Q$-groups correspond to the resolvent fields $\Q(i)$ and $ \Q(\sqrt{\pm p})$ (Proposition \ref{prop: dihedral hecke}).  We do not know a precise characterization of the aberrant $K$-groups in general. We note that it can happen that $\mathcal{F}_\Gamma = \mathcal{F}_{\Gamma}^1$, i.e.\ every field in $\mathcal{F}_\Gamma$ is Hecke unramified. For example, this happens for the family $F_n = \Q[x]/(x^3 + nx^2 + 9x + n)$ of cubic fields with quadratic resolvent $\Q(i)$. Thus, we predict that the average of $|\Cl_F[2]|$ in this family is $2$.  Another sample application of our heuristics (in the case $\Gamma = \mu_3$, see \S \ref{sec: kummer heuristics}) is:

\begin{conjecture} \label{conj: probability that h/h3 = 1}
As $n$ varies over primes congruent to $8 \pmod{9}$, the probability that 
$\Q(\sqrt[3]{n})$ has class number $1$ is
\begin{equation} \label{eqn: class number 1 probability tame}
    \prod_{q \neq 3} \prod_{j \ge 2} (1+q^{-j})^{-1},
\end{equation}
where $q$ ranges over the primes. On the other hand, as $n$ varies over primes congruent to $2,5 \pmod{9}$,  the probability that $\Q(\sqrt[3]{n})$ has class number $1$ is
\begin{equation} \label{eqn: class number 1 probability wild}
    \frac23\cdot 
\prod_{q\neq3} \prod_{j \ge 2} (1+q^{-j})^{-1}.
\end{equation}
\end{conjecture}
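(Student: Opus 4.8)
The statement is an application of the $\Gamma = \mu_3$ heuristic of \S\ref{sec: kummer heuristics}, so the plan is to \emph{derive} it from that heuristic together with two standard inputs: the independence of the $\ell$-parts of $\Cl_F$ across primes $\ell$, and the classical behavior of the $3$-part of pure cubic fields. Since $\Q(\sqrt[3]{n})$ has class number $1$ exactly when $\Cl_F[\ell^\infty] = 0$ for every $\ell$, and $\Cl_F[\ell^\infty] = 0$ iff $\Cl_F[\ell] = 0$, I would first factor
\[ \mathrm{Prob}(h_F = 1) = \prod_\ell \mathrm{Prob}(\Cl_F[\ell] = 0), \]
assuming the $\ell$-parts are independent. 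For $\ell = 3$: every residue class $2, 5, 8 \pmod 9$ satisfies $n \equiv 2 \pmod 3$, and for such prime $n$ the field $\Q(\sqrt[3]{n})$ has trivial $3$-class group by the classical genus theory of pure cubic fields; hence $\mathrm{Prob}(\Cl_F[3] = 0) = 1$, which is why the products in \eqref{eqn: class number 1 probability tame} and \eqref{eqn: class number 1 probability wild} omit $q = 3$.

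For $\ell \neq 3$, I would compute $\mathrm{Prob}(\Cl_F[\ell] = 0)$ from the moment heuristic. Just as $\Cl_{F/\Q}^\vee[2]$ is an unramified $2$-Selmer group carrying a cup-product pairing (Theorem~\ref{thm: canonical isomorphism}), the heuristic treats $\Cl_{F/\Q}^\vee[\ell]$ through the analogous \emph{alternating} cup-product pairing and models it as the cokernel of a random alternating matrix over $\Z_\ell$ subject to $u_\ell$ relations, so that
\[ \mathbb{E}\big(\#\mathrm{Surj}(\Cl_F[\ell], V)\big) = \frac{|\wedge^2 V|}{|V|^{u_\ell}} \]
for every $\F_\ell$-vector space $V$, with $u_\ell = 1$ for all odd $\ell$, while $u_2 = 1$ in the tame family $n \equiv 8 \pmod 9$ (equation \eqref{eqn: Malle again}) and $u_2 = 0$ in the wild family $n \equiv 2, 5 \pmod 9$ (equation \eqref{eqn: altered Malle}), the drop by one recording the vacuity of Hecke reciprocity. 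As $\Cl_F[\ell] = 0$ is equivalent to $\dim_{\F_\ell} \Cl_F[\ell] = 0$, only the rank distribution matters, and I would recover $\mathrm{Prob}(\Cl_F[\ell] = 0)$ from the moments $M_k := \mathbb{E}\big(\#\mathrm{Surj}(\Cl_F[\ell], \F_\ell^k)\big) = \ell^{\binom k2 - u_\ell k}$ by the standard $q$-binomial (Möbius) inversion on the subspace lattice, obtaining
\[ \mathrm{Prob}(\Cl_F[\ell] = 0) = \sum_{k \geq 0} \frac{(-1)^k M_k}{\prod_{i=1}^k (\ell^i - 1)}. \]

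The closed form then follows from a $q$-series identity. With $q = \ell^{-1}$ and $\prod_{i=1}^k(\ell^i - 1) = \ell^{\binom{k+1}2} \prod_{i=1}^k(1 - q^i)$, the $k$-th summand simplifies to $(-\ell^{-(u_\ell+1)})^k / \prod_{i=1}^k(1 - q^i)$, and Euler's identity $\sum_{k \geq 0} z^k / \prod_{i=1}^k(1 - q^i) = \prod_{i \geq 0}(1 - z q^i)^{-1}$ with $z = -\ell^{-(u_\ell + 1)}$ gives
\[ \mathrm{Prob}(\Cl_F[\ell] = 0) = \prod_{j \geq u_\ell + 1} \big(1 + \ell^{-j}\big)^{-1}. \]
Assembling over $\ell \neq 3$: in the tame family every $u_\ell = 1$, producing $\prod_{q \neq 3} \prod_{j \geq 2}(1 + q^{-j})^{-1}$, which is \eqref{eqn: class number 1 probability tame}. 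In the wild family only the factor at $\ell = 2$ changes, from $u_2 = 1$ to $u_2 = 0$; this multiplies the tame answer by $\prod_{j \geq 1}(1 + 2^{-j})^{-1} / \prod_{j \geq 2}(1 + 2^{-j})^{-1} = (1 + 2^{-1})^{-1} = \tfrac23$, yielding \eqref{eqn: class number 1 probability wild}.

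The main obstacle is not the evaluation but the justification of its inputs, which are all conjectural. First, one must assume the independence of the $\ell$-parts and the interchange of the $X \to \infty$ limit with the infinite product over $\ell$. More essentially, one must argue that the alternating moment heuristic with these precise exponents $u_\ell$ governs $\Cl_F[\ell]$ in the thin families of \emph{prime} $n$ in a fixed class mod $9$, and not merely in the full cubefree families proven in Theorem~\ref{main thm: over Q}; this is exactly the content of the heuristic of \S\ref{sec: kummer heuristics} and lies beyond what the present counting results can establish. By contrast the $\ell = 3$ input is classical, and the entire tame/wild dichotomy is concentrated in the single exponent $u_2$ — the one extra relation imposed by Hecke reciprocity on the tame side, equivalently the factor $(1 + 2^{-1})^{-1} = \tfrac23$ on the wild side.
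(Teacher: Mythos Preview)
Your derivation is correct and follows the same route as the paper's in \S\ref{sec: kummer heuristics}: factor $P(h=1)$ over primes under the independence assumption, dispose of $\ell=3$ via the classical fact that $3 \nmid h_{\Q(\sqrt[3]{n})}$ for primes $n \equiv 2\pmod 3$, and for $\ell \neq 3$ apply the heuristic moment distributions with $u=1$ (shifted to $u=0$ at $\ell=2$ on the wild side). The only difference is cosmetic: the paper reads $P(\Cl[q^\infty]=0)$ directly from the closed-form distribution in Proposition~\ref{prop: main distribution} together with a citation to Sawin--Wood for odd $q$, whereas you recompute it from the moments by M\"obius inversion and Euler's $q$-series identity.
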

We restrict to $n \equiv 2\pmod{3}$, since the class number is divisible by $3$ when $n \equiv 1\pmod{3}$, by genus theory. Using PARI/GP \cite{PariGp} we find that (\ref{eqn: class number 1 probability tame}) is $\approx 0.5662$, while (\ref{eqn: class number 1 probability wild}) is $\approx 0.3775$. This makes precise the approximate probabilities observed in the earlier quotation from \cite{CohenMartinet87}.

\subsection{Connection to spin structures} \label{sec: connection to spin structures}

A {\it spin structure} $\sigma$ on a number field $F$ is a square-class $[\alpha] \in F^\times/F^{\times 2}$ such that $$(\alpha)\Diff_{F/\Q}^{-1} = I^2 $$ for some fractional ideal $I$. Spin structures exist, by Hecke's theorem. 
We say a family of number fields $\mathcal{F}$ is {\it gyroscopic} if it admits a family-wide spin structure. 

In forthcoming work, the second author and Venkatesh \cite{SiadVenkatesh} give the following heuristic for the distribution of class groups in gyroscopic families
$\mathcal{F}$ of $K$-extensions with fixed and absolutely irreducible Galois group $G \subset S_m$: for any finite abelian $2$-group $H$, we have 
\begin{equation} \label{eqn: SV distribution}
   \mathbb{E}_{F \in \mathcal{F}}(|\mathrm{Surj}(\Cl_{F/K},H)|) =  \frac{\lvert H[2] \rvert \cdot  \lvert \wedge^2 H[|\mu_\infty(K)|]\rvert}{|H|^u},
\end{equation}
where $u$ is the Cohen--Martinet $u$-invariant. We refer to \cite{SiadVenkatesh}, where it is explained how spin structures induce quadratic refinements on various bilinear pairings defined on the class group and how the induced quadratic biextension structure leads to the above heuristic.

The wild family $F_n = \Q(\sqrt[3]{n})$ of Theorem \ref{main thm: over Q}(1) is gyroscopic. Indeed, we have $(-3)\Diff^{-1}_{F/\Q} =  I^2$, giving the family-wide spin structure $\sigma = -3.$  More generally, if $\Gamma$ is an aberrant $\Q$-group, and if $\mathcal{F}_\Gamma$ admits a universal polynomial $f$ (over an appropriate base space), then the family $\mathcal{F}_\Gamma^1$ is gyroscopic with spin structure $\sigma = \Disc(f)$. The prediction $(\ref{eqn: SV distribution})$ is then easily seen to imply the predictions in \S \ref{subsec: Gamma extension heuristics intro} for the families $\mathcal{F}_\Gamma^1$. We see that the predictions of \cite{SiadVenkatesh} for $\Cl_{F/\Q}[2]$ in families of the form $\mathcal{F}_\Gamma$ can be derived directly from Hecke reciprocity. Of course, (\ref{eqn: SV distribution}) is a more general prediction for the larger group $\Cl_{F/\Q}[2^\infty]$; e.g.\ it predicts the probability that the prime-to-$3$ part of $|\Cl_{\Q(\sqrt[3]{n})}|$ should be any given integer, generalizing Conjecture \ref{conj: probability that h/h3 = 1}. We explain how to extract this from (\ref{eqn: SV distribution}) in \S \ref{subsec: spin conjecture derivation} and offer numerical evidence in Appendix \ref{sec: appendix tables}.

The families of $n$-monogenized fields studied by Bhargava, Hanke, Shankar, Siad, and Swaminathan \cite{BhargavaHankeShankar, 2011.08834, 2011.08842, Swaminathan, ShankarSiadSwaminathan} are also (partly) gyroscopic, though in a different way. The spin structure in this case is $\sigma = nf'(\beta)$, where $\beta$ is an $n$-monogenizer and $f$ is its minimal polynomial. These works show that these families also exhibit boosts in the distribution of their class groups. Thus, spin structures give a common framework for understanding how algebraic properties of families of number fields impact the distribution of the class group.

\subsection{Previous work}
Ruth, in his Ph.D. thesis, proves the upper bound in Theorem \ref{main thm: over Q}(1) for the special sub-family where $n \not\equiv \pm 1\pmod{9}$ and $n$ is not just cubefree, but squarefree \cite[Theorem 1.1.5]{RuthThesis}. For such $n$, the element $\sqrt[3]{n}$ is a monogenizer, allowing Ruth to sidestep the subtle issue of parameterizing elements of $\Sel_2^\un(F_n/\Q)$ for all $n$, which is intimately tied to our notion of Hecke reciprocity.  In the language of spin structures, Ruth considers the subfamily of pure cubic fields which are gyroscopic in two different ways. Interestingly, the additional spin structure does not boost the average size of the $2$-torsion further; evidently what matters is the {\it existence} of a family-wide spin structure, not the number of such spin structures.  Note that Ruth's subfamily has density $0$ when the $F_n$ are ordered by discriminant, but has positive density when the $F_n$ are ordered by $n$. 

For more speculation on the proportions proven in Theorem \ref{main thm: over Q}, see \cite[\S4-6]{CohenMartinet94}. Note that \S\ref{subsec: Gamma extension heuristics intro} provides an answer to Problem 6.1 and Question 6.2 of {\it loc.\ cit}.  In \cite{CohenMartinet87}, they remark that when the fields $F_n = \Q(\sqrt[3]{n})$ are ordered by discriminant instead of by $n$, the data for $\Cl_{F_n}[2]$ (for all $n$) matches better with the Cohen--Martinet distribution for general $S_3$-cubic fields. It seems more likely to us that this is because the ratio of tame fields to wild fields is $8/3$ in the large discriminant limit,  and  moreover there is a large second order term that makes this disparity more pronounced for small discriminants. This follows from \cite[Theorem 8]{BhargavaShnidman14}, which divides the family of pure cubic fields into two subfamilies based on the ``shape'' of $\O_{F_n}$, and these two subfamilies happen to exactly be the subfamilies $\mathcal{F}_{\mu_3}^1$ and $\mathcal{F}_{\mu_3}^2$.  The division of pure cubic fields into ``type I'' and ``type II'' in fact goes back to Dedekind, who observed in \cite{Dedekind1900} that $\O_{F_n}$ takes on a different algebraic form, depending on whether $F_n$ is wildly or tamely ramified at $3$.

\subsection{Related work}
While posting this paper, we were made aware of the recent work of Loughran--Santens \cite{LoughranSantens} on Malle's conjecture. It seems plausible that one can extract a prediction from their work for the averages computed in Theorems \ref{thm: over Q(omega)} and \ref{main thm: over Q}, and more generally for the distribution of $\Cl_{F/K}[2^\infty]$ in the families covered by our heuristics. It will be interesting to see if these predictions agree and how Hecke reciprocity relates to their Brauer group computations.

\subsection{Acknowledgements}
We thank Will Sawin for discussions related to the proof of Theorem \ref{thm: reciprocity law for odd degree intro}. We also thank Manjul Bhargava, Bhargav Bhatt, Nick Katz, Dan Loughran, Tim Santens, Peter Sarnak, Sameera Vemulapalli and Akshay Venkatesh for helpful conversations and comments. Shnidman was partially funded by the European Research Council (CurveArithmetic, 101078157), as well as the Ambrose Monell Foundation while at the Institute of Advanced Study.  Siad was supported by the National Science Foundation under Grant No.\ DMS-1926686 at the Institute for Advanced Study. He also thanks Princeton University for providing an excellent working environment. 

\subsection{Notation}
\begin{itemize}
    \item $K$ will generally denote a field of characteristic $0$.  
    \item Denote by $\overline{K}$ an algebraic closure and write $\Gal_K = \Gal(\overline{K}/K)$.  
    \item For a finite extension $F/K$, write $\tilde F/K$ for its normal closure.
    \item If $K$ is a number field,  $\Cl_K$ denotes the ideal class group of the ring of integers $\O_K$.
    \item Write $\Cl_{F/K}$ for the cokernel of the natural map $\Cl_K \to \Cl_F$. 
    \item $\mu_n$ denotes the group of $n$-th roots of unity in $\overline{K}^\times$. 
    \item For a finite group $G$, let $\underline{G}$ be the $K$-group $G$ with trivial $\Gal_K$-action. 
    \item For a finite $K$-group $\Gamma$, let $\Gamma_0$ be its underlying abstract group. 
    \item For an abelian group $G$, let $G^\vee = \Hom(G, \Q/\Z)$.

\end{itemize}

\section{Unramified Selmer groups}
Fix a prime $\ell$ and an extension of number fields $F/K$. When $\mu_\ell \subset K$ and $\ell\nmid [F \colon K]$, we show how to view $\Cl_{F/K}^\vee[\ell]$ as a Selmer group. See \cite{DummitVoight} and \cite{LipnowskiSawinTsimerman} for two related perspectives.

\subsection{Absolute Selmer groups}
Set $T = H^1(F, \mu_\ell)$ and identify it with $F^\times/F^{\times \ell}$ via Kummer theory. Similarly, for any place $w$ of $F$, set $T_w = H^1(F_w, \mu_\ell)$ and define the following subspace
\[U_w = \begin{cases}    
\O_{F_w}^\times/\O_{F_w}^{\times \ell} & \mbox{ if $w$ non-archimedean }\\ 
 T_w  & \mbox{ if $w$ archimedean}
\end{cases}.\] 
The traditional $\ell$-Selmer group of $F$ is then
\[\Sel_\ell(F) = \ker\left(T \to \prod_w T_w/U_w\right),\]
where the arrow is induced by the restriction maps $T \to T_w$. 
 Concretely, $\Sel_\ell(F)$ is the subgroup of $[t] \in F^\times/F^{\times \ell}$ with valuation divisible by $\ell$ at all finite places, i.e.\ $(t) = I^\ell$ for some ideal $I$.  The map $[t] \mapsto [I]$ gives rise to the short exact sequence
\begin{equation}\label{seq: selmer group ses}
    0 \to \O^\times_F/\O_F^{\times \ell} \to \Sel_\ell(F) \to \Cl_F[\ell] \to 0.
    \end{equation}

Now consider the following slightly different Selmer structure on $\mu_\ell$. For a finite place $w$, let $\epsilon_w \in T_w$ be a non-trivial unramified class (unique up to scaling), and define 
\[S_w = \begin{cases}
    \langle \epsilon_w\rangle & \mbox { if $w$ non-archimedean} \\
    \{0\} & \mbox{ if $w$ archimedean}.
\end{cases}\]
Note that if $F$ contains $\mu_\ell$, then  $S_w = U_w^\perp$ with respect to the Hilbert symbol $(\, ,\, )_w$. 

Define the unramified Selmer group
\[\Sel_\ell^\un(F) = \ker\left(T \to \prod_wT_w/S_w \right).\]

\begin{theorem}\label{thm: canonical isomorphism}
If $F$ contains $\mu_\ell$, there is a canonical isomorphism $\Sel_\ell^\un(F) \stackrel{\xi}{\longrightarrow} \Cl_F^\vee[\ell]$. 
\end{theorem}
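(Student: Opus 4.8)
The plan is to identify $\Sel_\ell^\un(F)$ with the group of everywhere-unramified characters $\Gal_F \to \mu_\ell$ and then to invoke Artin reciprocity. Since $\mu_\ell \subset F$, Kummer theory provides a canonical isomorphism $T = H^1(F,\mu_\ell) \xrightarrow{\sim} \Hom(\Gal_F,\mu_\ell)$ carrying a class $[t]$ to the character $\phi_t$ with $\phi_t(\sigma) = \sigma(\sqrt[\ell]{t})/\sqrt[\ell]{t}$, whose fixed field is $F(\sqrt[\ell]{t})$. I would build $\xi$ as the composite: restrict this Kummer isomorphism to $\Sel_\ell^\un(F)$, observe that the resulting characters factor through $\Gal(H/F)$ for $H$ the Hilbert class field, and finally apply the Artin isomorphism $\Gal(H/F)\simeq \Cl_F$ together with the identification $\mu_\ell\simeq \tfrac1\ell\Z/\Z\subset\Q/\Z$ fixed by a primitive $\ell$-th root of unity in $F$.

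Everything then reduces to a local claim: for each place $w$ of $F$, the membership $[t]_w\in S_w$ is equivalent to $\phi_t$ being unramified at $w$, i.e.\ trivial on the inertia subgroup $I_w$. Granting this at every $w$, the subgroup $\Sel_\ell^\un(F)\subset T$ is exactly the set of $[t]$ whose character $\phi_t$ is unramified at all finite places and split at all archimedean ones; such characters factor through $\Gal(H/F)$, and $\Hom(\Gal(H/F),\mu_\ell)\simeq\Hom(\Cl_F,\mu_\ell)\simeq \Cl_F^\vee[\ell]$. Injectivity of $\xi$ is inherited from Kummer theory, and surjectivity follows by pulling back any $\psi\in\Cl_F^\vee[\ell]$ along the Artin map to an everywhere-unramified character and writing it as $\phi_t$ for a necessarily unramified $[t]$.

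The local claim is formal at the tame finite places $w\nmid\ell$ (there $S_w=U_w$ is the line of unramified unit classes), and the delicate points are $w\mid\ell$ and the archimedean places. At any finite $w$, because $\mu_\ell$ is an unramified $\Gal_{F_w}$-module, the inertia-trivial classes form $H^1_\un(F_w,\mu_\ell)=H^1(\Gal(F_w^\un/F_w),\mu_\ell)\simeq\mu_\ell$, a single line, and this line is precisely $\langle\epsilon_w\rangle=S_w$; the one thing to be careful about when $w\mid\ell$ is not to conflate this unramified line with the strictly larger unit group $U_w$. At an archimedean $w$ there is no nontrivial unramified extension, so the split (inertia-trivial) classes are exactly $\{0\}=S_w$; for $\ell=2$ this says $t$ is positive at each real place, which is exactly the condition carving out the \emph{ordinary} Hilbert class field (real places split), so the right-hand side is $\Cl_F^\vee[\ell]$ and not its narrow analogue.

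I expect the only genuine obstacle to be this local bookkeeping at $w\mid\ell$ and at the real places — verifying that the single unramified line $\langle\epsilon_w\rangle$ is the correct inertia condition above $\ell$, and pinning down the narrow-versus-wide distinction at the archimedean places. An alternative, more self-dual route would phrase the same matching through the remark that $S_w = U_w^\perp$ under the local Hilbert symbol and feed the conditions into Poitou--Tate duality against the sequence $(\ref{seq: selmer group ses})$; but the direct Kummer-plus-Artin construction above seems cleanest and makes the canonicity of $\xi$ transparent.
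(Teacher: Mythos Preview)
Your argument is correct and yields the same map $\xi$ as the paper, but the route is genuinely different. The paper defines $\xi$ idelically via the formula $\xi(t) = \sum_w (-,t)_w$ using local Hilbert symbols, checks well-definedness from the product formula, proves injectivity via strong approximation and the local-to-global principle for $\ell$-th powers, and then establishes surjectivity by a dimension count invoking the Greenberg--Wiles formula together with the sequence (\ref{seq: selmer group ses}). You instead identify $\Sel_\ell^\un(F)$ directly with the unramified characters via the local check $S_w = H^1_\un(F_w,\mu_\ell)$ (tautological from the definition of $\epsilon_w$) and then apply Artin reciprocity for the Hilbert class field; both injectivity and surjectivity fall out immediately without any dimension comparison. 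Your approach is more elementary and self-contained for this particular statement, and it makes the bijection visible in both directions; the paper's approach, while invoking heavier duality machinery, fits more naturally into the Selmer formalism used throughout and makes the Hilbert-symbol description of $\xi$ explicit, which is what is needed for the composition (\ref{eq: LST map}) and the pairings later on. The two constructions agree because local class field theory identifies the Hilbert symbol $(a,t)_w$ with $\phi_t$ evaluated at the local Artin image of $a$.
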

\begin{proof}
Class field theory gives an identification 
\[\Cl_F^\vee[\ell] \simeq \Hom_{\mathrm{cts}}(F^\times\backslash\A_F^\times/\prod_w \O_{F_w}^\times, \Z/\ell\Z),\] where the product is over the finite places of $F$. 
Define $\xi \colon \Sel_\ell^\un(F) \to \Cl_F^\vee[\ell]$ by the formula $\xi(t) = \sum_w ( - ,  t )_w$. This is well-defined because of the local conditions defining $\Sel_\ell^\un(F)$ and the product formula for the Hilbert symbol.  Note that $\xi$ is injective by strong approximation and the local-to-global principle for $\ell$-th powers. On the other hand, the Greenberg-Wiles formula \cite[Theorem 8.7.9]{NeukirchSchmidtWingberg} and $(\ref{seq: selmer group ses})$ give $\dim \Sel_\ell^\un(F) = \dim \Cl_F[\ell]=\dim \Cl_F^\vee[\ell]$. Thus, $\xi$ is an isomorphism. 
\end{proof}

Note that $\Sel_\ell^\un(F)$ is a subgroup of $\Sel_\ell(F)$. The composition 
\begin{equation}\label{eq: LST map}
    \Cl_F^\vee[\ell] \stackrel{\xi^{-1}}{\longrightarrow} \Sel_\ell^\un(F) \hookrightarrow \Sel_\ell(F) \to \Cl_F[\ell]
\end{equation}
is the map called $\psi_F$ in \cite{LipnowskiSawinTsimerman}.

\subsection{Relative Selmer groups}\label{subsec: relative signature groups}
Let $K$ be a number field containing $\mu_\ell$ and let $F/K$ be a finite extension.
\begin{lemma}
    The norm map $F^\times/F^{\times \ell} \to K^\times/K^{\times \ell}$ sends $\Sel_\ell^\un(F)$ to $\Sel_\ell^\un(K)$.  
\end{lemma}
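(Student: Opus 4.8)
The plan is to reduce the global assertion to a local statement at each place $v$ of $K$, and then deduce that local statement from the adjunction between the local norm and restriction under the Hilbert symbol.

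First I would use the compatibility of the norm with completion. Writing $F\otimes_K K_v = \prod_{w\mid v}F_w$, the localization at $v$ of $\Nm_{F/K}(t)$ is $\prod_{w\mid v}\Nm_{F_w/K_v}(t_w)$, where $t_w\in T_w$ denotes the image of $t$. Since $\Sel_\ell^\un(K)$ is defined by the local conditions $S_v$, and each $S_v$ is a subgroup of $T_v$, it suffices to show that for every place $v$ and every $w\mid v$ the local norm sends $S_w$ into $S_v$: the product of the factors then lands in $S_v$. Note that for $[t]\in\Sel_\ell^\un(F)$ we have $t_w\in S_w$ for all $w$ by definition, so this reduction is exactly what is needed.

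Next I would identify the local norm $\Nm_{F_w/K_v}\colon T_w\to T_v$ with the corestriction map $\mathrm{cor}=\mathrm{cor}_{F_w/K_v}$ on $H^1(\,\cdot\,,\mu_\ell)$ under the Kummer isomorphism. The key inputs are the projection formula $\mathrm{cor}\bigl(a\cup\mathrm{res}(b)\bigr)=\mathrm{cor}(a)\cup b$ together with the fact from local class field theory that corestriction preserves the Brauer invariant, i.e.\ $\inv_v\circ\,\mathrm{cor}=\inv_w$ on $\Br(F_w)[\ell]$. Applying $\inv_v$ to the projection formula yields the adjunction
\[(\mathrm{cor}(a),b)_v=(a,\mathrm{res}(b))_w \qquad (a\in T_w,\ b\in T_v).\]
Since both $K$ and $F$ contain $\mu_\ell$, the observation recorded above gives $S_w=U_w^\perp$ and $S_v=U_v^\perp$ with respect to the Hilbert symbols, and the inclusion $K_v\hookrightarrow F_w$ carries units to units, so $\mathrm{res}(U_v)\subseteq U_w$. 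Now taking $a=t_w\in S_w=U_w^\perp$ and any $b\in U_v$, the adjunction gives $(\mathrm{cor}(t_w),b)_v=(t_w,\mathrm{res}(b))_w=0$ because $\mathrm{res}(b)\in U_w$. As $b\in U_v$ was arbitrary, $\Nm_{F_w/K_v}(t_w)=\mathrm{cor}(t_w)\in U_v^\perp=S_v$, which is the desired local claim.

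The crux of the argument — and the step most deserving of care — is the adjunction identity, which requires the projection formula and the invariant-compatibility of corestriction in local class field theory, as well as the Kummer-theoretic identification of the field norm with corestriction. The archimedean places need only a brief separate check: for odd $\ell$ the local cohomology groups vanish and the condition is vacuous, while for $\ell=2$ one observes that over a real place $v$ each complex $w\mid v$ contributes a norm that is automatically a square and each real $w\mid v$ contributes a positive number (as $t_w\in S_w=\{0\}$), so the product lies in $S_v=\{0\}$; alternatively, the same adjunction argument applies verbatim, taking $U_w=T_w$ at archimedean $w$.
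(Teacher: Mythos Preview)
Your proof is correct and follows essentially the same approach as the paper: both reduce to the local claim that $\Nm_{F_w/K_v}$ sends $S_w$ into $S_v$, and both deduce this from the adjunction $(\Nm(a),b)_v=(a,b)_w$ for the Hilbert symbol together with the identification $S_w=U_w^\perp$. The paper's proof is simply terser, checking the generator $\epsilon_w$ directly and leaving the (trivial) archimedean case implicit.
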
 
\begin{proof}
    We need to check that for a place $w$ of $F$ above a finite place $v$ of $K$, that the norm of $\epsilon_w$ lies in the span of $\epsilon_v$. For all $u \in \O_{K,v}^\times$, we have $\langle \Nm(\epsilon_w), u \rangle_v = \langle \epsilon_w, u\rangle_w = 1$ \cite[pg. 209]{Serre1979}, so $\Nm(\epsilon_w)$ is orthogonal to all $v$-units and hence lies in $\langle \epsilon_v \rangle$.
\end{proof}
Define 
\[\Sel_\ell^\un(F/K) = \ker\left(\Sel_\ell^\un(F) \stackrel{\Nm}{\longrightarrow} \Sel_\ell^\un(K)\right).\]
If $\Cl_K[\ell] = 0$, then $\Sel_\ell^\un(F/K) \simeq \Sel_\ell^\un(F)$. However, even in this case, the relative Selmer framework gives the better point of view. For example, a class in $\Sel_2^\un(F)$ is locally in the span of $\epsilon_w$, but it must also lie in the kernel of the local norm, which may not be true about $\epsilon_w$ itself.

Returning to the general case $F/K$, we are led to define the $\Gal_K$-module 
\[M := \ker\left(\Res_{F/K}(\mu_\ell) \stackrel{\Nm}{\longrightarrow} \mu_\ell\right)\]
of order $\ell^{[F \colon K] - 1}$, whose group of geometric points is the subgroup of elements in $\mu_\ell^{[F \colon K]}$ such that the product of all entries is $1$.  We set $W = H^1(K, M)$ and make use of the identification  
\[W \simeq \ker\left(F^\times/F^{\times\ell} \stackrel{\Nm}{\longrightarrow}K^\times/K^{\times \ell}\right),\]
again coming from Kummer theory.
For any place $v$ of $K$, write $W_v = H^1(K_p, M)$. 

We define a Selmer structure on $W$ using the subgroups $V_v =W_v \cap \prod_{w \mid v} S_w$. The following follows from the definitions:

\begin{lemma}
    The Selmer group $\ker(W \to \prod_v W_v/V_v)$ is isomorphic to $\Sel_\ell^\un(F/K)$.
\end{lemma}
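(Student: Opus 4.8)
The plan is to unwind both the Selmer group $\ker(W \to \prod_v W_v/V_v)$ and the relative group $\Sel_\ell^\un(F/K)$ into explicit conditions on $\ell$-th power classes in $F^\times/F^{\times\ell}$ and to observe that they cut out the same subgroup. Throughout I would work with the Kummer identification $W \simeq \ker(F^\times/F^{\times\ell}\xrightarrow{\Nm}K^\times/K^{\times\ell})$ recorded above, under which an element $t \in W$ is a class $[t]$ with $\Nm_{F/K}([t])$ trivial.

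First I would pin down the local picture. Applying Shapiro's lemma over $K_v$ to the defining sequence $0 \to M \to \Res_{F/K}(\mu_\ell) \xrightarrow{\Nm} \mu_\ell \to 0$ gives $H^1(K_v, \Res_{F/K}(\mu_\ell)) \simeq \prod_{w\mid v} F_w^\times/F_w^{\times\ell}$, and the restriction $W \to W_v$ becomes the product of the localization maps $F^\times/F^{\times\ell} \to \prod_{w\mid v} F_w^\times/F_w^{\times\ell}$. The one point requiring care is that $W_v = H^1(K_v, M) \to \prod_{w\mid v} F_w^\times/F_w^{\times\ell}$ is injective: its kernel is the image of the connecting map out of $H^0(K_v,\mu_\ell)$, which vanishes precisely because the norm $\prod_{w\mid v}\mu_\ell(F_w) \to \mu_\ell$ is surjective. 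Surjectivity holds since $\sum_{w\mid v}[F_w\colon K_v] = [F\colon K]$ is prime to $\ell$, so at least one local degree is a unit modulo $\ell$. This is the only place the standing hypothesis $\ell \nmid [F\colon K]$ is needed, and it is the sole step that is not pure formalism; granting it, the subgroup $V_v = W_v \cap \prod_{w\mid v} S_w$ is genuinely cut out inside $W_v$ by the conditions $[t]_w \in S_w$ for $w \mid v$.

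Next I would translate the global Selmer condition. By the previous paragraph, $t \in W$ lies in $\ker(W \to \prod_v W_v/V_v)$ if and only if for every place $v$ its restriction to $W_v$ lands in $V_v$, i.e.\ $[t]_w \in S_w$ for every $w \mid v$ and every $v$, i.e.\ $[t]_w \in S_w$ at every place $w$ of $F$. This is exactly the condition defining $\Sel_\ell^\un(F) = \ker(T \to \prod_w T_w/S_w)$, so the Selmer group equals $W \cap \Sel_\ell^\un(F)$, the intersection formed inside $F^\times/F^{\times\ell}$.

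Finally I would match this with the relative group. By definition $\Sel_\ell^\un(F/K) = \ker(\Sel_\ell^\un(F) \xrightarrow{\Nm} \Sel_\ell^\un(K))$, and since $\Sel_\ell^\un(K) \subset K^\times/K^{\times\ell}$, an element $t \in \Sel_\ell^\un(F)$ lies in this kernel exactly when $\Nm([t])$ is trivial in $K^\times/K^{\times\ell}$, i.e.\ when $t \in \ker(\Nm) = W$. Hence $\Sel_\ell^\un(F/K) = \Sel_\ell^\un(F) \cap W$, which is the very subgroup identified in the previous step; as no choices intervene, the two coincide and the asserted isomorphism follows. The main (indeed only) obstacle is the injectivity of $W_v \hookrightarrow \prod_{w\mid v} F_w^\times/F_w^{\times\ell}$ via $\ell \nmid [F\colon K]$; the rest is a direct comparison of definitions, as the lemma's phrasing suggests.
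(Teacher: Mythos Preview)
Your proposal is correct and follows exactly the approach the paper intends: the paper's own proof is the single remark that the lemma ``follows from the definitions.'' Your careful treatment of the local injectivity $W_v \hookrightarrow \prod_{w\mid v} F_w^\times/F_w^{\times\ell}$ via $\ell \nmid [F\colon K]$ makes explicit the one nontrivial point that the paper's identifications $W \simeq \ker(\Nm)$ and $V_v = W_v \cap \prod_{w\mid v} S_w$ silently presuppose.
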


Recall that $\Cl_{F/K} = \Cl_F/\Cl_K$. 
\begin{lemma}
    If $\ell \nmid [F \colon K]$, then $\Sel_\ell^\un(F/K) \simeq \Cl_{F/K}^\vee[\ell]$.
\end{lemma}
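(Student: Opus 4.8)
The plan is to upgrade the canonical isomorphism $\xi$ of Theorem \ref{thm: canonical isomorphism} to a morphism of two-term complexes, so that $\Sel_\ell^\un(F/K)$ and $\Cl_{F/K}^\vee[\ell]$ appear as corresponding kernels. Since $K \supseteq \mu_\ell$, and hence $F \supseteq \mu_\ell$, Theorem \ref{thm: canonical isomorphism} supplies isomorphisms $\xi_F \colon \Sel_\ell^\un(F) \xrightarrow{\sim} \Cl_F^\vee[\ell]$ and $\xi_K \colon \Sel_\ell^\un(K) \xrightarrow{\sim} \Cl_K^\vee[\ell]$. The heart of the argument is the commutativity of the square whose horizontal maps are the norm $\Nm \colon \Sel_\ell^\un(F) \to \Sel_\ell^\un(K)$ on top and the dual $j^\vee \colon \Cl_F^\vee[\ell] \to \Cl_K^\vee[\ell]$ of the extension-of-ideals map $j \colon \Cl_K \to \Cl_F$ on the bottom, with vertical maps $\xi_F$ and $\xi_K$.

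First I would identify $j^\vee$ explicitly. Under the class field theory identification used to define $\xi$, the map $j$ is induced by the diagonal inclusion $\A_K^\times \hookrightarrow \A_F^\times$: a uniformizer at a finite place $v$ maps to the idele supported at the places $w \mid v$, which represents $v\O_F = \prod_{w\mid v} w^{e(w/v)}$, i.e.\ the extension of the prime $v$ to $F$. Consequently $j^\vee$ is restriction of characters along this inclusion. Recalling that $\xi_F(t) = \sum_w (\,-\,,\,t\,)_w$, for $u \in \A_K^\times$ one gets $j^\vee(\xi_F(t))(u) = \sum_w (u, t)_w$, where $u$ is viewed in $F_w^\times$ via $K_v \hookrightarrow F_w$.

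The commutativity of the square then reduces to the projection formula for the local Hilbert symbol, $(u, t)_w = (u, \Nm_{F_w/K_v}(t))_v$ for $u \in K_v^\times$ \cite{Serre1979}. Summing over $w \mid v$ and using $\Nm_{F/K}(t) = \prod_{w \mid v}\Nm_{F_w/K_v}(t)$ place-by-place yields $j^\vee(\xi_F(t))(u) = \sum_v (u, \Nm_{F/K}(t))_v = \xi_K(\Nm(t))(u)$, so that $\xi_K \circ \Nm = j^\vee \circ \xi_F$. I expect this to be the main obstacle: pinning down that $j$ is induced by $\A_K^\times \hookrightarrow \A_F^\times$ compatibly with the identifications defining $\xi$, and matching $\Nm$ with $j^\vee$ through the projection formula with the correct variance, so that no spurious factor of $[F \colon K]$ appears.

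Finally I would take kernels of the two horizontal maps. By definition the kernel of $\Nm$ is $\Sel_\ell^\un(F/K)$; since $\xi_F$ and $\xi_K$ are isomorphisms, this maps isomorphically onto $\ker\bigl(j^\vee \colon \Cl_F^\vee[\ell] \to \Cl_K^\vee[\ell]\bigr)$. Dualizing the defining sequence $\Cl_K \xrightarrow{j} \Cl_F \to \Cl_{F/K} \to 0$, and using that $\Q/\Z$ is injective, gives $0 \to \Cl_{F/K}^\vee \to \Cl_F^\vee \xrightarrow{j^\vee} \Cl_K^\vee$; applying the left-exact functor $(-)[\ell]$ identifies this kernel with $\Cl_{F/K}^\vee[\ell]$, which completes the proof. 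The hypothesis $\ell \nmid [F \colon K]$, in force throughout the subsection, ensures that $\Cl_{F/K}^\vee[\ell]$ is the intended object but is not otherwise needed in this particular argument.
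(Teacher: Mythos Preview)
Your proposal is correct and follows essentially the same route as the paper, which in one line identifies $\Sel_\ell^\un(F/K)$ with $\ker\bigl(\Cl_F^\vee[\ell]\to\Cl_K^\vee[\ell]\bigr)$ via the isomorphisms $\xi$ of Theorem~\ref{thm: canonical isomorphism} and then matches this kernel with $\Cl_{F/K}^\vee[\ell]$. Your unpacking of the commutativity $\xi_K\circ\Nm=j^\vee\circ\xi_F$ via the projection formula is exactly the content the paper leaves implicit; your observation that $\ell\nmid[F:K]$ is not actually used once one dualizes before taking $\ell$-torsion is also correct---the paper instead invokes the hypothesis to write $\Cl_{F/K}[\ell]=\coker(\Cl_K[\ell]\to\Cl_F[\ell])$ and then dualizes, which comes to the same thing.
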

\begin{proof}
    Since $\ell \nmid [F \colon K]$, the group $\Cl_{F/K}[\ell] = \mathrm{coker}(\Cl_K[\ell] \stackrel{i}{\longrightarrow} \Cl_F[\ell])$ is dual to the Selmer group $\Sel_\ell^\un(F/K) = \ker(\Cl_F^\vee[\ell] \stackrel{\Nm}{\longrightarrow}\Cl_K^\vee[\ell])$ via Theorem \ref{thm: canonical isomorphism}. 
\end{proof}

\begin{example}\label{ex: local triviality}
{\em 
    Suppose $\ell \nmid [F \colon K]$, and let $v$ be a prime of $K$ such that $F/K$ has a unique prime $w$ above $p$. Then the subgroup $V_v$ is trivial (since $\epsilon_v = \epsilon_w$ is not in the kernel of the local norm) and indeed every element of $\Sel_\ell^\un(F/K)$ is locally trivial at $w$ in this situation.  
    }
\end{example}

More generally, we have:
\begin{lemma}\label{lem: local selmer ratio}
    If $\ell \nmid [F: K]$ and $v$ is a finite prime of $K$, then  $|V_v| = |M(K_v)| = \ell^{N-1}$, where $N$ is the number of primes of $F$ above $v$. 
\end{lemma}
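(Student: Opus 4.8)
The plan is to reduce the computation of $|V_v|$ to a single local norm computation on unramified classes, reading off $|M(K_v)|$ along the way. Writing $F\otimes_K K_v=\prod_{w\mid v}F_w$, the short exact sequence $0\to M\to\Res_{F/K}\mu_\ell\xrightarrow{\Nm}\mu_\ell\to 0$ of $\Gal_{K_v}$-modules produces a long exact cohomology sequence. By Shapiro's lemma $H^i(K_v,\Res_{F/K}\mu_\ell)=\prod_{w\mid v}H^i(F_w,\mu_\ell)$, so on $H^0$ the norm becomes the map $\mu_\ell^N\to\mu_\ell$, $(\zeta_w)_w\mapsto\prod_w\zeta_w^{[F_w:K_v]}$. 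Since $\ell\nmid[F:K]=\sum_{w\mid v}[F_w:K_v]$, the local degrees cannot all be divisible by $\ell$, so this map is surjective; its kernel is $H^0(K_v,M)=M(K_v)$, which therefore has order $\ell^{N-1}$, giving the equality $|M(K_v)|=\ell^{N-1}$. Surjectivity on $H^0$ also forces the connecting map to vanish, so via Kummer theory $W_v=H^1(K_v,M)=\ker\!\big(\prod_{w\mid v}F_w^\times/F_w^{\times\ell}\xrightarrow{\Nm}K_v^\times/K_v^{\times\ell}\big)$. Intersecting with $\prod_{w\mid v}S_w$ and using that the norm of an unramified class is unramified (the argument of the first lemma of \S\ref{subsec: relative signature groups}, which shows $\Nm(\epsilon_w)\in\langle\epsilon_v\rangle=S_v$), I obtain $V_v=\ker\big(\mathcal N\colon\prod_{w\mid v}S_w\to S_v\big)$, where $\mathcal N$ is the restriction of the norm. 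As $\prod_{w\mid v}S_w\cong(\Z/\ell)^N$ and $S_v\cong\Z/\ell$, it then remains only to show $\mathcal N$ is surjective, for then $|V_v|=\ell^N/\ell=\ell^{N-1}$.

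To analyze $\mathcal N$ factor by factor, write $\mathcal N(\epsilon_w)=\epsilon_v^{c_w}$; I claim $c_w\not\equiv 0\pmod\ell$ if and only if $\ell\nmid e(w/v)$, where $e(w/v)$ is the ramification index. I would prove this with the projection formula for corestriction together with the Hilbert symbol. Fixing uniformizers $\pi_v,\pi_w$ and writing $\pi_v=\pi_w^{e(w/v)}u$ with $u\in\O_{F_w}^\times$, the projection formula $\mathrm{cor}_{F_w/K_v}(\mathrm{res}(x)\cup y)=x\cup\mathrm{cor}_{F_w/K_v}(y)$ applied to $x=[\pi_v]$ and $y=\epsilon_w$, followed by $\mathrm{inv}_{K_v}$ and the compatibility $\mathrm{inv}_{K_v}\circ\mathrm{cor}=\mathrm{inv}_{F_w}$, yields
\[\langle\pi_v,\epsilon_v\rangle_{K_v}^{\,c_w}=\langle\pi_v,\epsilon_w\rangle_{F_w}=\langle\pi_w,\epsilon_w\rangle_{F_w}^{\,e(w/v)}\cdot\langle u,\epsilon_w\rangle_{F_w}.\]
The crucial point is that $\langle u,\epsilon_w\rangle_{F_w}=1$ for every unit $u$: the class $\epsilon_w$ cuts out the unramified $\Z/\ell$-extension of $F_w$, and the norm on units from an unramified extension of local fields is surjective, so every unit is a local norm there. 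Since $\epsilon_v$ (resp.\ $\epsilon_w$) is nonzero and orthogonal to all units, nondegeneracy of the Hilbert symbol makes $\langle\pi_v,\epsilon_v\rangle_{K_v}$ and $\langle\pi_w,\epsilon_w\rangle_{F_w}$ primitive $\ell$-th roots of unity; hence $c_w\equiv 0$ if and only if $\ell\mid e(w/v)$, as claimed. This argument is uniform in $w$, covering the wildly ramified primes $w\mid\ell$ as well.

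Finally, surjectivity of $\mathcal N$ follows again from $\ell\nmid[F:K]$: if $\ell\mid e(w/v)$ for every $w\mid v$, then $\ell\mid e(w/v)f(w/v)=[F_w:K_v]$ for all $w$, and hence $\ell\mid\sum_{w\mid v}[F_w:K_v]=[F:K]$, a contradiction. So some $\epsilon_{w_0}$ has $\ell\nmid e(w_0/v)$, whence $\mathcal N(\epsilon_{w_0})$ generates $S_v$ and $\mathcal N$ is onto. This gives $|V_v|=\ell^{N-1}=|M(K_v)|$, as desired. I expect the main obstacle to be the per-prime computation of $c_w$, especially at the wild primes $w\mid\ell$, where $\epsilon_w$ is not a unit class and a naive residue-field calculation must be corrected by the ramification index; the projection-formula approach, resting on the single clean fact that units are norms from unramified extensions, is what makes this step uniform and avoids any case split on whether $w\mid\ell$.
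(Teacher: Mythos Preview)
Your argument is correct and follows the same overall plan as the paper: both compute $|M(K_v)|=\ell^{N-1}$ from surjectivity of the $H^0$-norm, identify $V_v$ with the kernel of $\mathcal N\colon\prod_{w\mid v}S_w\to S_v$, and then show $\mathcal N$ is surjective. The only difference is in how surjectivity is verified. The paper picks a single prime $w_1$ with $\ell\nmid[F_{w_1}\!:\!K_v]$, notes that then $\epsilon_{w_1}$ is literally the restriction of $\epsilon_v$ (since $\ell\nmid f(w_1/v)$), and concludes $\Nm(\epsilon_{w_1})=\epsilon_v^{[F_{w_1}:K_v]}\neq 1$ in one line; projection onto the remaining $N-1$ factors then gives $V_v\simeq\prod_{i\ge 2}S_{w_i}$. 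Your route via the projection formula and Hilbert symbols is longer but yields strictly more: you compute $\Nm(\epsilon_w)=\epsilon_v^{c_w}$ for \emph{every} $w$ and pin down $c_w\not\equiv 0\pmod\ell$ exactly when $\ell\nmid e(w/v)$. For $\ell=2$ this says $\Nm(\epsilon_w)=\epsilon_v$ iff $e(w/v)$ is odd, a fact the paper actually invokes later (in the proof of Lemma~\ref{lem: hecke ramified implies hecke prime}) by pointing back to the present lemma, even though the paper's own short proof does not spell it out for general $w$. So your extra work is not wasted; it supplies a detail the paper leaves implicit.
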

\begin{proof}
    Write $F \otimes K_v \simeq \prod_{i = 1}^N L_i$, where each $L_i$ is a field extension of $K_v$ corresponding to a prime $w_i$ above $v$. On one hand, we have $|M(K_v)| = \ell^{N-1}$. On the other hand, there exists at least one $L_i$ such that $\ell \nmid [L_i \colon K]$, and we may label things so that $i = 1$. Then note that $\epsilon_1$ is the image of the unramified class $\epsilon \in K_v^\times/K^{\times \ell}_v$. Thus $\Nm(\epsilon_1, 1, \ldots, 1) = \Nm(\epsilon_1) \cdot 1 = \epsilon_1^{[L_i \colon K]}  = \epsilon$, i.e.\ the norm map $\prod_i L_i^\times/L_i^{\times \ell} \to K_v^\times/K_v^{\times \ell}$ induces an isomorphism on unramified classes when restricted to the first factor. Projection onto the last $N-1$ factors therefore induces an isomorphism $V_v \simeq \prod_{i = 2}^N S_{w_i}$, and hence $|V_v| = \ell^{N-1}$.  
\end{proof}

From now on, we view $\Sel_\ell^\un(F/K)$ as the Selmer group of the Selmer structure $(M, \{V_v\})$. Similarly for $\Sel_\ell^\un(F)$ whenever $\Sel_\ell^\un(K)$ (or equivalently $\Cl_K[\ell]$) happens to be $0$.  

\section{Hecke primes and Hecke reciprocity}\label{sec: reciprocity laws}

Let $K$ be a local or global field of characteristic $0$. We study the notion of Hecke primes and Hecke ramification in extensions $F/K$ of odd degree.  In the global case, we prove a reciprocity law constraining the number of Hecke primes of $F$ that are inert in an unramified quadratic extension $L/F$ corresponding to a class in $\Cl_{F/K}^\vee[2]$.

\subsection{Hecke primes over local fields}\label{subsec: Hecke primes local fields}
Let $K$ be a finite extension of $\Q_p$ for some prime $p$, with ring of integers $\O$. Let $v$ be the discrete valuation on $K$, normalized so that $v(\pi) = 1$, where $\pi\in \O$ is any uniformizer.  Let $F$ be an \'etale $K$-algebra of degree $m$, with $m = 2g + 1$ odd.  

We may write $F \simeq \prod_{i = 1}^N L_i$, where each $L_i$ is a finite field extensions of $K$. As before, set
\[M := \ker\left(\Res_{F/K}(\mu_\ell) \stackrel{\Nm}{\longrightarrow} \mu_\ell\right),\]
let 
\[W = H^1(K, M) \simeq \ker(F^\times/F^{\times 2} \to K^\times/K^{\times 2}),\] 
and let $V \subset W$ be the subgroup of classes $[t] = (u_1,\ldots, u_N)$ such that each $u_i$ is either trivial or the unramified square-class $\epsilon_i$ in $L_i$.

We define the {\it Hecke ideal} to be
\[H_{F/K} := \Disc_{F/K}\mathrm{Diff}_{F/K}^{-1} \subset \O_F.\]
Its norm $\Disc_{F/K}^m\Disc_{F/K}^{-1} = \Disc_{F/K}^{2g}$ is a square of an ideal. We say that $F/K$ is {\it Hecke ramified} if $H_{F/K}$ is {\it not} a square of an ideal. Otherwise, $F/K$ is {\it Hecke unramified}.  If $F/K$ is Hecke ramified then $F/K$ is ramified in the usual sense, but the converse need not be true. Finally, we say that a prime $\m_i$ (corresponding to $L_i$) of $\O_F$ is a {\it Hecke prime} if the power of $\m_i$ dividing $H_{F/K}$ is odd and if there exists $[t] = (u_1,\ldots,u_N) \in V$ such that $u_i \notin L_i^{\times2}$. 

We record below some basic facts about Hecke primes; the proofs will use the following notation. Write $(\pi) = \prod_i \m_i^{e_i}$ and let $f_i$ be the residue field degree at $\m_i$.  Write $\Diff_{F/K} = \prod_i \m_i^{d_i}$, so that $\Disc_{F/K} = (\pi)^d$ where $d = \sum_i d_if_i$. In this notation, $H_{F/K} = \prod_i \m_i^{de_i - d_i}$ and of course $m = \sum_i e_i f_i$. We write $h_i = de_i - d_i$ for the exponent of $\m_i$ in the Hecke ideal.

\begin{lemma}\label{lem: unram or tot ram}
    If $F/K$ is either unramified or a field extension, then $F/K$ is Hecke unramified.
\end{lemma}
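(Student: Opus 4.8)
The plan is to show that in both cases every exponent $h_i = de_i - d_i$ in the factorization $H_{F/K} = \prod_i \m_i^{h_i}$ is even, which is precisely the condition that $H_{F/K}$ be the square of an $\O_F$-ideal, i.e.\ that $F/K$ be Hecke unramified. Since the two hypotheses are disjoint in spirit, I would treat them separately, but in each case the argument reduces to a short parity computation using the notation $(\pi) = \prod_i \m_i^{e_i}$, $\Diff_{F/K} = \prod_i \m_i^{d_i}$, and $d = \sum_i d_i f_i$ already set up before the statement.

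For the unramified case, I would simply observe that $F/K$ unramified means $\Diff_{F/K} = \O_F$, so every $d_i = 0$ and hence $\Disc_{F/K} = \O_K$, giving $d = 0$. Then $h_i = de_i - d_i = 0$ for all $i$, so $H_{F/K} = \O_F$ is trivially a square and $F/K$ is Hecke unramified.

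For the field case, I would set $N = 1$, write $F = L_1$ with ramification index $e = e_1$ and residue degree $f = f_1$, so that $m = ef$. The only external input is the different--discriminant relation $\Disc_{F/K} = \Nm_{F/K}(\Diff_{F/K})$, which upon taking valuations over $K$ collapses (since $N=1$) to $d = d_1 f_1 = d_1 f$. Substituting this into the exponent of the unique prime $\m_1$ gives $h_1 = de_1 - d_1 = d_1 f e - d_1 = d_1(ef - 1) = d_1(m-1)$. Because $m$ is odd by hypothesis, $m - 1$ is even, whence $h_1$ is even and $H_{F/K} = \m_1^{h_1}$ is a square.

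There is no real obstacle here: the entire content is the parity bookkeeping, and the role of the oddness hypothesis on $m = 2g+1$ is exactly to force $m-1 = 2g$ even in the field case. The one point to state carefully is the reduction $d = d_1 f_1$ from the different--discriminant formula, which is what makes the single exponent $h_1$ a multiple of $m-1$.
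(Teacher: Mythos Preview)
Your proof is correct and follows essentially the same argument as the paper: the unramified case is immediate, and in the field case you compute $h_1 = d_1(e_1 f_1 - 1) = d_1(m-1)$, which is even since $m$ is odd. The only difference is that you spell out the unramified case and the identity $d = d_1 f_1$ a bit more explicitly than the paper does.
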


\begin{proof}
    The unramified case is clear, so assume $F$ is a field. Then $(\pi) = \m_1^{e_1}$ and $H_{F/K} = \m_1^{d_1f_1e_1 - d_1} = \m_1^{d_1(e_1f_1 - 1)} = \m_1^{d_1(m-1)}$, which is a square.
\end{proof}

\begin{lemma}\label{lem: hecke ramified implies hecke prime}
Suppose $F/K$ is Hecke ramified. Then there exists a Hecke prime $\m_i$ of $F/K$.     
\end{lemma}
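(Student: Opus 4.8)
The plan is to reduce the existence of a Hecke prime to a parity count governed by the ramification indices $e_i$. The starting point is the identity
\[\textstyle\sum_i h_i f_i = d\sum_i e_i f_i - \sum_i d_i f_i = dm - d = 2gd,\]
so $\sum_i h_i f_i$ is \emph{even}. Since $F/K$ is Hecke ramified, the set $I = \{i : h_i \text{ odd}\}$ is nonempty, and I will locate a Hecke prime among the $\m_i$ with $i \in I$.

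First I would pin down the secondary condition in the definition of a Hecke prime by computing the norms of the unramified classes. I claim that for each $j$,
\[\Nm_{L_j/K}(\epsilon_j) = \epsilon_v^{\,e_j} \quad \text{in } K^\times/K^{\times 2},\]
where $\epsilon_v$ is the nontrivial unramified square-class of $K$. By the orthogonality computation already used above, $\Nm_{L_j/K}(\epsilon_j)$ lies in $\langle \epsilon_v\rangle$, so it suffices to detect its class by pairing against a uniformizer $\pi$. Using the projection formula $\langle \Nm_{L_j/K}(\epsilon_j),\pi\rangle_v = \langle \epsilon_j, \pi\rangle_{w_j}$ together with the fact that $\epsilon_j$ is the unramified class of $L_j$ (so its Hilbert pairing with any $y$ is $(-1)^{v_{L_j}(y)}$), the right-hand side equals $(-1)^{v_{L_j}(\pi)} = (-1)^{e_j}$; since $\langle \epsilon_v, \pi\rangle_v = -1$, the claim follows. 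This computation is uniform in the residue characteristic, which is the one point needing genuine care: when $p = 2$ the group $K^\times/K^{\times 2}$ is larger, but the argument uses only that units are norms from unramified extensions while uniformizers are not.

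With the norm identity in hand, membership in $V$ becomes transparent. For a subset $S$, the tuple with $u_j = \epsilon_j$ for $j \in S$ and $u_j = 1$ otherwise lies in $V$ iff its norm $\epsilon_v^{\sum_{j\in S} e_j}$ is trivial, i.e.\ iff $\sum_{j \in S} e_j$ is even. Hence $\m_i$ satisfies the secondary Hecke condition exactly when some $S \ni i$ has $\sum_{j\in S} e_j$ even, which happens iff $e_i$ is even, or $e_i$ is odd and some other index $j \neq i$ also has $e_j$ odd.

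Finally I would conclude by contradiction. If no $\m_i$ with $i \in I$ were a Hecke prime, then every $i \in I$ would have $e_i$ odd and be the \emph{unique} index with odd ramification; this forces $I = \{i_0\}$, a single index with $e_{i_0}$ odd and all other $e_j$ even. Reducing $m = \sum_j e_j f_j \equiv f_{i_0} \pmod 2$ and using that $m$ is odd gives $f_{i_0}$ odd, whence $\sum_j h_j f_j \equiv h_{i_0} f_{i_0} \equiv 1 \pmod 2$, contradicting that $\sum_j h_j f_j = 2gd$ is even. The main obstacle is the uniform norm computation of the second paragraph; once that is secured, the rest is the parity bookkeeping above.
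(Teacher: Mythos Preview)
Your proof is correct and follows the same overall structure as the paper: both arguments reduce to the situation where there is a unique index $i_0$ with $h_{i_0}$ odd, that index also being the unique one with $e_{i_0}$ odd, and then derive a parity contradiction. The norm identity $\Nm_{L_j/K}(\epsilon_j) = \epsilon_v^{e_j}$ you prove is exactly the fact the paper invokes (phrased there as ``$\Nm(\epsilon_{L_i}) = \epsilon_K$ if and only if $e_i$ is odd'').

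The difference lies in the final contradiction. The paper splits into cases according to the parity of $d = \sum_i d_i f_i$ and chases the congruences $h_i \equiv d_i$ or $h_i \equiv e_i - d_i \pmod 2$ through the two cases separately. You instead observe the single identity $\sum_i h_i f_i = d(m-1) = 2gd$, which immediately forces $\sum_i h_i f_i$ to be even and yields the contradiction in one line once $I = \{i_0\}$ with $f_{i_0}$ odd. This is a cleaner route: it identifies the correct conserved quantity at the outset and avoids the case analysis on $d$. The paper's version, on the other hand, makes more explicit which of the individual $d_i$ must be even or odd, which is not needed for the lemma but gives slightly more information about the local structure.
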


\begin{proof}
    The hypothesis implies that there exists a prime $\m_1$ such that $h_1$ is odd. If there does not exist $[t] = (t_1,\ldots, t_N) \in V$ with $t_1 \notin L_1^{\times 2}$, it means that $e_i$ is even for $2\leq i \leq N$ (see the proof of Lemma \ref{lem: local selmer ratio} and use the fact that $\Nm(\epsilon_{L_i}) = \epsilon_K$ if and only if $e_i$ is odd). Since $m = \sum_i e_if_i$ is odd, it follows that $e_1f_1$ is odd and hence both $e_1$ and $f_1$ are odd. We may also assume that $h_i$ is even for $2 \leq i \leq N$ since at most one prime with $h_i$ odd can fail to be a Hecke prime (again by the proof of Lemma \ref{lem: local selmer ratio}).
    
    Suppose first that $d = \sum_i d_i f_i$ is even. Then $h_i \equiv d_i \pmod{2}$ for all $i$. It follows that $d_1 \equiv h_1$ is odd and $d_i \equiv h_i$ is even for all $i \geq 2$. But then $d  = \sum d_if_i \equiv d_1f_1$ is odd, which is a contradiction. Now suppose $d = \sum_i d_if_i$ is odd.  Then $h_i \equiv e_i - d_i$ for all $i$, and so $e_1 \not\equiv d_1$, and hence $d_1$ is even. But then $e_i \equiv d_i$ for $i \geq 2$, which implies $d_i$ is also even for $i \geq 2$. This implies that $d = \sum d_if_i$ is even, which is a contradiction.  
\end{proof}

As an example, here is a characterization of Hecke ramification in the cubic case: 

\begin{lemma}\label{lem: cubic characterization}
If $m = 3$, then $F/K$ is Hecke ramified if and only if $F \simeq K(\sqrt{\pi}) \times K$, for some uniformizer $\pi$. In this case, the ramified prime $\m_1$ is the unique Hecke prime of $F/K$.    
\end{lemma}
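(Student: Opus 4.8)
The plan is to run through the classification of degree-$3$ \'etale $K$-algebras and then pin down the one shape that produces an odd Hecke exponent. Writing $F \simeq \prod_i L_i$, the partitions of $3$ give three possibilities: $F \simeq K \times K \times K$, $F$ a cubic field, or $F \simeq L \times K$ with $L/K$ quadratic. The first is unramified and the second is a field, so both are Hecke unramified by Lemma \ref{lem: unram or tot ram}; the same holds if $L/K$ is unramified, since then $F$ itself is unramified. Thus any Hecke ramified $F$ must be of the form $L \times K$ with $L/K$ a \emph{ramified} quadratic extension, and it remains to decide, among these, exactly which are Hecke ramified.

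In this case I label $\m_1$ the prime of $L$ and $\m_2$ the prime of the split factor $K$, so that $e_1 = 2$, $f_1 = 1$, $d_2 = 0$, and $e_2 = f_2 = 1$. Then $d = \sum_i d_i f_i = d_1$, and the formula $h_i = d e_i - d_i$ gives $h_1 = 2d_1 - d_1 = d_1$ and $h_2 = d_1$. Hence $H_{F/K} = \m_1^{d_1}\m_2^{d_1}$ fails to be a square precisely when $d_1$ is odd, i.e.\ $F/K$ is Hecke ramified iff the different exponent $d_1$ of $L/K$ is odd. To translate this into the stated form, write $L = K(\sqrt a)$. Since $\disc(\O_K[\sqrt a]) = (4a)$ and $\O_L \supseteq \O_K[\sqrt a]$ has square index, $v(\disc_{L/K}) \equiv v(4a) \equiv v(a) \pmod 2$; and because $L/K$ is totally ramified, $d_1 = v(\Nm_{L/K}\Diff_{L/K}) = v(\disc_{L/K})$. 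Therefore $d_1$ is odd iff $v(a)$ is odd, in which case $a$ is a uniformizer times a square and $L \simeq K(\sqrt{\pi})$ for a uniformizer $\pi$. This proves both implications of the first assertion; note the congruence argument is uniform in the residue characteristic, which is the only place where the wildly ramified case ($p = 2$) needs care.

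For the final claim, the delicate point is that \emph{both} $h_1$ and $h_2$ are odd, yet only $\m_1$ should be a Hecke prime, so the secondary condition in the definition must do the separating. I would compute $V$ explicitly: identifying $F^\times/F^{\times 2} = L^\times/L^{\times 2} \times K^\times/K^{\times 2}$, an element $(u_1, u_2) \in V$ has each $u_i$ trivial or the unramified class and satisfies $\Nm_{L/K}(u_1)\, u_2 \in K^{\times 2}$. Since $e(L/K) = 2$ is even, the computation in the proof of Lemma \ref{lem: local selmer ratio} shows $\Nm_{L/K}(\epsilon_L)$ is a square, so the relation forces $u_2 = 1$ for every $(u_1, u_2) \in V$; hence $\m_2$ fails the secondary condition and is not a Hecke prime despite $h_2$ being odd. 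On the other hand $(\epsilon_L, 1) \in V$ witnesses $u_1 = \epsilon_L \notin L^{\times 2}$, so $\m_1$ is a Hecke prime. This exhibits $\m_1$, the ramified prime, as the unique Hecke prime, completing the proof. The main obstacle is precisely this last separation: the Hecke exponent alone does not detect the Hecke prime, and one must use the vanishing of $\Nm(\epsilon_L)$ in $K^\times/K^{\times 2}$ coming from the even ramification index.
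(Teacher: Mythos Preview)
Your proof is correct and follows the same overall strategy as the paper: reduce via Lemma~\ref{lem: unram or tot ram} to the partially ramified case $F \simeq L \times K$ with $L/K$ ramified quadratic, compute the Hecke exponents, and then identify the unique Hecke prime by computing $V$.

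The one genuine difference is in the characterization of when $d_1$ is odd. The paper splits into cases: for $p \neq 2$ it notes that tame ramification gives $d_1 = 1$ and that every ramified quadratic extension is $K(\sqrt\pi)$; for $p = 2$ it invokes Serre's formula $d_1 = w(\bar\alpha - \alpha)$ with $\O_L = \O_K[\alpha]$ and computes directly with $\alpha = a + b\sqrt d$. Your argument via the index-squared relation $\disc(\O_K[\sqrt a]) = [\O_L : \O_K[\sqrt a]]^2 \cdot \disc_{L/K}$ handles both residue characteristics at once and is cleaner. Your treatment of the Hecke prime is also slightly more explicit than the paper's: you spell out why $\Nm_{L/K}(\epsilon_L)$ is a square (using that $L/K$ is totally ramified, so $\epsilon_L$ may be taken in $K^\times$), which forces $u_2 = 1$ for every element of $V$ and rules out $\m_2$ despite its odd Hecke exponent; the paper simply asserts that the nontrivial element of $V$ is $(\epsilon_1,1)$ by appeal to Lemma~\ref{lem: local selmer ratio}.
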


\begin{proof}
    If $F/K$ is unramified or totally ramified, then $F/K$ is Hecke unramified by Lemma \ref{lem: unram or tot ram}. If $F/K$ is partially ramified and $p \neq 2$, then $(\pi) = \m_1^2\m_2$, and $H_{F/K} = \m_1^{2-1}\m_2 = \m_1\m_2$. Thus $F/K$ is Hecke ramified in this case, and moreover $L_1 \simeq K(\sqrt{\pi})$ for some uniformizer $\pi$, since $p \neq 2$.
    
    If $F/K$ is partially ramified and $p = 2$, then $\Disc(F/K) = (\pi)^e$ for some $e \geq 2$, and $H_{F/K} = \m_1^{2e - e}\m_2^e = (\m_1\m_2)^e$ is a square if and only if $e$ is even. Write $F \simeq K(\sqrt{d}) \times K$, for some $d \in K$. We claim that $e$ is even if and only if $d \in \O_K^\times$ (up to squares in $K^\times$). By \cite[p.\ 64]{Serre1979}, we have $e = w(\overline{\alpha} - \alpha)$, where $w$ is the normalized valuation on $K(\sqrt{d})$ and $\alpha$ is such that $\O_{K(\sqrt{d})} = \O_K[\alpha]$. Writing $\alpha = a + b\sqrt{d}$, we have $w(\overline{\alpha} - \alpha) = w(2b\sqrt{d}) \in 2\Z + w(\sqrt{d})$, which is even if and only if $d$ is a unit (up to squares), as claimed. 

    Finally, in the Hecke ramified case, the group $V$ has order $2$ (by Lemma \ref{lem: local selmer ratio}) and the non-trivial class is $t = (\epsilon_1, 1)$. It follows that $\m_1$ corresponding to $K(\sqrt{d}) = L_1$ is the unique Hecke prime. 
\end{proof}

\begin{example}
    {\em 
    In higher degree, it can happen that $F/K$ is Hecke ramified but there are no {\it ramified} Hecke primes. For example, if $F/\Q_3$ has degree five and splitting type $(1^32)$,  then $d = d_1f_1 = d_1 \in \{3,5\}$ is odd. Hence $(\pi) = \m_1^3\m_2$ and 
    \[H_{F/\Q_3} = \m_1^{h_1}\m_2^{h_2} = \m_1^{de_1 - d_i}\m_2^{de_2 - d_2} = \m_1^{2d}\m_2^{d},\]
    which shows that the unramified prime $\m_2$ is the unique Hecke prime for $F/\Q_3$.
    }
\end{example}
\subsection{Hecke reciprocity}\label{subsec: Hecke reciprocity}
Now suppose that $K$ is a number field and let $F/K$ be a finite extension. Recall the following theorem of Hecke \cite[pg. 291]{WeilBasicNumberTheory}.

\begin{theorem}\label{thm: hecke}
    The different $\mathrm{Diff}_{F/K}$ is a square in $\Cl_F$.
\end{theorem}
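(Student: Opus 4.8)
The plan is to prove Hecke's theorem that $\mathrm{Diff}_{F/K}$ is a square in $\Cl_F$ by reducing to a local computation and then invoking a global square-root statement. First I would recall the relationship between the different and the discriminant: since $\Disc_{F/K} = \Nm_{F/K}(\mathrm{Diff}_{F/K})$, and since the norm of a principal/square ideal is tracked by class field theory, the essential content is about the \emph{parity} of the local exponents $d_w$ of $\mathrm{Diff}_{F/K} = \prod_w w^{d_w}$. The key classical input is Hecke's formula relating $d_w$ to the local different, together with the fact that the \emph{norm} $\Nm_{F/K}(\mathrm{Diff}_{F/K}) = \Disc_{F/K}$ is an honest square as an $\O_K$-ideal precisely when certain congruences hold. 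I would organize the proof around showing that $[\mathrm{Diff}_{F/K}]$ lies in the kernel of $\Cl_F \to \Cl_F/2\Cl_F$, i.e.\ that it is a square class.

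The cleanest route, and the one I would take, is via the theory of the \emph{Artin symbol} and the reciprocity map. Class field theory identifies $\Cl_F/2\Cl_F \simeq \Cl_F^\vee[2]^\vee$, and an ideal class $[\a]$ is a square if and only if $\chi(\a) = 1$ for every quadratic character $\chi$ of the class group, i.e.\ for every everywhere-unramified quadratic extension $L/F$. So it suffices to show that for every such $L = F(\sqrt{t})/F$ with $t \in \O_F^\times$ locally (unramified everywhere), the Artin symbol of $\mathrm{Diff}_{F/K}$ in $\Gal(L/F) \simeq \{\pm 1\}$ is trivial. Writing $\mathrm{Diff}_{F/K} = \prod_w w^{d_w}$, this amounts to the product formula $\prod_w \chi_w(\pi_w)^{d_w} = 1$, where $\chi_w$ is the local character cutting out $L$. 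This is exactly where I would bring in the local Hilbert symbol / Hasse reciprocity: the global product of local symbols vanishes, and the contribution at each place is governed by $d_w$ together with the local invariant of $t$.

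The main obstacle — and where the odd-degree hypothesis is secretly used elsewhere in the paper, though Hecke's theorem itself holds in general — is controlling the local exponents $d_w$ modulo $2$ at the \emph{wildly ramified} primes, where $d_w$ exceeds $e_w - 1$ and the naive tame formula $d_w = e_w - 1$ fails. At a tame prime $d_w = e_w - 1$ and the parity is easy to read off; at a wild prime one must invoke the structure of higher ramification groups and the fact that, after taking the \emph{norm}, the total discriminant exponent $\sum_w d_w f_w$ satisfies the parity constraint forcing the square-class to be trivial. I expect the technical heart to be reconciling the local different exponents with the condition $\Nm_{F/K}(\mathrm{Diff}_{F/K}) \in \Disc_{F/K}$ being a square, using the conductor-discriminant formula to package the wild contributions. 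Rather than reprove these ramification-theoretic facts, I would cite Hecke's original argument as packaged in Weil's \emph{Basic Number Theory} \cite[pg.\ 291]{WeilBasicNumberTheory}, and present the reciprocity-map reformulation above as the conceptual skeleton, deferring the delicate wild-ramification parity count to that reference. Indeed, as the paper notes in its introduction, for the cubic case a fully self-contained algebraic proof via the Brauer group and the quadratic refinement $q^n$ becomes available, so the general statement here is most efficiently treated by citation.
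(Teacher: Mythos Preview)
The paper does not prove this theorem: it is stated as a classical result of Hecke and simply cited to \cite[pg.~291]{WeilBasicNumberTheory}, with no argument given. Your proposal ultimately does the same thing --- after sketching a reformulation via quadratic characters of $\Cl_F$ and Hilbert symbols, you defer the actual content (the parity control at wildly ramified primes) to the identical citation. So there is no discrepancy in approach; both treat the statement as known background.

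That said, your sketch is not quite a proof even modulo the wild-ramification issue. You reduce to showing $\prod_w \chi_w(\pi_w)^{d_w} = 1$ for every unramified quadratic $\chi$, but the product formula for Hilbert symbols gives $\prod_w (t,\alpha)_w = 1$ for a fixed $\alpha \in F^\times$, not for the id\`ele $(\pi_w^{d_w})_w$ assembled from local uniformizers. The actual argument (as in Weil, or in Hecke's original) goes through the theta function associated to the different and its transformation law, or equivalently through Weil's metaplectic formalism; the class-field-theoretic reformulation you outline is correct as a \emph{characterization} of what needs to be shown, but does not by itself supply the mechanism. The paper's later Remark after Theorem~\ref{thm: reciprocity law cubics} notes that the quadratic-refinement machinery gives an independent algebraic proof in the cubic case, which is closer in spirit to what you are gesturing at.
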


As in the introduction, we define the {\it Hecke ideal} 
\[H_{F/K} := \Disc_{F/K} \mathrm{Diff}_{F/K}^{-1} \subset \O_F.\] The Hecke ideal is especially interesting when $F/K$ has odd degree, because then the norm of $H_{F/K}$ is the square of an ideal in $\O_K$ (while $H_{F/K}$ itself need not be the square of an ideal in $\O_F$).  As the discriminant $\Disc_{F/K}$ is a square in $\Cl_K$, Hecke's theorem also gives:

\begin{corollary}\label{cor: hecke ideal}
    The Hecke ideal $H_{F/K}$ is a square in $\Cl_F$.
\end{corollary}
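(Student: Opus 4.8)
The plan is to deduce the corollary from Hecke's theorem (Theorem \ref{thm: hecke}) together with the standard identity relating the relative different and discriminant; all of the arithmetic content is already packaged in Hecke's theorem, and the rest is bookkeeping in class groups. In $\Cl_F$ we have the relation
\[
[H_{F/K}] = [\Disc_{F/K}\cdot\O_F]\,[\Diff_{F/K}]^{-1},
\]
where $\Disc_{F/K}\cdot\O_F$ denotes the extension to $\O_F$ of the discriminant ideal $\Disc_{F/K}\subset\O_K$. By Hecke's theorem the class $[\Diff_{F/K}]$, and hence its inverse, is a square in $\Cl_F$; so the whole problem reduces to showing that $[\Disc_{F/K}\cdot\O_F]$ is a square in $\Cl_F$.

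First I would invoke the ideal-theoretic identity $\Disc_{F/K}=\Nm_{F/K}(\Diff_{F/K})$ in $\O_K$. The ideal norm descends to the norm homomorphism $\Nm_*\colon \Cl_F\to\Cl_K$, so $[\Disc_{F/K}]=\Nm_*([\Diff_{F/K}])$ in $\Cl_K$. Since a group homomorphism carries squares to squares and $[\Diff_{F/K}]$ is a square by Hecke's theorem, the class $[\Disc_{F/K}]$ is a square in $\Cl_K$ --- this is exactly the assertion flagged in the text just before the corollary.

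Next I would push this square class forward along the natural homomorphism $j\colon \Cl_K\to\Cl_F$ induced by extension of ideals, $\a\mapsto\a\cdot\O_F$. Because $j$ is a homomorphism, $[\Disc_{F/K}\cdot\O_F]=j([\Disc_{F/K}])$ is again a square in $\Cl_F$. Combining with the opening paragraph, $[H_{F/K}]$ is the product of the two squares $j([\Disc_{F/K}])$ and $[\Diff_{F/K}]^{-1}$, hence a square in $\Cl_F$, which is the corollary.

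I do not expect a serious obstacle, precisely because Theorem \ref{thm: hecke} carries all the weight; the only points needing care are bookkeeping. Specifically, I should make sure the identity $\Disc_{F/K}=\Nm_{F/K}(\Diff_{F/K})$ is applied correctly in the possibly non-Galois setting (where it remains valid as a statement about ideals, even though $\Nm_{F/K}$ is not a product of conjugates), and that both the extension map $j$ and the ideal-norm map $\Nm_*$ are genuinely group homomorphisms on class groups, so that each respects squares. Neither of these presents a real difficulty.
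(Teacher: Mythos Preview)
Your proof is correct and follows essentially the same route as the paper: the paper simply records that $\Disc_{F/K}$ is a square in $\Cl_K$ (which you justify via $\Disc_{F/K}=\Nm_{F/K}(\Diff_{F/K})$ and Hecke's theorem) and then combines this with Hecke's theorem on the different to conclude that $H_{F/K}=\Disc_{F/K}\cdot\Diff_{F/K}^{-1}$ is a square in $\Cl_F$. Your write-up merely spells out the bookkeeping that the paper leaves implicit.
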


To state the reciprocity law, we recall the definition of a Hecke prime from the introduction (which is also the global version of the local definition given in the previous subsection). 
\begin{definition}
{\em 
A prime $w$ of $F$ lying over a finite prime $v$ of $K$, is a {\it Hecke prime} if an odd power of $w$ divides $H_{F/K}$ and if there exists $t \in V_v$ which projects to $\epsilon_w$ in $S_w$ (i.e.\ $t \notin F_w^{\times 2}$). 
}
\end{definition}

The existence of a Hecke prime is an obstruction to $H_{F/K}$ being a square of an ideal.\footnote{Analogously, a polarization $\lambda \colon A \to \hat{A}$ of an abelian variety over a field $k$ always has square degree, but it may not arise from a line bundle defined over $k$. On the other hand, $2\lambda$ does have this property \cite[\S20]{MumfordAbelianVarieties}.}  Hecke reciprocity (which is also Theorem \ref{thm: reciprocity law for odd degree intro}) states:

\begin{theorem}\label{thm: hecke reciprocity}
    Let $F/K$ be a finite extension of number fields and let $L = F(\sqrt{t})/F$ be an everywhere unramified quadratic extension such that $\Nm_{F/K}(t) \in K^{\times 2}$.  Then the number of Hecke primes of $F$ that are inert in $L/F$ is even.
\end{theorem}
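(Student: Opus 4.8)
The plan is to reduce the reciprocity statement to a product formula for Hilbert symbols, using Hecke's theorem (Theorem \ref{thm: hecke}) as the crucial algebraic input. First I would recall that, by Corollary \ref{cor: hecke ideal}, the Hecke ideal $H_{F/K}$ is a square in the class group $\Cl_F$; concretely this means there is a fractional ideal $I$ of $\O_F$ and an element $\alpha \in F^\times$ with $H_{F/K} = (\alpha) I^2$. The square-class $[\alpha] \in F^\times/F^{\times 2}$ is a \emph{spin structure} in the sense of \S\ref{sec: connection to spin structures}, and its relevant feature is that the valuation $w(\alpha)$ is congruent $\bmod\, 2$ to the exponent $h_w$ of $w$ in $H_{F/K}$ at every finite prime $w$ of $F$. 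In particular, $w(\alpha)$ is odd precisely at the primes $w$ with $h_w$ odd, i.e.\ at the primes which are Hecke primes once the secondary local condition is imposed.

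The engine of the proof is the product formula for the Hilbert symbol over $F$: for the everywhere unramified extension $L = F(\sqrt{t})$, we have $\prod_w (\alpha, t)_w = 1$, where the product ranges over all places of $F$ and each factor lies in $\{\pm 1\} \simeq \Z/2\Z$. The next step is to analyze each local factor $(\alpha, t)_w$. Since $t$ defines an everywhere unramified quadratic extension, $t$ is a local unit square-class at every finite prime (and the archimedean places contribute trivially under the hypotheses, or are handled by the square-norm condition $\Nm_{F/K}(t) \in K^{\times 2}$, which I would use to cancel the contribution of the element $\alpha$ coming from $K$). For a finite prime $w$ where $\alpha$ is a unit up to squares --- equivalently where $h_w$ is even --- both entries are units, so $(\alpha, t)_w = 1$. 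The only potentially nontrivial factors occur at primes $w$ with $w(\alpha)$ odd, where the tame symbol formula gives $(\alpha, t)_w = \left(\frac{\bar t}{\m_w}\right)^{w(\alpha)}$, which equals $-1$ exactly when $t$ is a nonsquare in the residue field, i.e.\ exactly when $w$ is inert in $L/F$.

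Assembling these computations, the product formula becomes $\prod_{w : h_w \text{ odd}} (\alpha, t)_w = 1$, and each factor is $-1$ iff $w$ is a Hecke prime inert in $L$ and $+1$ otherwise; hence the number of inert Hecke primes is even. The main obstacle I anticipate is the bookkeeping at the primes where $h_w$ is odd but $w$ fails the secondary local condition defining a Hecke prime (the requirement that $V_v$ contains a class projecting to $\epsilon_w$). At such a prime, the local factor $(\alpha, t)_w$ might \emph{a priori} be nontrivial, and I must show it does not contribute. The key point, drawn from the analysis in Lemma \ref{lem: local selmer ratio} and Lemma \ref{lem: hecke ramified implies hecke prime}, is that at a non-Hecke prime the even residue-degree or even ramification structure forces $t$ to be a local square (since $t \in V_v$ and the projection to $S_w$ is trivial), making $(\alpha, t)_w = 1$ regardless of $w(\alpha)$. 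I would also need to treat the prime $2$ and any wildly ramified primes with care, where the clean tame-symbol formula is unavailable; here I would argue directly that $t$ being a unit square-class combined with the local structure forces triviality, or invoke the local interpretation of Hecke ramification from \S\ref{subsec: Hecke primes local fields} to see that such primes are never Hecke primes and contribute nothing to the count.
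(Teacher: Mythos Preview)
Your approach is essentially the same as the paper's, just packaged via Hilbert symbols rather than the Artin character. The paper's proof is a three-line argument: let $\chi \colon \Cl_F \to \Z/2\Z$ be the character of $L/F$; since $[H_{F/K}] \in 2\Cl_F$ (Corollary~\ref{cor: hecke ideal}), $\chi([H_{F/K}]) = 0$; expanding gives $\sum_{w \text{ inert}} h_w \equiv 0 \pmod 2$, and this sum equals the number of inert Hecke primes. Your choice of $\alpha$ with $H_{F/K} = (\alpha)I^2$ and the product formula $\prod_w (\alpha,t)_w = 1$ is exactly $\chi([H_{F/K}]) = 0$ rewritten, so the two arguments coincide.

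Two remarks on your execution. First, your concern about primes above $2$ is unnecessary: you are not using the tame-symbol formula in its residue-field form, but only the fact that $(\alpha,\epsilon_w)_w = (-1)^{w(\alpha)}$ for the \emph{unramified} class $\epsilon_w$, which holds at all primes (it is the statement that the norm group of the unramified quadratic extension is $\O_{F_w}^\times \pi_w^{2\Z}$). Since $t$ is locally in $\langle \epsilon_w\rangle$, not merely a unit, this covers every finite place uniformly. Second, the square-norm hypothesis $\Nm_{F/K}(t) \in K^{\times 2}$ is not used to cancel archimedean contributions (those vanish because $t \in S_w = \{0\}$ at infinite places); it is used exactly where you suspected, to guarantee that $\mathrm{res}_v(t) \in V_v$, so that at a prime $w$ with $h_w$ odd which fails the secondary Hecke condition, $t$ is forced to be a local square and $(\alpha,t)_w = 1$. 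You identified this point correctly; the paper's proof uses it too but leaves it implicit in the passage from $\sum_{w \text{ inert}} h_w$ to $\sum_{w \in S} 1$.
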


\begin{proof}
    Let $\chi \colon \Cl_F \to \Z/2\Z$ be the quadratic character corresponding to $L/F$. For every prime $w$ of $F$, we have $\chi(w) = 0$ if and only if $w$ splits in $L/K$ if and only if $t \notin F_w^{\times 2}$. By Theorem \ref{cor: hecke ideal}, $\chi([H_{F/K}]) = 0$. Write $H_{F/K} = \prod_w w^{d_w}$ and let $S$ be the set of Hecke primes $w$ that are inert in $L/F$.  Then 
    \[0 = \chi([H_{F/K}]) = \sum_{\mbox{$w$ inert in $L/F$}} d_w= \sum_{w \in S} 1 \in \Z/2\Z,\] 
which means that $\#S$ is even. 
\end{proof}

Here is a sample application of the reciprocity law.

\begin{theorem}\label{thm: Kummer obstruction}
    Let $p$ be an odd prime and let $F/\Q$ be a degree $p$ Kummer extension $F = \Q(\sqrt[p]{n})$. Then any prime of $F$ lying above $p$ is split in any unramified quadratic extension $L/F$. 
\end{theorem}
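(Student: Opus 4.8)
The plan is to deduce Theorem \ref{thm: Kummer obstruction} directly from Hecke reciprocity (Theorem \ref{thm: hecke reciprocity}) by showing that, for a pure prime-degree Kummer extension $F = \Q(\sqrt[p]{n})$, every prime of $F$ above $p$ is a Hecke prime, and that there is at most one such prime. Once we know there is a \emph{single} Hecke prime $w$ above $p$ and no others in the whole field, Theorem \ref{thm: hecke reciprocity} forces the number of Hecke primes inert in any unramified $L/F$ of square norm to be even; since the only candidate is $w$, this number is either $0$ or $1$, so it must be $0$, i.e.\ $w$ splits. (Here I use that an unramified quadratic $L/F$ automatically has $\Nm_{F/K}(t) \in \Q^{\times 2}$ in the relevant Selmer sense, so reciprocity applies; this is the content of viewing $L$ as a class in $\Cl_F^\vee[2]$.)

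The heart of the matter is therefore the purely local computation of Hecke primes above $p$, which I would run through the local theory of \S\ref{subsec: Hecke primes local fields}. First I would reduce to understanding $F \otimes_\Q \Q_p$ and its splitting/ramification type. Since $p$ is odd and $F = \Q(\sqrt[p]{n})$, the prime $p$ is \emph{tamely} ramified whenever $p \nmid n$ (and the wild case $p \mid n$ must be handled separately but the extension is still degree $p$, so Lemma \ref{lem: unram or tot ram} may apply if $F_{\mathfrak p}/\Q_p$ is a field). The crucial observation is that the global degree $[F:\Q] = p$ is odd, so locally every $L_i$ appearing in $F\otimes \Q_p \simeq \prod_i L_i$ has the arithmetic that drives the Hecke-prime criterion. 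I would compute the exponents $h_i = d e_i - d_i$ of each prime in the Hecke ideal $H_{F/\Q}$ and check: (i) some prime above $p$ has odd $h_i$, so $F/\Q$ is Hecke ramified at $p$ and there \emph{is} a Hecke prime above $p$ by Lemma \ref{lem: hecke ramified implies hecke prime}; and (ii) by the same parity bookkeeping as in the proofs of Lemma \ref{lem: local selmer ratio} and Lemma \ref{lem: hecke ramified implies hecke prime}, at most one prime above $p$ can be a Hecke prime.

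The reason only one Hecke prime can occur is the constraint $\sum_i e_i f_i = p$ being odd, together with the criterion that a prime $\m_i$ supports a nontrivial local class $u_i = \epsilon_i \in V$ exactly when $e_i$ is odd (via $\Nm(\epsilon_{L_i}) = \epsilon_K \iff e_i$ odd, as used in the proof of Lemma \ref{lem: local selmer ratio}). Since $p$ is odd, the number of factors with $e_i$ odd and the parity constraints interlock so that the local Selmer-type group $V_v$ has order $2$ in the Hecke-ramified case (exactly as in the cubic computation of Lemma \ref{lem: cubic characterization}), pinning the Hecke prime down uniquely. I expect the main obstacle to be the \textbf{wild case} $p \mid n$: there one must verify that $F_{\mathfrak p}/\Q_p$ is either a field or otherwise still yields at most one Hecke prime, and this requires care with wild different exponents rather than the clean tame formula $d_i = e_i - 1$. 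But in all cases the outcome is that there is a unique Hecke prime over $p$, after which the reciprocity law finishes the argument immediately by the parity contradiction sketched above.
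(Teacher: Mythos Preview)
Your overall strategy matches the paper's: locate a unique Hecke prime for $F/\Q$ and apply Theorem~\ref{thm: hecke reciprocity}. But the local analysis is wrong in ways that break the argument. It is \emph{not} true that every prime above $p$ is a Hecke prime, nor that there is only one prime above $p$, nor that ``in all cases there is a unique Hecke prime over $p$.'' There are exactly two possibilities for the splitting of $p$ in $F$: either $(p) = \lambda^p$ (wild) or $(p) = \lambda_1^{p-1}\lambda_2$ (tame). In the wild case $F\otimes\Q_p$ is a field, so by Lemma~\ref{lem: unram or tot ram} there are \emph{no} Hecke primes above $p$ and reciprocity is vacuous; instead $V_p = 0$ by Lemma~\ref{lem: local selmer ratio}, so every $t \in \Sel_2^\un(F)$ is locally trivial at $\lambda$ and $\lambda$ splits for purely local reasons. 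In the tame case there are two primes above $p$, and only $\lambda_1$ is a Hecke prime: the nontrivial class in $V_p$ is $(\epsilon_{\lambda_1},1)$, so the $\lambda_2$-component of every $t \in \Sel_2^\un(F)$ is trivial and $\lambda_2$ again splits for local reasons. Your proposal tries to handle all primes above $p$ via reciprocity, which cannot work for $\lambda$ in the wild case or for $\lambda_2$ in the tame case.

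You also assert that there are ``no others in the whole field'' but never justify it; the paper's one-line argument is that any prime $q \neq p$ ramified in $\Q(\sqrt[p]{n})$ is \emph{totally} ramified, so $V_q = 0$ by Lemma~\ref{lem: local selmer ratio} and no Hecke prime lies above $q$. Finally, your identification of the wild case with ``$p \mid n$'' is incomplete: the wild case is exactly when $n$ is not a $p$-th power in $\Q_p^\times$, which includes many $n$ with $p \nmid n$.
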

\begin{proof}
Since $\Cl_\Q[2] = 0$, every unramified quadratic extension $L/F$ arises from some $t \in \Sel_2^\un(F/\Q)$.  There are two possibilities for the splitting behavior of $p$ in $F$: either $p = \lambda^p$ or $(p) = \lambda_1^{p-1} \lambda_2$. In the first case we have $V_p = 0$ by Lemma \ref{lem: local selmer ratio}, so $\lambda$ splits in all quadratic extensions $L/F$ for local reasons. In the second case, we have $\#V_p = 2$ and the non-trivial class is $(\epsilon_{\lambda_1},1)$. This shows that $\lambda_2$ is not Hecke and moreover splits in every unramified $L/F$ for local reasons. On the other hand, $\lambda_1$ is a Hecke prime, and we claim that it is the unique Hecke prime in $F/\Q$. This is because any ramified prime $q \neq p$ is totally ramified, and hence $V_q = 0$ by Lemma \ref{lem: local selmer ratio}.  By Theorem \ref{thm: hecke reciprocity}, we see that  $\lambda_1$ must also split in $L/F$ (for global reasons!).   
\end{proof}

More generally, the same proof gives:

\begin{corollary}
    Suppose $F/K$ is an odd degree extension with exactly one Hecke prime $w$. Then each $[t] \in \Sel_2^\un(F/K)$ is locally a square at $w$.  
\end{corollary}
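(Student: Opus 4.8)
The plan is to deduce this directly from Hecke reciprocity (Theorem \ref{thm: hecke reciprocity}), treating it as a clean generalization of Theorem \ref{thm: Kummer obstruction}, whose proof I would essentially reproduce. First I would translate the Selmer membership into the hypotheses of Theorem \ref{thm: hecke reciprocity}. Given a class $[t] \in \Sel_2^\un(F/K)$, I would observe that $L = F(\sqrt{t})/F$ is an everywhere unramified quadratic extension with $\Nm_{F/K}(t) \in K^{\times 2}$. This is immediate from the definition of $\Sel_2^\un(F/K)$ as the kernel of the norm on $\Sel_2^\un(F)$: membership in $\Sel_2^\un(F)$ forces $t$ to lie in the unramified class group $S_w = \langle \epsilon_w \rangle$ at every finite place (and to be trivial at archimedean places), which is precisely the condition that $F(\sqrt{t})/F$ be unramified, while lying in the kernel of the norm gives $\Nm_{F/K}(t) \in K^{\times 2}$.

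Next I would invoke Hecke reciprocity: the number of Hecke primes of $F$ that are inert in $L/F$ is even. By hypothesis $F/K$ has exactly one Hecke prime $w$, so this count is either $0$ or $1$; being even, it must be $0$. Hence $w$ is not inert in $L/F$. Since $L/F$ is unramified, the only other possibility at $w$ is that $w$ splits, and $w$ splits in $L = F(\sqrt{t})$ if and only if $t$ is a square in $F_w^\times$. This yields the claim that each $[t] \in \Sel_2^\un(F/K)$ is locally a square at $w$.

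I expect no serious obstacle, as the substance is entirely contained in Theorem \ref{thm: hecke reciprocity}, and the corollary is its specialization to the case of a single Hecke prime. The only point requiring care is the bookkeeping at a local place: one must note that an everywhere unramified quadratic extension of $F$ admits exactly two behaviors at each prime, split or inert, and that these correspond respectively to $t$ being or not being a local square. Ruling out inertness at $w$ therefore forces splitting, which is the desired conclusion.
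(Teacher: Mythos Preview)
Your proposal is correct and matches the paper's approach exactly: the paper simply says ``the same proof gives'' (referring to the proof of Theorem~\ref{thm: Kummer obstruction}), and you have faithfully spelled out that argument, invoking Hecke reciprocity to force the single Hecke prime to split. The only cosmetic omission is the trivial case $[t]=0$, where $t$ is already a global square and the conclusion is vacuous.
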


Using the formulas in \cite{BhargavaShnidman14}, we can write down many other examples of families of fields admitting at most one Hecke prime:

\begin{example}\label{ex: one Hecke ramified prime}
{\em 
    Let $p$ be a non-zero integer with at most one prime factor. For $n \in \Q^\times$, set $f_n = x^3 + nx^2 - 9px - np$ and  $F_n = \Q[x]/f_n(x)$. The discriminant of $f_n$ is $4p(n^2 + 27p)^2$, so this is a family of cubic fields with quadratic resolvent $\Q(\sqrt{p})$. If $p = \pm 1$, there are no Hecke primes in this family, while if $|p| > 1$, then $p$ is the only prime which is Hecke ramified in $F_n$. 
    }
\end{example}

\section{Quadratic refinements}

To prove our results on cubic Kummer extensions $F = K(\sqrt[3]{n})$, we analyze a certain quadratic refinement of the pairing on $\Sel_2^\un(F/K) \times \Sel_2^\un(F/K) \to \Br(K)[2]$ induced by cup product. In the process, we will essentially reprove the Hecke reciprocity law from the previous section, without invoking Hecke's theorem. Since it may be of independent interest, we use quadratic refinements to give an algebraic proof of Hecke's theorem for general cubic extensions.

\subsection{Quadratic refinements}\label{subsec: the obstruction map}
 We begin in the general setting where $K$ is a field of characteristic $0$ and $F$ is a finite \'etale $K$-algebra of odd degree $m = 2g+1$.

 Recall the $K$-group scheme $M = \ker(\Res_{F/K}\mu_2 \stackrel{\Nm}{\longrightarrow} \mu_2)$. We define a bilinear pairing 
 \[\langle \,  , \, \rangle \colon M \times M \to \mu_2\]
 as follows. Let $M^0 = \Res_{F/K}\mu_2$. For $i = 1, \ldots, m$, let $\sigma_i \colon K \to \overline{K}$ be the different embeddings of $F$ into $\overline{K}$. With this ordering, we identify $M^0(\overline{K}) \simeq \mu_2^m$. Let $\Delta = (-1,\ldots, -1) \in \mu_2^m$ and let $e_1, \ldots, e_m$ be the standard basis of $\mu_2^m$. The pairing on $M^0(\overline{K})$ determined by the rule 
 \[\langle e_i,e_j \rangle = \begin{cases} 1 & \mbox{ if $i = j$}\\
-1 & \mbox{ if $i \neq j$}
 \end{cases}
 \] is $\Gal_K$-equivariant, hence descends to a pairing $M^0 \times M^0 \to \mu_2$. 
 
\begin{lemma}\label{lem: self-dual}
    $M$ is isomorphic to its own Cartier dual via $\langle \, , \,  \rangle$.
\end{lemma}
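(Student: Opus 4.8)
The plan is to show that the homomorphism $\phi\colon M \to M^\vee := \underline{\Hom}(M,\mu_2)$ induced by $\langle\,,\,\rangle$ is an isomorphism of $K$-group schemes, where $M^\vee$ denotes the Cartier dual (since $M$ is $2$-torsion, $\underline{\Hom}(M,\G_m) = \underline{\Hom}(M,\mu_2)$). First I would reduce to geometric points: as $\mathrm{char}\,K = 0$, both $M$ and $M^\vee$ are finite \'etale $K$-group schemes of the same order $2^{m-1}$, and $\phi$ is a morphism of $K$-group schemes precisely because the pairing is $\Gal_K$-equivariant (this is built into the construction of $\langle\,,\,\rangle$). Hence it suffices to prove $\phi$ is injective on $\overline{K}$-points, i.e.\ that $\langle\,,\,\rangle$ restricts to a non-degenerate pairing on $M(\overline{K})$.

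The crux is an elementary linear-algebra computation. I would pass to additive notation, identifying $\mu_2 \simeq \Z/2\Z$ and $M^0(\overline{K}) \simeq \F_2^m$ with standard basis $e_1,\dots,e_m$, so that $M(\overline{K}) = W := \{x \in \F_2^m : \sum_i x_i = 0\}$. In these terms the defining rule ($\langle e_i,e_j\rangle = 0$ for $i=j$, and $1$ for $i\neq j$) says that $\langle\,,\,\rangle$ has Gram matrix $B = J - I$, with $J$ the all-ones matrix and $I$ the identity. The key observation is that for any $x \in W$ one has $(Bx)_i = \sum_{j\neq i} x_j = \big(\sum_j x_j\big) - x_i = x_i$ in $\F_2$; that is, $B$ acts as the identity on $W$. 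Consequently $\langle x,y\rangle = x^\top B y = \sum_i x_i y_i$ for all $x,y \in W$, so $\langle\,,\,\rangle$ restricts to the \emph{standard} dot product on $W$.

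It then remains to check that the standard dot product is non-degenerate on $W$. Its radical is $W \cap W^\perp$, the orthogonal complement being taken in $\F_2^m$. Since $W = \mathbf{1}^\perp$ for $\mathbf{1} = (1,\dots,1)$, we have $W^\perp = \langle \mathbf{1}\rangle$, so the radical equals $W \cap \langle\mathbf{1}\rangle$. Now $\mathbf{1} \in W$ if and only if $\sum_i 1 = m = 0$ in $\F_2$, i.e.\ if and only if $m$ is even; as $m = 2g+1$ is odd, $\mathbf{1}\notin W$ and the radical is trivial. This gives non-degeneracy, hence $\phi$ is an isomorphism.

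I do not expect a serious obstacle here, as the argument is short. The one point genuinely to watch is where the hypothesis that $m$ is odd enters, which is exactly the final parity check: if $m$ were even then $\mathbf{1}$ would lie in $W$ and the restricted pairing would be degenerate, so oddness is essential and should be flagged. The only other thing needing (routine) care is the reduction to geometric points, justified by \'etaleness in characteristic $0$ together with the $\Gal_K$-equivariance already established when $\langle\,,\,\rangle$ was defined.
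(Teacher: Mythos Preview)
Your proof is correct and follows essentially the same approach as the paper's: both reduce to an elementary linear-algebra check over $\F_2$, with the crucial parity observation that $m$ odd forces $\mathbf{1}=\Delta \notin M(\overline{K})$. The paper phrases this by noting that the radical of the pairing on $M^0$ is $\langle \Delta\rangle$ (so the induced pairing on $M\simeq M^0/\langle\Delta\rangle$ is perfect), whereas you observe directly that the restriction to $M$ coincides with the standard dot product---but these are two sides of the same computation.
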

\begin{proof}
    The kernel for the pairing on $M^0$ is the span of $\Delta = (-1,\ldots, -1)$, and so the induced pairing on $M$ is perfect. 
\end{proof}

Cup-product and the self-duality of Lemma \ref{lem: self-dual} induce a bilinear form 
\[( \, , \, ) \colon H^1(K, M) \times H^1(K,M) \to H^2(K, \mu_2)[2] = \Br(K)[2]\]
valued in the $2$-torsion subgroup of the Brauer group of $K$. When $F/K$ is an extension of number fields, this pairing is given (locally at each prime) by the Hilbert symbol.

A {\it quadratic refinement} of $( \, , )$ is a quadratic map $q \colon H^1(K, M) \to \Br(K)[2]$ such that 
\[( x, y ) = q(x+y) - q(x) - q(y).\]
There are many quadratic refinements of $(\, ,\, )$, each one depending on a choice of $\beta \in F^\times$ such that $\Tr_{F/K}(\beta) = 0$. Let $f(x) \in K[x]$ be the monic, degree $m$ characteristic polynomial of $\beta$ acting by multiplication on $F$. To $t \in H^1(K, M) \subset F^\times/F^{\times2}$, we attach the quadratic space $(F, Q_{\beta,t})$, where 
\[Q_{\beta,t}(x,y)=\Tr_{F/K}(txy/f'(\beta)).\] Because $f'(\beta)$ generates the different of $\O_K[\beta]$ and because $t$ has square norm, this quadratic space of rank $2g+1$ has discriminant $1$. We say $Q_{\beta,t}$ is {\it split} if it contains a $g$-dimensional isotropic subspace. This is the case if $t = 1$, and in this case the orthogonal group is the split group $\SO(m)$.  

\begin{proposition}\label{prop: obstruction in terms of isotropy}
    For each $\beta$ with $\Tr_{F/K}(\beta) = 0$, there exists a quadratic refinement 
    \[q_\beta \colon H^1(K, M) \to \Br(K)[2]\] 
    sending $[t]$ to the class of $Q_{\beta, t}$ in $H^1(K, \SO(m)) \simeq \Br(K)[2]$. In particular,  $[t] \in H^1(K,M)$ lies in $\ker(q_\beta)$ if and only if $Q_{\beta,t}$ is split.  
\end{proposition}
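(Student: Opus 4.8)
The plan is to show that the map $[t] \mapsto [Q_{\beta,t}]$ is a quadratic refinement of $(\,,\,)$ and to identify its kernel with the locus of split forms. First I would set up the identification $H^1(K,\SO(m)) \simeq \Br(K)[2]$. Since $Q_{\beta,t}$ has rank $m = 2g+1$ and discriminant $1$ (as noted, because $f'(\beta)$ generates the different of $\O_K[\beta]$ and $t$ has square norm, forcing the determinant to be a square), its class lands in the set of forms with the same rank and discriminant as the split form $Q_{\beta,1}$. These are exactly the twists of $\SO(m)$, classified by $H^1(K,\SO(m))$. For odd $m$, the odd orthogonal group satisfies $\SO(m) \simeq \mathrm{PGO}(m)$-type phenomena giving $H^1(K,\SO(m)) \simeq \Br(K)[2]$ via the Hasse--Witt/Clifford invariant; concretely the invariant is the (relative) Hasse invariant of $Q_{\beta,t}$ against $Q_{\beta,1}$, which is a class in $\Br(K)[2]$. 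I would define $q_\beta([t])$ to be this class.

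Next I would verify the quadratic refinement identity $(x,y) = q_\beta(x+y) - q_\beta(x) - q_\beta(y)$. The key computation is that multiplication by $t_1 t_2$ relates the form $Q_{\beta, t_1 t_2}$ to $Q_{\beta,t_1}$ and $Q_{\beta,t_2}$, and the defect in Hasse invariants under this operation is measured by a Hilbert-symbol-type expression. I would argue this by reducing to the local case, where $\Br(K_v)[2] \simeq \Z/2$ and the Hasse invariant is additive up to a correction term that is exactly the cup product pairing $(x,y)$ computed via Hilbert symbols. The identification of this correction with the pairing $(\,,\,)$ arising from cup product and the self-duality of $M$ (Lemma \ref{lem: self-dual}) is the conceptual heart; I expect to invoke the standard fact that the Hasse invariant of a diagonal form $\langle t_1 a_1, \ldots \rangle$ differs from that of $\langle a_1, \ldots\rangle$ by a sum of Hilbert symbols $(t_i, a_i)$, and to match the resulting bilinear expression with the pairing defined via $\langle e_i, e_j\rangle$ on $M^0(\overline{K})$.

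Finally I would prove the kernel statement: $[t] \in \ker(q_\beta)$ if and only if $Q_{\beta,t}$ is split. By definition $q_\beta([t]) = 0$ means $Q_{\beta,t}$ is isomorphic to the split form $Q_{\beta,1}$, which contains a $g$-dimensional isotropic subspace; conversely a rank $2g+1$ form of discriminant $1$ with a $g$-dimensional isotropic subspace is determined up to isomorphism (its anisotropic kernel is one-dimensional and pinned down by the discriminant), hence isometric to $Q_{\beta,1}$, so its class in $H^1(K,\SO(m))$ is trivial. Over a local field this follows from the classification of quadratic forms by rank, discriminant, and Hasse invariant; globally it follows from the Hasse--Minkowski principle once we know triviality everywhere locally, though for the kernel statement it suffices to note that an odd-rank form of trivial discriminant is split precisely when its class in $H^1(K,\SO(m))$ vanishes, which is the definition of $q_\beta$.

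The main obstacle I anticipate is the bookkeeping in the refinement identity: carefully matching the Hasse-invariant correction term, expressed via Hilbert symbols on the coordinates of $t = (t_1,\ldots,t_m)$, with the abstract cup-product pairing $(\,,\,)$ defined through the self-duality $\langle e_i, e_j\rangle$ of Lemma \ref{lem: self-dual}. The sign conventions in the pairing ($\langle e_i, e_j\rangle = -1$ for $i \neq j$ versus $+1$ on the diagonal) must be reconciled precisely with the trace-form structure $\Tr_{F/K}(t xy / f'(\beta))$ and the norm-one condition defining $M$; this is where I would expect to borrow the geometric identification of Bhargava--Gross \cite{BhargavaGross2013} rather than recompute from scratch.
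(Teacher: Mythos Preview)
Your proposal is correct in outline, but you should know that the paper's own proof is simply a citation: it reads ``This is nicely explained in Bhargava--Gross \cite[\S4.2]{BhargavaGross2013}, see also \cite[\S6.2]{BhargavaGross2014} and \cite[Corollary 4.6]{PoonenRains}.'' So there is no detailed argument in the paper to compare against.

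That said, your approach differs from what those references do. Bhargava--Gross and Poonen--Rains obtain the quadratic refinement conceptually, as the connecting map $H^1(K,J[2]) \to H^2(K,\G_m)$ arising from the theta group extension $1 \to \G_m \to \mathcal{G} \to J[2] \to 1$ associated to (twice) the principal polarization on the Jacobian of $y^2 = f_\beta(x)$; the refinement identity then follows from the general fact that the commutator pairing on a theta group recovers the Weil pairing, which is the cup product on $J[2]$. This is exactly the content of Remark~\ref{rem: connecting map} immediately following the proposition. Your route via Hasse invariants and Hilbert symbols is the concrete shadow of this, and is viable, but the bookkeeping you flag as the ``main obstacle'' (matching the Hilbert-symbol correction terms to the pairing $\langle e_i,e_j\rangle$ through the norm-one condition) is genuinely delicate and is precisely what the theta-group framework packages cleanly. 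Since you already anticipate invoking Bhargava--Gross at that step, you may as well cite it from the outset, as the paper does.
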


\begin{proof}
    This is nicely explained in Bhargava-Gross \cite[\S4.2]{BhargavaGross2013}, see also \cite[\S6.2]{BhargavaGross2014} and \cite[Corollary 4.6]{PoonenRains}. 
\end{proof}

\begin{remark}\label{rem: connecting map}
    {\em 
    One can view $q_\beta \colon H^1(K,M) \to \Br(K)[2]$ as the map $H^1(K, J[2]) \to H^2(K, \G_m)[2]$ in the long exact sequence associated to the theta group of the square of the principal polarization on the Jacobian $J$ of the hyperelliptic curve $y^2 = f_\beta(x)$, as explained in \cite[\S4]{PoonenRains}. 
    }
\end{remark}

In the next two sections, we analyze the maps $q_\beta$  over local and global fields, especially their interaction with the group $\Sel_2^\un(F/K)$ and its local analogue $V_p$. We give full details for cubic extensions and mostly full details for Kummer extensions $F = K(\sqrt[m]{n})$ of prime degree.

\subsection{Quadratic refinements over local fields}\label{subsec: quadratic refinements over local fields}
We return to the notation of Section \ref{subsec: Hecke primes local fields}, where $K$ is a finite extension of $\Q_p$ for some prime $p$ and $F/K$ is a degree $m = 2g+1$ \'etale $K$-algebra.

Let $\beta \in F$ be such that $\Tr_{F/K}(\beta) = 0$ and $F \simeq K[x]/f_\beta(x)$, where $f_\beta(x)$ is the characteristic polynomial of $\beta$, and let $q_\beta\colon W \to \Br(K)[2]$ be the quadratic refinement associated to $\beta$ as in Proposition \ref{prop: obstruction in terms of isotropy}. We show below that, in many cases, there exists $\beta$ such that $V \subset \ker(q_\beta)$ if and only if $F/K$ is Hecke unramified.  We begin with two statements that hold for any odd $m$. 

\begin{lemma}\label{lem: locally unobstructed at unramified primes}
    If $F/K$ is unramified, then $V\subset \ker(q_\beta)$, for any choice of $\beta$. 
\end{lemma}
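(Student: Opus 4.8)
The plan is to reduce the claim to a statement about quadratic forms via Proposition \ref{prop: obstruction in terms of isotropy}: since $[t] \in \ker(q_\beta)$ if and only if the rank-$m$ space $(F, Q_{\beta,t})$ with $Q_{\beta,t}(x,y) = \Tr_{F/K}(txy/f'(\beta))$ is split, it suffices to exhibit a $g$-dimensional isotropic subspace of $Q_{\beta,t}$ for every $[t] \in V$. Because $F/K$ is unramified we have $\Diff_{F/K} = \O_F$, and every class in $V$ is represented by a unit $u \in \O_F^\times$ (each component $u_i$ is either $1$ or the unramified square class, both of which are units). This is the structural input that I expect to force splitness.

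The cleanest way to organize the argument, and the one that I expect to work uniformly in the residue characteristic, is to exploit that $q_\beta$ is a \emph{quadratic refinement} of the cup-product pairing $(\,,\,)$. If $V$ is isotropic for $(\,,\,)$ and $q_\beta$ vanishes on a set of generators of $V$, then the identity $q_\beta(x+y) = q_\beta(x) + q_\beta(y) + (x,y)$ propagates the vanishing to all of $V$ by induction. The isotropy of $V$ holds for every $p$: the cup product of two unramified classes lands in the unramified part of $H^2(K,\mu_2) = \Br(K)[2]$, which is trivial for a local field. The group $V$ is generated by the classes described in the proof of Lemma \ref{lem: local selmer ratio} (each supported on a pair of components of $F \otimes K_v$ so as to have trivial norm), so the lemma reduces to computing $q_\beta$ on these single generators. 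It is also worth recording that any two refinements $q_\beta, q_{\beta'}$ differ by a homomorphism $W \to \Br(K)[2]$, since their bilinear corrections cancel; this gives some freedom to check the generator case for a convenient $\beta$.

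For the remaining generator computation I would use reduction modulo the maximal ideal. Using the codifferent identity $\O_K[\beta]^\vee = f'(\beta)^{-1}\O_K[\beta]$ together with the determinant formula $\det(\Tr_{F/K}(\lambda v_iv_j)) = \Nm_{F/K}(\lambda)\cdot\disc(v_1,\dots,v_m)$, the Gram matrix of $Q_{\beta,t}$ in the basis $1,\beta,\dots,\beta^{m-1}$ has determinant $\pm\Nm_{F/K}(t)$, a unit; reducing this unimodular form yields a nondegenerate quadratic form of odd rank $m = 2g+1$ over the residue field $k$, which over a finite field has Witt index $g$, and a maximal isotropic subspace lifts by Hensel's lemma. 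The hard part is precisely the integrality of $Q_{\beta,t}$ on this lattice when $\O_K[\beta] \subsetneq \O_F$ (so $f'(\beta)$ is not a unit and $t$ need not lie in $\O_K[\beta]$), and the case $p = 2$, where one must work with the quadratic form rather than its polar form and verify that isotropic subspaces still lift. I expect these two points to be the genuine obstacle, to be dispatched either by a more careful choice of lattice (replacing $\O_K[\beta]$ by a lattice adapted to $t$) or by reducing the generator case to rank $3$, where an explicit split form can be written down directly.
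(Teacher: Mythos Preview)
Your approach is genuinely different from the paper's, and the gaps you flag are real and not easily closed.

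The paper's proof is a one-line cohomological argument using Remark~\ref{rem: connecting map}: the map $q_\beta$ can be identified with the connecting map $H^1(K,J[2]) \to H^2(K,\G_m)[2]$ attached to the theta group of $J = \Jac(y^2 = f_\beta(x))$. Since $F/K$ is unramified, the module $M \simeq J[2]$ is unramified, and connecting maps in Galois cohomology send unramified classes to unramified classes. But $H^2_{\un}(K,\G_m) = 0$ for a local field, so $q_\beta(V) = 0$. This handles all $\beta$ and all residue characteristics simultaneously, with no computation.

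Your route via explicit quadratic forms runs into the obstacles you identify, and I do not see that either of your proposed escape hatches works. First, the integrality problem is genuine: for general $\beta$ with $\O_K[\beta] \subsetneq \O_F$, the entries $\Tr_{F/K}(t\beta^{i+j}/f'(\beta))$ need not lie in $\O_K$ (since $t \in \O_F^\times$ need not lie in $\O_K[\beta]$), and switching to the lattice $\O_F$ fails for the dual reason that $f'(\beta)^{-1}\O_F \not\subset \O_F$. Second, your remark that any two refinements differ by a \emph{homomorphism} $W \to \Br(K)[2]$ is correct, but this does not buy you the freedom to check only a convenient $\beta$: you would still need to show that this homomorphism kills $V$, and there is no evident local reason for that without already knowing something like the paper's unramifiedness argument. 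So the reduction to a single $\beta$ is circular.

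The moral is that the cohomological description of $q_\beta$ as a boundary map is exactly what makes ``unramified input $\Rightarrow$ unramified output'' automatic; trying to see this on the level of the quadratic form $Q_{\beta,t}$ reintroduces the dependence on $\beta$ that the cohomological picture erases.
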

\begin{proof}
    The assumptions imply that $M$ is unramified. Using Remark \ref{rem: connecting map},  we conclude that $q_\beta$ sends unramified classes to unramified classes. Since the non-zero $2$-torsion element of $H^2(K, \G_m)$ is ramified, we must have $q_\beta(t) = 0$ for all $t \in V$.
\end{proof}

\begin{lemma}\label{lem: field extensions}
    If $F/K$ is a field extension then $V\subset \ker(q_\beta)$, for any choice of $\beta$.
\end{lemma}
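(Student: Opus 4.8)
The plan is to show that for a field extension the local group $V$ collapses to the zero group, so that the asserted inclusion $V \subset \ker(q_\beta)$ holds vacuously (for \emph{every} $\beta$, as claimed). First I would record the structural point: since $F/K$ is a field, the decomposition $F \simeq \prod_{i=1}^N L_i$ of Section~\ref{subsec: Hecke primes local fields} has $N = 1$. Then Lemma~\ref{lem: local selmer ratio}, applied with $\ell = 2$, gives $|V| = |M(K)| = 2^{N-1} = 1$, so $V = \{0\}$.

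For the reader's benefit I would also spell out the mechanism directly, without black-boxing Lemma~\ref{lem: local selmer ratio}. By definition $V \subset W$ consists of the unramified classes lying in $\ker\bigl(\Nm_{F/K}\colon F^\times/F^{\times 2} \to K^\times/K^{\times 2}\bigr)$, so for a field extension the only potentially nontrivial element is the unramified square-class $[\epsilon_F]$. Writing $m = ef$ with $e$ the ramification index and $f$ the residue degree, the oddness of $m = 2g+1$ forces $e$ to be odd. By the norm computation recorded in the proof of Lemma~\ref{lem: local selmer ratio} (namely $\Nm(\epsilon_F) = \epsilon_K$ exactly when $e$ is odd), we obtain $\Nm_{F/K}(\epsilon_F) = \epsilon_K$, which is nontrivial in $K^\times/K^{\times 2}$. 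Hence $[\epsilon_F] \notin W$, and $V = \{0\}$.

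Finally I would conclude using Proposition~\ref{prop: obstruction in terms of isotropy}: the trivial class $t = 1$ corresponds to the split form $Q_{\beta,1}$, which lies in $\ker(q_\beta)$ for every choice of $\beta$. Therefore $0 \in \ker(q_\beta)$, and so $V = \{0\} \subset \ker(q_\beta)$, proving the lemma. The only real obstacle here is conceptual rather than computational: one must recognize that a field extension of odd degree has $N = 1$, and that the oddness of the ramification index makes the norm of the unramified class itself a nontrivial unramified class in $K$, so that $V$ degenerates to the zero group. Once this is in place the statement is immediate, in contrast to the partially ramified cases treated in the subsequent lemmas where $V$ is genuinely nontrivial.
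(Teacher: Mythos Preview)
Your proposal is correct and follows exactly the paper's approach: the paper's proof is the single sentence ``In this case $V = 0$ by Lemma~\ref{lem: local selmer ratio}.'' Your additional unpacking of the norm computation and the appeal to Proposition~\ref{prop: obstruction in terms of isotropy} for $t=1$ are fine but unnecessary, since $q_\beta(0)=0$ is automatic for any quadratic map.
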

\begin{proof}
    In this case $V = 0$ by Lemma \ref{lem: local selmer ratio}. \end{proof}

To handle the more subtle cases, we assume for the rest of this section that $m = 3$. Then $q_\beta(t) = 0$ if and only if the rank $3$ quadratic form $\Tr(tx^2/f'(\beta))$ is isotropic. The following explicit criteria for isotropy follows from the definition of the Hilbert symbol $( \, , )$ on $K^\times/K^{\times 2} \times K^\times/K^{\times 2}$. 

\begin{lemma}\label{lem: isotropy criterion}
    Let $a,b,c \in K^\times$. Then $ax^2 + by^2 + cz^2$ is isotropic over $K$ if and only if $(-ac,-bc)= 1$. 
\end{lemma}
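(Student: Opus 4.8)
The plan is to reduce the statement about isotropy of the diagonal ternary form $ax^2 + by^2 + cz^2$ to a computation of Hilbert symbols, and then to invoke the standard criterion for ternary forms over a local field. First I would observe that, after scaling the form by $c^{-1}$ (which does not affect isotropy), it suffices to decide when $a'x^2 + b'y^2 + z^2$ is isotropic, where $a' = a/c$ and $b' = b/c$. This is exactly the form $\langle a', b', 1\rangle$, and its behavior is governed by whether it represents zero nontrivially.

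The key step is to recall that a nondegenerate ternary quadratic form $\langle \alpha, \beta, \gamma\rangle$ over $K$ is isotropic if and only if $(-\alpha\beta, -\alpha\gamma)_K = 1$ in $\Br(K)[2]$; equivalently, the form represents $0$ iff the quaternion-algebra obstruction vanishes. Specializing to $\langle a', b', 1 \rangle$ with $\gamma = 1$, the isotropy condition becomes $(-a'b', -a')_K = 1$. I would then manipulate the Hilbert symbol using its bilinearity and the relation $(x, -x)_K = 1$ to rewrite this in the symmetric form $(-a'c, -b'c)_K$ after clearing the scaling—more concretely, tracking how the substitution $a' = a/c$, $b' = b/c$ interacts with the symbol and using $(s^2, \cdot)_K = 1$ to remove square factors, the condition collapses to $(-ac, -bc)_K = 1$, matching the statement. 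The identity $(u,v)_K(u,w)_K = (u,vw)_K$ together with $(u,-u)_K=1$ are the only algebraic inputs needed here.

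The main obstacle, such as it is, will be bookkeeping the scaling factors correctly so that the final symbol comes out as the clean $(-ac,-bc)$ rather than some equivalent-but-uglier expression; the temptation is to normalize too early and lose the symmetry. The cleanest route is probably to not scale at all, but instead to directly write down the condition for $ax^2+by^2+cz^2$ to represent $0$. Over any field of characteristic not $2$, the form $\langle a,b,c\rangle$ is isotropic iff $\langle a,b\rangle$ represents $-c$, iff $-c$ is a norm from the relevant quadratic étale algebra, which is encoded by the Hilbert symbol identity $(-ab, -ac)_K = 1$ (and by symmetry $(-ac,-bc)_K=1$). Verifying that these two symmetric-looking symbols agree is a one-line Hilbert symbol computation.

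Because the statement is purely local and explicitly flagged in the excerpt as following directly ``from the definition of the Hilbert symbol,'' I would expect the proof to be short: it is essentially the classical characterization of isotropy for ternary forms, and no genuinely new obstacle arises. The only care required is to state the classical fact in the correct normalization and confirm the symbol rearrangement; everything else is routine and can be cited (e.g.\ to \cite{Serre1979}) rather than rederived.
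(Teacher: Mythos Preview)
Your proposal is correct and matches the paper's approach: the paper does not give a detailed proof at all, simply remarking that the criterion ``follows from the definition of the Hilbert symbol $(\,,\,)$ on $K^\times/K^{\times 2} \times K^\times/K^{\times 2}$.'' Your argument is exactly the intended one-line unpacking of this: scale $\langle a,b,c\rangle$ by $-c^{-1}$ to get $\langle -a/c,-b/c,-1\rangle$, whose isotropy is by definition $(-a/c,-b/c)=1$, and then clear the square $c^{-2}$ to obtain $(-ac,-bc)=1$.
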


Since we now assume $m = 3$, the only case not covered by the earlier two lemmas  is when $F \simeq L \times K$, where $L$ is a ramified quadratic $K$-extension.

\begin{proposition}\label{prop: partially ramified cubic computation}
    Suppose $F = K(\sqrt{d}) \times K$, with $K(\sqrt{d})/K$ ramified. Then there exists a constant $v_0 \in \Z$ such that  if $\beta = (a + b\sqrt{d},-2a)$ satisfies $v(a) > v(b) + v_0$, then $V \subset \ker(q_\beta)$ if and only if $F/K$ is Hecke unramified. 
\end{proposition}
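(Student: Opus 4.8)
The plan is to reduce everything to the explicit isotropy criterion of Lemma \ref{lem: isotropy criterion}, applied to the unique non-trivial class $t \in V$, and to compute the relevant Hilbert symbol under the two ramification scenarios distinguished by Lemma \ref{lem: cubic characterization}. Recall from that lemma that in the partially ramified case $F = K(\sqrt{d}) \times K$, the group $V$ has order $2$ with non-trivial class $t = (\epsilon_1, 1)$, where $\epsilon_1$ is the unramified square-class of $L = K(\sqrt{d})$; and that $F/K$ is Hecke ramified precisely when the ramification exponent $e$ (equivalently $d$, up to squares) is such that $d \notin \O_K^\times$. So the statement reduces to: \emph{for a suitable choice of $\beta$, the rank-$3$ form $Q_{\beta,t}$ is isotropic if and only if $d$ is a unit up to squares.} Since $\ker(q_\beta)$ contains $V$ iff it contains this single nontrivial $t$, the whole proposition is one Hilbert-symbol computation, once $\beta$ is chosen.

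First I would set up coordinates. With $\beta = (a + b\sqrt{d}, -2a)$, the trace condition $\Tr_{F/K}(\beta) = \Tr_{L/K}(a + b\sqrt d) + (-2a) = 2a - 2a = 0$ holds automatically, so $\beta$ is a legitimate choice for every $a,b$. The idea behind the lopsided valuation hypothesis $v(a) > v(b) + v_0$ is to make $\beta$ a \emph{near-uniformizer} on the ramified factor $L$: when $v(a)$ is very large relative to $v(b)$, the element $a + b\sqrt d \in L$ behaves like $b\sqrt d$, whose valuation is odd (since $\sqrt d$ is ramified), so it generates the maximal ideal up to the unit $b$ and even powers. This should force $F \simeq K[x]/f_\beta(x)$ (one must check $\beta$ generates, i.e.\ that its three conjugates are distinct, which the valuation gap guarantees) and, more importantly, pin down $f_\beta'(\beta)$ up to squares and units. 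I would compute $f_\beta(x) = (x^2 - 2ax + (a^2 - b^2 d))(x + 2a)$, so on the $L$-factor $f_\beta'(\beta) = (2\beta_L - 2a)(\beta_L + 2a)$ with $\beta_L = a + b\sqrt d$, giving $f_\beta'(\beta_L) = 2b\sqrt d \cdot (3a + b\sqrt d)$, while on the $K$-factor $f_\beta'(-2a) = (-2a)^2 - 2a(-2a) + (a^2 - b^2 d) = 9a^2 - b^2 d$.

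Next I would diagonalize $Q_{\beta,t}(x) = \Tr_{F/K}(t x^2 / f_\beta'(\beta))$. Because $F = L \times K$ splits as a product and $t = (\epsilon_1, 1)$, the trace form splits as an orthogonal sum of a rank-$2$ piece coming from $L$ and a rank-$1$ piece coming from $K$. The rank-$1$ piece contributes the class of $1/f_\beta'(-2a) = 1/(9a^2 - b^2 d)$, and under the valuation hypothesis $9a^2 - b^2 d$ is a unit times $-b^2 d$ up to squares, i.e.\ $\equiv -d$ modulo squares and units. The rank-$2$ piece is $\Tr_{L/K}(\epsilon_1 x^2 / f_\beta'(\beta_L))$; computing its diagonalization (discriminant and one diagonal entry) reduces, via $f_\beta'(\beta_L) = 2b\sqrt d(3a + b\sqrt d)$ and the unramified class $\epsilon_1$, to tracking the parity of $v(\sqrt d)$ and whether $d$ is a unit. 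Here the hypothesis $v(a) > v(b) + v_0$ is what lets me drop the $3a$ relative to $b\sqrt d$ (so $3a + b\sqrt d \sim b\sqrt d$ up to a $1$-unit that is a square for $p \neq 2$, and is controlled by $v_0$ when $p = 2$). I would then feed the three diagonal entries $a,b,c$ into Lemma \ref{lem: isotropy criterion} and verify that the symbol $(-ac, -bc)_K$ equals $1$ exactly when $d \in \O_K^\times$ up to squares, which by the proof of Lemma \ref{lem: cubic characterization} is exactly the Hecke unramified condition.

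The main obstacle will be the residue characteristic $2$ case, which is why the statement is hedged with the auxiliary constant $v_0$ rather than asserting the result for all $\beta$ with $v(a) > v(b)$. When $p = 2$, higher units need not be squares, Hilbert symbols are governed by congruences modulo higher powers of $2$, and the ramified quadratic extensions $K(\sqrt d)/K$ are more numerous and subtle (wild ramification). I expect that $v_0$ must be chosen large enough, depending on $K$, that the error term $3a$ in $3a + b\sqrt d$, and the unit discrepancy between $9a^2 - b^2 d$ and $-b^2 d$, both land in the subgroup of squares of the relevant local units; concretely $v_0 \approx 2 v(2) + 1$ or similar should suffice. The odd residue characteristic case, by contrast, should be essentially immediate once the diagonalization is in hand, since there any $1$-unit is a square. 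I would therefore organize the proof to dispatch $p \neq 2$ cleanly first, isolating the genuine difficulty in the dyadic estimate, and then verify that a single valuation bound $v_0$ handles $p = 2$ uniformly.
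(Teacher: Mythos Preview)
Your proposal is correct and follows essentially the same route as the paper: compute $f_\beta'(\beta) = (6ab\sqrt{d} + 2db^2,\, 9a^2 - db^2)$, simplify modulo squares under the valuation hypothesis to $(2,-d)$, diagonalize the trace form $\Tr_{F/K}((2\epsilon,-d)x^2)$ as $\epsilon x_1^2 + \epsilon d x_2^2 - d x_3^2$, and apply Lemma~\ref{lem: isotropy criterion} to reduce to the Hilbert symbol $(d,\epsilon)$, which detects whether $d$ is a uniformizer and hence (via Lemma~\ref{lem: cubic characterization}) whether $F/K$ is Hecke ramified. Your discussion of why the constant $v_0$ is needed at $p=2$ is in fact more explicit than the paper's, which simply asserts the simplification holds for $v(a) \gg v(b)$.
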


\begin{proof}
    A computation gives $f_\beta'(\beta) = (6ab\sqrt{d} + 2d b^2, 9a^2 - d b^2)$. If $v(a) \gg v(b)$, then we have $f'_\beta(\beta) \equiv (2d, -d) \equiv (2,-d)$ modulo squares in $F^\times$. There is a single non-trivial class in $V$, the class $(\epsilon, \Nm(\epsilon)) \equiv (\epsilon,1)$. Thus, we need to check whether the quadratic form $\Tr_{F/K}((2\epsilon, -d)x^2)$ is isotropic or not. An equivalent diagonalization of this quadratic form is $\epsilon x_1^2 + \epsilon d x_2^2 - d x_3^2$. By Lemma \ref{lem: isotropy criterion}, this is anisotropic if and only if $(\epsilon d, \epsilon) = (d,\epsilon) = 1$ if and only if $d$ is a uniformizer (up to squares). By Lemma \ref{lem: cubic characterization}, it is isotropic if and only if $F/K$ is Hecke unramified.     
\end{proof}

\subsection{Algebraic proof of Hecke reciprocity for cubic fields}

Now suppose $K$ is a number field, and let $F/K$ be an extension of degree $3$. The following is a special case of Theorem \ref{thm: hecke reciprocity}, but we will prove it in a different way.

\begin{theorem}\label{thm: reciprocity law cubics}
    Let $F$ be a cubic extension of $K$ and let $t \in \Sel_2^\un(F/K)$. Then the number of 
 Hecke  primes of $F/K$ that are inert in $F(\sqrt{t})/F$ is even. 

\end{theorem}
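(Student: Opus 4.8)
The plan is to give a purely local-global argument using the quadratic refinement $q_\beta = \sum_v q_{\beta,v}$ built in \S\ref{subsec: the obstruction map}, entirely bypassing Hecke's theorem (Theorem \ref{thm: hecke}). The starting observation is that since $t \in \Sel_2^\un(F/K)$, its image in $\Br(K)[2]$ under the global quadratic refinement must vanish. Indeed, by reciprocity in the Brauer group the sum of local invariants $\sum_v \inv_v(q_{\beta,v}(t))$ is zero in $\Q/\Z$, so I would first establish
\[
\sum_v \inv_v\bigl(q_{\beta,v}(t)\bigr) = 0 \in \tfrac12\Z/\Z.
\]
This follows because $q_\beta(t)$ is the class of the global quadratic form $Q_{\beta,t}$, which lies in $\Br(K)[2]$, and every element of $\Br(K)[2]$ has invariants summing to zero. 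The whole theorem will then reduce to identifying, prime by prime, exactly when the local term $\inv_v(q_{\beta,v}(t))$ is nontrivial and matching that with the condition that $w \mid v$ is a Hecke prime inert in $F(\sqrt{t})/F$.

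Next I would carry out the local analysis, choosing a single global $\beta \in F^\times$ with $\Tr_{F/K}(\beta)=0$ that is simultaneously admissible at every ramified prime in the sense of Proposition \ref{prop: partially ramified cubic computation}; such a $\beta$ exists by weak approximation, since only finitely many primes ramify and at each the condition $v(a) > v(b)+v_0$ is open. With $\beta$ fixed, I would split the primes $v$ of $K$ into cases using the earlier lemmas. At primes where $F/K$ is unramified, Lemma \ref{lem: locally unobstructed at unramified primes} gives $q_{\beta,v}(t)=0$ for all $t \in V_v$, so these contribute nothing. At primes where $F\otimes_K K_v$ is a field (i.e. totally ramified or inert cubic), Lemma \ref{lem: field extensions} gives $V_v=0$, and since $t \in \Sel_2^\un(F/K)$ is locally in $V_v$, again $q_{\beta,v}(t)=0$. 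The only remaining primes are the partially ramified ones $F\otimes_K K_v \simeq K_v(\sqrt{d})\times K_v$ with $K_v(\sqrt{d})/K_v$ ramified; by Lemma \ref{lem: cubic characterization} these are precisely the Hecke-ramified primes, each carrying a unique Hecke prime $w\mid v$. There, Proposition \ref{prop: partially ramified cubic computation} shows that for the nontrivial class $t=(\epsilon,1)\in V_v$ one has $q_{\beta,v}(t)\neq 0$, i.e. $\inv_v(q_{\beta,v}(t))=1/2$, exactly when $F/K$ is Hecke ramified at $v$, and $q_{\beta,v}(t)=0$ when $t$ is locally trivial.

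The key bookkeeping step is to match the local invariant with the splitting behavior in $F(\sqrt t)/F$. For a Hecke prime $w\mid v$, the local form $Q_{\beta,t}$ at $v$ is anisotropic precisely when the local square class $t_w=\epsilon_w$ is the nontrivial unramified class, and $\epsilon_w$ being nontrivial at $w$ is exactly the statement that $w$ is inert (not split) in $F(\sqrt t)/F$, since $F(\sqrt t)/F$ is everywhere unramified. Thus $\inv_v(q_{\beta,v}(t))=1/2$ if and only if the Hecke prime above $v$ is inert in $F(\sqrt t)/F$, and $=0$ otherwise. Substituting into the displayed Brauer reciprocity relation, the sum over all $v$ reduces to a sum of $1/2$ over exactly those Hecke primes $w$ that are inert in $F(\sqrt t)/F$, and its vanishing in $\tfrac12\Z/\Z$ forces the number of such primes to be even.

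I expect the main obstacle to be the careful identification, at the partially ramified primes, that the local nonvanishing of $q_{\beta,v}$ corresponds precisely to inertness of the Hecke prime in $F(\sqrt t)/F$ rather than to some other local square-class condition; this requires tracking how the diagonalization in Proposition \ref{prop: partially ramified cubic computation} interacts with the global $t$, and confirming that the choice of global $\beta$ does not introduce spurious contributions at the finitely many ramified primes. A secondary subtlety is ensuring the single weak-approximation choice of $\beta$ simultaneously satisfies the hypothesis $v(a)>v(b)+v_0$ at every partially ramified prime, with the constant $v_0$ possibly varying with $v$; taking the maximum over the finite set of relevant primes resolves this.
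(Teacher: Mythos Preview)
Your proposal is correct and follows essentially the same strategy as the paper: choose a global trace-zero $\beta$ adapted at each partially ramified prime so that Proposition \ref{prop: partially ramified cubic computation} applies, then combine Lemmas \ref{lem: locally unobstructed at unramified primes}--\ref{lem: field extensions} with Brauer reciprocity to count the nonzero local invariants. The paper makes the weak-approximation step explicit by first approximating an unconstrained $z \in F$ and then setting $\beta = 3z - \Tr_{F/K}(z)$ to force the trace-zero condition; note also two small slips in your write-up --- not every partially ramified prime is Hecke ramified (the $p=2$ case of Lemma \ref{lem: cubic characterization}), though Proposition \ref{prop: partially ramified cubic computation} still gives $V_v \subset \ker(q_{\beta,v})$ there, and you should record that archimedean places contribute nothing since $V_v=0$ forces $t$ to be locally trivial.
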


\begin{proof}
    Let $S$ be the set of primes of $K$ with splitting type $(1^21)$.  For each $v \in S$, write $v = w_1^2w_2$ and let $\sqrt{d} \in F_{w_1}$ be as in Proposition \ref{prop: partially ramified cubic computation}. 
 Choose a single $z \in F$ such that for each $v \in S$, we have $w_1(z - \sqrt{d}) \gg 1$ and $w_2(z) \gg 1$. Then $\beta := 3z - \Tr_{F/K}(z)$ satisfies $\Tr_{F/K}(\beta) = 0$.  If we write $z = (a+b\sqrt{d}, c) \in F \otimes K_v \simeq F_{w_1} \times F_{w_2}$, then 
    \[\beta = (3a+3b\sqrt{d} - 2a - c,3c - 2a -c) = (a -c + 3b\sqrt{d}, -2a-2c), \]
    and our choice of $z$ means that $v(a-c) \gg 1$ whereas $v(b) = 0$. 

    Let $q_\beta \colon H^1(K, M) \to \Br(K)[2]$ be the global refinement. For $t \in \Sel_2^\un(F/K)$, consider the element $\alpha = q_\beta(t)$. Let $\inv_v \colon \Br(K_v)[2] \simeq \Z/2\Z$ be the invariant map.  By Proposition \ref{prop: partially ramified cubic computation}, if $F/K$ is Hecke ramified at $v$ and $\mathrm{res}_v(t)$ is equal to the non-trivial class in $V_v \subset H^1(K_v, M)$, then $\inv_v(\mathrm{res}_v\alpha) = 1/2$. Otherwise, we have $\inv_v(\mathrm{res}_v(\alpha)) = 0$, by Lemmas \ref{lem: cubic characterization}-\ref{lem: field extensions}; if $v$ is archimedean, then this is because $t$ is locally trivial at $v$. Class field theory gives $\sum_v \inv_v(\mathrm{res}_v(\alpha)) =0$, so we see that the number of Hecke ramified primes $v$ of $K$ above which there is a prime which is inert in $F(\sqrt{t})$ is even. By Lemma \ref{lem: cubic characterization}, above each such $v = w_1^2w_2$ there is exactly one Hecke prime (namely, $w_1$), so the number of Hecke primes which are inert in $F(\sqrt{t})$ is also even.   
\end{proof}

\begin{remark}
 {\em 
 The proof of Theorem \ref{thm: reciprocity law cubics} did not use Hecke's Theorem \ref{thm: hecke}. In fact, it gives a new proof of Hecke's theorem for cubic $F/\Q$, since applying it to all $ \chi \in \Cl_{F/\Q}^\vee[2] \simeq \Sel_2^\un(F/\Q)$, we conclude that $\chi([H_{F/\Q}]) =0$ and hence $[H_{F/\Q}]$ is a  square in $\Cl_F$. It follows that $[\Diff_{F/\Q}]$ is a square in the class group as well. 
 }   
\end{remark}

\section{A canonical quadratic refinement for Kummer extensions}\label{sec: kummer}

In this section, we consider Kummer extensions $F = K(\sqrt[m]{n})$, where $m = 2g+1$ is an odd prime. 
 There is then a natural choice of $\beta \in F$ of trace $0$, namely $\beta = \sqrt[m]{n}$. Let $q \colon H^1(K, M) \to \Br(K)[2]$ be the associated quadratic refinement (previously called $q_\beta$). 

\begin{proposition}\label{prop: Kummer kernel condition}
A class $t \in H^1(K, M)$ lies in $\ker(q)$ if and only if the quadratic space $\frac{1}{m}\Tr_{F/K}(tx^2)$ $($of rank $m = 2g+1$ and discriminant $1)$ is split, i.e.\ contains a $g$-dimensional isotropic subspace. 
\end{proposition}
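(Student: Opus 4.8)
The plan is to reduce everything directly to Proposition \ref{prop: obstruction in terms of isotropy}, which already identifies $\ker(q) = \ker(q_\beta)$, for $\beta = \sqrt[m]{n}$, as the set of classes $[t]$ for which the rank-$m$ form $Q_{\beta,t}(x) = \Tr_{F/K}(tx^2/f'(\beta))$ is split. Thus the entire content of the proposition is to identify this form, up to isometry over $K$, with $\frac{1}{m}\Tr_{F/K}(tx^2)$, after which the stated equivalence of splitting conditions is immediate.

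First I would compute the relevant derivative. Since $\beta = \sqrt[m]{n}$ has characteristic polynomial $f(x) = x^m - n$ (of degree $m = [F\colon K]$), we get $f'(x) = mx^{m-1}$ and hence $f'(\beta) = m\beta^{m-1} = mn/\beta$, using $\beta^m = n$. Substituting, $Q_{\beta,t}(x) = \Tr_{F/K}(tx^2/f'(\beta)) = \frac{1}{mn}\Tr_{F/K}(t\beta x^2)$, which differs from the target form $\frac{1}{m}\Tr_{F/K}(tx^2)$ only by the insertion of the factor $\beta/n$ inside the trace.

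The key step is to absorb this discrepancy by an explicit $K$-linear change of variables. Because $m - 1 = 2g$ is even, the element $\beta^{m-1} = \beta^{2g} = (\beta^g)^2$ is a square in $F^\times$, and multiplication by $\beta^g$ is a $K$-linear automorphism of $F$ (invertible since $\beta \in F^\times$, as $\beta^m = n \neq 0$). Applying it, I would compute
\[Q_{\beta,t}(\beta^g u) = \tfrac{1}{mn}\Tr_{F/K}(t\beta\cdot\beta^{2g} u^2) = \tfrac{1}{mn}\Tr_{F/K}(t\beta^m u^2) = \tfrac{1}{m}\Tr_{F/K}(tu^2),\]
again using $\beta^m = n$. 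Hence $Q_{\beta,t}$ and $\frac{1}{m}\Tr_{F/K}(tx^2)$ are isometric over $K$; in particular the latter also has rank $m$ and discriminant $1$ (these being isometry invariants, and $Q_{\beta,t}$ having them by Proposition \ref{prop: obstruction in terms of isotropy}), and one is split if and only if the other is. Combined with Proposition \ref{prop: obstruction in terms of isotropy}, this gives the claim.

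There is no serious obstacle here; the only points requiring care are the bookkeeping of the scalar $\beta/(mn)$ and, more importantly, the recognition that $\beta^{m-1}$ is a perfect square in $F$ precisely because $m$ is odd. This is exactly what makes multiplication by $\beta^g$ an honest isometry rather than merely a similitude, and hence what allows the splitting condition to transfer faithfully between the two forms.
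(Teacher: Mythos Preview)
Your proof is correct and takes essentially the same approach as the paper's. The paper's one-line argument simply observes that $f'(\beta) = m\beta^{2g}$ is a scalar times a square in $F^\times$, which is exactly the content of your explicit isometry $x \mapsto \beta^g x$; you have just written out the details more fully.
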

\begin{proof}
    This follows from Proposition \ref{prop: obstruction in terms of isotropy}, since $f'(\beta) = m\beta^{2g}$ is a scalar times a square. 
\end{proof}

The purpose of this section is to determine to what extent elements of $V$ (in the local case) and  $\Sel_2^\un(F/K)$ (in the global case) belong to $\ker(q)$, as this will be necessary information in the proofs of Theorems \ref{thm: over Q(omega)} and \ref{main thm: over Q} and for the heuristics in  \S\ref{sec: Gamma-extension heuristics}. 

\subsection{Local Kummer extensions}

Let $K$ be a finite extension of $\Q_p$ with normalized valuation $v$. Write $\zeta_m$ for a primitive $m$-th root of unity, and let $d$ (resp.\ $e$) be the degree (resp.\ ramification index) of the extension $K(\zeta_m)/K$.  Let $F = K[x]/(x^m - n)$ be a Kummer $K$-algebra of odd prime degree. Without loss of generality, we assume that $0 \leq v(n) < m$. 
\begin{lemma}\label{lem: Kummer Hecke ramification}
  $F/K$ is Hecke ramified if and only if $p = m$, $F/K$ is tamely ramified, and $e$ is even.  In this case, $n$ is an $m$-th power in $K^\times$ and a prime of $\O_F$ is a Hecke prime if and only if it is ramified. The number of such Hecke primes is $(p-1)/d$. 
\end{lemma}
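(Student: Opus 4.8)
The plan is to run a case analysis on the factorization type of $x^m - n$ over $K$, reducing everything to a single interesting case, and then to compute the Hecke ideal by hand there. First I would dispose of the Hecke-unramified cases using Lemma \ref{lem: unram or tot ram}. If $0 < v(n) < m$, then $\gcd(v(n),m) = 1$ since $m$ is prime, so any root of $x^m - n$ has valuation with exact denominator $m$; hence $F$ is a totally ramified field extension and is Hecke unramified. If $v(n) = 0$ and $p \ne m$, then $\mu_m$ is unramified over $K$ and $F/K$ is unramified, again Hecke unramified. Finally, if $p = m$, $v(n) = 0$, and $n \notin K^{\times m}$, then $x^m - n$ is irreducible (the standard irreducibility criterion for $x^p - n$ with $p$ prime), so $F$ is a field and Lemma \ref{lem: unram or tot ram} applies. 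This shows that Hecke ramification forces $p = m$, $v(n) = 0$, and $n = c^m$ an $m$-th power, which also yields the asserted ``$n$ is an $m$-th power'' conclusion.

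In the remaining case I would compute the decomposition explicitly. Writing $x^m - c^m = \prod_j (x - \zeta_m^j c)$ and letting $\Gal(K(\zeta_m)/K) \subset (\Z/m)^\times$ act by multiplication on the exponents $j$, the orbit of $j = 0$ gives the factor $K$, while the remaining $m-1$ nonzero exponents split into $(m-1)/d$ orbits of size $d$, each producing a copy of the field $K(\zeta_m)$. Thus $F \simeq K \times K(\zeta_m)^{(m-1)/d}$, which is tamely ramified (justifying the ``tamely ramified'' clause), with each $K(\zeta_m)$-factor having ramification index $e$, residue degree $d/e$, and, by tameness, different exponent $e - 1$.

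It then remains to compute the Hecke exponents. With $D := v(\Disc_{F/K}) = (m-1)(e-1)/e$, the exponent of each prime in $H_{F/K}$ is $h = De_i - d_i$: the $K$-factor gives $h = D$, and each ramified factor gives $h = De - (e-1) = (e-1)(m-2)$. Since $m$ is odd, $m-2$ is odd, so the ramified exponents are odd if and only if $e$ is even; and one checks $D = \frac{m-1}{e}(e-1)$ is even whenever $e$ is odd, so $H_{F/K}$ is a square exactly when $e$ is odd. This gives the equivalence ``Hecke ramified $\iff e$ even.'' For the Hecke-prime count I would invoke the norm computation from the proof of Lemma \ref{lem: local selmer ratio}, that $\Nm(\epsilon_L) = \epsilon_K$ if and only if the ramification index of $L/K$ is odd. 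When $e$ is even, the unramified class on each $K(\zeta_m)$-factor has trivial norm, so it contributes an element of $V$ with a nontrivial local component there; combined with $h$ odd, each of the $(m-1)/d$ ramified primes is a Hecke prime. The unramified $K$-factor has $e_0 = 1$ odd, so $\Nm(\epsilon_0) = \epsilon_K \ne 1$ cannot be cancelled within $V$ (all other factors contributing trivial norm), hence it is not a Hecke prime. This yields exactly $(m-1)/d = (p-1)/d$ Hecke primes, all ramified.

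\textbf{Main obstacle.} The delicate step is not the discriminant arithmetic but the verification of the secondary (local norm) condition in the definition of a Hecke prime: one must carefully use the norm behavior of unramified classes to separate the ramified primes (which do satisfy it) from the unramified $K$-factor (which does not, despite $e_0$ being odd), so that the Hecke primes coincide exactly with the ramified primes. Keeping the global-to-local $V$-bookkeeping of square-classes consistent through the reduction to $n \in K^{\times m}$ is where I expect the most care to be needed.
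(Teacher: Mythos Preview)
Your proposal is correct and follows essentially the same case analysis and explicit Hecke-ideal computation as the paper's own proof. Your discriminant exponent $D = (m-1)(e-1)/e$ is in fact the correct one (the paper writes $(p-1)(e-1)/d$, a typo), and your verification of the Hecke-prime characterization via $\Nm(\epsilon_L) = \epsilon_K \Leftrightarrow e(L/K)$ odd is more thorough than the paper, which states that part of the lemma without proof.
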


\begin{proof}
    If $p \neq m$, and $F/K$ is ramified, then $v(n) > 0$ and $F/K$ is a totally ramified field extension. It is Hecke unramified by Lemma \ref{lem: field extensions}.  Of course, if $F/K$ is unramified then it is also Hecke unramified.

    Now suppose $p = m$. Note that $e$ divides $d$ but may be strictly smaller.  First consider the case that $n$ is an $m$-th power. Then $F \simeq K \times K(\zeta_p)^{(p-1)/d}$. We have $\Disc_{F/K} = (\pi)^{(p-1)(e-1)/d}$ and $(\pi) = \m_1 \prod_{i = 1}^{(p-1)/d} \m_i^e$. So $\mathrm{Diff}_{F/K} = \prod_{i =1}^{(p-1)/d} \m_i^{e-1}$ and 
    \[H_{F/K} = \m_1^{(p-1)(e-1)/d}\prod_{i = 1}^{(p-1)/d} \m_i^{e(p-1)(e-1)/d - (e-1)} = \m_1^{(p-1)(e-1)/d}\prod_{i = 1}^{(p-1)/d} \m_i^{(e-1)(\frac{e(p-1)}{d} - 1)}.\]
    This is a square of an ideal if and only if $e$ is odd, as claimed.

    If $n$ is not an $m$-th power in $K^\times$, then $x^m - n$ is irreducible over $K$, hence $F/K$ is a field extension (of degree $m = p$). If $F/K$  is ramified, then it must be totally (and wildly) ramified, and hence Hecke unramified by Lemma \ref{lem: field extensions}. 
\end{proof}

Define the subspace $V \subset H^1(K, M)$ as in \S\ref{subsec: quadratic refinements over local fields}. 
 To determine $\ker(q) \cap V$, we will use the Hasse-Witt invariant $\mathrm{HW}(Q)$ of a quadratic form $Q$ over $K$. By definition 
 \[\mathrm{HW}(Q) = \prod_{i < j} (a_i, a_j) \in \{\pm 1\},\] 
 where $Q \sim \sum a_ix_i^2$ is any diagonal quadratic form equivalent to $Q$.

\begin{lemma}\label{lem: Hasse-Witt}
Let $L/K$ be an \'etale $K$-algebra of degree $2g+1$ and let $\Delta := \disc(\Tr_{F/K}(x^2)) \in F^\times/F^{\times 2}$ be the discriminant. Then the quadratic space $\Tr_{K/F}(tx^2)$ is split if and only if it has Hasse-Witt invariant $((-1)^{g(g+1)/2}N(t)^g\Delta^g,-1)$.
\end{lemma}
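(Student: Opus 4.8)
The plan is to compute the Hasse-Witt invariant of the quadratic form $Q_t := \Tr_{L/K}(tx^2)$ directly and match it against the Hasse-Witt invariant of a split form of the same rank and discriminant, using the fact that over a local field $K$ two nondegenerate quadratic forms are isometric if and only if they share the same rank, discriminant, and Hasse-Witt invariant. Since the form $Q_t$ has rank $2g+1$ and (by the discussion preceding Proposition \ref{prop: obstruction in terms of isotropy}, because $t$ has square norm) discriminant $1$, the only remaining invariant distinguishing $Q_t$ from the split form is $\mathrm{HW}$. So the statement reduces to: the split form of rank $2g+1$ and discriminant $1$ has Hasse-Witt invariant exactly $((-1)^{g(g+1)/2}N(t)^g\Delta^g,-1)$. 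The strategy is therefore to identify the Hasse-Witt invariant of the \emph{split} form of this rank and discriminant, and then observe that $Q_t$ is split precisely when its own $\mathrm{HW}$ equals that value.

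First I would pin down the Hasse-Witt invariant of the standard split form $\sum_{i=1}^{g}x_iy_i + z^2$ (i.e.\ $g$ hyperbolic planes plus a one-dimensional piece forcing discriminant $1$). A hyperbolic plane is isometric to $x^2 - y^2$, i.e.\ $\langle 1, -1\rangle$, so the split form is diagonalized as $\langle 1, -1, 1, -1, \ldots, 1, -1, c\rangle$ where the last entry $c$ is chosen to make the total discriminant equal to $1$. Computing $\prod_{i<j}(a_i,a_j)$ for this explicit diagonalization is a bookkeeping exercise in the Hilbert symbol: the $g$ copies of $(1,-1)=1$ contribute nothing individually, but the cross terms among the $\pm 1$ entries and with $c$ accumulate a sign governed by the combinatorial factor $(-1)^{g(g+1)/2}$. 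This is where the binomial-type exponent $g(g+1)/2$ should emerge naturally, from counting the number of pairs $\{i,j\}$ contributing $(-1,-1)$.

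Next I would relate the abstract discriminant $\Delta = \disc(\Tr_{L/K}(x^2))$ and the norm $N(t)$ to the diagonalization of $Q_t$. Diagonalizing $Q_t \sim \langle a_1, \ldots, a_{2g+1}\rangle$, the product $\prod a_i$ equals the discriminant of $Q_t$, which is $\Delta \cdot N(t)$ up to squares (the $t$ twists the trace form and scales its discriminant by $N(t)$). Using bimultiplicativity of the Hilbert symbol, $\mathrm{HW}(Q_t)$ and the target invariant $((-1)^{g(g+1)/2}N(t)^g\Delta^g,-1)$ are both expressible in terms of these quantities. The key algebraic identity to exploit is that for a form of fixed discriminant, splitness is a single Hilbert-symbol condition, and the factors $N(t)^g$ and $\Delta^g$ track how the Hasse-Witt invariant of the split form of discriminant $1$ transforms when one insists the actual discriminant be $\Delta N(t)$ while normalizing back to $1$.

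The main obstacle, I expect, will be the careful bookkeeping of signs in the Hilbert-symbol product, particularly getting the exponent $g(g+1)/2$ correct and tracking the interaction between the discriminant normalization and the $N(t)^g \Delta^g$ factor; sign errors in such Hasse-Witt computations are notoriously easy to make. A secondary subtlety is justifying that $Q_t$ split (contains a $g$-dimensional isotropic subspace) is equivalent to $Q_t$ being isometric to the standard split form of its rank and discriminant — this uses the classification of quadratic forms over local fields and the fact that a form of odd rank $2g+1$ and discriminant $1$ is split if and only if its Witt index is maximal, which I would cite or verify against the standard structure theory. Once the invariant of the split model is computed, matching it to $\mathrm{HW}(Q_t)$ finishes the proof.
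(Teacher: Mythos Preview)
Your approach is the paper's: invoke the local classification of quadratic forms by rank, discriminant, and Hasse--Witt invariant, write the split form of rank $2g+1$ and discriminant $d$ as $\langle (-1)^g d\rangle \oplus gH$, compute its Hasse--Witt invariant $((-1)^{g(g+1)/2}d^g,-1)$, and then substitute $d = N(t)\Delta$. One correction: the form $\Tr_{L/K}(tx^2)$ in this lemma has discriminant $N(t)\Delta$, not $1$ --- the discriminant-$1$ remark before Proposition~\ref{prop: obstruction in terms of isotropy} concerns the different form $\Tr(tx^2/f'(\beta))$ --- so drop the first paragraph's claim and the ``normalizing back to $1$'' detour and work directly with the split model of discriminant $N(t)\Delta$, as your third paragraph already does.
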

\begin{proof}
The rank, discriminant, and Hasse-Witt invariant form a complete set of invariants for equivalence classes of quadratic forms over the local field $K$ \cite{MR522835}. There is a unique split space of discriminant $d$ and rank $2g+1$ over $K$, namely $\langle (-1)^gd \rangle \oplus gH$ where $H$ is the hyperbolic plane. Denote its Hasse-Witt invariant by $\varepsilon_d = ((-1)^{g(g+1)/2}d^g,-1)$. Therefore, $\Tr_{F/K}(tx^2)$ splits if and only if it has Hasse-Witt invariant equal to $\varepsilon_{N(t)\Delta}$. 
\end{proof}

\begin{remark}\label{rem: split quadratic form}
{\em
    In our setting of Kummer extensions, the quadratic form $\frac{1}{m}\Tr_{F/K}(tx^2)$ has discriminant $1$. If $p > 2$, then it is split if and only if its Hasse-Witt invariant is $1$.  
}
\end{remark}

\begin{theorem}\label{thm: local Kummer obstruction}
    If $p \neq m$ or $e \in \{1,m-1\}$, then $V \subset \ker(q)$ if and only if $F/K$ is Hecke unramified.  
\end{theorem}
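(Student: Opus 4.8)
The plan is to run the case analysis dictated by Lemma \ref{lem: Kummer Hecke ramification} and reduce the whole biconditional to a single Hasse--Witt invariant computation in the one genuinely ramified case. Recall that by Proposition \ref{prop: Kummer kernel condition} membership of $t$ in $\ker(q)$ is equivalent to splitness of $\frac1m\Tr_{F/K}(tx^2)$.

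First I would dispose of the Hecke unramified cases, where I must show $V \subset \ker(q)$. If $p \neq m$, then $F/K$ is either unramified (when $n$ is a unit) or a totally tamely ramified field (when $v(n)>0$); the former is handled by Lemma \ref{lem: locally unobstructed at unramified primes} and the latter by Lemma \ref{lem: field extensions}, since then $V=0$. If $p=m$ and $e=1$, then $K(\zeta_m)/K$ is unramified, so $F$ is either unramified as an \'etale algebra (Lemma \ref{lem: locally unobstructed at unramified primes}) or a field (Lemma \ref{lem: field extensions}); either way $V \subset \ker(q)$. Finally, when $p=m$ and $e=m-1$ but $n \notin K^{\times m}$, the algebra $F$ is a totally wildly ramified field and again $V=0$.

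The only remaining, and crucial, case is $p=m$, $e=m-1$, and $n \in K^{\times m}$, where Lemma \ref{lem: Kummer Hecke ramification} tells us $F/K$ is Hecke ramified, so I must exhibit a class of $V$ outside $\ker(q)$. Here $d=e=m-1$ forces $F \simeq K \times L$ with $L=K(\zeta_m)/K$ totally tamely ramified of degree $m-1$, and by Lemma \ref{lem: local selmer ratio} the group $V$ has order $2$ with nontrivial class $t=(1,\epsilon_L)$. The key observation is that since $L/K$ is totally ramified the residue fields agree, so $\epsilon_L$ is the image of the unramified unit $\epsilon=\epsilon_K \in \O_K^\times$; hence $\Tr_{L/K}(\epsilon_L x^2)=\epsilon\cdot \Tr_{L/K}(x^2)$ is an honest scalar twist. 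I would then compare $Q_0=\frac1m\Tr_{F/K}(x^2)$ (which is split, being the $t=1$ form) with $Q_1=\frac1m\Tr_{F/K}(tx^2)$, both of rank $m$ and discriminant $1$. Since $p=m$ is odd, Remark \ref{rem: split quadratic form} and Lemma \ref{lem: Hasse-Witt} reduce splitness to the Hasse--Witt invariant, and the scalar-twist identity $\mathrm{HW}(\epsilon R)=(\epsilon,\epsilon)^{\binom{m-1}{2}}(\epsilon,(\det R)^{m-2})\,\mathrm{HW}(R)$, applied to the rank-$(m-1)$ piece $R=\frac1m\Tr_{L/K}(x^2)$, collapses to $\mathrm{HW}(Q_1)/\mathrm{HW}(Q_0)=(\epsilon,\det R)$.

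The main obstacle is evaluating this last symbol, which amounts to pinning down $\det R$ as a square class. I would compute the trace form of the tame totally ramified extension $L/K$ in the basis $1,\pi_L,\dots,\pi_L^{m-2}$, using that $\mu_{m-1}\subset \Q_p \subset K$ makes $L/K$ Kummer and that $\Tr_{L/K}(\pi_L^k)$ vanishes unless $(m-1)\mid k$; this exhibits $\det R$ as a uniformizer times a unit, whence $(\epsilon,\det R)=(\epsilon,\pi)=-1$ because $\epsilon$ is a nonsquare unit and the residue characteristic is odd. The one subtlety to watch is the parity bookkeeping: $(\epsilon,\epsilon)=(\epsilon,-1)=1$ since $\epsilon$ is a unit in odd residue characteristic; $m-1$ even gives $\det(\epsilon R)=\epsilon^{m-1}\det R \equiv \det R$, so the cross terms of the full forms with $\langle 1/m\rangle$ cancel; and $m-2$ odd gives $(\epsilon,(\det R)^{m-2})=(\epsilon,\det R)$. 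This leaves exactly the single nontrivial symbol $(\epsilon,\pi)=-1$, so $\mathrm{HW}(Q_1)\neq \mathrm{HW}(Q_0)$, hence $Q_1$ is not split and $t\notin\ker(q)$, completing the Hecke ramified case.
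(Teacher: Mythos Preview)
Your proof is correct and your case analysis matches the paper's. The interesting divergence is in the Hecke ramified case $p=m$, $e=d=m-1$, $n\in K^{\times m}$.

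The paper computes $\mathrm{HW}(Q_t)$ directly: it writes $K(\zeta_m)=K(\sqrt[m-1]{b})$, proves the auxiliary Lemma~\ref{lem: cyclotomic kummer} that $b\equiv -p\pmod{K^{\times2}}$, invokes Serre's explicit formula for the trace form of a Kummer extension, and then carries out a line-by-line Hasse--Witt computation that eventually collapses to $(\epsilon,p)$, which is $-1$ because $v(p)$ is odd under the hypothesis. Your route is cleaner: the observation that $\epsilon_L$ may be represented by $\epsilon_K\in K$ (since $L/K$ is totally ramified with equal residue fields and odd residue characteristic) turns $Q_t$ into a scalar twist $\langle 1/m\rangle\perp\epsilon R$ of $Q_0=\langle 1/m\rangle\perp R$. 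The comparison $\mathrm{HW}(Q_1)/\mathrm{HW}(Q_0)=(\epsilon,\det R)$ then reduces the whole computation to the single fact that $\disc(L/K)$ has odd valuation $m-2$, which is immediate from tame total ramification of degree $m-1$. This bypasses both Serre's formula and Lemma~\ref{lem: cyclotomic kummer} entirely.

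Two small remarks. First, your parenthetical ``totally wildly ramified field'' in the case $p=m$, $e=m-1$, $n\notin K^{\times m}$ is correct but stronger than you need: all you use is that $F$ is a field, giving $V=0$ via Lemma~\ref{lem: field extensions}. Second, your basis computation with $\pi_L$ implicitly uses that a totally tamely ramified extension of degree $m-1$ over a field containing $\mu_{m-1}$ is generated by an $(m-1)$-th root of a uniformizer; this is standard, but you could equally just cite that the different of such an extension has valuation $e-1=m-2$.
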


\begin{proof}
    When $p\neq m$ or when $F/K$ is either unramified or totally ramified, this follows from Lemma \ref{lem: Kummer Hecke ramification} and Lemmas \ref{lem: field extensions}-\ref{lem: locally unobstructed at unramified primes}. 
    
    So we may assume now that $p = m$ and moreover that $n \in K^{\times m}$. As before, we have $F \simeq K \times K(\zeta_m)^h$, where $d = [K(\zeta_m) \colon K]$ and $hd = m-1 = p-1$.  If $e = 1$, then $F/K$ is unramified, a case we have already covered. So it remains to consider the case $e = d = m-1 = p-1$, where we must show (by Lemma \ref{lem: Kummer Hecke ramification}) that $V \not\subset \ker(q)$. In this case, $F \simeq K \times K(\zeta_m)$ and the unique  non-zero element $t \in V$ is $(1,\epsilon)$. The quadratic form $Q_t(x) = \frac{1}{m}\Tr_{F/K}(tx^2)$ is equivalent to 
    \[\langle m\rangle  \perp m\epsilon \Tr_{K(\zeta_m)/K}(x^2).\]
    Note that $K(\zeta_m)/K$ is of the form $K(\sqrt[m-1]{b})$ for some $b \in K^\times$, since $\Q_p$ contains $\mu_{p-1}$. 
    \begin{lemma}\label{lem: cyclotomic kummer}
        We have $b \equiv -p \pmod{K^{\times2}}$
    \end{lemma}
    \begin{proof}
        We may assume $K = \Q_p$. The discriminant of $\Q(\zeta_p)/\Q$ is $(-1)^{(p-1)/2}p^{p-2} \equiv (-1)^{(p-1)/2} p$. On the other hand, the discriminant of $x^{p-1} - b$ is 
        \[(-1)^{(p-1)(p-2)/2}(p-1)^{p-1}(-b)^{p-2} \equiv (-1)^{(p-1)(p-2)/2 + 1}b\] 
        Equating these two, we find that $b \equiv -p$. 
    \end{proof} 
    
    A formula of Serre \cite[pg. 672]{SerreWittInvariant84} for the trace form of a Kummer extension gives 
    \[\Tr_{K(\zeta_m)/K}(x^2) \sim \langle p-1, (-1)^{\frac{p-1}{2}}(p-1)p\rangle \perp H^{(p-1)/2 - 1} \sim \langle -1,(-1)^{\frac{p+1}{2}}p\rangle \perp H^{(p-1)/2 - 1}.\]
    The Hasse-Witt invariant of $Q_t$ is then 
    \begin{align*}
    \mathrm{HW}(Q_t) &= (m,m)^{\binom{p}{2}}\mathrm{HW}(\langle -\epsilon, (-1)^{\frac{p+1}{2}}\epsilon p, \epsilon, -\epsilon,\cdots, \epsilon, -\epsilon\rangle)\\
    &=  (m,m)^{\frac{p-1}{2}} (\epsilon, \pm p)\mathrm{HW}(\langle -1,(-1)^{\frac{p+1}{2}}p,1,-1,\cdots, 1,-1\rangle)\\
    &=(p,p)^{\frac{p-1}{2}} (\epsilon, p)((-1)^{\frac{p+1}{2}}p,-1)^{\frac{p-1}{2}}\\
    &= (p,-1)^{\frac{p-1}{2}} (\epsilon, p)(p,-1)^{\frac{p-1}{2}}\\
    &= (\epsilon,p)\\
    &=\begin{cases}
        1 & \mbox{ if } v(p) \equiv 0\pmod{2}\\
        -1 & \mbox{ if } v(p) \equiv 1\pmod{2}
    \end{cases}.
    \end{align*}

    However, $v(p)$ is odd, by Lemma \ref{lem: cyclotomic kummer} and our assumption that $K(\zeta_m)/K$ is totally ramified of degree $m-1$.  Thus $\mathrm{HW}(Q_t) = -1$ and $Q_t$ is not split.  Hence $V \not\subset \ker(q)$, by Proposition \ref{prop: Kummer kernel condition} and Remark \ref{rem: split quadratic form}. 
\end{proof}

\subsection{Global Kummer extensions}
Now  let $K$ be a number field.  The following is a more precise version of Hecke reciprocity for prime degree pure fields $F = K(\sqrt[m]{n})$ satisfying a mild hypothesis. Note that by Lemma \ref{lem: Kummer Hecke ramification}, if $v$ is a prime of $K$ which is Hecke ramified in $F$, then $v$ lies above $m$ and is tamely ramified in $F$. Moreover, the Hecke primes of $F/K$ are exactly the ramified primes of $F$ lying over such a prime $v$ of $K$.  Denote by $q_v \colon H^1(K_v, M) \to \Z/2\Z$ the local quadratic refinement.

\begin{theorem}\label{thm: global Kummer reciprocity law}
    Assume that for all primes $v$ of $K$ above $m$, the ramification index $e(K_v(\zeta_m)/K_v)$ is $1$ or $m-1$.  Let $F = K(\sqrt[m]{n})$ be a degree $m$ Kummer extension of $K$ and let $t \in \Sel_2^\un(F/K)$. Then $q_v(t) = 1/2 \in \Br(K_v)[2]$ if and only if $v$ is tamely ramified in $F/K$ and $t \notin F_w^{\times 2}$, where $w$ is the unique ramified prime above $v$. Since $\sum_v q_v(t) = 0$, the number of Hecke primes $w$ of $F/K$ such that $w$ is inert in $F(\sqrt{t})/F$ is even.
\end{theorem}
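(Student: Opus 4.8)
The plan is to reduce the entire statement to the local computation already carried out in Theorem \ref{thm: local Kummer obstruction}. The crucial simplification, compared with the cubic case of Theorem \ref{thm: reciprocity law cubics}, is that a Kummer extension supplies a single global trace-zero element $\beta = \sqrt[m]{n}$, so the global refinement $q = q_\beta$ is defined once and for all and restricts at each place to the local refinement $q_v$ of Theorem \ref{thm: local Kummer obstruction}; no patching of local choices of $\beta$ is required. Concretely, for $t \in H^1(K, M)$ the class $q_\beta(t) \in \Br(K)[2]$ is represented by the quadratic form $\frac1m\Tr_{F/K}(tx^2)$ (Proposition \ref{prop: Kummer kernel condition}), and base change to $K_v$ gives the compatibility $\inv_v(q_\beta(t)) = q_v(\mathrm{res}_v(t))$.

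First I would establish the local dichotomy. Fix $t \in \Sel_2^\un(F/K)$, so $\mathrm{res}_v(t) \in V_v$ for every $v$, and run through the primes $v$ of $K$. If $v \nmid m$, then $F/K$ is Hecke unramified at $v$ by Lemma \ref{lem: Kummer Hecke ramification}, so $V_v \subset \ker(q_v)$ by Theorem \ref{thm: local Kummer obstruction} (applied with $p \neq m$) and $q_v(t) = 0$. If $v \mid m$, the hypothesis $e \in \{1, m-1\}$ lets me invoke Theorem \ref{thm: local Kummer obstruction} directly: $V_v \subset \ker(q_v)$ if and only if $F/K$ is Hecke unramified at $v$. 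By Lemma \ref{lem: Kummer Hecke ramification}, $F/K$ is Hecke ramified at such a $v$ precisely when it is tamely ramified there, which forces $e = m-1$ and $F \otimes_K K_v \simeq K_v \times K_v(\zeta_m)$ with a unique ramified prime $w \mid v$. In the Hecke unramified subcases one gets $q_v(t) = 0$, whereas in the Hecke ramified subcase the explicit Hasse--Witt computation inside the proof of Theorem \ref{thm: local Kummer obstruction} shows the nontrivial class of $V_v$ has invariant $1/2$.

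Combining these cases yields the first assertion: $q_v(t) = 1/2$ if and only if $v \mid m$ is tamely ramified in $F/K$ and $\mathrm{res}_v(t)$ is the nontrivial element of $V_v$, i.e.\ $t \notin F_w^{\times 2}$ for the unique ramified prime $w \mid v$. (At every other place, including $v \nmid m$ where $F$ might be tamely ramified, Lemma \ref{lem: local selmer ratio} gives $V_v = 0$, so the condition $t \notin F_w^{\times 2}$ cannot occur.) For the global conclusion I would then invoke reciprocity in the Brauer group: the fundamental exact sequence $0 \to \Br(K) \to \bigoplus_v \Br(K_v) \to \Q/\Z \to 0$ forces $\sum_v \inv_v(q_\beta(t)) = 0$, that is $\sum_v q_v(t) = 0$ in $\frac12\Z/\Z$. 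Since each Hecke ramified $v$ carries exactly one Hecke prime $w$, and since by the local dichotomy $v$ contributes $1/2$ exactly when that $w$ is inert in $F(\sqrt{t})/F$ (equivalently $t \notin F_w^{\times 2}$) and $0$ otherwise, the number of inert Hecke primes equals the number of $v$ with $q_v(t) = 1/2$, which must be even.

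I expect the work here to be bookkeeping rather than new input: the substantive content, namely that the Hasse--Witt invariant equals $-1$ in the tame case, is already contained in Theorem \ref{thm: local Kummer obstruction}. The points needing care are the base-change compatibility $\inv_v(q_\beta(t)) = q_v(\mathrm{res}_v(t))$ and the observation that, under the hypothesis $e \in \{1, m-1\}$, each Hecke ramified prime $v$ of $K$ has a unique ramified prime above it, so that counting Hecke ramified places $v$ with $q_v(t) = 1/2$ is the same as counting inert Hecke primes $w$.
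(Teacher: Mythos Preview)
Your proposal is correct and follows essentially the same approach as the paper: both reduce the local analysis to Lemma~\ref{lem: Kummer Hecke ramification} and Theorem~\ref{thm: local Kummer obstruction}, then invoke Brauer group reciprocity to conclude the parity statement. Your write-up is somewhat more explicit about the base-change compatibility and the fact that the hypothesis $e \in \{1, m-1\}$ forces a unique ramified (hence unique Hecke) prime above each Hecke ramified $v$, but the argument is the same.
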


\begin{proof}
By Lemma \ref{lem: Kummer Hecke ramification} and Theorem \ref{thm: local Kummer obstruction},  we have $q_v(t) = 0$ for all primes $v$ except possibly those $v$ dividing $m$ which split as $v = w_1w_2^{m-1}$. For the latter, the space $V_v \subset H^1(K_v, M)$ is $1$-dimensional and a class $t \in \Sel_2^\un(F/K)$ restricts to the non-trivial class if and only if $t \notin F_{w_2}^{\times 2}$. Moreover, in this case, we have $q_v(\mathrm{res}_v(t)) = 1/2$ by Theorem \ref{thm: local Kummer obstruction} and Lemma \ref{lem: Kummer Hecke ramification}. Since $\sum_v \inv_v(q_v(\mathrm{res}_v(t))) = 0$, we see that the number of such primes $v$ is even. Finally, note that the non-trivial class in $V_v$ projects to a square in $F_{w_1}^\times$ since the norm of the class $\epsilon \in F_{w_2}^\times/F^{\times 2}_{w_2}$ is trivial. So the unique ramified prime $w$ above such a $v$ is the unique Hecke prime above $v$. Thus, the number of Hecke primes $w$ in $F$ with $t \notin F_w^{\times 2}$ is also even.   
\end{proof}

The hypothesis in our theorem always holds when $K = \Q$, so we obtain:

\begin{corollary}\label{thm: reciprocity for Kummer extensions over Q}
    Let $F_n = \Q(\sqrt[m]{n})$ be a degree $m$ Kummer extension and let $t \in \Sel_2^\un(F_n)$. Then $t$ lies in the kernel of $q \colon \Sel_2^\un(F_n) \to \Br(\Q)[2]$ and is locally square at all primes of $F_n$ above $m$. 
\end{corollary}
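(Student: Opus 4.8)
The plan is to derive Corollary \ref{thm: reciprocity for Kummer extensions over Q} as the special case $K = \Q$ of Theorem \ref{thm: global Kummer reciprocity law}. The first task is to verify the hypothesis of that theorem, namely that for every prime $v$ of $\Q$ lying above $m$ (there is only one, $v = m$ itself, since $m$ is prime) the ramification index $e(\Q_m(\zeta_m)/\Q_m)$ equals $1$ or $m-1$. This is immediate: the extension $\Q_m(\zeta_m)/\Q_m$ is the cyclotomic extension obtained by adjoining a primitive $m$-th root of unity to $\Q_m$, which is totally ramified of degree $m-1$. Hence $e = m-1$, and the hypothesis holds, so Theorem \ref{thm: global Kummer reciprocity law} applies verbatim with $K = \Q$.

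Next I would extract the pointwise conclusion. Theorem \ref{thm: global Kummer reciprocity law} tells us that $q_v(t) = 0$ in $\Br(\Q_v)[2]$ for every prime $v \neq m$, and that at $v = m$ we have $q_m(t) = 1/2$ precisely when $m$ is tamely ramified in $F_n$ and $t \notin F_w^{\times 2}$ for the unique ramified prime $w$ above $m$. The crucial extra input over $\Q$ is that $m$ is the \emph{only} rational prime over $m$, so Hecke reciprocity leaves no room for a second obstructing prime to cancel it: the global relation $\sum_v \inv_v(q_v(\mathrm{res}_v(t))) = 0$ from class field theory forces the single local term $\inv_m(q_m(t))$ to vanish on its own. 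In other words, $q_m(t) = 0$ necessarily, so the tame-ramification alternative producing $q_m(t) = 1/2$ cannot actually occur for a global Selmer class; the number of obstructing primes above $m$ must be even, but there is at most one, hence zero.

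From $q_v(t) = 0$ for all $v$ it follows that $q(t) = \sum_v q_v(t) = 0$ in $\Br(\Q)[2]$, giving the first assertion that $t$ lies in $\ker(q)$. For the second assertion, that $t$ is locally a square at every prime of $F_n$ above $m$, I would argue from the vanishing $q_m(t) = 0$ together with Theorem \ref{thm: local Kummer obstruction} and Lemma \ref{lem: Kummer Hecke ramification}. If $F_n/\Q$ is wildly ramified or unramified at $m$, there are no Hecke primes above $m$ and the relevant local space $V_m$ either is trivial (in the field-extension case, by Lemma \ref{lem: local selmer ratio}) or its classes are locally square at the ramified prime, so every $t$ is automatically square at all primes over $m$. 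In the tame case, where $(m) = w_1 w_2^{m-1}$ in $F_n$, the Selmer class $t$ projects to a square in $F_{n,w_1}^\times$ automatically (its norm being trivial), while $q_m(t) = 0$ combined with Theorem \ref{thm: local Kummer obstruction} forces $t \in F_{n,w_2}^{\times 2}$ at the ramified prime as well. This is exactly the content of Theorem \ref{thm: Kummer obstruction} phrased locally, and it is the step where Hecke reciprocity does genuine work.

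The only real subtlety — and the step I would be most careful about — is the passage from ``the number of obstructing Hecke primes is even'' to ``$t$ is square at the specific prime $w_2$ above $m$.'' Over a general $K$ one cannot make this leap, since there could be several primes over $m$ and the parity constraint only relates them collectively. The point of specializing to $\Q$ is precisely that $m$ is the unique prime over $m$, so ``even'' degenerates to ``zero,'' pinning down the local behavior at the single prime. I would make sure to state this reduction explicitly rather than appealing vaguely to parity, since it is the crux of why the corollary is cleaner over $\Q$ than over a general number field.
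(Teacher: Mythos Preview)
Your proposal is correct and follows essentially the same route as the paper: verify that $e(\Q_m(\zeta_m)/\Q_m) = m-1$ so Theorem \ref{thm: global Kummer reciprocity law} applies, then use the fact that $m$ is the \emph{only} rational prime above $m$ to collapse the parity constraint on Hecke primes to the statement that $t$ is locally a square at every prime of $F_n$ above $m$; the claim $t \in \ker(q)$ then follows since all local $q_v(t)$ vanish. The paper's own proof is organized slightly differently (it splits into wild and tame cases first, deducing local squareness and leaving the $\ker(q)$ claim implicit), but the substance is the same.
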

\begin{proof}
    The unique prime $p$ of $\Q$ above $m$ (namely $p = m$ itself) satisfies $[\Q_p(\zeta_m) \colon \Q_p] = m-1$, so the hypotheses of Theorem \ref{thm: global Kummer reciprocity law} hold. Moreover, this is the only prime which could be Hecke ramified in $F_n$. If $F_n/\Q$ is wildly ramified at $p$, then $V_p = 0$ and every $t \in \Sel_2^\un(F_n)$ is locally a square at the unique prime $w$ of $F_n$ above $p$. If $F_n/\Q$ is tamely ramified at $p$, then $(p) = w_1 w_2^{p-1}$, and $w_2$ is the unique Hecke prime of $F_n/\Q$. It follows that $t$ is a square at $w_1$ (for local reasons) and a square at $w_2$ as well (because of the reciprocity law in Theorem \ref{thm: global Kummer reciprocity law}).  
\end{proof}

The hypothesis in Theorem \ref{thm: global Kummer reciprocity law} also always holds for cubic extensions. We highlight the following special case (which would be relevant for versions of Theorems \ref{thm: over Q(omega)} and \ref{main thm: over Q} over other number fields).
\begin{corollary}\label{cor: Kummer cubic global result}
    \label{thm: cubic Kummer brauer obstruction over number fields}
Suppose that $K \otimes_\Q \Q_3 \simeq \prod_{i = 1}^N L_i$, with at most one factor $L_i$ not containing $\zeta_3$.  Then for every cubic Kummer extension $F_n = K(\sqrt[3]{n})$, the classes $t \in \Sel_2^\un(F_n/K) \subset F_n^\times/F_n^{\times 2}$ all lie in the kernel of $q \colon \Sel_2^\un(F_n/K) \to \Br(K)[2]$ and are local squares at any Hecke prime of $F_n$. 
\end{corollary}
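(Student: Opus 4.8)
The plan is to deduce this from Theorem \ref{thm: global Kummer reciprocity law} applied with $m = 3$, after verifying that its hypothesis holds automatically and that the assumption on $K \otimes_\Q \Q_3$ forces the relevant Brauer-group product formula to collapse to a single term.

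First I would check that the hypothesis of Theorem \ref{thm: global Kummer reciprocity law} is satisfied. For any prime $v$ of $K$ above $3$, the extension $K_v(\zeta_3)/K_v$ has degree dividing $[\Q_3(\zeta_3) \colon \Q_3] = 2$, so its ramification index $e(K_v(\zeta_3)/K_v)$ is either $1$ or $2 = m - 1$. Thus the hypothesis holds for every cubic Kummer extension $F_n = K(\sqrt[3]{n})$, and the theorem applies to each $t \in \Sel_2^\un(F_n/K)$.

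Next I would locate the primes $v$ at which the local refinement $q_v$ can be nonzero. By Theorem \ref{thm: global Kummer reciprocity law}, $q_v(t) = 1/2$ forces $v$ to be tamely ramified in $F_n/K$, and by Lemma \ref{lem: Kummer Hecke ramification} such a $v$ must lie above $3$ with $e(K_v(\zeta_3)/K_v)$ even, i.e.\ $e = 2$, which requires $\zeta_3 \notin K_v$. The hypothesis that at most one factor $L_i$ of $K \otimes_\Q \Q_3$ omits $\zeta_3$ says precisely that at most one prime $v$ of $K$ has $\zeta_3 \notin K_v$; hence there is at most one prime $v$ at which $q_v(t)$ can equal $1/2$.

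Finally I would invoke reciprocity in the Brauer group, which gives $\sum_v \inv_v(q_v(t)) = 0$ in $\Z/2\Z$. Since at most one summand is nonzero, that lone summand must itself vanish, so $q_v(t) = 0$ for every $v$ and therefore $q(t) = 0$, placing $t$ in $\ker(q)$. Reading this vanishing back through the equivalence $q_v(t) = 1/2 \iff t \notin F_w^{\times 2}$ from Theorem \ref{thm: global Kummer reciprocity law} shows that $t$ is a local square at the (at most one) Hecke prime $w$ of $F_n$, as any Hecke prime lies above a Hecke-ramified $v$. The only point requiring care is the bookkeeping that translates the condition ``$L_i$ does not contain $\zeta_3$'' into the count of Hecke-ramified primes of $K$; once that translation is in place, the single-term product formula collapses and the statement follows immediately, so I do not expect any substantial obstacle beyond this verification.
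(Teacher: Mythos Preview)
Your proposal is correct and follows essentially the same route as the paper. The paper's proof is simply ``As before, noting that if $L_i$ contains $\zeta_3$ then the corresponding prime of $K$ is Hecke unramified in $F/K$,'' which is exactly the argument you spell out: verify the hypothesis of Theorem~\ref{thm: global Kummer reciprocity law} holds automatically for $m=3$, observe that only primes $v\mid 3$ with $\zeta_3\notin K_v$ can be Hecke ramified (hence there is at most one), and let the single-term Brauer reciprocity sum collapse.
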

\begin{proof}
    As before, noting that if $L_i$ contains $\zeta_3$ then the corresponding prime of $K$ is Hecke unramified in $F/K$. (In particular, even though we say ``any Hecke prime'' in the statement of the corollary, there is actually at most one such prime under the given hypothesis.) 
\end{proof}

\section{Pairs of binary cubic forms with vanishing $A_1$-invariant}\label{sec: counting pairs of binary cubics}

We recall some facts from \cite[\S3-4]{AlpogeBhargavaShnidman}, which will be used in the next section. Let $K$ be a field of characteristic not $2,3$ and let $G = \SL_2^2/\mu_2$, the algebraic group over $K$. Let $V = K^2 \otimes \Sym^3(K^2)$ be the $8$-dimensional irreducible $G$-representation of pairs of binary cubic forms. The ring of invariants $\overline{K}[V]^{G(\overline{K})}$ is equal to $\overline{K}[A_1,A_3]$, where $A_i$ is an invariant of degrees $2i$. Explicitly, we have
\begin{equation}\label{A1formula}
    A_1((F_1,F_2)) = r_1 r_8 - 3r_2 r_7 + 3r_3 r_6 - r_4 r_5,
    \end{equation}
where $F_1(x,y) = \sum_{i=0}^3 \binom3ir_{i+1} x^{3-i} y^i$ and $F_2(x,y) = \sum_{i=0}^3 \binom3i r_{i+5} x^{3-i} y^i$. The invariant $A_3$ is given by the degree 3 invariant $J$ of the covariant binary quartic form $G = (\partial_xF_1)(\partial_yF_2) - (\partial_yF_1)(\partial_xF_2)$. More precisely, 
\begin{equation}\label{A3formula}A_3(v) =  \frac{1} {108}\Bigl(J(G(v))- A_1(v)^3\Bigr)\end{equation}
for $v = (F_1,F_2) \in V(\C)$.  Here, we have normalized the invariants $J$ and $A_3$ so that the pair of cubic forms $v_n := (3xy^2,x^3 + 2ny^3)$ satisfies $J(G(v_n)) = 108n$ and $A_3(v_n) = n$.

The locus $A_1 = 0$ forms a quadric $Y \subset V$. 
For each $n \in K^\times$, there is a distinguished $G(K)$-orbit in $V(K)$ with invariants $A_1 = 0,A_3 = n$, namely the orbit of $v_n$.  The stabilizer $\mathrm{Stab}(v_n) \subset G$ is of order $4$, and is isomorphic to the $K$-group scheme 
\[M_n = \ker\left(\Res_{F_n/K}(\mu_2) \stackrel{\Nm}{\longrightarrow} \mu_2\right),\]
where $F_n = K[x]/(x^3 - n)$.  Let $Y(K)_n \subset Y(K)$ be the subset of pairs with invariants $A_1 = 0,A_3 = n$. Let \[H^1(K, \mathrm{Stab}(v_n)) \stackrel{q_n}{\longrightarrow} H^1(K, G)\]
be the map induced by the inclusion $\mathrm{Stab}(v_n) \hookrightarrow G$. As explained in \cite{BhargavaGross2013}, $q_n$ can be identified with the map $H^1(K, M_n) \to \Br(K)[2]$ of Proposition \ref{prop: obstruction in terms of isotropy} (with $\beta = \sqrt[3]{n}$) which we will also denote by $q_n$. 
 By arithmetic invariant theory \cite[Proposition 1]{BhargavaGross2014}, we have:

\begin{proposition}\label{prop: parameterization}
There is a bijection, functorial in $K$, between $G(K)\backslash Y(K)_n$ and $\ker(q_n)$.
\end{proposition}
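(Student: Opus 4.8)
The plan is to invoke the standard machinery of arithmetic invariant theory, as developed in \cite{BhargavaGross2014}, specialized to the stabilizer situation described by $v_n$. The key observation is that since $V$ is an $8$-dimensional representation of $G = \SL_2^2/\mu_2$ with the distinguished orbit of $v_n$ having stabilizer $\mathrm{Stab}(v_n) \simeq M_n$, the parameterization of orbits in the fiber $Y(K)_n$ reduces to a question in Galois cohomology. First I would recall the general principle: for a linear algebraic group $G$ acting on a variety with a distinguished $K$-rational point $v_n$ whose stabilizer is $S := \mathrm{Stab}(v_n)$, the $G(K)$-orbits on $\{v \in V(K) : v \text{ is } G(\overline{K})\text{-conjugate to } v_n\}$ are classified by $\ker\left(H^1(K, S) \to H^1(K, G)\right)$. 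This is the content of \cite[Proposition 1]{BhargavaGross2014}, obtained by twisting and the long exact sequence in nonabelian cohomology.

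The crucial input I would verify is that the set $Y(K)_n$ is precisely the set of $K$-rational points of $V$ that become $G(\overline{K})$-equivalent to $v_n$ over $\overline{K}$. Since $A_1$ and $A_3$ generate the ring of invariants $\overline{K}[V]^G = \overline{K}[A_1, A_3]$, any two vectors with the same invariants $(A_1, A_3) = (0, n)$ lie in the same $G(\overline{K})$-orbit, provided the fiber is a single orbit over $\overline{K}$ and the stabilizer is smooth of the expected dimension. Here I would need the fact (from \cite{AlpogeBhargavaShnidman}) that over $\overline{K}$ the fiber of the invariant map over $(0,n)$ with $n \neq 0$ is a single orbit, so that $Y(\overline{K})_n = G(\overline{K}) \cdot v_n$. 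Granting this, the map sending a $G(K)$-orbit $G(K)\cdot v$ to the cohomology class measuring the obstruction to conjugating $v$ to $v_n$ over $K$ (i.e., the image of the $G(\overline{K})$-conjugating element under the connecting map) gives a well-defined injection into $H^1(K, S)$, landing in the kernel of $H^1(K, S) \to H^1(K, G)$ since the obstruction vanishes in $G$. Surjectivity onto this kernel follows because any class in the kernel lifts to a $G(\overline{K})$-cocycle that is a coboundary in $G$, which untwists to produce a genuine $K$-rational vector in the fiber.

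I would then identify the stabilizer with $M_n$, which is already asserted in the excerpt: $\mathrm{Stab}(v_n) \simeq M_n = \ker(\Res_{F_n/K}\mu_2 \xrightarrow{\Nm} \mu_2)$, and identify the resulting map $H^1(K, M_n) \to H^1(K, G) \simeq \Br(K)[2]$ with the quadratic refinement $q_n$ of Proposition \ref{prop: obstruction in terms of isotropy}. This last identification is the geometric heart of \cite{BhargavaGross2013} and is the part I would lean on as a black box rather than reprove. Functoriality in $K$ is automatic from the functoriality of the cohomological constructions, since all maps (the invariant map, the connecting homomorphisms, and the stabilizer identification) commute with base change along field extensions.

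The main obstacle I anticipate is verifying that $Y(\overline{K})_n$ is exactly one $G(\overline{K})$-orbit and that $\mathrm{Stab}(v_n)$ is smooth of the correct order $4$, so that the orbit is separable and the twisting argument of \cite{BhargavaGross2014} applies cleanly. If the stabilizer were non-reduced or the fiber contained extra orbits (e.g.\ from degenerate or non-semisimple vectors), the correspondence would fail to be a clean bijection. However, since the excerpt already records that $\mathrm{Stab}(v_n)$ has order $4$ and is isomorphic to $M_n$, and since the invariants $A_1, A_3$ cut out the orbit for generic $n \in K^\times$, this verification amounts to a finite computation that is carried out in \cite[\S3-4]{AlpogeBhargavaShnidman}. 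Thus the proof is essentially a citation of \cite[Proposition 1]{BhargavaGross2014} once these geometric inputs are in place.
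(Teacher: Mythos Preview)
Your proposal is correct and matches the paper's approach exactly: the paper simply cites \cite[Proposition 1]{BhargavaGross2014} for the arithmetic invariant theory bijection, and you have unpacked precisely the ingredients (single $G(\overline{K})$-orbit on the fiber, smooth stabilizer $\simeq M_n$, identification of the connecting map with $q_n$ via \cite{BhargavaGross2013}) that underlie that citation. There is nothing to add.
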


A $G(\Z)$-invariant function $\phi:Y({\Z})\to[0,1]\subset \R$ is said to be {\it defined by congruence conditions} if, for all primes $p$, there exist functions $\phi_p:Y({\Z_p})\to[0,1]$ satisfying the following conditions:
\begin{itemize}
\item[(1)] For all $y\in Y(\Z)$, the product $\prod_p\phi_p(y)$ converges to $\phi(y)$;
\item[(2)] Each function $\phi_p$ is 
locally constant outside a closed subset of $Y({\Z_p})$ of measure zero.
\end{itemize}
We say that such a function $\phi$ is {\it acceptable} if for
sufficiently large primes $p$, we have $\phi_{p}(y)=1$ whenever
$p^2 \nmid A_3(y)$.  Let $N_\phi(Y(\Z);X)$ denote the weighted number of irreducible  $G(\Z)$-orbits of elements $y\in Y(\Z)$ with $|A_3(y)|<X$, where the orbit of each such $y$ is weighted by $\phi(y)$. 

In the next section, we will take $K = \Q$ and use the following counting result for $G(\Z)$-orbits on $Y(\Z)$ with bounded $A_3$-invariant \cite[Theorem 4.1]{AlpogeBhargavaShnidman}.

\begin{theorem}\label{thm: counting in the quadric over Q}
  Let $\phi:Y(\Z)\to[0,1]$ be a $G(\Z)$-invariant acceptable function that is defined by the functions $\phi_{p}:Y({\Z_p})\to[0,1]$. Then  
\begin{equation}
N_\phi(Y(\Z);X)
  = 
  X  \cdot 
 \int_{\scriptstyle{y\in G(\Z) \backslash Y(\R)}\atop\scriptstyle{|A_3(y)|<1}}
dy \cdot \prod_{p}
  \int_{y\in Y({\Z_{p}})}\phi_{p}(y)\,dy\,+\,O_\phi\left(X^{1 - \delta}\right);
\end{equation}
  here $dy$ is the $G(\R)$-invariant $($resp.\ $G(\Z_p)$-invariant$)$ measure $dr_2\,dr_3\cdots dr_8/(\partial A_1/\partial r_1)$ on $Y(\R)$ $($resp.\ $Y(\Z_p))$,
  where $r_1,\ldots,r_8$ are the coordinates on $V$.
\end{theorem}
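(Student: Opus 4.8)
The plan is to combine Bhargava's geometry-of-numbers framework for counting integral orbits with the circle method, the latter being needed to handle the thin quadric condition $A_1 = 0$. The starting point is to reduce the count of $G(\Z)$-orbits to a count of lattice points in a fundamental domain. Since $A_3$ separates the $G(\overline{K})$-orbits on $Y$, for each $a \in \R^\times$ the fiber $\{y \in Y(\R) : A_3(y) = a\}$ is a finite union of $G(\R)$-orbits; choosing a continuously varying set $L$ of representatives, the locus in $Y(\R)$ where $A_3 \neq 0$ equals $G(\R) \cdot L$ up to the finite real-orbit data. Fixing a fundamental domain $\mathcal{F}$ for $G(\Z)\backslash G(\R)$, the weighted orbit count $N_\phi(Y(\Z);X)$ becomes the $\phi$-weighted number of lattice points of $Y(\Z)$ lying in $\mathcal{F}\cdot L_X$, where $L_X$ is the part of $L$ with $|A_3| < X$.

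Next I would apply Bhargava's averaging trick: rather than count against a single skewed translate $\mathcal{F}\cdot L_X$, I would average over $g$ in a fixed compact neighborhood in $G(\R)$, which smooths the region and replaces the awkward fundamental-domain slab by a genuine expanding body in $Y(\R)$. Within such a region the problem reduces to counting solutions $v \in \Z^8$ of the single quadratic equation $A_1(v) = 0$ in an expanding, smoothly weighted region, subject to the congruence weights defining $\phi = \prod_p \phi_p$. Here I would invoke the circle method in the smooth delta-symbol form of Duke–Friedlander–Iwaniec and Heath-Brown. Because $A_1 = r_1 r_8 - 3 r_2 r_7 + 3 r_3 r_6 - r_4 r_5$ is a nondegenerate (split) quadratic form in $8$ variables, far more than the five needed for the method to yield a power-saving error, the count admits an asymptotic whose main term factors as a singular integral times a singular series.

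The remaining bookkeeping identifies these two factors with those in the statement. The singular series assembles into the product of local densities $\prod_p \int_{Y(\Z_p)} \phi_p(y)\,dy$, the weights $\phi_p$ entering each local factor. The singular integral, after integrating out the $G(\R)$-average and using homogeneity — $A_3$ is homogeneous of degree $6$, while the measure $dy = dr_2\cdots dr_8/(\partial A_1/\partial r_1)$ scales with degree $6$ under $v \mapsto \lambda v$ — reassembles into $X \cdot \int_{y \in G(\Z)\backslash Y(\R),\,|A_3(y)|<1} dy$, the linear growth in $X$ reflecting exactly this homogeneity ($\{|A_3|<X\} = X^{1/6}\cdot\{|A_3|<1\}$ and $dy \mapsto \lambda^6\,dy$).

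The hard part is the interface between the two techniques, and it is concentrated in two places. First is the cuspidal analysis: the fundamental domain $\mathcal{F}$ is non-compact, and one must show that integral points reaching deep into the cusp — which correspond to reducible or otherwise degenerate configurations, and which are excluded from the count of irreducible orbits — contribute only $O(X^{1-\delta})$ and do not distort the main term. Controlling this requires a stratification of $Y(\Z)$ by the shape of the associated lattice together with sharp upper bounds in each cuspidal region, uniformly in the circle-method estimate as the region sweeps through $\mathcal{F}$. Second is maintaining uniformity of the circle-method error over the weight and ensuring convergence of $\prod_p$; this is precisely where the acceptability hypothesis enters, since $\phi_p = 1$ whenever $p^2 \nmid A_3$ forces all but finitely many local factors to be trivial relative to each $A_3$, taming the tail of the product so that the total error remains $O_\phi(X^{1-\delta})$ after summing the local contributions.
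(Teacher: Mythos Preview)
The paper does not prove this statement: it is quoted verbatim as \cite[Theorem~4.1]{AlpogeBhargavaShnidman}, with no argument given. Your sketch is a faithful outline of the method used there --- Bhargava's averaging over a fundamental domain for $G(\Z)\backslash G(\R)$, reduction to counting lattice points on the quadric $A_1=0$ in an expanding region, and the smooth $\delta$-symbol circle method of Heath-Brown (following Duke--Friedlander--Iwaniec) to handle the quadratic constraint in $8$ variables --- and the paper itself confirms this in the proof of the imaginary quadratic analogue (Theorem~\ref{thm: counting in the quadric over imaginary quadratic}), where it says the proof of \cite[Theorem~4.1]{AlpogeBhargavaShnidman} goes through ``using the circle method over $\O_K$'' and invoking the analogue of \cite[Theorem~1.1]{BrowningHeathBrown}. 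So there is nothing to compare: your proposal is the right strategy, but the paper simply imports the result rather than reproving it.
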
\noindent

The proof of Theorem \ref{thm: counting in the quadric over Q} could presumably be adapted to prove an analogous result over any number field $K$, using versions of the circle method, the divisor bound, and the geometric sieve for quadrics over $K$, but we have not worked through these details. However, if $K$ is an imaginary quadratic field with class number $1$, we have:

\begin{theorem}\label{thm: counting in the quadric over imaginary quadratic}
  Suppose $K$ is an imaginary quadratic field with class number $1$. Let $\phi:Y(\O_K)\to[0,1]$ be a $G(\O_K)$-invariant acceptable function defined by functions $\phi_{p}:Y({\O_{K,p}})\to[0,1]$. Then  
\begin{equation}
N_\phi(Y(\O_K);X)
  = 
  X  \cdot 
 \int_{\scriptstyle{y\in G(\O_K) \backslash Y(\C)}\atop\scriptstyle{|A_3(y)|<1}}
dy \cdot \prod_{p}
  \int_{y\in Y({\O_{K,p}})}\phi_{p}(y)\,dy\,+\,O_\phi\left(X^{1 - \delta}\right);
\end{equation}
  here $dy$ is the $G(\C)$-invariant $($resp.\ $G(\O_{K,p})$-invariant$)$ measure $dr_2\,dr_3\cdots dr_8/(\partial A_1/\partial r_1)$ on $Y(\C)$ $($resp.\ $Y(K_p))$,
  where $r_1,\ldots,r_8$ are the coordinates on $V$.
\end{theorem}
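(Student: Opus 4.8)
The plan is to follow the proof of Theorem \ref{thm: counting in the quadric over Q} in \cite{AlpogeBhargavaShnidman} essentially line by line, replacing $\Z$ by $\O_K$ and $\R$ by $\C$ throughout, and to check that each of its three main ingredients --- reduction theory for the fundamental domain, Bhargava's averaging trick, and the circle-method count of integral points on the quadric $A_1 = 0$ --- survives the base change. The two features of an imaginary quadratic field of class number $1$ that make this adaptation clean are: (i) $\O_K$ is a PID, so that $G(\O_K)$-orbit representatives and the local-to-global sieve on the $A_3$-invariant behave exactly as over $\Z$, with no interference from a nontrivial class group; and (ii) $\O_K^\times$ is finite, so that the unit lattice plays no role and $G(\O_K)$ is an arithmetic lattice in $G(\C)$ whose cusp structure (a single cusp, the class number being $1$) reduces to a product of copies of the Bianchi reduction theory for $\SL_2(\O_K) \subset \SL_2(\C)$.

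First I would set up the geometry-of-numbers framework. Since $K \otimes_\Q \R \cong \C$, the lattice $V(\O_K)$ is a full-rank lattice in $V \otimes_\Q \R \cong \C^8 \cong \R^{16}$, and $Y(\O_K)$ is its intersection with the quadric $\{A_1 = 0\}$. Using Bianchi reduction theory for $\SL_2(\O_K)$ acting on $\SL_2(\C)$, I would construct a fundamental domain $\mathcal{F}$ for the action of $G(\O_K)$ on $Y(\C)_{A_3 \neq 0}$ as a product of a fundamental domain for $G(\O_K)\backslash G(\C)$ with a slice of orbit representatives carrying the $A_3$-invariant, exactly as in the ABS construction. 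Bhargava's averaging over a fixed compact ball in $G(\C)$ then converts the count of $G(\O_K)$-orbits with $|A_3| < X$ into an average of $\O_K$-lattice-point counts in translates of the thickened region $\mathcal{F}_{|A_3| < X}$.

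The heart of the argument is the count of $\O_K$-points on the affine quadric $A_1 = 0$ with bounded invariant and prescribed congruence conditions. Here I would invoke the Heath-Brown delta method (circle method) over the imaginary quadratic field $K$, for which the requisite exponential-sum and error-term machinery is by now standard (for instance the number-field delta method developed by Browning and Vishe, and the earlier circle method over number fields of Skinner). Because $A_1 = r_1 r_8 - 3r_2 r_7 + 3 r_3 r_6 - r_4 r_5$ is a nondegenerate split quadratic form of rank $8$ in the coordinates $r_1, \ldots, r_8$, the delta method yields a main term of size $X$ times a product of local densities: the singular integral over $\C$ reproduces the archimedean factor $\int_{G(\O_K)\backslash Y(\C),\, |A_3|<1} dy$, while the singular series factors as $\prod_p \int_{Y(\O_{K,p})} \phi_p(y)\, dy$ over the finite primes $p$ of $K$. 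The acceptable weights $\phi_p$ and the condition $\phi_p \equiv 1$ whenever $p^2 \nmid A_3$ are incorporated through a squarefree sieve on $A_3$, whose tail converges because $\O_K$ is a PID and the relevant local densities are bounded away from the bad primes.

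The step I expect to be the main obstacle is controlling the noncompact part of the fundamental domain together with the uniformity of the delta-method error. Concretely, one must (a) truncate the cusp of $\mathcal{F}$ and show that $G(\O_K)$-orbits whose representatives are forced into the cuspidal region --- the reducible or otherwise degenerate orbits --- contribute only $O(X^{1-\delta})$, adapting the \emph{cutting off the cusp} estimates of ABS to the Bianchi setting; and (b) ensure that the power-saving error term in the circle method holds uniformly enough in the congruence data to survive both the averaging and the sieve. Both points are analytic in nature, but because $\O_K$ is a PID with finite unit group the geometric bookkeeping is identical to the $\Q$-case, and the only genuinely new input is the (already available) delta-method estimates over $K$. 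I therefore expect no essential new difficulty, consistent with the claim that the arguments of \cite{AlpogeBhargavaShnidman} generalize straightforwardly in this setting.
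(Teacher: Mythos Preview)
Your proposal is correct and takes essentially the same approach as the paper: adapt the proof of \cite[Theorem 4.1]{AlpogeBhargavaShnidman} verbatim, using that $\O_K$ is a PID with finite unit group, and replace the circle-method input by its analogue over $\O_K$. The paper's own proof is much terser than yours --- it simply cites Browning's work on the circle method over $\O_K$ \cite{Browningcubicforms} and the $\O_K$-analogue of \cite[Theorem~1.1]{BrowningHeathBrown} rather than Browning--Vishe or Skinner --- but the substance is identical.
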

\begin{proof}
As $\O_K$ is a PID with a finite group of units, the proof of \cite[Theorem 4.1]{AlpogeBhargavaShnidman} goes through essentially unchanged, using the circle method over $\O_K$ as in \cite{Browningcubicforms}. To carry this out, one must use the analogue of \cite[Theorem 1.1]{BrowningHeathBrown} over $\O_K$, but again the proof is essentially the same.      
\end{proof}

\section{$2$-torsion in the class groups of  $\Q(\sqrt[3]{d})$}

We now specialize \S\ref{sec: counting pairs of binary cubics} to the case $K = \Q$. For any $n \in \Q^\times$ and any prime $p \leq \infty$, write 
\[V_{n,p} \subset H^1(\Q_p,M_n) \subset (F_n \otimes \Q_p)^\times/(F_n \otimes \Q_p)^{\times 2}\] 
for the subspace of local conditions defining $\Sel_2^\un(F_n/\Q) = \Sel_2^\un(F_n)$, as in \S\ref{subsec: relative signature groups}. Write
\[q_{n,p} \colon H^1(\Q_p, M_n) \to \Z/2\Z\] 
for the local quadratic refinement of \S\ref{sec: kummer}, and write $q_n \colon H^1(\Q, M_n) \to \Br(\Q)[2]$ for the global one. 

\subsection{The wild family}
Consider first the wild fields $F_n$, i.e.\ those with $n \in \Z$ cube-free and $n \not \equiv \pm 1\pmod{9}$.  In order to apply Theorem \ref{thm: counting in the quadric over Q}, we use: 

\begin{proposition}\label{prop: local signature elements are unobstructed}
Suppose $n \not\equiv \pm 1 \pmod{9}$. Then $V_{n,p} \subset \ker(q_{p,n})$, for every prime $p$.
\end{proposition}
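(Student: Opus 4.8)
The plan is to reduce the proposition to the assertion that the wild fields $F_n$ are everywhere \emph{Hecke unramified}, and then invoke Theorem~\ref{thm: local Kummer obstruction}, which for $m = 3$ translates Hecke unramifiedness at a prime $p$ into precisely the inclusion $V_{n,p} \subset \ker(q_{n,p})$. Concretely, I would fix a prime $p$ and split into the cases $p \neq 3$ and $p = 3$.

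For $p \neq 3$ the argument is immediate. Since $m = 3 \neq p$, the hypothesis ``$p \neq m$'' of Theorem~\ref{thm: local Kummer obstruction} holds, so $V_{n,p} \subset \ker(q_{n,p})$ if and only if $F_n/\Q$ is Hecke unramified at $p$; and by Lemma~\ref{lem: Kummer Hecke ramification} a Kummer extension of prime degree $m$ can only be Hecke ramified at a prime lying over $m$, so $F_n$ is automatically Hecke unramified at every $p \neq 3$. (Alternatively, one checks directly that $F_n \otimes \Q_p$ is either unramified or a totally ramified field, and applies Lemmas~\ref{lem: field extensions}--\ref{lem: locally unobstructed at unramified primes}.)

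The substance is at $p = 3 = m$. Here I would first verify the hypothesis $e \in \{1, m-1\}$ of Theorem~\ref{thm: local Kummer obstruction}: since $\Q_3(\zeta_3) = \Q_3(\sqrt{-3})$ is ramified quadratic over $\Q_3$, we have $e = e(\Q_3(\zeta_3)/\Q_3) = 2 = m-1$. The theorem then reduces the claim to showing $F_n$ is Hecke unramified at $3$, and by Lemma~\ref{lem: Kummer Hecke ramification} (with $e = 2$ even) this is equivalent to $F_n$ being \emph{not} tamely ramified at $3$. The proof of that lemma shows that tame ramification at $p = m$ forces $n$ to be an $m$-th power in $\Q_3^\times$, so it suffices to prove that $n \not\equiv \pm 1 \pmod{9}$ (with $n$ cubefree) implies $n \notin \Q_3^{\times 3}$.

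This last verification is the crux, and it is a short $3$-adic computation where the cubefree hypothesis does the essential work. If $3 \mid n$, then cubefreeness forces $v_3(n) \in \{1,2\}$, which is not divisible by $3$, so $n$ cannot be a cube in $\Q_3^\times$ (indeed $x^3 - n$ is then totally ramified by its Newton polygon). If $3 \nmid n$, then $n \in \Z_3^\times$, and the cube classes in $\Z_3^\times$ are detected modulo $9$: one has $\Z_3^\times/(\Z_3^\times)^3 \cong \Z/3$ and the cube units are exactly the residues $\pm 1 \pmod{9}$. Hence $n \not\equiv \pm 1 \pmod{9}$ yields $n \notin \Q_3^{\times 3}$, as needed. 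I expect no genuine obstacle here beyond keeping the cubefree condition in view so that $v_3(n) \in \{1,2\}$ cannot accidentally produce a cube; the rest is a direct assembly of Theorem~\ref{thm: local Kummer obstruction} and Lemma~\ref{lem: Kummer Hecke ramification}.
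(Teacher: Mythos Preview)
Your proposal is correct and follows exactly the same route as the paper, which simply cites Lemma~\ref{lem: Kummer Hecke ramification} and Theorem~\ref{thm: local Kummer obstruction}; you have just unpacked the details of how those two results combine. In particular, your verification that $n \not\equiv \pm 1 \pmod 9$ (with $n$ cubefree) forces $n \notin \Q_3^{\times 3}$, and hence that $F_n$ is wildly ramified rather than tamely ramified at $3$, is precisely the content hidden in the paper's one-line proof.
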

\begin{proof}
This follows from Lemma \ref{lem: Kummer Hecke ramification} and Theorem \ref{thm: local Kummer obstruction}.  
\end{proof}

We say  $v \in Y(\Q)_n$ is {\it unramified} if, for every prime $p$, the $G(\Q_p)$-orbit of $v$ corresponds to an element of $V_{n,p}$ under the bijection of Proposition \ref{prop: parameterization}. Let $Y(\Q)_n^\un$ denote the set of unramified vectors with invariant $n$. 

\begin{corollary}\label{cor: Z bijection wild}
For each $n \not\equiv \pm 1\pmod{9}$, there exists a bijection $\Sel_2^\un(F_n) \to G(\Q)\backslash Y(\Q)^\un_n$. Moreover, there exists $N \geq 1$,  independent of $n$, such that each $G(\Q)$-orbit in $Y(\Q)^\un_n$ is represented by an element in $\frac{1}{N}V(\Z)$. 
\end{corollary}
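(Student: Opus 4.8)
The plan is to establish Corollary \ref{cor: Z bijection wild} in two parts: first the global bijection, then the uniform integrality statement. For the bijection, I would combine Proposition \ref{prop: parameterization} with Proposition \ref{prop: local signature elements are unobstructed}. Recall that Proposition \ref{prop: parameterization} gives a functorial bijection between $G(\Q)\backslash Y(\Q)_n$ and $\ker(q_n)$, where $q_n \colon H^1(\Q, M_n) \to \Br(\Q)[2]$. The unramified orbits $Y(\Q)_n^\un$ are, by definition, those whose local orbit at each $p$ corresponds to a class in $V_{n,p}$; under the bijection these correspond to classes $t \in H^1(\Q, M_n)$ that both lie in $\ker(q_n)$ and satisfy $\mathrm{res}_p(t) \in V_{n,p}$ for all $p$. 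The latter condition is exactly membership in $\Sel_2^\un(F_n)$. So I must check that $\Sel_2^\un(F_n) \subseteq \ker(q_n)$, i.e.\ that the local conditions $V_{n,p}$ automatically force global membership in the kernel. This is where Proposition \ref{prop: local signature elements are unobstructed} enters: since $n \not\equiv \pm 1 \pmod 9$, we have $V_{n,p} \subset \ker(q_{n,p})$ for every prime $p$. Hence for $t \in \Sel_2^\un(F_n)$, each local component $q_{n,p}(\mathrm{res}_p(t)) = 0$, so $q_n(t)$ is a Brauer class that is locally trivial everywhere, and therefore trivial by the injectivity of $\Br(\Q) \to \prod_p \Br(\Q_p)$ (the Hasse principle for the Brauer group). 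Thus $t \in \ker(q_n)$, giving the desired bijection $\Sel_2^\un(F_n) \to G(\Q)\backslash Y(\Q)_n^\un$.

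For the second statement, the point is to produce integral representatives with a denominator $N$ that does not depend on $n$. The natural approach is to observe that the functorial bijection of Proposition \ref{prop: parameterization} is compatible with the integral structure: an unramified orbit imposes, at each prime $p$, a local condition cutting out $V_{n,p}$, and for all but finitely many $p$ (those not dividing $6$ or the relevant discriminant data) these orbits already have $\Z_p$-integral representatives because the extension is unramified there and the reduction theory of the quadric is well-behaved. The content is therefore entirely at the finitely many bad primes $p \in \{2,3\}$ (together with archimedean positivity, which affects only the real orbit count, not integrality). At these finitely many primes one checks directly that an unramified class admits a representative in $\frac{1}{N_p} V(\Z_p)$ for some bounded $N_p$ depending only on $p$ and not on $n$; taking $N = \prod_p N_p$ over this finite set gives the uniform bound.

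The main obstacle I anticipate is the uniform integrality claim, not the bijection. Controlling the denominator uniformly in $n$ requires knowing that the local orbit representatives at $p = 2$ and $p = 3$ can be cleared of denominators by a bounded factor independent of the $A_3$-invariant $n$. This is plausible because the stabilizer $M_n$ and the local conditions $V_{n,p}$ have a structure that, after fixing the splitting type at $p$ (of which there are finitely many possibilities), depends on $n$ only through bounded data modulo squares; but verifying it cleanly likely requires invoking the explicit local parameterization of $G(\Z_p)$-orbits on $Y(\Z_p)$ from \cite{AlpogeBhargavaShnidman} and checking that the clearing-of-denominators step is uniform. I would expect this to follow from the fact that the quadric $Y$ has a $G$-invariant integral model and that the relevant orbits of bounded invariant type form finitely many $G(\Z_p)$-orbits with representatives of bounded height, so that a single $N$ suffices across the whole family.
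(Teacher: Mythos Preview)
Your argument for the bijection is correct and matches the paper's exactly: Proposition \ref{prop: local signature elements are unobstructed} gives $V_{n,p}\subset\ker(q_{n,p})$ for all $p$, and the Hasse principle for $\Br(\Q)$ then yields $\Sel_2^\un(F_n)\subset\ker(q_n)$, whence Proposition \ref{prop: parameterization} provides the bijection with $G(\Q)\backslash Y(\Q)_n^\un$.

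For the integrality statement your approach is in the right spirit but the paper takes a cleaner route. You reduce to checking $p\in\{2,3\}$, but your passage from ``primes not dividing $6$ or the relevant discriminant data'' to just $\{2,3\}$ skips a step: the primes $p\mid n$ with $p>3$ also require attention, since $F_n$ is ramified there. The paper handles this by observing that for $p>3$ the local conditions $V_{n,p}$ defining $\Sel_2^\un(F_n)$ coincide with those defining $\Sel_2(E_n)$ for the elliptic curve $E_n\colon y^2=x^3+n^2$ (when $p\mid n$ one has $H^1(\Q_p,M_n)=0$, and when $p\nmid n$ both are the unramified classes). This allows the integrality to be deduced directly from \cite[Theorem 3.19]{AlpogeBhargavaShnidman} without redoing the local analysis. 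Your hands-on approach would also work once you note that at $p\mid n$, $p>3$, the only orbit is the distinguished one $v_n\in V(\Z)$, but the elliptic-curve comparison is the more efficient packaging.
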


\begin{proof}
    By Proposition \ref{prop: local signature elements are unobstructed}, we have $\Sel_2^\un(F_n) \subset \ker(q_n)$, and so the first sentence follows from Proposition \ref{prop: parameterization}.  The proof of integrality (i.e.\ that the $G(\Q)$-orbits have representatives with bounded denominators) is exactly as in \cite[Theorem 3.19]{AlpogeBhargavaShnidman}. Integrality can also be deduced from the latter result (without reproving it) since for $p > 3$, the local conditions at $p$ defining $\Sel_2^\un(F/\Q)$ agree with the local conditions defining $\Sel_2(E_n)$, where $E_n \colon y^2 = x^3 + n^2$. Indeed, if $p$ divides $n$ then $H^1(\Q_p, M_n) = 0$ and if $p$ does not divide $n$, then the local condition is the unramified classes, in both cases.  
\end{proof}

\begin{theorem}\label{thm: wild selmer avg}
    Ordering the wild fields $F_n = \Q(\sqrt[3]{n})$ by $|n|$, we have $\avg_n\, |\Cl_{F_n}[2]| = 2$.
\end{theorem}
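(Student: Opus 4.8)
The plan is to convert the average of $|\Cl_{F_n}[2]|$ into an asymptotic count of integral orbits on the quadric $Y$ and then invoke Theorem~\ref{thm: counting in the quadric over Q}. First I would use the identification $|\Cl_{F_n}[2]| = |\Sel_2^\un(F_n)|$ (valid since $\Cl_\Q[2] = 0$, so $\Cl_{F_n/\Q} = \Cl_{F_n}$ and $|\Cl_{F_n}^\vee[2]| = |\Cl_{F_n}[2]|$) together with the decomposition $|\Sel_2^\un(F_n)| = 1 + \#\{\text{nontrivial classes}\}$, where the ``$1$'' is the identity class present for every $n$. By Corollary~\ref{cor: Z bijection wild}, for wild $n$ the nontrivial classes of $\Sel_2^\un(F_n)$ are in bijection with the \emph{irreducible} $G(\Q)$-orbits in $Y(\Q)^\un_n$ (the distinguished orbit of $v_n$ corresponding to the identity), each having a representative in $\frac1N V(\Z)$. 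Hence
\[
\avg_n |\Cl_{F_n}[2]| = 1 + \lim_{X\to\infty}\frac{\sum_{n \text{ wild},\, |n|<X} \#\{\text{irreducible orbits in } G(\Q)\backslash Y(\Q)^\un_n\}}{\#\{n \text{ wild} : |n|<X\}},
\]
so everything reduces to counting irreducible unramified orbits with bounded $A_3$-invariant.

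Next I would package this count so that Theorem~\ref{thm: counting in the quadric over Q} applies. The bookkeeping is to choose a weighting $\phi = \prod_p \phi_p$ on $Y(\Z)$ (after clearing the denominator $N$) so that: (i) $\phi_p$ is supported on the orbits corresponding to the local unramified subspace $V_{n,p}$ under Proposition~\ref{prop: parameterization}; (ii) each $G(\Z)$-orbit is weighted by the reciprocal of the number of $G(\Z)$-orbits in its $G(\Q)$-orbit, so that the weighted $G(\Z)$-count recovers the $G(\Q)$-orbit count; and (iii) the cubefree and $n \not\equiv \pm1 \pmod 9$ conditions are imposed through $\phi_3$ and the large-$p$ factors. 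One must then verify that $\phi$ is acceptable and defined by congruence conditions; imposing the cubefree condition is the delicate point, as it is a constraint at all primes at once. Granting this, Theorem~\ref{thm: counting in the quadric over Q} yields an asymptotic of the shape $X \cdot c_\infty \cdot \prod_p c_p + O(X^{1-\delta})$ for the numerator.

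Finally I would evaluate the resulting Euler product and divide by the count of wild $n$, which grows like a constant times $X$. After the standard mass-formula reinterpretation of the local measures, the ratio assembles into $\tau(G) \cdot \prod_{p \le \infty} \nu_p$, where $\tau(G) = 2$ is the Tamagawa number of $G = \SL_2^2/\mu_2$, the archimedean factor is $\nu_\infty = 1/2$, and each finite factor is $\nu_p = 1$. The computation $\nu_p = 1$ for all finite $p$ rests on Proposition~\ref{prop: local signature elements are unobstructed}: every local unramified class is unobstructed in the wild family, so there is no local loss, in contrast to the tame case where $\nu_3 = 1/2$. Thus the average number of nontrivial classes is $2 \cdot \tfrac12 \cdot 1 = 1$, giving $\avg_n |\Cl_{F_n}[2]| = 1 + 1 = 2$.

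The main obstacle I anticipate is not the formal reduction but the two analytic/arithmetic inputs underlying the counting step: the \emph{uniformity estimate} needed to sieve to cubefree $n$ and to discard the tail of orbits whose invariant is divisible by large prime powers (ensuring these land in the error term), and the precise matching of the $G(\Z)$-versus-$G(\Q)$ orbit weights with the local densities $\nu_p$. Both are of the type handled in \cite{AlpogeBhargavaShnidman}, so the strategy is to arrange the local conditions so that Theorem~\ref{thm: counting in the quadric over Q} and its attendant uniformity results apply essentially verbatim, leaving only the local density computations, which are elementary and purely local, to be carried out by hand.
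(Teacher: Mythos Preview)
Your proposal is correct and follows essentially the same approach as the paper: identify $|\Cl_{F_n}[2]|$ with $|\Sel_2^\un(F_n)|$, split off the identity class, parametrize nontrivial Selmer elements by unramified orbits via Corollary~\ref{cor: Z bijection wild}, apply Theorem~\ref{thm: counting in the quadric over Q} with the weight $\phi(v) = 1/m(v)$ to convert to an Euler product, and evaluate the factors as $\tau(G)=2$, $\nu_\infty = 1/2$, and $\nu_p=1$ for finite $p$. One small sharpening: the fact that $\nu_p = 1$ for finite $p$ uses two inputs, not one---Proposition~\ref{prop: local signature elements are unobstructed} ensures that all of $V_{n,p}$ is represented by local orbits (so the numerator is $|V_{n,p}|$ rather than $|V_{n,p}\cap\ker(q_{n,p})|$), and then Lemma~\ref{lem: local selmer ratio} gives $|V_{n,p}| = |M_n(\Q_p)|$; the paper also invokes the change-of-variables formula \cite[Proposition~4.10]{AlpogeBhargavaShnidman} to pass from the local measure on $Y(\Z_p)$ to the explicit integral defining $\nu_p$.
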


\begin{proof}
 Let $U \subset \Z$ be the subset of cubefree integers $n \not\equiv \pm 1 \pmod{9}$. For each prime $p$, let $U_p$ be the closure of $U$ in $\Q_p$ (so that $U_p = \Z_p$ for $p \nmid 3\infty$). For $v \in Y(\Z)$, we let $m(v)$ be the number of $G(\Z)$-orbits in $G(\Q)v$, and set $\phi(v) = 1/m(v)$.  By Corollary \ref{cor: Z bijection wild}, we may compute $\avg_n|\Sel_2^\un(F_n)|$ by estimating, for each $X$, the number of $\phi$-weighted unramified $G(\Z)$-orbits $G(\Z)v$ with $|A_3(v)| < X$, and then taking the limit as $X \to \infty$.   Theorem \ref{thm: counting in the quadric over Q} gives an Euler product expression for the main term. Using the change of variable formula \cite[Proposition 4.10]{AlpogeBhargavaShnidman} and standard manipulations \cite[proof of Theorem 5.3]{AlpogeBhargavaShnidman}, the Euler factor at $p$ is equal to 
 \[\nu_p = \int_{n \in U_p} \frac{|V_{n,p}|}{|M_n(\Q_p)|}dn,\]
i.e.\ the average over $n \in U_p$ of the number of unramified $G(\Q_p)$-orbits on $Y(\Q_p)$ with invariant $n$, weighted by the size of their stabilizers. 
Thus, we arrive at the formula
\[\avg_{n \in U} |\Sel_2^\un(F_n)| = 1 + 2\prod_p \nu_p,\] 
where the factor of $2$ is the Tamagawa number of $G$. For $p \neq \infty$, we have $\nu_p  = 1$ by Lemma \ref{lem: local selmer ratio}. For $p = \infty$, we have $\nu_\infty = 1/2$.  Putting this together, we obtain,
\[\avg_n |\Sel_2^\un(F_n)| =1 + 2\cdot \frac12 \cdot 1 =  2.\]
The result now follows from Theorem \ref{thm: canonical isomorphism}.
\end{proof}

\subsection{The tame family}
Next we consider the tame fields $F_n = \Q(\sqrt[3]{n})$, i.e.\ those such that $n \equiv \pm 1\pmod{9}$. The wrinkle  here is that the analogue of Proposition \ref{prop: local signature elements are unobstructed} fails:

\begin{proposition}\label{prop: tame failure II}
Suppose $n \equiv \pm 1\pmod{9}$. Then the non-trivial element of $V_{n,3} \subset H^1(\Q_3, M_n)$ is not contained in $\ker(q_{n,3})$. 
\end{proposition}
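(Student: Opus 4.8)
The plan is to deduce this directly from the local analysis of the preceding section, since essentially all of the genuine content has already been packaged into Theorem \ref{thm: local Kummer obstruction}; the work here is just to verify that we are in the hypotheses of that theorem and then read off the conclusion. The first step is to pin down the \'etale algebra $F_n \otimes \Q_3$. I would check that $n \equiv \pm 1 \pmod 9$ forces $n$ to be a cube in $\Q_3^\times$: as $n$ is cubefree we have $3 \nmid n$, so $n \in \Z_3^\times$, and writing $\Z_3^\times \cong \mu_2 \times (1 + 3\Z_3)$ one sees that cubing is an isomorphism on the $\mu_2$-factor and has image exactly $1 + 9\Z_3$ on the pro-$3$ part (either via $\log$, or directly from $(1+3a)^3 = 1 + 9a(1 + 3a + 3a^2)$). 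Hence the cubes in $\Z_3^\times$ are precisely the units $\equiv \pm 1 \pmod 9$, so our hypothesis gives $n \in (\Q_3^\times)^3$. Writing $n = c^3$ with $c \in \Q_3$, the factorization $x^3 - n = (x-c)(x^2 + cx + c^2)$, whose quadratic factor has discriminant $-3c^2 \equiv -3$ modulo squares, shows that $F_n \otimes \Q_3 \cong \Q_3 \times \Q_3(\zeta_3)$, with $\Q_3(\zeta_3) = \Q_3(\sqrt{-3})$ a ramified quadratic extension (note $-3$ is a non-square in $\Q_3$ since $v(-3) = 1$).

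With this local structure in hand, the remaining steps are bookkeeping. In the notation of \S\ref{subsec: quadratic refinements over local fields} we have $p = m = 3$, the ramification index $e = e(\Q_3(\zeta_3)/\Q_3) = 2$, and $F_n$ is tamely ramified at $3$, since the ramification lives entirely in the tame quadratic factor. In particular $e = 2$ is even, so Lemma \ref{lem: Kummer Hecke ramification} applies and shows that $F_n/\Q$ is Hecke ramified at $3$. Moreover $e = 2 = m-1$, so $e \in \{1, m-1\}$ and the hypothesis of Theorem \ref{thm: local Kummer obstruction} is met; that theorem then yields $V_{n,3} \not\subset \ker(q_{n,3})$.

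Finally, to upgrade this to the stated claim about \emph{the} non-trivial element, I would invoke Lemma \ref{lem: local selmer ratio}: since $F_n \otimes \Q_3$ has exactly $N = 2$ factors, $|V_{n,3}| = 2^{N-1} = 2$, so $V_{n,3}$ consists of the trivial class together with a single non-trivial class. As $V_{n,3} \not\subset \ker(q_{n,3})$, this non-trivial class is exactly the element not lying in $\ker(q_{n,3})$, which is the assertion.

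I do not anticipate a real obstacle, because the one computation with any substance, the Hasse--Witt invariant calculation, is already carried out inside the proof of Theorem \ref{thm: local Kummer obstruction} in precisely the case $e = d = m-1$ treated there. The only point requiring care is matching the local invariants correctly, namely confirming that the ramified factor is the \emph{tamely and totally} ramified extension $\Q_3(\zeta_3)$ with $e = m-1$, rather than a wildly ramified configuration; this is exactly what the cube-ness of $n$ (equivalently, the congruence $n \equiv \pm 1 \pmod 9$) guarantees.
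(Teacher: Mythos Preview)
Your proof is correct and takes essentially the same approach as the paper, which simply cites Lemma~\ref{lem: Kummer Hecke ramification} and Theorem~\ref{thm: local Kummer obstruction}. You have supplied the verification that $n\equiv\pm1\pmod 9$ forces $n\in(\Q_3^\times)^3$ and hence tame ramification, together with the observation via Lemma~\ref{lem: local selmer ratio} that $|V_{n,3}|=2$, but these are exactly the details implicit in the paper's two citations.
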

\begin{proof}
This follows again from Lemma \ref{lem: Kummer Hecke ramification} and Theorem \ref{thm: local Kummer obstruction}
\end{proof}

Nonetheless, we still have the following global result.

\begin{proposition}\label{prop: selmer elements are unobstructed}
Each class $t \in \Sel_2^\un(F_n)$ lies in $\ker(q_n)$. 
\end{proposition}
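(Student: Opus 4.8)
The plan is to establish the global statement by playing the local contributions of the quadratic refinement against the reciprocity law in the Brauer group, exactly as in the proof of Theorem \ref{thm: reciprocity law cubics} but now invoking the precise local computations for Kummer extensions. First I would recall that the global refinement decomposes as $q_n = \sum_v q_{n,v}$, so that for any $t \in \Sel_2^\un(F_n)$ we have $\inv_v(\mathrm{res}_v(q_n(t))) = q_{n,v}(\mathrm{res}_v(t))$ at each place $v$, and that by class field theory (reciprocity in $\Br(\Q)$) the sum $\sum_v q_{n,v}(\mathrm{res}_v(t)) = 0 \in \Z/2\Z$. The goal is to show the left-hand side $q_n(t)$ is actually trivial in $\Br(\Q)[2]$, not merely that its local invariants sum to zero; since $\Br(\Q)[2] \hookrightarrow \bigoplus_v \Br(\Q_v)[2]$, it suffices to show each local invariant vanishes individually.

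The key step is therefore to check that $q_{n,v}(\mathrm{res}_v(t)) = 0$ for \emph{every} place $v$. By definition $\mathrm{res}_v(t) \in V_{n,v}$, so this reduces to understanding $\ker(q_{n,v}) \cap V_{n,v}$. For every prime $p \neq 3$ and for $p = \infty$, Theorem \ref{thm: local Kummer obstruction} (with $m = 3$, noting $p \neq m$ or else $e \in \{1, m-1\}$ is automatic in the cubic case) together with Lemma \ref{lem: Kummer Hecke ramification} gives $V_{n,p} \subset \ker(q_{n,p})$, since $F_n/\Q$ is Hecke unramified away from $3$; at the archimedean place the local class is trivial. The only problematic place is $v = 3$, where Proposition \ref{prop: tame failure II} shows the nontrivial element of $V_{n,3}$ is \emph{not} in $\ker(q_{n,3})$, so that $q_{n,3}(\mathrm{res}_3(t)) = 1/2$ whenever $\mathrm{res}_3(t)$ is nontrivial in $V_{n,3}$.

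The crux of the argument — and the main obstacle — is then to rule out exactly this bad case: I must show that for $t \in \Sel_2^\un(F_n)$, the restriction $\mathrm{res}_3(t)$ is in fact the trivial class of $V_{n,3}$, so that $q_{n,3}(\mathrm{res}_3(t)) = 0$ after all. This is precisely the content of Corollary \ref{thm: reciprocity for Kummer extensions over Q} (equivalently the $m=3$, $K = \Q$ specialization of Theorem \ref{thm: global Kummer reciprocity law}): since $3$ is the unique prime of $\Q$ above $m = 3$ and $[\Q_3(\zeta_3) \colon \Q_3] = 2 = m-1$, the tame prime $3$ factors as $(3) = w_1 w_2^{2}$ with $w_2$ the unique Hecke prime, and Hecke reciprocity forces $t$ to be a local square at both $w_1$ (for local reasons, as $V_{n,3}$ projects trivially there) and $w_2$ (for the global reciprocity reason). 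Hence $\mathrm{res}_3(t)$ is trivial in $V_{n,3}$, giving $q_{n,3}(\mathrm{res}_3(t)) = 0$. Combining with the vanishing at all other places, every local invariant of $q_n(t)$ is zero, so $q_n(t) = 0$ and $t \in \ker(q_n)$, as desired. The only subtlety to handle carefully is the logical direction: one cannot first assume $t \in \ker(q_n)$ and deduce the local squareness, since that would be circular; instead one invokes the reciprocity law (Corollary \ref{thm: reciprocity for Kummer extensions over Q}), whose proof rests on Hecke's theorem rather than on $q_n$, to conclude local squareness at $3$ independently, and only then deduces $t \in \ker(q_n)$.
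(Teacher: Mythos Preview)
Your argument is correct and ultimately rests on the same citation as the paper's own proof, which is a single line invoking Corollary~\ref{thm: reciprocity for Kummer extensions over Q}. Note that this corollary already contains the conclusion $t \in \ker(q)$ verbatim, so your route through the local-squareness clause and back is an unnecessary detour.

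One correction to your final remark: the proof of Corollary~\ref{thm: reciprocity for Kummer extensions over Q} (via Theorem~\ref{thm: global Kummer reciprocity law}) does \emph{not} rest on Hecke's theorem. It rests on the Brauer-group reciprocity $\sum_v \inv_v(q_v(\mathrm{res}_v t)) = 0$, combined with the local computations showing that this sum has at most one possibly nonzero term, namely at $v = 3$; the reciprocity then forces $q_{n,3}(\mathrm{res}_3 t) = 0$, and the local analysis (Proposition~\ref{prop: tame failure II}) converts this into the local-squareness statement. There is still no circularity, because $\sum_v \inv_v = 0$ is a general property of $\Br(\Q)$ and does not presuppose $t \in \ker(q_n)$ --- it is precisely the mechanism that produces that conclusion.
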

\begin{proof}
This is a special case of the ``explicit'' version of Hecke reciprocity in Theorem \ref{thm: reciprocity for Kummer extensions over Q}.
\end{proof}

We define $Y(\Q)_n^\un$ as before and conclude similarly (allowing now $n \equiv \pm1 \pmod9$):

\begin{corollary}\label{cor: Z bijection}
For each $n \geq 1$, there exists a bijection $\Sel_2^\un(F_n) \to G(\Q)\backslash Y(\Q)^\un_n$. Moreover, there exists $N \geq 1$,  independent of $n$, such that each $G(\Q)$-orbit in $Y(\Q)^\un_n$ is represented by an element in $\frac{1}{N}V(\Z)$. 
\end{corollary}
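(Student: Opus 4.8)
The plan is to prove Corollary \ref{cor: Z bijection} by mimicking the proof of Corollary \ref{cor: Z bijection wild}, replacing the input Proposition \ref{prop: local signature elements are unobstructed} (which fails in the tame case) with the global Proposition \ref{prop: selmer elements are unobstructed}. The key observation is that Corollary \ref{cor: Z bijection wild} never really needed the \emph{local} statement $V_{n,p} \subset \ker(q_{n,p})$ for every $p$; what it genuinely used, via Proposition \ref{prop: parameterization}, is the \emph{global} containment $\Sel_2^\un(F_n) \subset \ker(q_n)$, together with an integrality argument. Since Proposition \ref{prop: selmer elements are unobstructed} supplies exactly the global containment $\Sel_2^\un(F_n) \subset \ker(q_n)$ for \emph{all} $n$ (including $n \equiv \pm1 \pmod 9$), the first sentence of the corollary follows verbatim: Proposition \ref{prop: parameterization} gives a functorial bijection $\ker(q_n) \simeq G(\Q)\backslash Y(\Q)_n$, and restricting to the unramified locus yields $\Sel_2^\un(F_n) \to G(\Q)\backslash Y(\Q)_n^\un$.

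The only remaining point is integrality, i.e.\ the existence of a uniform $N \geq 1$ such that every $G(\Q)$-orbit in $Y(\Q)_n^\un$ has a representative in $\frac{1}{N}V(\Z)$. First I would note that for $p \neq 3$ nothing has changed from the wild case: the local conditions defining $Y(\Q)_n^\un$ at $p \neq 3$ are identical to those in Corollary \ref{cor: Z bijection wild} (unramified classes when $p \nmid n$, and the vanishing group when $p \mid n$), so the bounded-denominator argument of \cite[Theorem 3.19]{AlpogeBhargavaShnidman} applies away from $3$ exactly as before. The subtlety is entirely concentrated at $p = 3$, where in the tame case the relevant Selmer class $t$ is \emph{not} locally square even though it is globally unobstructed. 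Thus I would isolate the claim that an unramified orbit at $3$ (i.e.\ one whose image in $H^1(\Q_3, M_n)$ lies in $V_{n,3}$) still admits a representative with bounded $3$-adic denominator.

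The hard part will be this integrality at $3$ in the tame case. The clean way to handle it is to follow the second, alternative argument sketched in the proof of Corollary \ref{cor: Z bijection wild}: deduce integrality from the result for $\Sel_2(E_n)$ (with $E_n \colon y^2 = x^3 + n^2$) rather than reprove it. For $p > 3$ the two local conditions coincide, so the only place the comparison could break is again $p = 3$; there I would verify directly that the two possible local orbit types at $3$ (according to whether $\mathrm{res}_3(t) \in V_{n,3}$ is trivial or the nontrivial unramified class) each have representatives with $3$-adic valuation bounded independently of $n$ --- a finite local computation, since the tame ramification type $(3) = w_1 w_2^{2}$ is the same for all tame $n$, so $Y(\Q_3)_n^\un$ has a uniform local structure. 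Alternatively, one can simply invoke \cite[Theorem 3.19]{AlpogeBhargavaShnidman} as stated, observing that its proof produces bounded denominators for \emph{any} $G(\Q)$-orbit on the quadric with bounded invariant, regardless of whether the orbit is Hecke-unramified; the unramifiedness was only used to identify which orbits correspond to Selmer classes, not to bound denominators. I expect the cleanest writeup to cite the latter, reducing the proof to two sentences paralleling Corollary \ref{cor: Z bijection wild}, with the caveat that one must check the constant $N$ can be chosen uniformly over the tame family --- which holds because the local orbit space $Y(\Q_3)_n^\un$ is, up to the fixed tame ramification pattern, independent of $n$.
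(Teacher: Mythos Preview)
Your approach is correct and matches the paper's: use Proposition \ref{prop: selmer elements are unobstructed} in place of Proposition \ref{prop: local signature elements are unobstructed} to get $\Sel_2^\un(F_n) \subset \ker(q_n)$, then invoke Proposition \ref{prop: parameterization} and the integrality argument of \cite[Theorem 3.19]{AlpogeBhargavaShnidman} exactly as in Corollary \ref{cor: Z bijection wild}. The paper's proof is literally two words (``as before'').

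However, your discussion of the $3$-adic integrality contains a factual error that leads you to over-complicate things. You write that ``in the tame case the relevant Selmer class $t$ is \emph{not} locally square'' at $3$, and later speak of ``the two possible local orbit types at $3$ (according to whether $\mathrm{res}_3(t) \in V_{n,3}$ is trivial or the nontrivial unramified class)''. But Corollary \ref{thm: reciprocity for Kummer extensions over Q} says precisely that every $t \in \Sel_2^\un(F_n)$ \emph{is} locally a square at all primes above $3$; equivalently, $\mathrm{res}_3(t) = 0$ always. This is the whole point of Hecke reciprocity here: the nontrivial class in $V_{n,3}$ never arises from a global Selmer element, and it also fails to lie in $\ker(q_{n,3})$ (Proposition \ref{prop: tame failure II}), so it does not correspond to any local orbit either. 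Hence there is exactly \emph{one} local orbit type at $3$ for unramified orbits in the tame case --- the distinguished orbit of $v_n$ --- and no special $3$-adic analysis is needed. Your final observation, that the bounded-denominator argument of \cite[Theorem 3.19]{AlpogeBhargavaShnidman} does not use Hecke-unramifiedness and applies verbatim, is the right way to conclude.
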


However, because of the obstruction coming from Theorem \ref{thm: reciprocity for Kummer extensions over Q}, the computation of the moment of $|\Sel_2^\un(F_n)| = |\Cl_{F_n}[2]|$,  for the tame fields $F_n$, takes a different shape:

\begin{theorem}\label{thm: tame selmer avg}
    Ordering the tame fields $F_n = \Q(\sqrt[3]{n})$ by $|n|$, we have $\avg_n\, |\Cl_{F_n}[2]|= 1 + \frac12 $.
\end{theorem}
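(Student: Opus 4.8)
The plan is to run the argument of Theorem \ref{thm: wild selmer avg} essentially verbatim, tracking the single place where the tame family behaves differently, namely the local factor at $3$. First I would invoke Corollary \ref{cor: Z bijection}, which supplies, for each tame $n$, a bijection $\Sel_2^\un(F_n) \to G(\Q)\backslash Y(\Q)_n^\un$ together with a uniform bound $N$ on the denominators of orbit representatives. The crucial point is that this bijection survives into the tame case despite the local obstruction of Proposition \ref{prop: tame failure II}: by Proposition \ref{prop: selmer elements are unobstructed} (the explicit Hecke reciprocity of Corollary \ref{thm: reciprocity for Kummer extensions over Q}), every global class $t \in \Sel_2^\un(F_n)$ lies in $\ker(q_n)$ and hence corresponds to a genuine $G(\Q)$-orbit on the quadric $Y$. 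Thus no Selmer classes are lost, and I can count them by counting integral orbits.

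Next I would apply the counting result Theorem \ref{thm: counting in the quadric over Q} to the $G(\Z)$-invariant weight $\phi(v) = 1/m(v)$ cut out by the local conditions defining $Y(\Q)^\un_n$, exactly as in the wild case; acceptability of $\phi$ holds for the same reasons, since away from $3$ the defining conditions agree with those for $\Sel_2(E_n)$. With $U \subset \Z$ now the cubefree integers $n \equiv \pm 1\pmod{9}$ and $U_p$ its closure in $\Q_p$, this produces the Euler product $\avg_n |\Sel_2^\un(F_n)| = 1 + \tau(G)\prod_{p \le \infty}\nu_p$ with $\tau(G) = 2$, where the $\phi$-weighted average number of unramified local orbits is
\[\nu_p = \int_{n \in U_p}\frac{|V_{n,p} \cap \ker(q_{n,p})|}{|M_n(\Q_p)|}\,dn.\]
For every $p \ne 3$ the tame and wild families are indistinguishable $p$-adically: by Theorem \ref{thm: local Kummer obstruction} and Lemma \ref{lem: Kummer Hecke ramification}, $F_n/\Q_p$ is Hecke unramified for $p \neq 3$, so $V_{n,p}\subset\ker(q_{n,p})$ and Lemma \ref{lem: local selmer ratio} forces $|V_{n,p} \cap \ker(q_{n,p})| = |V_{n,p}| = |M_n(\Q_p)|$, giving $\nu_p = 1$ for finite $p \ne 3$; and $\nu_\infty = 1/2$ as before.

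The only new input, and the heart of the matter, is the factor $\nu_3$. For tame $n$ we have $3 \nmid n$ and $n \in \Q_3^{\times 3}$, whence $F_n \otimes \Q_3 \simeq \Q_3 \times \Q_3(\sqrt{-3})$ with $\Q_3(\sqrt{-3})/\Q_3$ ramified, so that $|V_{n,3}| = |M_n(\Q_3)| = 2$. The decisive difference from the wild case is that, by Proposition \ref{prop: tame failure II}, the nontrivial element of $V_{n,3}$ is not in $\ker(q_{n,3})$, so it corresponds to no local orbit on $Y(\Q_3)$; hence $|V_{n,3}\cap \ker(q_{n,3})| = 1$, the integrand is identically $1/2$, and $\nu_3 = 1/2$. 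The hard part will be to see that this local deficit is consistent with, indeed forced by, the global parameterization: globally the Selmer classes all survive (they lie in $\ker(q_n)$), but by Corollary \ref{thm: reciprocity for Kummer extensions over Q} their restrictions at $3$ are trivial, so they are counted by orbits that are locally trivial at $3$, precisely matching $|V_{n,3}\cap\ker(q_{n,3})| = 1$. Assembling the factors,
\[\avg_n |\Sel_2^\un(F_n)| = 1 + 2\cdot\tfrac12\cdot\tfrac12 = 1 + \tfrac12 = \tfrac32,\]
and Theorem \ref{thm: canonical isomorphism} identifies $|\Sel_2^\un(F_n)|$ with $|\Cl_{F_n}[2]|$, completing the proof.
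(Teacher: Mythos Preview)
Your proposal is correct and follows essentially the same route as the paper: invoke Corollary \ref{cor: Z bijection} (underpinned by Proposition \ref{prop: selmer elements are unobstructed}), apply Theorem \ref{thm: counting in the quadric over Q} to obtain the Euler product with $\nu_p = \int_{U_p} |V_{n,p}\cap\ker(q_{n,p})|/|M_n(\Q_p)|\,dn$, and then compute $\nu_p = 1$ for finite $p\ne 3$, $\nu_\infty = 1/2$, and $\nu_3 = 1/2$ via Proposition \ref{prop: tame failure II}. Your additional remarks on why the local deficit at $3$ is consistent with the global parameterization are accurate elaborations of what the paper leaves implicit.
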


\begin{proof}
 Let $U \subset \Z$ be the integers congruent to $\pm 1\pmod9$, and for each prime $p$, let $U_p$ be the closure of $U$ in $\Q_p$. Proceeding as in the proof of Theorem \ref{thm: wild selmer avg}, and using Corollary \ref{cor: Z bijection}, we obtain
\[\avg_d \#\Sel_2^\un(F_n) = 1 + 2\prod_p \nu_p,\] 
where this time 
\[\nu_p = \int_{n \in U_p} \frac{\#V_{n,p} \cap \ker(q_{n,p})}{\#M_n(\Q_p)}dn.\]
The computation for $p \neq 3$ is as before. For $p = 3$ we have:

\begin{lemma}
$\nu_3 = \frac12$.
\end{lemma}
\begin{proof}
Since $F_n$ is tame, the prime $3$ has splitting type $(1^21)$ and hence $\#V_{n,3} = \#M_n(\Q_3) = 2$ by Lemma \ref{lem: local selmer ratio}. On the other hand, we have $\#V_{n,3} \cap \ker(q_{n,3})= 1$ by Proposition \ref{prop: tame failure II}. 
\end{proof}

Putting this together, we obtain,
\[\avg_n \#\Sel_2^\un(F_n) = 1 + 2\prod_p \nu_p =1 + 2\cdot \frac12 \cdot \frac12 \cdot \prod_{p \nmid 3\infty}  1 =  1 + \frac12,\]
as desired.
\end{proof}

\begin{proof}[Proof of Theorem \ref{main thm: over Q}]
This is Theorems \ref{thm: tame selmer avg} and \ref{thm: wild selmer avg}. 
\end{proof}

\section{Generalization to imaginary quadratic fields} \label{sec: imaginary quadratic fields}
Let $K$ be an imaginary quadratic field. For simplicity, we assume that $\Cl_K = 0$. Consider the cubic extensions $F_n = K(\sqrt[3]{n})$, with $n \in \O_K$.  For any prime $p \leq \infty$ of $K$, write 
\[V_{n,p} \subset H^1(K_p,M_n) \subset (F_n \otimes K_p)^\times/(F_n \otimes K_p)^{\times 2}\] 
for the subspace of local conditions defining $\Sel_2^\un(F_n/K) = \Sel_2^\un(F_n)$, as in \S\ref{subsec: relative signature groups}. Let 
\[q_{n,p} \colon H^1(K_p, M_n) \to \{0,1\}\] 
be the local obstruction map. If $p \nmid 3$, then $V_{p,n} \subset \ker(q_n)$ for all $n$, by Lemma \ref{lem: Kummer Hecke ramification} and Theorem \ref{thm: local Kummer obstruction}. When $p \mid 3$, the behavior depends on the local $3$-adic behavior of $K$, as we shall see.

\subsection{$K$ ramified at $3$} 
Suppose that $K$ is ramified at $3$, so that $(3) = \p^2$. (Since we assume $\Cl_K = 0$, this in fact forces $K = \Q(\sqrt{-3})$.)  In this case, we have:
\begin{proposition}\label{prop: local signature elements are unobstructed over K}
For every prime $p$ of $K$, we have $V_{n,p} \subset \ker(q_{n,p})$.
\end{proposition}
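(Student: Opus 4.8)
The plan is to reduce the statement to the two local results already established — Lemma~\ref{lem: Kummer Hecke ramification} and Theorem~\ref{thm: local Kummer obstruction} — by exploiting that $K = \Q(\sqrt{-3})$ already contains the cube roots of unity. Concretely, $\zeta_3 = \tfrac{-1+\sqrt{-3}}{2} \in K$, so $K = \Q(\zeta_3)$. In particular, at the unique prime $\p$ above $3$ (here $(3) = \p^2$), the completion $K_\p$ contains $\zeta_3$, so $K_\p(\zeta_3) = K_\p$ and the ramification index $e = e(K_\p(\zeta_3)/K_\p)$ equals $1$. This single observation is really the whole content of the proposition (recall that $\mathrm{obs}_{n,p}$ is the local obstruction map $q_{n,p}$).

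First I would dispose of the primes $p \nmid 3$. For these the hypothesis $p \neq m$ of Theorem~\ref{thm: local Kummer obstruction} holds (with $m = 3$), and Lemma~\ref{lem: Kummer Hecke ramification} shows that $F_n/K$ can be Hecke ramified only at a prime above $m = 3$. Hence $F_n/K$ is Hecke unramified at every $p \nmid 3$, and Theorem~\ref{thm: local Kummer obstruction} gives $V_{n,p} \subset \ker(q_{n,p})$ at all such $p$.

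Next I would treat the prime $\p \mid 3$, the only delicate case. Here $e = 1 \in \{1, m-1\} = \{1,2\}$, so the hypothesis of Theorem~\ref{thm: local Kummer obstruction} is again satisfied. The criterion of Lemma~\ref{lem: Kummer Hecke ramification} for Hecke ramification above $m$ requires $e$ to be \emph{even}; since $e = 1$ is odd, $F_n/K$ is Hecke unramified at $\p$ for every $n$. Theorem~\ref{thm: local Kummer obstruction} then yields $V_{n,\p} \subset \ker(q_{n,\p})$, completing the argument for all primes $p$ of $K$.

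There is no real obstacle beyond verifying the hypotheses of the two cited results; the point worth emphasizing is \emph{why} the statement holds. Over $\Q$ the prime $3$ could be Hecke ramified precisely through the totally ramified cyclotomic case $e = m-1 = 2$, which is the source of the tame/wild dichotomy in Theorem~\ref{main thm: over Q}. Once the ground field contains $\zeta_3$, this cyclotomic ramification disappears ($e = 1$), so the dichotomy collapses and the family $F_n/K$ is uniformly Hecke unramified — exactly the behavior needed for Theorem~\ref{thm: over Q(omega)}.
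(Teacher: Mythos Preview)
Your proof is correct and follows essentially the same approach as the paper: both reduce to Lemma~\ref{lem: Kummer Hecke ramification} and Theorem~\ref{thm: local Kummer obstruction}, the key observation being that since $K = \Q(\zeta_3)$ we have $e = e(K_\p(\zeta_3)/K_\p) = 1$, which is odd, so no prime of $K$ is Hecke ramified in $F_n/K$. Your version is simply more explicit about why $e$ is odd.
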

\begin{proof}
    This follows from Theorem \ref{thm: local Kummer obstruction}, once we check that there are no Hecke primes for the fields $F_n$. By Lemma \ref{lem: Kummer Hecke ramification}, the only possible Hecke prime is $p  = \p$, but since $e$ is odd in this case, even $\p$ is not a Hecke prime. 
\end{proof}

Let $Y(K)_n^\un$ denote the vectors $v \in Y(K)_n$ such that, for every prime $p$ of $K$, the $G(K_p)$-orbit of $v$ corresponds to an element of $V_{n,p}$ under the bijection of Proposition \ref{prop: parameterization}. 

\begin{corollary}\label{cor: Z[omega] bijection}
For each $n \in \O_K$, there exists a bijection $\Sel_2^\un(F_n/K) \to G(K)\backslash Y(K)^\un_n$. Moreover, there exists $N \geq 1$,  independent of $n$, such that each $G(K)$-orbit in $Y(K)^\un_n$ is represented by an element in $\frac{1}{N}V(\O_K)$. 
\end{corollary}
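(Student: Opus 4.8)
The plan is to transcribe the proof of Corollary \ref{cor: Z bijection wild} essentially verbatim, replacing $\Q$ by $K$ and $\Z$ by $\O_K$. This substitution is legitimate because the two inputs to that argument survive it. First, Proposition \ref{prop: local signature elements are unobstructed over K} shows that \emph{every} local Selmer condition is unobstructed, so — unlike the tame case over $\Q$ — no appeal to global Hecke reciprocity is needed; concretely, since $\zeta_3 \in K = \Q(\sqrt{-3})$, no $F_n$ is Hecke ramified, and we are in the ``easy'' situation of Corollary \ref{cor: Z bijection wild} rather than Corollary \ref{cor: Z bijection}. Second, the integrality argument only requires that the base ring be a PID with finite unit group, which holds as $K$ has class number $1$.

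First I would establish the bijection. By Proposition \ref{prop: local signature elements are unobstructed over K} we have $V_{n,p} \subset \ker(q_{n,p})$ for every prime $p$ of $K$, so each class of $\Sel_2^\un(F_n/K)$ is everywhere locally in $\ker(q_{n,p})$ and therefore lies in $\ker(q_n)$ by the product formula for the Brauer group. Proposition \ref{prop: parameterization}, which is functorial in the base field, identifies $G(K)\backslash Y(K)_n$ with $\ker(q_n)$. By definition, $Y(K)^\un_n$ consists of those $v \in Y(K)_n$ whose local $G(K_p)$-orbit corresponds to a class in $V_{n,p}$ for every $p$; under the parameterization these orbits are precisely the elements of $\ker(q_n)$ satisfying all the local Selmer conditions, i.e.\ the elements of $\Sel_2^\un(F_n/K)$. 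Restricting the bijection of Proposition \ref{prop: parameterization} to this subset then yields the asserted bijection $\Sel_2^\un(F_n/K) \longrightarrow G(K)\backslash Y(K)^\un_n$.

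For integrality, I would argue exactly as in \cite[Theorem 3.19]{AlpogeBhargavaShnidman}: one clears denominators of an orbit representative prime by prime, and then uses that $\O_K$ is a PID to globalize into $\frac{1}{N}V(\O_K)$. At each prime $p \nmid 6$ the module $M_n$ is unramified and $V_{n,p}$ consists of unramified classes, which forces an integral local representative and so contributes no denominator; alternatively, for $p > 3$ one deduces integrality for free (without reproving it) by matching the local conditions with those defining $\Sel_2(E_n/K)$, where $E_n \colon y^2 = x^3 + n^2$, exactly as in Corollary \ref{cor: Z bijection wild}. The hard part, and the step I expect to be the main obstacle, is the uniform-in-$n$ bound at the finitely many primes above $2$ and $3$. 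I anticipate this to be mild, however: because no $F_n/K$ is Hecke ramified (Proposition \ref{prop: local signature elements are unobstructed over K}), the local orbit structure at these primes does not degenerate as $n$ varies, so the bounded-denominator estimate from the $\Q$ case applies with a single $N$ independent of $n$.
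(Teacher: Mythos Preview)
Your proposal is correct and matches the paper's approach exactly: the paper's proof is simply ``As before,'' meaning one runs the argument of Corollary~\ref{cor: Z bijection wild} over $K$ in place of $\Q$, using Proposition~\ref{prop: local signature elements are unobstructed over K} in place of Proposition~\ref{prop: local signature elements are unobstructed} and invoking \cite[Theorem~3.19]{AlpogeBhargavaShnidman} for integrality. Your only superfluous addition is the appeal to the product formula --- since every local invariant of $q_n(t)$ vanishes, $q_n(t)=0$ follows already from the injectivity $\Br(K)\hookrightarrow\bigoplus_v\Br(K_v)$.
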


\begin{proof}
As before.
\end{proof}

The following is Theorem \ref{thm: over Q(omega)} from the introduction.

\begin{theorem}\label{thm: C_3 selmer avg}
    Let $K = \Q(\sqrt{-3})$. Ordering the extensions $F_n = K(\sqrt[3]{n})$, for $n \in \O_K$,  by $\Nm(n)$, we have $\avg_n \,  \#\Sel_2^\un(F_n/K) = 3/2$.
\end{theorem}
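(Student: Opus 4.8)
The plan is to follow the proof of Theorem \ref{thm: wild selmer avg} essentially verbatim, substituting the imaginary-quadratic counting result Theorem \ref{thm: counting in the quadric over imaginary quadratic} for its rational analogue Theorem \ref{thm: counting in the quadric over Q}. Since $K = \Q(\sqrt{-3})$ has $\Cl_K = 0$, the ring $\O_K$ is a PID with finitely many units, so the hypotheses of Theorem \ref{thm: counting in the quadric over imaginary quadratic} are met and the entire orbit-counting apparatus transcribes to $\O_K$. The only genuinely new input is the archimedean local factor, now attached to the single \emph{complex} place of $K$; this is exactly what produces the answer $3/2$ in place of the answer $2$ obtained over $\Q$.

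First I would let $U \subset \O_K$ be the set of cubefree elements, and for each prime $p$ of $K$ let $U_p$ be its closure in $K_p$. By Corollary \ref{cor: Z[omega] bijection}, for each $n \in \O_K$ there is a bijection $\Sel_2^\un(F_n/K) \to G(K)\backslash Y(K)_n^\un$, and every such orbit has a representative in $\tfrac1N V(\O_K)$ with $N$ independent of $n$. As in the proof of Theorem \ref{thm: wild selmer avg}, I would set $\phi(v) = 1/m(v)$, where $m(v)$ is the number of $G(\O_K)$-orbits inside $G(K)v$, so that $\avg_n \#\Sel_2^\un(F_n/K)$ is obtained by counting $\phi$-weighted unramified $G(\O_K)$-orbits $G(\O_K)v$ with $\Nm(A_3(v)) < X$ and letting $X \to \infty$. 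Feeding this into Theorem \ref{thm: counting in the quadric over imaginary quadratic}, the change of variables and standard manipulations of \cite{AlpogeBhargavaShnidman} yield a formula of the same shape,
\[
\avg_n \#\Sel_2^\un(F_n/K) = 1 + \tau(G)\prod_p \nu_p = 1 + 2\prod_p \nu_p,
\]
where the product is over all places of $K$, the factor $\tau(G) = 2$ is the Tamagawa number of $G = \SL_2^2/\mu_2$, and $\nu_p = \int_{n \in U_p} \#(V_{n,p}\cap\ker q_{n,p})/\#M_n(K_p)\,dn$ is the weighted average number of unramified $G(K_p)$-orbits.

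It then remains to evaluate the $\nu_p$. For every finite prime $p$, Proposition \ref{prop: local signature elements are unobstructed over K} gives $V_{n,p}\subset\ker(q_{n,p})$ (the key point being that, unlike over $\Q$, the fields $F_n/K$ admit \emph{no} Hecke primes when $K = \Q(\sqrt{-3})$, since $3$ ramifies in $K$ and the cyclotomic ramification index is odd). Hence $\#(V_{n,p}\cap\ker q_{n,p}) = \#V_{n,p} = \#M_n(K_p)$ by Lemma \ref{lem: local selmer ratio}, so $\nu_p = 1$ for all finite $p$, including the unique prime above $3$. The sole place where the computation departs from the $\Q$ case is the archimedean one: $K$ has a single \emph{complex} place, and since $H^1(\C, M_n) = 0$ the local space $V_{n,\infty}$ is trivial, while $\#M_n(\C) = 2^{[F_n:K]-1} = 4$. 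Thus $\nu_\infty = 1/\#M_n(\C) = 1/4$, exactly half of the value $\nu_\infty = 1/\#M_n(\R) = 1/2$ seen over the real place of $\Q$. Combining, I obtain
\[
\avg_n \#\Sel_2^\un(F_n/K) = 1 + 2\cdot\frac14 = \frac32,
\]
which is the assertion; the corresponding statement for $\#\Cl_{F_n}[2]$ follows since $\Cl_K = 0$ forces $\Cl_{F_n}[2] = \Cl_{F_n/K}[2]$, which is dual to $\Sel_2^\un(F_n/K)$ by Theorem \ref{thm: canonical isomorphism}.

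The main obstacle is the correct evaluation of the archimedean factor $\nu_\infty$, as the whole discrepancy between this theorem and Theorem \ref{thm: wild selmer avg} is concentrated there: replacing the real place by a complex place doubles the stabilizer $\#M_n$ and thereby halves $\nu_\infty$ from $1/2$ to $1/4$, converting the average $2$ into $3/2$. A secondary point to verify carefully is that the Euler-product normalization and Tamagawa constant emerging from Theorem \ref{thm: counting in the quadric over imaginary quadratic} match those of the rational case, so that the bookkeeping leading to $1 + \tau(G)\prod_p\nu_p$ goes through without loss; this is where one must confirm that the \cite{AlpogeBhargavaShnidman} mass-formula manipulations survive the passage from $\Z$ to $\O_K$.
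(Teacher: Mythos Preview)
Your proposal is correct and follows essentially the same approach as the paper's proof: both invoke Corollary~\ref{cor: Z[omega] bijection} and Theorem~\ref{thm: counting in the quadric over imaginary quadratic} to obtain the Euler-product expression $1 + 2\prod_p \nu_p$, then compute $\nu_p = 1$ at finite places via Lemma~\ref{lem: local selmer ratio} and $\nu_\infty = 1/4$ at the complex place. Your write-up is in fact more detailed than the paper's, spelling out why $\#M_n(\C) = 4$ and why Proposition~\ref{prop: local signature elements are unobstructed over K} renders the intersection with $\ker q_{n,p}$ redundant at finite primes.
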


\begin{proof}
Using Theorem \ref{thm: counting in the quadric over imaginary quadratic}, and arguing as before, we have
\[\avg_d \#\Sel_2^\un(F_n/K) = 1 + 2\prod_p \nu_p,\] 
where 
\[\nu_p = \int_{n \in \O_{K_p}} \frac{\#V_{n,p}}{\#M_n(K_p)}dn,\]
If $p \neq \infty$ we have $\nu_p = 1$ by Proposition \ref{lem: local selmer ratio}. 
On the other hand, we have $\nu_\infty = \#V_{n,\infty}/\#M_n(\C) =1/4$.
Putting this together, we obtain,
\[\avg_n \#\Sel_2^\un(F_n) = 1 + 2\prod_p \nu_p =1 + 2\cdot \frac14 =  1 + 1/2,\]
proving the theorem.
\end{proof}

\subsection{$K$ inert at $3$}
Suppose that $K$ is inert at $3$, so that $(3) = \p$ is the unique prime above $3$.  
\begin{proposition}\label{prop: tame failure III}
Let $p$ be a prime of $K$ and $n \in K^\times$. If $p \neq \p$ or if $n$ is not a cube in $K_\p$, then $V_{n,p} \subset \ker(q_n)$. If $p = \p$ and $n$ is a cube in $K^\times_\p$, then the non-trivial element of $V_{n,\p} \subset H^1(K_\p, M_n)$ is not contained in $\ker(q_{n,\p})$. 
\end{proposition}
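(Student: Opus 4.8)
The plan is to deduce the entire statement from the local analysis of cubic Kummer algebras already in hand, namely Lemma~\ref{lem: Kummer Hecke ramification} and Theorem~\ref{thm: local Kummer obstruction}. The one input that must be checked by hand is that the ramification hypothesis of Theorem~\ref{thm: local Kummer obstruction} holds at $\p$. Since $3$ is inert in $K$, the completion $K_\p$ is the unramified quadratic extension of $\Q_3$, with residue field $\F_9$. As $\F_9^\times$ has order $8$, it contains no primitive cube root of unity, so $\zeta_3 \notin K_\p$; equivalently, $K_\p(\zeta_3) = K_\p(\sqrt{-3})$ is the ramified quadratic extension of $K_\p$, because $v_\p(3) = 1$ is odd. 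Hence the ramification index $e = e(K_\p(\zeta_3)/K_\p)$ equals $2 = m-1$, so Theorem~\ref{thm: local Kummer obstruction} applies at $\p$. At every prime $p \neq \p$ the residue characteristic differs from $m = 3$, so the theorem applies there as well.

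First I would dispatch the cases contributing to the inclusion $V_{n,p} \subset \ker(q_{n,p})$. If $p \neq \p$, then $p \nmid 3$, so $F_n/K_p$ is Hecke unramified by Lemma~\ref{lem: Kummer Hecke ramification}, whence $V_{n,p} \subset \ker(q_{n,p})$ by Theorem~\ref{thm: local Kummer obstruction}. If $p = \p$ and $n$ is not a cube in $K_\p^\times$, then $x^3 - n$ has no root in $K_\p$ (a root would force $3 \mid v_\p(n)$ or exhibit $n$ as a cube) and is therefore irreducible, so $F_n \otimes K_\p$ is a field; by Lemma~\ref{lem: local selmer ratio} this forces $V_{n,\p} = 0$ (equivalently, apply Lemma~\ref{lem: field extensions}), and the inclusion is automatic.

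The substance is the remaining case $p = \p$ with $n$ a cube in $K_\p^\times$. Writing $n = c^3$ with $c \in K_\p$, the factorization $x^3 - n = (x-c)(x^2 + cx + c^2)$ together with the fact that the quadratic factor has discriminant $-3c^2 \notin K_\p^{\times 2}$ (since $-3$ is a non-square in $K_\p$) identifies $F_n \otimes K_\p \simeq K_\p \times K_\p(\zeta_3)$. This is tamely ramified (the ramification index $2$ is prime to $3$) with $e = 2$ even, so $F_n/K_\p$ is Hecke ramified by Lemma~\ref{lem: Kummer Hecke ramification}. Theorem~\ref{thm: local Kummer obstruction} then yields $V_{n,\p} \not\subset \ker(q_{n,\p})$. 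Since $F_n \otimes K_\p$ has exactly two factors, Lemma~\ref{lem: local selmer ratio} gives $|V_{n,\p}| = 2$, and as the trivial class always lies in $\ker(q_{n,\p})$, the unique non-trivial class of $V_{n,\p}$ is precisely the one outside the kernel, as claimed.

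The only genuinely non-formal step is the verification that $e(K_\p(\zeta_3)/K_\p) = m-1$; this is where inertness of $3$ in $K$ enters decisively, making the hypothesis of Theorem~\ref{thm: local Kummer obstruction} available so that the reduction goes through. Everything else is bookkeeping with the \'etale algebra structure of $F_n \otimes K_\p$ and the local Selmer ratio of Lemma~\ref{lem: local selmer ratio}.
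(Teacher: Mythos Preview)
Your proof is correct and follows exactly the paper's route, which simply invokes Lemma~\ref{lem: Kummer Hecke ramification} and Theorem~\ref{thm: local Kummer obstruction}; you have merely spelled out the verification of the hypothesis $e \in \{1, m-1\}$ at $\p$ that the paper leaves implicit. One small caveat: the residue-field argument for $\zeta_3 \notin K_\p$ is not valid reasoning, since roots of unity of order equal to the residue characteristic always reduce to $1$ (so their absence in $\F_9^\times$ is automatic and says nothing about $K_\p$); however, your immediately following ramification argument via $v_\p(-3)$ odd is correct and is all that is needed.
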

\begin{proof}
Again from Lemma \ref{lem: Kummer Hecke ramification} and Theorem \ref{thm: local Kummer obstruction}
\end{proof}

Despite the second sentence of Proposition \ref{prop: tame failure III}, for every $n \in K^\times$, the group $\Sel_2^\un(F_n/K)$ lies in $\ker(q_n)$. Indeed, this follows from Corollary \ref{thm: reciprocity for Kummer extensions over Q}, as in the tame case over $\Q$. Let $Y(K)_n^\un$ denote the vectors $v \in Y(K)_n$ such that for every prime $p$ of $K$, the $G(K_p)$-orbit of $v$ corresponds to an element of $V_{n,p}$ under the bijection of Proposition \ref{prop: parameterization}. As before, we have: 

\begin{corollary}\label{cor: Z[i] bijection}
For each $n \in \O_K$, there exists a bijection $\Sel_2^\un(F_n/K) \to G(K)\backslash Y(K)^\un_n$. Moreover, there exists $N \geq 1$,  independent of $n$, such that each $G(K)$-orbit in $Y(K)^\un_n$ is represented by an element in $\frac{1}{N}V(\O_K)$. 
\end{corollary}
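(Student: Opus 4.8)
The plan is to reduce the statement to the parameterization of Proposition \ref{prop: parameterization} together with the global kernel containment recorded just above. Concretely, Proposition \ref{prop: parameterization} furnishes a functorial bijection between $G(K)\backslash Y(K)_n$ and $\ker(q_n) \subset H^1(K,M_n)$. Restricting to the unramified locus, the $G(K)$-orbits in $Y(K)_n^\un$ are by definition those whose local orbit at every prime $p$ of $K$ corresponds to a class in $V_{n,p}$. Thus the desired bijection $\Sel_2^\un(F_n/K) \to G(K)\backslash Y(K)_n^\un$ follows as soon as we know that every $t \in \Sel_2^\un(F_n/K)$ actually lies in $\ker(q_n)$, so that it corresponds to a genuine global orbit rather than merely a compatible family of local ones.

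I regard the containment $\Sel_2^\un(F_n/K) \subset \ker(q_n)$ as the conceptual heart of the argument, and it is where the inert case genuinely differs from the ramified-at-$3$ case. By Proposition \ref{prop: tame failure III}, when $n$ is a cube in $K_\p^\times$ the non-trivial class of $V_{n,\p}$ fails to lie in $\ker(q_{n,\p})$ \emph{locally}, so one cannot verify the kernel condition place-by-place as was possible in Proposition \ref{prop: local signature elements are unobstructed over K}. What rescues the argument is Hecke reciprocity: for a Selmer class $t$ one has $q_{n,p}(\mathrm{res}_p(t)) = 0$ at every $p \neq \p$ by Proposition \ref{prop: tame failure III}, while reciprocity in the Brauer group gives $\sum_p \inv_p(q_{n,p}(\mathrm{res}_p(t))) = 0$; hence the contribution at $\p$ must also vanish, forcing $q_n(t) = 0$ globally. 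This is exactly the mechanism that operated in the tame family over $\Q$ (cf.\ the proof of Theorem \ref{thm: tame selmer avg}), and it is precisely the content recalled before the corollary via Corollary \ref{thm: reciprocity for Kummer extensions over Q}.

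For the integrality assertion I would argue verbatim as in \cite[Theorem 3.19]{AlpogeBhargavaShnidman}, which produces, for orbits with prescribed invariants, representatives in $\frac{1}{N}V(\O_K)$ with $N$ uniform in $n$. Alternatively, one can bypass reproving this by noting that at each prime $p \nmid 3$ the local condition $V_{n,p}$ cutting out $\Sel_2^\un(F_n/K)$ coincides with the local $2$-Selmer condition for the elliptic curve $y^2 = x^3 + n^2$ (namely $H^1(K_p, M_n) = 0$ when $p \mid n$, and the unramified classes when $p \nmid n$), which reduces the bounded-denominator claim to the finitely many primes above $3$. Since the only delicate step is the global kernel containment, already in hand, the remainder is bookkeeping identical to that of the preceding corollaries, and I would simply write ``as before.''
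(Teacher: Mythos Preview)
Your proposal is correct and matches the paper's approach exactly: the paper's entire proof is ``As before,'' and you have faithfully unpacked what this means, namely the global kernel containment $\Sel_2^\un(F_n/K) \subset \ker(q_n)$ via Hecke reciprocity (stated just above the corollary, using that $\p$ is the unique prime above $3$), followed by Proposition~\ref{prop: parameterization} and the integrality argument of \cite[Theorem 3.19]{AlpogeBhargavaShnidman} carried over from Corollary~\ref{cor: Z bijection wild}.
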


As over $\Q$, say that $F_n$ is wild if it is wildly ramified over $\p$. Otherwise, we say $F_n$ is tame.

\begin{theorem}\label{thm: average over Q(i)}
    Let $K = \Q(\sqrt{-d})$ for $d \in \{-1,-7,-19,-43,-67,-163\}$, and order the fields $F_n = K(\sqrt[3]{n})$, for $n \in \O_K$,  by $\Nm(n)$. Then as $F_n$ varies through wild $($resp.\ tame$)$ $F_n$, the first moment $\avg_n \,  \#\Sel_2^\un(F_n/K)$ is equal to $3/2$ $($resp.\ $5/4)$.
\end{theorem}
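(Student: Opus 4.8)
The plan is to mirror the proofs of Theorems \ref{thm: wild selmer avg}, \ref{thm: tame selmer avg}, and \ref{thm: C_3 selmer avg}, substituting the count of Theorem \ref{thm: counting in the quadric over imaginary quadratic} for its analogue over $\Q$. The six fields listed are exactly the imaginary quadratic fields of class number one in which $3$ is inert, so both the counting theorem (which needs $\O_K$ a PID with finitely many units) and the class field theory identification $\Sel_2^\un(F_n/K)\simeq\Cl_{F_n}^\vee[2]$ of Theorem \ref{thm: canonical isomorphism} apply. The first step is to secure the orbit parameterization for \emph{all} $n\in\O_K$. Since $3$ is inert, $K\otimes_\Q\Q_3\simeq K_\p$ is a single unramified quadratic extension of $\Q_3$, which does not contain $\zeta_3$ because $K\neq\Q(\sqrt{-3})$. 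Hence the hypothesis of Corollary \ref{cor: Kummer cubic global result} holds (exactly one factor lacking $\zeta_3$), and I conclude $\Sel_2^\un(F_n/K)\subset\ker(q_n)$ for every $n$. Feeding this into Proposition \ref{prop: parameterization} yields the bijection of Corollary \ref{cor: Z[i] bijection} between $\Sel_2^\un(F_n/K)$ and the set $G(K)\backslash Y(K)_n^\un$ of unramified integral orbits.

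Next I would treat the two families separately. Let $U\subset\O_K$ be the cubefree $n$ whose field $F_n$ is wildly (resp.\ tamely) ramified at $\p$, and let $U_p$ be the closure of $U$ in $K_p$; here $U_p=\O_{K_p}$ for $p\neq\p$, while $U_\p$ is the set of $3$-adic non-cubes (resp.\ cubes). Weighting each orbit by the reciprocal of the number of $G(\O_K)$-orbits it contains and applying Theorem \ref{thm: counting in the quadric over imaginary quadratic}, the change of variables and manipulations from \cite{AlpogeBhargavaShnidman} convert the main term into the Euler product
\[
\avg_{n\in U}\#\Sel_2^\un(F_n/K)=1+2\prod_{p\le\infty}\nu_p,
\]
where $2=\tau(G)$ and $\nu_p$ is the average over $n\in U_p$ of $\#\bigl(V_{n,p}\cap\ker q_{n,p}\bigr)/\#M_n(K_p)$.

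The computation of the $\nu_p$ is then routine. For finite $p\nmid3$ we have $V_{n,p}\subset\ker q_{n,p}$ (Proposition \ref{prop: tame failure III}) and $\#V_{n,p}=\#M_n(K_p)$ (Lemma \ref{lem: local selmer ratio}), so $\nu_p=1$; at the complex place $V_{n,\infty}=0$ while $\#M_n(\C)=4$, giving $\nu_\infty=1/4$. The decisive factor is $\nu_\p$. In the wild family $F_n\otimes K_\p$ is a field, so there is one prime above $\p$ and $\#V_{n,\p}=\#M_n(K_\p)=1$, whence $\nu_\p=1$ and
\[
\avg=1+2\cdot\tfrac14\cdot1=\tfrac32.
\]
In the tame family $F_n\otimes K_\p\simeq K_\p\times K_\p(\zeta_3)$, so there are two primes above $\p$ and $\#V_{n,\p}=\#M_n(K_\p)=2$; but Proposition \ref{prop: tame failure III} obstructs the non-trivial class, so $\#\bigl(V_{n,\p}\cap\ker q_{n,\p}\bigr)=1$, $\nu_\p=1/2$, and
\[
\avg=1+2\cdot\tfrac14\cdot\tfrac12=\tfrac54.
\]
Theorem \ref{thm: canonical isomorphism} then converts these into the claimed averages of $\#\Cl_{F_n}[2]$.

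The one genuinely delicate point, and the main obstacle, is the tame case, where Hecke reciprocity does the essential work: a priori a Selmer class could restrict at $\p$ to the obstructed class of $V_{n,\p}$, placing it outside $\ker(q_n)$ and hence outside the image of the parameterization. Corollary \ref{cor: Kummer cubic global result} forbids this, forcing each Selmer class to be a local square at the unique Hecke prime and so legitimizing the count through integral orbits. Apart from this, the only ingredient not already present in the arguments over $\Q$ is the counting input Theorem \ref{thm: counting in the quadric over imaginary quadratic}, which I take as given; granting it, the remainder is the local bookkeeping carried out above.
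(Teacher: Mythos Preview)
Your proposal is correct and follows essentially the same approach as the paper's own proof: invoke Corollary \ref{cor: Z[i] bijection} (via Corollary \ref{cor: Kummer cubic global result} and Proposition \ref{prop: tame failure III}) to get the orbit parameterization for all $n$, apply Theorem \ref{thm: counting in the quadric over imaginary quadratic}, and then compute the local factors $\nu_p$ exactly as you do, with $\nu_\infty=1/4$, $\nu_p=1$ for finite $p\nmid3$, and $\nu_\p=1$ (wild) or $1/2$ (tame). Your identification of the six listed fields as precisely the class-number-one imaginary quadratic fields in which $3$ is inert, and your explanation of why the tame case hinges on Hecke reciprocity, are accurate elaborations of what the paper records more tersely in the paragraphs preceding the theorem.
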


\begin{proof}
As before, we use Corollary \ref{cor: Z[i] bijection} and Theorem \ref{thm: counting in the quadric over imaginary quadratic}, to arrive at an Euler product expression
\[\avg_d \#\Sel_2^\un(F_n/K) = 1 + 2\prod_p \nu_p.\] 

If $p \nmid 3\infty$ we have $\nu_p = 1$ as before.  
We have $\nu_\infty = \#V_{n,\infty}/\#M_n(\C) =1/4$. Finally, just as in the wild case over $\Q$, we have $\nu_3 = 1$, whereas in the tame  case we have $\nu_3 = 1/2$. Putting this together, we obtain in the wild case:
\[\avg_n \#\Sel_2^\un(F_n/K) = 1 + 2\prod_p \nu_p =1 + 2\cdot \frac14 \cdot 1=  1 + 1/2,\]
whereas in the tame case we obtain
\[\avg_n \#\Sel_2^\un(F_n/K) = 1 + 2\prod_p \nu_p =1 + 2\cdot \frac14 \cdot \frac12=  1 + 1/4 = 5/4,\]
as claimed.
\end{proof}

\section{Heuristics for $\Cl_{F/K}[2]$ in families of $G$-extensions} \label{sec: random model for Cl[2]}

In \cite{Malle2010}, Malle gives heuristics for the distribution of the groups $\Cl_{F/K}[2]$ in families of odd degree extensions $F/K$ with fixed Galois group and unit group rank.  Below we attempt to reconcile his heuristics with Hecke reciprocity. For simplicity, we assume $K = \Q$. 

 Recall the notation from Section \ref{subsec: relative signature groups}. By definition, we have 
 \[\Sel_2^\un(F) = \Sel_2^\un(F/\Q) = \ker\left(W \to \prod_p W_p/V_p\right) \subset  W = H^1(\Q,M).\] 
 The isomorphism $\Sel_2^\un(F) \simeq \Cl_F^\vee[2]$ of Theorem \ref{thm: canonical isomorphism} allows us to view $\Cl_F^\vee[2]$ as a kernel of a matrix, as follows. For any finite set of finite primes $S$ of $\Q$, let $H^1_S \subset W$ be the subspace of classes that are unramified outside $S$. Let $Y_S = \oplus_{p \in S \cup \{\infty\}} W_p/V_p$.  Then $\Sel_2^\un(F) = \ker(H^1_S \to Y_S)$, realizing $\Sel_2^\un(F)$ as the kernel of a linear map between finite dimensional $\F_2$-vector spaces.

We show next that for large and generic enough choices of $S$, this matrix is nearly square.  Write $u = r_1 + r_2 - 1$ for the unit group rank and write $h = \dim \Cl_{F}[2]$. 
\begin{lemma}
    Let $T$ be the set of primes of $F$ above $S$. Let $\langle T\rangle$ denote the subspace of $\Cl_F/2\Cl_F$ spanned by the prime ideals in $T$. Then $\dim H^1_S = u + h + |T| - |S| - \dim \langle T \rangle$. 
\end{lemma}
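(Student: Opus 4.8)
The plan is to identify $H^1_S$ as the intersection of the $2$-Selmer-type group
\[F(T,2) := \{t \in F^\times/F^{\times2} : \val_w(t) \equiv 0 \pmod 2 \text{ for all } w \notin T\}\]
with the kernel of the global norm, and then to compute the two resulting dimensions separately. Since $W = \ker(\Nm \colon F^\times/F^{\times2} \to \Q^\times/\Q^{\times2})$, and a class in $W$ is unramified outside $S$ exactly when it has even valuation at every prime $w$ of $F$ not lying over $S$, we have $H^1_S = F(T,2) \cap \ker(\Nm)$. Hence $\dim_{\F_2} H^1_S = \dim_{\F_2} F(T,2) - \dim_{\F_2}\Nm(F(T,2))$, and everything reduces to evaluating these two terms.

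For the first term I would invoke the standard exact sequence $0 \to \O_{F,T}^\times/\O_{F,T}^{\times2} \to F(T,2) \to \Cl_{F,T}[2] \to 0$, where $\O_{F,T}^\times$ is the group of $T$-units and $\Cl_{F,T} = \Cl_F/\langle[\mathfrak p] : \mathfrak p \in T\rangle$ is the $T$-class group. Dirichlet's $S$-unit theorem gives $\O_{F,T}^\times \cong \mu_F \times \Z^{u + |T|}$, and because $F/\Q$ has odd degree we have $\mu_F = \{\pm1\}$; thus $\dim_{\F_2}\O_{F,T}^\times/\O_{F,T}^{\times2} = u + |T| + 1$. For the class-group factor, the very definition of $\langle T\rangle$ gives $\Cl_{F,T}/2 = (\Cl_F/2)/\langle T\rangle$, so (using that $\Cl_{F,T}$ is finite) $\dim_{\F_2}\Cl_{F,T}[2] = \dim_{\F_2}\Cl_{F,T}/2 = h - \dim_{\F_2}\langle T\rangle$. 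Adding the two contributions yields $\dim_{\F_2}F(T,2) = u + h + |T| + 1 - \dim_{\F_2}\langle T\rangle$.

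For the second term I claim $\Nm \colon F(T,2) \to \Q^\times/\Q^{\times2}$ has image exactly $\Q(S,2) := \langle -1,\, p : p \in S\rangle$, of dimension $|S|+1$. The inclusion $\Nm(F(T,2)) \subseteq \Q(S,2)$ follows from $\val_p(\Nm t) = \sum_{w \mid p} f_w\,\val_w(t)$ being even for $p \notin S$. For surjectivity it is enough to hit the generators: $-1 \in \O_F^\times \subset F(T,2)$ satisfies $\Nm(-1) = (-1)^{[F\colon\Q]} \equiv -1$, and for each $p \in S$ the element $p \in F^\times$ lies in $F(T,2)$ (its ideal is supported on $T$) and has $\Nm(p) = p^{[F\colon\Q]} \equiv p$, both congruences using that $[F\colon\Q]$ is odd. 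Combining the two computations gives $\dim_{\F_2}H^1_S = (u + h + |T| + 1 - \dim_{\F_2}\langle T\rangle) - (|S|+1) = u + h + |T| - |S| - \dim_{\F_2}\langle T\rangle$.

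I do not anticipate a genuine obstacle: the lemma is essentially bookkeeping once the exact sequence for $F(T,2)$ and the norm surjectivity are in place. The only two points demanding care both hinge on the parity of $[F\colon\Q]$ — namely the identification $\mu_F = \{\pm1\}$ and the surjectivity of $\Nm$ onto $\Q(S,2)$. It is worth emphasizing that the formula holds for \emph{every} finite set $S$, with no genericity hypothesis, and that the two spurious ``$+1$'' terms (one from the roots of unity, one from the generator $-1$ of $\Q(S,2)$) cancel exactly.
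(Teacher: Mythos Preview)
Your proof is correct and takes a genuinely different route from the paper's. The paper argues by induction on $|S|$: starting from the base case $S=\emptyset$ (which is the relative Selmer sequence), it adds one rational prime $p$ at a time, filters $H^1_{S'}$ by allowing odd valuation at the primes $w_1,\dots,w_k$ above $p$ one by one, and checks at each step whether the dimension jumps by comparing $[w_j]$ to the span of the earlier primes in $\Cl_F/2\Cl_F$. The square-norm constraint is enforced by choosing $w_1$ with $[F_{w_1}:\Q_p]$ odd, so that allowing odd valuation at $w_1$ alone never produces a new class.

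Your argument is more global: you compute $\dim F(T,2)$ directly from the $T$-unit theorem and the $T$-class group, then subtract the dimension of the image of the norm, which you show equals $|S|+1$ by exhibiting explicit preimages $-1$ and $p$ (both relying on $[F:\Q]$ odd). This avoids the induction and the prime-by-prime filtration entirely, and makes transparent why the two ``$+1$'' terms cancel. The paper's approach, by contrast, makes visible exactly which prime $w_j$ contributes a new dimension and which does not, information that is useful elsewhere in \S\ref{sec: random model for Cl[2]} when one wants to think of $\Cl_F^\vee[2]$ as the kernel of an explicit matrix.
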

\begin{proof}
When $S$ is empty, this follows from the relative version of (\ref{seq: selmer group ses}). We prove the general case by induction on $|S|$. Suppose the lemma is proven for $S$ and let $S' = S\cup \{p\}$. Let $w_1,\ldots, w_k$ be the primes of $F$ above $p$, ordered so that $F_{w_1}/\Q_p$ has odd degree. Let $T'$ be the primes of $F$ above $S'$.  It is enough to show that $\dim H^1_{S'} - \dim H^1_S = k-1 - n$ where $n = \dim\langle T'\rangle - \dim \langle T \rangle$. For each $1 \leq j \leq k$, let $H^1_S \subset H^1_j \subset H^1_{S'}$ be the subgroup of $H^1_{S'}$ consisting of square-classes whose odd valuations are contained in the set $T_j:= S \cup \{w_1,\ldots, w_j\}$. Consider the homomorphism $\psi \colon H^1_j \to \Z/2\Z$ sending a squareclass $t$ to $w_j(t) \pmod{2}$. It has kernel $H^1_{j-1}$.
When $j = 1$, we have $H^1_S = H^1_1$ since any $t \in F^\times$ with $w_1(t) \equiv 1\pmod{2}$ and $w_i(t) \equiv 0\pmod{2}$ for $i > 1$ does not have square norm. Thus, it is enough to show that for $j > 1$ we have $H^1_{j-1} \neq H^1_j$ if and only if the class of $w_j$ in $\langle T_j\rangle$ lies in $\langle T_{j-1}\rangle$. But $H^1_j \neq  H^1_{j-1}$ if and only if \[w_j = (t)I^2\prod_{w \in J}w\] 
for some subset $J \subset T_{j-1}$, some ideal $I$ and some $t \in F^\times$ with square norm if and only if the image of $w_j$ in $\langle T' \rangle$ lies in $\langle T \rangle +\langle w_1,\ldots, w_{j-1}\rangle = \langle T_j\rangle$, as desired. 

\end{proof}

\begin{lemma}
With notation as above, if $S$ contains $2$ then $\dim Y_S = 2u + |T| - |S|$.    
\end{lemma}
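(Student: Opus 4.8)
The plan is to compute $\dim Y_S = \sum_{p \in S \cup \{\infty\}} \dim(W_p/V_p)$ place by place, where $W_p = H^1(\Q_p, M)$. The subtracted term is already known: Lemma \ref{lem: local selmer ratio} gives $\dim V_p = \dim M(\Q_p) = N_p - 1$, where $N_p$ is the number of primes of $F$ above $p$. So everything reduces to computing $\dim W_p$ at each place.

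For a finite prime $p$, I would combine the local Euler--Poincaré formula with the self-duality of $M$. Since $M$ is $2$-torsion of order $2^{m-1}$, the Euler characteristic formula reads (in cardinalities) $\#H^0 \cdot \#H^2 / \#H^1 = \|\#M\|_p$, so in $\F_2$-dimensions $h^1 = h^0 + h^2 + v_p(\#M)$, with $v_p(\#M) = (m-1)\,v_p(2)$. By Lemma \ref{lem: self-dual}, $M$ is its own Cartier dual, so local Tate duality gives $H^2(\Q_p, M) \cong H^0(\Q_p, M)^\vee$ and hence $h^2 = h^0 = \dim M(\Q_p) = N_p - 1$. Thus $\dim W_p = 2(N_p - 1)$ for odd $p$, giving $\dim(W_p/V_p) = N_p - 1$, while at $p = 2$ the extra term $v_2(\#M) = m-1$ appears, so that $\dim(W_2/V_2) = (N_2 - 1) + (m-1)$.

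For the archimedean place, note that $V_\infty = 0$, so I must compute $\dim W_\infty = \dim H^1(\R, M)$ directly. The cleanest route is the defining sequence $0 \to M \to \Res_{F/\Q}\mu_2 \xrightarrow{\Nm} \mu_2 \to 0$ of $\Gal_\Q$-modules. Shapiro's lemma identifies $H^i(\R, \Res_{F/\Q}\mu_2) = \bigoplus_{v \mid \infty} H^i(F_v, \mu_2)$, of dimension $r_1 + r_2$ for $i = 0$ and $r_1$ for $i = 1$ (complex places contribute nothing to $H^1$). Since $r_1 \geq 1$ as $m$ is odd, the norm is surjective on $H^0$, which kills the connecting map, and the induced map on $H^1$ is the surjective sum map $(\Z/2)^{r_1} \to \Z/2$; the long exact sequence then yields $\dim W_\infty = r_1 - 1$.

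Finally I would assemble $\dim Y_S = (r_1 - 1) + \sum_{p \in S}(N_p - 1) + (m-1)$, where the last summand is the extra contribution at $p = 2$ --- this is exactly where the hypothesis $2 \in S$ is used. Writing $\sum_{p \in S}(N_p - 1) = |T| - |S|$ and using $m = r_1 + 2r_2$ to get $(r_1 - 1) + (m-1) = 2(r_1 + r_2 - 1) = 2u$, I conclude $\dim Y_S = 2u + |T| - |S|$. The one genuinely delicate point is pinning down the normalization in the Euler characteristic formula (the sign attached to $v_p(\#M)$); I would confirm it against the test case $M = \mu_2$ over $\Q_2$, where indeed $\dim H^1 = \dim \Q_2^\times/\Q_2^{\times 2} = 3 = 1 + 1 + 1$.
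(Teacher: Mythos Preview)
Your proof is correct and follows essentially the same approach as the paper: both compute $\dim(W_p/V_p)$ place by place using self-duality of $M$ and the local Euler characteristic formula at finite primes, together with $\dim V_p = N_p - 1$ from Lemma \ref{lem: local selmer ratio}, and then assemble using $m = r_1 + 2r_2$. Your treatment of the archimedean place via Shapiro's lemma and the defining exact sequence for $M$ is more detailed than the paper's, which simply asserts $\dim Y_\emptyset = r_1 - 1$, but the content is the same.
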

\begin{proof}
    First note that $\dim Y_\emptyset = r_1 - 1$. Write $m = 2g + 1$.  For a prime $p$, let $k_p$ be the number of primes of $F$ above $p$. Since $M$ is self-dual, the local Euler characteristic formula gives \[\dim W_p = \dim H^1(\Q_p,M) = \begin{cases}
        2k_p - 2  & \mbox{ if } p \neq 2\\
        2g + 2k_p-2 & \mbox{  if } p = 2
    \end{cases}\]
    For all primes $p$ we have $\dim V_p = k_p - 1$. It follows that for $S = \{2\}$ we have
    \[ \dim Y_{\{2\}} = r_1 - 1 + 2(g + k_2 - 1) - (k_2-1) = 2u +k_2 - 1,\]
    using that $m = 2g + 1 = r_1 + 2r_2$.  Similarly, if $2 \in S$ then $\dim Y_S = 2u + |T| - |S|$.  
\end{proof}

\begin{corollary}\label{cor: square}
    If $T$ contains a generating set for $\Cl_F/2\Cl_F$, then $\dim Y_S = \dim H^1_S + u$. 
\end{corollary}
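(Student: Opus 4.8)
The plan is to combine the two preceding lemmas directly; the only extra input needed is the standard fact that for a finite abelian group $A$ one has $\dim_{\F_2} A[2] = \dim_{\F_2} A/2A$. Applying this to $A = \Cl_F$ gives $\dim_{\F_2} \Cl_F/2\Cl_F = h$, where $h = \dim_{\F_2}\Cl_F[2]$ is the quantity appearing in the first lemma. (I also take $2 \in S$ to be part of the standing hypothesis here, since the second lemma requires it; without it $Y_S$ need not be nearly square.)

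First I would unwind the hypothesis. Saying that $T$ contains a generating set for $\Cl_F/2\Cl_F$ means precisely that the classes of the primes in $T$ span $\Cl_F/2\Cl_F$, i.e.\ $\langle T\rangle = \Cl_F/2\Cl_F$, and hence $\dim \langle T\rangle = h$. Substituting $\dim\langle T\rangle = h$ into the formula
\[\dim H^1_S = u + h + |T| - |S| - \dim\langle T\rangle\]
of the first lemma, the two copies of $h$ cancel and I obtain
\[\dim H^1_S = u + |T| - |S|.\]

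Next I would invoke the second lemma, which gives $\dim Y_S = 2u + |T| - |S|$, and subtract the two displayed identities:
\[\dim Y_S - \dim H^1_S = \bigl(2u + |T| - |S|\bigr) - \bigl(u + |T| - |S|\bigr) = u,\]
which is the claim. I do not expect any genuine obstacle: the entire content sits in the two lemmas, and the corollary is just the bookkeeping observation that once $T$ is large enough to kill the $\dim\langle T\rangle$ term, the excess of $\dim Y_S$ over $\dim H^1_S$ is exactly the unit rank $u$. Thus the linear map $H^1_S \to Y_S$ defining $\Sel_2^\un(F)$ is square up to a $u$-dimensional defect, which is the structural point being set up for the random matrix model in this section.
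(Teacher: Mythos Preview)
Your proof is correct and matches the paper's own argument essentially verbatim: the paper simply notes $\dim\langle T\rangle = h$ and cites the two lemmas. Your added remarks about $\dim_{\F_2}\Cl_F/2\Cl_F = \dim_{\F_2}\Cl_F[2]$ and the implicit assumption $2\in S$ just make explicit what the paper leaves tacit.
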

\begin{proof}
    We have $\dim \langle T\rangle = h$, so this follows from the previous two lemmas. 
\end{proof}

Let $\mathcal{F}$ be the family of odd degree $m$ extensions $F/\Q$ with Galois group $S_m$ and unit rank $u = r_1 + r_2 - 1$. Corollary \ref{cor: square} shows that (by assuming $S$ contains $2$ and a generating set for $\Cl_F/2\Cl_F$) we may view $\Cl_F^\vee[2]$ as the kernel of a matrix with $N$ rows and $N + u$ columns, for some integer $N$. However, this matrix is not quite random, since we should account for Hecke reciprocity.  For this, suppose that $S$ contains the set $S_0$ of all primes that are Hecke ramified in $F$, and let $T_0 \subset T$ be the subset of Hecke primes of $F$. Consider the linear functional $f \colon H^1_S \to \Z/2\Z$ sending $t \mapsto \sum_{t \in T_0} (t,\pi_w)_w$ for any choice of uniformizers $\pi_w$, for $w \in T_0$. Then Hecke reciprocity (Theorem \ref{thm: hecke reciprocity}) is equivalent to the statement $\Sel_2^\un(F) \subset \ker(f)$. Thus $\Cl_F^\vee[2] = \ker(\ker(f) \to Y_S)$ is the kernel of a random $(N+u) \times (N-1)$ matrix over $\F_2$. (Since the average number of Hecke primes of $F/\Q$ is unbounded, the functional $f$ is non-zero with probability $1$, hence $\dim \ker(f) = N-1$ with probability $1$.) Dualizing and replacing $N$ with $N-1$, we see that $\Cl_F[2]$ is the cokernel of a square matrix over $\F_2$ modulo $u+1$ random relations.  

The final bit of structure to account for is the alternating bilinear pairing on $\Cl_F^\vee[2]$ given by
\[(\chi,\chi') = \chi(\psi_F(\chi')) - \chi'(\psi_F(\chi))\]
where $\psi_F$ is the isomorphism of (\ref{eq: LST map}). (In fact, there is a second alternating pairing $\omega$ on $\Cl_F^\vee[2]$ defined by \cite{LipnowskiSawinTsimerman}; all we will use is the existence of one such pairing.) 


\begin{lemma}\label{lem: alternating}
    If a finite abelian $p$-group admits a non-degenerate alternating pairing, it can be realized as the cokernel of some large enough alternating matrix.
\end{lemma}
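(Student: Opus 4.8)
The plan is to put the pairing into a normal form and then read off the matrix block by block. Write $A$ for the finite abelian $p$-group and $b\colon A\times A\to \Q_p/\Z_p$ for the given non-degenerate alternating pairing (we may work $p$-adically, and correspondingly build an alternating matrix over $\Z_p$; this is the right setting, since cokernels of matrices over $\F_2$ would only ever produce elementary abelian groups). The strategy is: first classify $(A,b)$ up to isometry as an orthogonal direct sum of hyperbolic planes $H(p^k):=\big((\Z/p^k)^2,\ b((a,b),(c,d))=(ad-bc)/p^k\big)$; then realize each $H(p^k)$ as the cokernel of an explicit $2\times 2$ alternating matrix over $\Z_p$; and finally assemble these into a block-diagonal alternating matrix, padding with unimodular blocks to reach any sufficiently large size.

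For the classification I would induct on $|A|$. Choose $x\in A$ of maximal order $p^k$. Non-degeneracy produces $y\in A$ with $b(x,y)$ of exact order $p^k$ in $\Q_p/\Z_p$, and maximality of $k$ then forces $y$ to also have order $p^k$; because $b$ is alternating one checks that $x,y$ are independent, so $H:=\langle x,y\rangle\cong(\Z/p^k)^2$ and, after rescaling $y$ by a unit, $b|_H$ is the hyperbolic form. Since $b|_H$ is non-degenerate, the usual argument (the map $H\to H^\vee$ induced by $b|_H$ is an isomorphism, so every $a\in A$ differs from an element of $H$ by something in $H^\perp$) gives an orthogonal splitting $A=H\oplus H^\perp$ with $b|_{H^\perp}$ again non-degenerate and alternating; induction then yields $A\cong\bigoplus_i H(p^{k_i})$ orthogonally. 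The point worth emphasizing is that, unlike for symmetric or quadratic forms, this runs uniformly over all $p$ — including $p=2$ — precisely because the alternating hypothesis $b(z,z)=0$ eliminates any need to complete squares.

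Finally, each $H(p^k)$ is the cokernel of $\left(\begin{smallmatrix} 0 & p^k \\ -p^k & 0\end{smallmatrix}\right)$ acting on $\Z_p^2$, which is alternating; moreover the induced linking form $x^{T}M^{-1}y \bmod \Z_p$ recovers $b|_H$ up to isometry, so the pairing is preserved as a bonus. The block-diagonal sum $M_0$ of these blocks realizes $A\cong\mathrm{coker}(M_0)$ with $M_0$ alternating of size $2r$, where $r$ is the number of summands. To reach arbitrarily large size — the ``large enough'' in the statement — I would append copies of the unimodular block $\left(\begin{smallmatrix} 0 & 1 \\ -1 & 0\end{smallmatrix}\right)$, whose cokernel is trivial, giving an alternating matrix of any even size $\ge 2r$ with cokernel still $A$ (odd size is impossible, as an odd alternating matrix is singular and so has infinite cokernel). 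I expect the main obstacle to be entirely in the second paragraph: once the hyperbolic decomposition is established, the realization is immediate, and the delicate step is verifying that $b|_H$ non-degenerate yields the splitting $A=H\oplus H^\perp$ as \emph{finite abelian groups}, not merely over $\Q_p$, which is where the module bookkeeping must be done with care.
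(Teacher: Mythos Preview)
Your argument is correct and self-contained: the hyperbolic decomposition of $(A,b)$ over $\Z_p$ goes through exactly as you outline (the alternating hypothesis handles $p=2$ uniformly), and the block-diagonal realization with unimodular padding is immediate. The splitting $A=H\oplus H^\perp$ as abelian groups is indeed the only step requiring care, and your sketch is fine: non-degeneracy of $b|_H$ means the induced map $H\to H^\vee$ is an injection between groups of the same order $p^{2k}$, hence an isomorphism, and then every $a\in A$ can be corrected by an element of $H$ to land in $H^\perp$.

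The paper takes a completely different, non-constructive route: it simply observes that by \cite[Theorem 3.9]{BhargavaKaneLenstraPoonenRains}, every finite abelian $p$-group with a non-degenerate alternating pairing occurs with \emph{positive probability} as the cokernel of a random $n\times n$ alternating matrix over $\Z_p$ for $n$ large, so in particular some such matrix exists. What the paper's approach buys is brevity and, implicitly, the stronger statement that such matrices form a set of positive measure. What your approach buys is an explicit matrix and independence from the (not entirely trivial) probability computation in \cite{BhargavaKaneLenstraPoonenRains}; it also makes transparent the classical fact underlying all of this, namely that a finite abelian $p$-group admits a non-degenerate alternating form if and only if it is of the form $B\times B$.
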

\begin{proof}
    In fact, each $p$-group with a non-degenerate alternating pairing has a positive probability of being realized as such a cokernel \cite[Theorem 3.9]{BhargavaKaneLenstraPoonenRains}.
\end{proof}

Lemma \ref{lem: alternating} and the discussion preceding it suggest that we should model $\Cl^\vee_F[2]$ as the cokernel of a large alternating matrix over $\F_2$ modulo $u+1$ random relations. 

\begin{lemma}\label{lem: alternating distribution}
    Let $G$ be the cokernel of a random alternating $n \times n$ matrix over $\F_2$. As $n \to \infty$, 
\begin{equation} \label{eqn: symmetric moments}
    \mathbb{E}(|\mathrm{Surj}(G,V)|) = |V|| \wedge^2 V|.
\end{equation}
\end{lemma}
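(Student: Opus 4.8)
The plan is to compute the moment directly, by unwinding the definition of $\mathrm{Surj}(G,V)$ and interchanging the expectation with a sum over linear maps. Write $G = \F_2^n/\mathrm{im}(A)$, where $A$ is the uniformly random alternating $n\times n$ matrix viewed as an endomorphism of $\F_2^n$. A homomorphism $G\to V$ is the same thing as a linear map $\phi\colon \F_2^n\to V$ that kills $\mathrm{im}(A)$, and it is surjective if and only if $\phi$ is. Hence, for each realization of $A$, we have $|\mathrm{Surj}(G,V)| = \#\{\phi\in\mathrm{Surj}(\F_2^n,V) : \phi A = 0\}$, and by linearity of expectation
\[\mathbb{E}\big(|\mathrm{Surj}(G,V)|\big) = \sum_{\phi\in\mathrm{Surj}(\F_2^n,V)} \mathbb{P}\big(\mathrm{im}(A)\subseteq\ker\phi\big).\]

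Set $r = \dim_{\F_2} V$. The next step is to evaluate $\mathbb{P}(\mathrm{im}(A)\subseteq\ker\phi)$ for a fixed surjection $\phi$, and the key point is that this probability depends only on $r$. Identifying $A$ with the alternating form $B(x,y) = x^\top A y$ on $\F_2^n$, and using that $A$ is symmetric, the condition $\mathrm{im}(A)\subseteq\ker\phi$ is equivalent to $(\ker\phi)^\perp\subseteq\ker(A)=\mathrm{rad}(B)$, i.e.\ that $B$ is pulled back from an alternating form on $\F_2^n/(\ker\phi)^\perp\cong\F_2^{n-r}$, where $(\ker\phi)^\perp$ is the $r$-dimensional orthogonal complement for the standard pairing. (Equivalently, one may note that $A\mapsto Q^\top A Q$ is a measure-preserving bijection of the set of alternating matrices for $Q\in\mathrm{GL}_n(\F_2)$, which moves $\ker\phi$ to the standard coordinate subspace, so the count is independent of $\phi$.) There are $2^{\binom{n-r}{2}}$ alternating forms on an $(n-r)$-dimensional space, out of $2^{\binom{n}{2}}$ total, so
\[\mathbb{P}\big(\mathrm{im}(A)\subseteq\ker\phi\big) = 2^{\binom{n-r}{2}-\binom{n}{2}} = 2^{-nr+\binom{r+1}{2}}.\]

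Finally I would combine this with the standard count $|\mathrm{Surj}(\F_2^n,V)| = \prod_{i=0}^{r-1}(2^n-2^i) = 2^{nr}\prod_{i=0}^{r-1}(1-2^{i-n})$. Multiplying, the $2^{nr}$ factors cancel against $2^{-nr}$, leaving
\[\mathbb{E}\big(|\mathrm{Surj}(G,V)|\big) = 2^{\binom{r+1}{2}}\prod_{i=0}^{r-1}(1-2^{i-n})\ \longrightarrow\ 2^{\binom{r+1}{2}}\quad\text{as } n\to\infty,\]
since each factor $(1-2^{i-n})\to 1$. As $\binom{r+1}{2} = r + \binom{r}{2}$ and $|V| = 2^r$, $|\wedge^2 V| = 2^{\binom{r}{2}}$, the limit equals $|V|\,|\wedge^2 V|$, as claimed. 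The only real content is the probability computation in the middle step; the point to get right there is the reduction showing the answer depends only on $\dim V$ (via congruence-invariance of the uniform measure, or equivalently the radical description above) together with the count $2^{\binom{n-r}{2}}$ of alternating forms vanishing on a fixed $r$-dimensional subspace. The surjection count and the limit are routine bookkeeping.
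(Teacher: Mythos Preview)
Your proof is correct. The computation is clean: you correctly identify surjections $G\to V$ with surjections $\phi\colon\F_2^n\to V$ killed by $A$, the duality step $\mathrm{im}(A)\subseteq\ker\phi\iff(\ker\phi)^\perp\subseteq\ker A$ is valid because alternating matrices over $\F_2$ are symmetric, the count $2^{\binom{n-r}{2}}$ of alternating forms vanishing on a fixed $r$-dimensional subspace is right, and the limit is routine.

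This is a genuinely different and more elementary route than the paper's. The paper does not compute anything directly over $\F_2$; instead it quotes the known result (from \cite{BhargavaKaneLenstraPoonenRains} and \cite{NguyenWood}) that cokernels of random alternating matrices over $\Z_2$ have $H$-moment $|\Sym^2 H|$, and then invokes a lifting lemma saying that for any lift $X$ of an $\F_2$-matrix $\overline X$ to $\Z_2$ one has $\coker(\overline X)=\coker(X)/2\coker(X)$, so the $V$-moments over $\F_2$ agree with those over $\Z_2$, giving $|\Sym^2 V|=|V|\,|\wedge^2 V|$. Your approach has the virtue of being self-contained and transparent, with no black-box citations; the paper's approach is shorter if one is willing to import those results and also situates the statement within the broader $\Z_p$ theory.
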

\begin{proof}
    The cokernels of random alternating matrices over $\Z_2$ have moments given by $\lvert \Sym^2 H \rvert$ from \cite{BhargavaKaneLenstraPoonenRains} and in particular \cite[\S 4.1]{NguyenWood}. Let $\overline{X}$ be any $\F_2$-matrix. Since $\coker(\overline{X}) = \coker(X)/2\coker(X)$ for any lift $X$ of $\overline{X}$ to a matrix with coefficients in $\Z_2$ \cite[Lemma 2.25.]{DasZhangShiqiao}, we see that the cokernels of random alternating $\F_2$-matrices have the same $V$-moments as the cokernels of random $\Z_2$ and this moment is thus equal to $\lvert \Sym^2 V \rvert = \lvert V \rvert \lvert \wedge^2 V \rvert$ as desired. 
\end{proof}

To obtain our model for $\Cl_{F/K}[2]$ we should therefore take the distribution in Lemma \ref{lem: alternating distribution} and further mod out by $u+1$ random relations.  By \cite[proof of Lemma 8.12.]{LipnowskiSawinTsimerman}, we arrive at the distribution:
\begin{equation} \label{eqn: Malle}
    \mathbb{E}(|\mathrm{Surj}(\Cl_{F/K}[2],V)|) = \dfrac{| \wedge^2 V|}{|V|^u},
\end{equation}
which is the distribution predicted by Malle in the family of degree $m$ extensions $F/\Q$ with Galois group $S_m$ and unit rank $u = r_1 + r_2 - 1$. Thus, we have recovered Malle's heuristic via Hecke reciprocity. 

The following proposition records some of the properties of Malle's distribution.

\begin{proposition} \label{prop: main distribution}
    For every $u \ge 0$, there exists a unique probability distribution on isomorphism classes of $\F_2$-vector spaces having moments (\ref{eqn: Malle}). Explicitly, this distribution is given by: 
    $$
    \P(V) = \frac{\lvert \wedge^2 V \rvert}{\lvert \# \Aut_{\F_2} V \rvert \lvert V^{u} \rvert} \prod_{i \ge 1} (1+2^{-u-i})^{-1}
    $$
    and the $n^{th}$ moment of $\lvert V \rvert$ with respect to this distribution is given by: $$\mathbb{E}(\lvert V \rvert^n) = \prod_{k=0}^{n-1} (1+2^{k-u}).$$

\end{proposition}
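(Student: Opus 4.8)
The plan is to reduce all three assertions---existence, uniqueness, and the two explicit formulas---to a single classical identity, Euler's $q$-binomial theorem
\[
\sum_{r\ge 0}\frac{q^{\binom{r}{2}}z^r}{\prod_{i=1}^r(1-q^i)}=\prod_{i\ge 0}\bigl(1+zq^i\bigr),\qquad |q|<1,
\]
specialized to $q=1/2$, together with a standard moment-determinacy result. Throughout I would write $V=\F_2^r$, so that $|V|=2^r$, $|\wedge^2V|=2^{\binom r2}$, and $|\Aut_{\F_2}V|=|\mathrm{GL}_r(\F_2)|=2^{\binom r2}\prod_{i=1}^r(2^i-1)$. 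The proposed mass then simplifies to $\P(\F_2^r)=C\,2^{-ru}\big/\prod_{i=1}^r(2^i-1)$, where $C=\prod_{i\ge1}(1+2^{-u-i})^{-1}$, and writing $\prod_{i=1}^r(2^i-1)=2^{\binom{r+1}2}(q;q)_r$ with $(q;q)_r:=\prod_{i=1}^r(1-q^i)$ turns it into $\P(\F_2^r)=C\,q^{\binom r2}(q^{u+1})^r/(q;q)_r$.

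For uniqueness I would invoke the moment problem: a probability measure on isomorphism classes of finite $\F_2$-vector spaces is determined by its surjection-moments $M_{\F_2^t}=\mathbb E|\mathrm{Surj}(G,\F_2^t)|$ provided these do not grow too quickly. The moments here, $M_{\F_2^t}=2^{\binom t2-tu}$, are of the standard Cohen--Lenstra type and grow like $2^{\binom t2}$, comfortably within the determinacy regime of \cite{SawinWood} (see also \cite{NguyenWood}). This shows there is \emph{at most} one distribution with moments (\ref{eqn: Malle}); it is the conceptually heaviest step, since it is the only place an external theorem is used, and I expect confirming its growth hypothesis to be the main (though mild) obstacle.

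It then remains to verify that the displayed $\P$ is a genuine probability distribution with the required moments; by uniqueness this simultaneously furnishes existence and identifies $\P$. Summing $\P(\F_2^r)=C\,q^{\binom r2}(q^{u+1})^r/(q;q)_r$ over $r$ and applying Euler's theorem with $z=q^{u+1}$ gives $\sum_r\P(\F_2^r)=C\prod_{i\ge0}(1+q^{u+1+i})=C\cdot C^{-1}=1$, so $\P$ is a probability measure. For the moments, the key factorization $\prod_{j=0}^{t-1}(2^r-2^j)=2^{\binom t2}\prod_{i=1}^r(2^i-1)\big/\prod_{i=1}^{r-t}(2^i-1)$ cancels the $\prod_{i=1}^r(2^i-1)$ appearing in $\P(\F_2^r)$, so that after the index shift $s=r-t$,
\[
\sum_{r\ge t}\P(\F_2^r)\,\bigl|\mathrm{Surj}(\F_2^r,\F_2^t)\bigr|
= C\,2^{\binom t2-tu}\sum_{s\ge 0}\frac{q^{\binom s2}(q^{u+1})^s}{(q;q)_s}
= C\,2^{\binom t2-tu}\cdot C^{-1}
= 2^{\binom t2-tu},
\]
where the inner sum is again $C^{-1}$ by the \emph{same} Euler identity. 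Thus $\P$ realizes the moments (\ref{eqn: Malle}).

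Finally, the $n$-th moment of $|V|$ would follow by one more application of the same identity, now with $z=q^{\,u-n+1}$:
\[
\mathbb E(|V|^n)=\sum_r 2^{rn}\P(\F_2^r)=C\sum_r\frac{q^{\binom r2}(q^{\,u-n+1})^r}{(q;q)_r}=C\prod_{j\ge u-n+1}(1+q^{\,j})=\prod_{j=u-n+1}^{u}\bigl(1+2^{-j}\bigr),
\]
and reindexing by $k=u-j$ rewrites this as $\prod_{k=0}^{n-1}(1+2^{\,k-u})$, as claimed. The only genuine subtleties are the bookkeeping of the $q$-shifts and the convergence of the infinite products (automatic since $q=1/2<1$); beyond these, the entire proposition is a single application of the $q$-binomial theorem doing triple duty, layered over the cited moment-determinacy result.
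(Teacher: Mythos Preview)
Your proof is correct, and it takes a genuinely different route from the paper's argument.

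The paper's proof is almost entirely citation-based: existence, uniqueness, and the explicit formula for $\P(V)$ are all deduced from \cite[Lemma 6.1]{SawinWood}, and the classical moments $\mathbb{E}(|V|^n)$ are obtained indirectly by realizing the distribution as the pushforward of Malle's distribution on finite abelian $2$-groups along $A\mapsto A/2A$, then invoking \cite[Lemma 12.5]{SawinWood} and \cite[Proposition 2.2]{Malle2010}. Your argument, by contrast, is essentially self-contained: after citing the moment-determinacy result (which the paper also relies on, implicitly through \cite[Lemma 6.1]{SawinWood}), you reduce everything---the normalization $\sum_r\P(\F_2^r)=1$, the surjection moments, and the classical moments---to three specializations of a single Euler $q$-binomial identity with $q=1/2$. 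Your bookkeeping is correct, including the observation that the identity holds for all $z$ (needed when $n>u+1$) since both sides are entire in $z$ for $|q|<1$. The trade-off is that the paper's route situates the result within the Sawin--Wood and Malle frameworks and so connects it to the ambient heuristics, whereas your route is more elementary and transparent but forgoes that structural link.
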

\begin{proof}
The first sentence and the formula for $\P(V)$ follows from Lemma 6.1. of \cite{SawinWood}.

To prove the statement concerning the $n^{th}$ classical moment, we consider the functor $A \mapsto A/2A$ from  the category $\mathrm{FinAb}_2$ of finite abelian $2$-groups to the category $\mathrm{Vect}_{\F_2}$ of finite dimensional $\F_2$-vector spaces. Any surjection from $A \twoheadrightarrow V$ descends to a surjection $A/2A \twoheadrightarrow V$, so if $\mu$ is a distribution on $\mathrm{Vec}_{\F_2}$ determined by its moments, then any distribution $\nu$ on on $\mathrm{FinAb}_2$ whose $V$-moment matches those of $\mu$ for all $V \in \mathrm{Vect}_{\F_2}$ must pushforward to $\mu$ along $\mathrm{FinAb}_2 \rightarrow \mathrm{Vect}_{\F_2}$.

Malle's distribution on finite abelian $2$-groups in \cite[Conjecture 2.1]{Malle2010} has $V$-moments equal to (\ref{eqn: Malle}) by \cite[Lemma 12.5.]{SawinWood}. It follows that the distribution associated to (\ref{eqn: Malle}) is the pushforward of the distribution of \cite[Conjecture 2.1.]{Malle2010} on finite abelian $2$-groups under $\mathrm{FinAb}_2 \rightarrow \mathrm{Vect}_{\F_2}$. Since $A[2] \simeq A/2A$ for any finite abelian $2$-group $A$,  the $n^{th}$ moment of $2^{\rk_2 V}$ over the distribution \cite[Conjecture 2.1.]{Malle2010} coincides the with $n^{th}$ moment of $\lvert V \rvert$ the distribution (\ref{eqn: Malle}) and by \cite[Proposition 2.2.]{Malle2010} the former is equal to $\mathbb{E}(\lvert V \rvert^n) = \prod_{k=0}^{n-1} (1+2^{k-u})$. 
\end{proof}

Malle gives a heuristic for more general families $\mathcal{F}$, where the base field $K$ need not be $\Q$ and where the fixed Galois group $G = G_{F/K} \subset S_m$  of $F/K$ need not be all of $S_m$; see also \cite{AdamMalle}. Suppose $G \subset S_m$ is {\it absolutely irreducible} in the sense that the permutation $G$-representation $\chi$ for the index $m$ subgroup $\Gal(\tilde F/F)$ is such that $\varphi:= \chi - 1_G$ is irreducible over $\C$.  In this case, the heuristic distribution is the same as above, but with $u$ defined to be $u = \langle \chi_E, \varphi\rangle $, where $\chi_E$ is the character of the $G$-representation $\O_{\tilde F}^\times \otimes \Q$; see \cite[p.\ 466]{Malle2010}.  We expect that whenever $G$ has even order, the average number of Hecke primes for extensions $F/K$ in the family $\mathcal{F}$ is unbounded, and so the above reasoning should again apply to recover Malle's heuristic.

\section{Heuristics for $\Cl_{F/K}[2]$ in families of  $\Gamma$-extensions}\label{sec: Gamma-extension heuristics}

Let $F/K$ be a finite extension of number fields, and let $G_{F/K} = \Gal(\tilde F/K)$ be its Galois group. We may view $G_{F/K}$ as  $K$-group via the natural conjugation action $\Gal_K \to \Aut(G_{F/K})$.  Note that $G_{F/K}$ has trivial $\Gal_K$-action if and only if $F/K$ is Galois. 

Let $\Gamma$ be a $K$-group with underlying group $\Gamma_0$.
\begin{definition}
{\em     A {\it $\Gamma$-extension} is a finite extension $F/K$ together with an inclusion $\Gamma \hookrightarrow G_{F/K}$ of $K$-groups.}
\end{definition}  
Let $\mathcal{F}_\Gamma$ be the family of $\Gamma$-extensions of degree $|\Gamma|$, and let $F/K \in \mathcal{F}_\Gamma$.  We assume that the order $m = |\Gamma|$ is odd, and hence the extensions $F/K \in \mathcal{F}_\Gamma$ are of odd degree. 
 If $\Gamma$ is the constant $K$-group $\underline{\Gamma_0}$ associated to $\Gamma_0$, then $\mathcal{F}_\Gamma = \mathcal{F}_{\Gamma_0,K}$.  
In general, the $\Gal_K$-action on $\Gamma$ trivializes over a minimal finite Galois extension $R/K$, called the {\it resolvent field} of $\Gamma$. As the $\Gal_K$-action on $G_{F/K}$ is certainly trivialized over the Galois closure $\tilde F$, we have $R \subset \tilde F$. To keep things simple, we assume from now on that $(m, [R \colon K]) = 1$.
\begin{lemma}
    Let $H = \Gal(R/K)$. Then $ G_{F/K} \simeq \Gamma \rtimes \underline{H}$.  
\end{lemma}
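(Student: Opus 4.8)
The plan is to work with the abstract Galois group $G := G_{F/K} = \Gal(\tilde F/K)$, the given embedding $\iota \colon \Gamma \hookrightarrow G$, and the normal subgroup $N := \Gal(\tilde F/R) \trianglelefteq G$ with quotient $G/N = \Gal(R/K) = H$; note $R \subseteq \tilde F$, since the $\Gal_K$-action on $\Gamma$ is pulled back from the conjugation action on $G_{F/K}$, which factors through the surjection $\Gal_K \twoheadrightarrow G$. First I would record two structural facts about $\iota(\Gamma_0)$. Because $\iota$ is a morphism of $K$-groups and the $\Gal_K$-action on $G$ is conjugation through $\Gal_K \twoheadrightarrow G$, equivariance reads $g\,\iota(\gamma)\,g^{-1} = \iota(\sigma\cdot\gamma) \in \iota(\Gamma_0)$ for every $g \in G$ and $\gamma \in \Gamma_0$; hence $\iota(\Gamma_0) \trianglelefteq G$. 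Moreover the induced conjugation map $G \to \Aut(\Gamma_0)$ has image exactly $\mathrm{im}(\Gal_K \to \Aut(\Gamma_0)) = \Gal(R/K) = H$ and kernel $C_G(\iota(\Gamma_0))$, so that $N = \Gal(\tilde F/R)$ is identified with the centralizer $C_G(\iota(\Gamma_0))$.

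The heart of the argument is to show $\iota(\Gamma_0) = N$. Using that $|H| = [R \colon K]$ is coprime to $m = |\Gamma_0|$, the composite $\Gamma_0 \xrightarrow{\iota} G \twoheadrightarrow H$ has image of order dividing $\gcd(m, [R\colon K]) = 1$, hence is trivial, giving $\iota(\Gamma_0) \subseteq N = C_G(\iota(\Gamma_0))$; in particular $\Gamma_0$ is abelian. On the other hand, being a $\Gamma$-extension of degree $m$, the subgroup $\iota(\Gamma_0)$ is a regular subgroup of $G \subseteq S_m$ acting on the $m$ embeddings of $F$ (a complement to a point stabilizer $G_F$), exactly as $\Gal(F/K)$ does in the Galois case. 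The centralizer in the symmetric group $S_m$ of an abelian regular subgroup equals that subgroup itself, since the left and right regular representations coincide in the abelian case; thus $C_{S_m}(\iota(\Gamma_0)) = \iota(\Gamma_0)$, and intersecting with $G$ yields $C_G(\iota(\Gamma_0)) = \iota(\Gamma_0)$. Combining with the previous paragraph gives $N = \iota(\Gamma_0)$, and hence a short exact sequence $1 \to \Gamma_0 \to G \to H \to 1$.

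Finally, since $\gcd(|\Gamma_0|, |H|) = \gcd(m, [R\colon K]) = 1$, the Schur--Zassenhaus theorem splits this sequence, producing a complement isomorphic to $H$ and an isomorphism of abstract groups $G \cong \Gamma_0 \rtimes H$, where $H$ acts on $\Gamma_0$ by conjugation; by the identification $G/C_G(\Gamma_0) \cong H \subseteq \Aut(\Gamma_0)$ this action is precisely the resolvent action defining the $K$-group $\Gamma$. Tracking the $\Gal_K$-actions across this isomorphism (trivial on the complement $H$, and the given action on $\Gamma$) upgrades it to an isomorphism of $K$-groups $G_{F/K} \cong \Gamma \rtimes \underline{H}$, as claimed. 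The main obstacle is the middle step: correctly identifying $\Gal(\tilde F/R)$ with the centralizer $C_G(\Gamma_0)$ and then proving this centralizer collapses onto $\Gamma_0$ (via abelianness forced by coprimality together with regularity of $\iota(\Gamma_0)$); everything else is either formal equivariance bookkeeping or a direct appeal to Schur--Zassenhaus.
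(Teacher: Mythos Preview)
Your group-theoretic approach---identifying $N = \Gal(\tilde F/R)$ with $C_G(\iota(\Gamma_0))$ and then arguing this centralizer collapses to $\iota(\Gamma_0)$---is a different route from the paper's, which instead shows directly that $L := F \otimes_K R$ is a field with $L/K$ Galois, forcing $L = \tilde F$ and hence $|N| = [L:R] = m$.

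There is, however, a genuine gap. You assert that $\iota(\Gamma_0)$ is a regular subgroup of $G \subset S_m$, justifying this only by ``being a $\Gamma$-extension of degree $m$''. But the definition of $\Gamma$-extension (merely an embedding $\Gamma \hookrightarrow G_{F/K}$ of $K$-groups) does not by itself force $\iota(\Gamma_0)$ to act transitively on the $m$ embeddings of $F$: there exist transitive $G \subset S_m$ with a normal abelian subgroup of order $m$ that is \emph{not} regular (e.g.\ the sum-zero copy of $C_3^2$ inside $C_3 \wr C_3 \subset S_9$). What rules such examples out is precisely the coprimality hypothesis $(m,[R:K]) = 1$, which you have not invoked at this step.

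The fix uses only what you have already set up. Since $N \trianglelefteq G$, the $N$-orbits on the $m$ embeddings form a system of blocks; their number divides $m$ and, since $G/N = H$ acts transitively on them, also divides $|H| = [R:K]$. Coprimality forces a single orbit, so $N$ is transitive. You already have $\iota(\Gamma_0) \subseteq Z(N)$ (from $\Gamma_0 \subseteq N = C_G(\Gamma_0)$), and a central element of a transitive permutation group that fixes one point fixes every point; hence $\iota(\Gamma_0)$ acts freely, and since $|\Gamma_0| = m$ it is regular. Your centralizer argument and the Schur--Zassenhaus conclusion then go through as written.
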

\begin{proof}
    The assumption $(m, [R \colon K]) = 1$, implies that $L:= F \otimes_K R$ is a field extension of $R$. Since $\Gamma(R) = \Gamma(\overline{K})$ has order $m = [F\otimes_K R \colon R] = [L \colon R]$, we see that $L/R$ is Galois of degree $m$. It follows that $L/K$ is Galois, and since $L$ is contained in $\tilde F$, we must have $L = \tilde F$. By the Schur-Zassenhous theorem, we have $G_{L/K} \simeq \Gamma \rtimes G_{R/K} = \Gamma \rtimes \underline{H}$. 
\end{proof}

 Conversely, suppose $ \iota \colon G \hookrightarrow S_m$ is a transitive subgroup of $S_m$ with a normal subgroup $G_0 \subset G$ of order $m$, and suppose that $H := G/G_0$ has order prime to $m$. Then $G \simeq G_0 \rtimes H$, and any extension $F/K$ of degree $m$ with Galois group $\iota$ can be given the structure of $\Gamma$-extension, for some $K$-twist $\Gamma$ of $\underline{G_0}$. The resolvent field of $\Gamma$ is $\tilde F^{G_0}/K$. Thus, $\mathcal{F}_\Gamma$ is the family of $G$-extensions with resolvent field $R$, where $G:= \Gamma_0 \rtimes H \stackrel{\iota}{\hookrightarrow} S_m$. 

In this section, we elaborate on the heuristics presented in the introduction for the distribution of $\Cl_{F/K}[2]$ for $F \in \mathcal{F}_\Gamma$.  The family $\mathcal{F}_\Gamma$ sits in the larger family $\mathcal{F}_{\iota,K}$ of degree $m$ extensions $F/K$ with Galois group $G \stackrel{\iota}{\hookrightarrow} S_m$, and it is natural to guess that the distribution of $\Cl_{F/K}[2]$ for $F \in \mathcal{F}_\Gamma$ is the same as for $F \in \mathcal{F}_{\iota, K}$. The latter distribution has been predicted by Malle, at least under certain  irreducibility assumptions \cite{Malle2010, AdamMalle}; as discussed in Section \ref{sec: random model for Cl[2]}. Sawin--Wood give a prediction without such assumptions, but assuming $|G|$ is odd.

\subsection{Case $|G|$ odd}

If $\Gamma$ has trivial $\Gal_K$-action, i.e.\ $\Gamma = \underline{G}$ for some group $G$, then $\mathcal{F}_\Gamma = \mathcal{F}_{\iota, K}$ and we simply follow the heuristics of Sawin--Wood. More generally, if $G =  \Gamma_0 \rtimes H$ has odd order, then our heuristics for $\mathcal{F}_\Gamma$ revert to the  heuristics of Sawin--Wood for the larger family $\mathcal{F}_{\iota, K}$ (see \cite[\S10]{SawinWood}). The reason for this is that Hecke reciprocity is not relevant when $|G|$ is odd, by the following result.

\begin{proposition}\label{prop: G odd}
    Suppose $|G| = |G_{F/K}|$ is odd. Then $F/K$ is Hecke unramified. 
\end{proposition}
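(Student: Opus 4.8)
The plan is to prove the statement directly, by showing that $H_{F/K}$ is literally the square of an $\O_F$-ideal, which is exactly the assertion that $F/K$ is Hecke unramified; in particular this bypasses the notion of Hecke prime entirely. Since being a square of an ideal can be checked prime by prime, it suffices to prove that every prime $w$ of $F$ divides $H_{F/K}$ to an even power. Working at a single place $v$ of $K$ and using the notation $h_i = d e_i - d_i$ with $d = \sum_j d_j f_j$ from \S\ref{subsec: Hecke primes local fields} (where $d_j = v_{w_j}(\Diff_{F/K})$ is the local different exponent), I would first reduce everything to the claim that each $d_j$ is even. Indeed, once every $d_j$ is even, $d = \sum_j d_j f_j$ is even, and hence $h_i = d e_i - d_i$ is even for all $i$; this is precisely the statement that $H_{F/K}$ is the square of an ideal.

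The heart of the argument is therefore to show that every local different exponent is even when $|G|$ is odd. First I would treat the Galois case: let $E/K_v$ be any finite Galois extension whose group is a subquotient of $G$, hence of odd order. Its different exponent is $\sum_{k \ge 0}(|G_k| - 1)$, where the $G_k$ are the ramification groups in lower numbering. Each $G_k$ is a subgroup of the inertia group $G_0$, so $|G_k|$ is odd and $|G_k| - 1$ is even; summing, the different exponent of $E/K_v$ is even. I will apply this to $E = \tilde F_{\tilde w}$, the completion of the Galois closure $\tilde F$ at a place $\tilde w$ lying over $v$, whose Galois group over $K_v$ is the decomposition group $D \subseteq G$, of odd order.

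To descend to the possibly non-Galois factor $L_j$ of $F \otimes_K K_v$ (the completion $F_{w_j}$), I would use that $L_j \subseteq \tilde F_{\tilde w}$ and apply the tower formula for differents: measuring everything by $v_{\tilde w}$, one has $v_{\tilde w}(\Diff_{\tilde F_{\tilde w}/K_v}) = v_{\tilde w}(\Diff_{\tilde F_{\tilde w}/L_j}) + e(\tilde F_{\tilde w}/L_j)\, d_j$. Both of the outer differents are even by the Galois case (the extension $\tilde F_{\tilde w}/L_j$ is Galois with group a subgroup of $D$, again of odd order), while $e(\tilde F_{\tilde w}/L_j)$ divides $|G|$ and is therefore odd. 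Hence $d_j$ is even, which completes the reduction and the proof. The main obstacle is exactly this non-Galois descent: the different exponent of the individual field factor $L_j$ cannot be read off from a single ramification filtration, so one cannot simply quote the parity statement for $E/K_v$; it is the tower formula together with the oddness of $e(\tilde F_{\tilde w}/L_j)$ that bridges the gap. As a sanity check, all $e_i$ and $f_i$ are themselves odd here (each $e_i$ is the size of an orbit of the odd-order inertia group acting on $G/\Gal(\tilde F/F)$, and each $f_i$ divides $|G|$), which is consistent with $H_{F/K}$ being an ideal square.
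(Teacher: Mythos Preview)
Your proof is correct and follows essentially the same approach as the paper: first show that the different exponent is even in the Galois case via the ramification-group formula $\sum_{k\ge 0}(|G_k|-1)$, and then descend to the non-Galois case using the multiplicativity of the different in the tower $K_v \subset L_j \subset \tilde F_{\tilde w}$ together with the fact that the ramification index $e(\tilde F_{\tilde w}/L_j)$ is odd. The only cosmetic difference is that you phrase the descent locally at a single place while the paper phrases it globally via the map of ideal groups $I_F \to I_{\tilde F}$; the content is identical.
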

\begin{proof}
    If $F/K$ is an odd degree {\it Galois} extension, then the formula \cite[p. 64]{Serre1979} for the valuation of $\Diff_{F/K}$ in terms of ramification groups shows that $\Diff_{F/K}$ is a square as an ideal. Thus, in the notation of \S\ref{subsec: Hecke primes local fields}, the integers $d_i$ are all even, hence  $d = \sum d_i f_i$ is even, and hence the exponents $h_i = de_i - d_i$ of $H_{F/K}$ are all even. Thus, $H_{F/K}$ is a square, i.e.\ $F/K$ is Hecke unramified and admits no Hecke primes. 
    
    Now consider the general case where $F/K$ is not necessarily Galois. Since $\tilde F/K$ is an odd degree Galois extension, all ramification indices in {\it all} subextensions of $\tilde F/K$ are odd. Now let $j \colon I_F \to I_{\tilde F}$ be the map on ideal groups induced by the inclusion $F \hookrightarrow \tilde{F}$. The formula $\Diff_{\tilde F/K} = \Diff_{\tilde F/F} \Diff_{F/K}$ shows that $j(\Diff_{F/K})$ is a square in $I_{\tilde F}$. It follows that $\Diff_{F/K}$ is a square already in $I_F$, because all ramification indices of $\tilde F/F$ are odd. Thus, $H_{F/K} = \Disc_{F/K}\Diff_{F/K}^{-1}$ is a square as well, and there are no Hecke primes in $F/K$.   
\end{proof}

\subsection{Case $|G|$ even}
To formulate our heuristic when $|G|$ is even, write $\mathcal{F}_\Gamma = \mathcal{F}_\Gamma^1 \coprod \mathcal{F}_\Gamma^2$, where $\mathcal{F}_\Gamma^1$ (resp.\ $\mathcal{F}_\Gamma^2$) is the subfamily of $\Gamma$-extensions $F/K$ that are Hecke unramified (resp.\ Hecke ramified).

\begin{proposition}\label{prop: hecke primes divide the resolvent disc}
    Let $F \in \mathcal{F}_\Gamma$ and suppose $w \mid v$ is a Hecke prime of $F$. Then either $v$ ramifies in the resolvent field $R/K$ or there is a prime $w_i$ over $v$ which is wildly ramified. 
\end{proposition}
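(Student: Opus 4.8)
The plan is to prove the contrapositive: assuming that $v$ is unramified in the resolvent field $R/K$ and that every prime $w_i$ of $F$ above $v$ is tamely ramified, I will show that $v$ admits no Hecke prime. By the definition of a Hecke prime, it is enough to show that the exponent $h_i$ of each $w_i$ in the Hecke ideal $H_{F/K}$ is even, since a prime with even Hecke exponent fails the very first condition of being a Hecke prime (so the secondary, isotropy-type condition never needs to be examined).

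The first step translates the hypothesis on $R$ into a statement about ramification indices. Recall that $R = \tilde F^{\Gamma_0}$, so $\Gal(R/K) = H$ and $G_{F/K} \simeq \Gamma \rtimes \underline{H}$. Fixing a prime $\tilde w$ of $\tilde F$ above $v$ with inertia group $I \subseteq G_{F/K}$, the condition that $v$ is unramified in $R/K$ says precisely that the image of $I$ in $\Gal(R/K) = G_{F/K}/\Gamma_0$ is trivial, i.e.\ $I \subseteq \Gamma_0$. Since $|\Gamma_0| = m$ is odd, Lagrange gives $|I| \mid m$, so $|I|$ is odd. As all inertia groups above $v$ are conjugate in $G_{F/K}$, and $e(w_i/v)$ divides $e(\tilde w_i/v) = |I(\tilde w_i/v)| = |I|$ by multiplicativity of ramification indices in the tower $K \subseteq F \subseteq \tilde F$, I conclude that each $e_i := e(w_i/v)$ is odd.

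The second step is a parity computation with the local different, in the notation of \S\ref{subsec: Hecke primes local fields}. Since each $w_i$ is tamely ramified, the different exponent is $d_i = e_i - 1$, which is even because $e_i$ is odd. Writing $m = \sum_i e_i f_i$ and reducing mod $2$, the oddness of $m$ and of each $e_i$ forces $\sum_i f_i$ to be odd; hence $d = \sum_i d_i f_i = \sum_i (e_i - 1) f_i = m - \sum_i f_i$ is even. Therefore $h_i = d e_i - d_i \equiv 0 - (e_i - 1) \equiv 0 \pmod{2}$ for every $i$, so $H_{F/K}$ has even exponent at each prime above $v$, and no such prime is a Hecke prime.

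The only genuinely delicate point is the bookkeeping in the first step: one must pass from the inertia group $I$ of the Galois closure $\tilde F/K$, where the hypothesis on $R$ is naturally phrased, to the ramification indices $e(w_i/v)$ of the possibly non-Galois extension $F/K$. This is handled by the divisibility $e(w_i/v) \mid |I|$ together with the conjugacy of inertia groups; once the oddness of $m$ is invoked, all $e_i$ become odd and the remaining parity computation is routine. I emphasize that tameness enters only in the second step, to guarantee $d_i = e_i - 1$; the oddness of the $e_i$ comes entirely from the hypothesis that $v$ is unramified in $R$, which is why the two alternatives in the statement (ramification in $R$, or a wild prime above $v$) are exactly the two ways this argument can fail.
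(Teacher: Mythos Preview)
Your proof is correct and follows essentially the same strategy as the paper: both arguments show that under the two hypotheses the local Hecke exponents $h_i = de_i - d_i$ are all even, via the tame formula $d_i = e_i - 1$ and a parity computation. The only minor difference is in how you control the $e_i$: the paper observes that $\tilde F/F$ is unramified above $v$ (since $\tilde F = FR$ and $R/K$ is unramified there), hence all $e_i$ are \emph{equal} to the common ramification index in $\tilde F/K$, whereas you argue via $I \subseteq \Gamma_0$ to conclude only that each $e_i$ is \emph{odd}. Your weaker conclusion is still enough for the parity computation, so nothing is lost.
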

\begin{proof}
Write $v = \prod_{i = 1}^N w_i^{e_i}$, for primes $w_i$ of $F$ and let $f_i$ be the inertia degree of $w_i$ over $v$. We may suppose that each $w_i$ is at most tamely ramified over $v$. We suppose $v$ is unramified in $R/K$ in order to obtain a contradiction. Since $R/K$ is unramified, and since $\tilde F$ is the compositum of $F$ and $R$, it follows that $\tilde F/F$ is unramified. Since $\tilde F/K$ is Galois, it follows that all $e_i$ are equal, say $e_i = e$. Write $f = \sum_i f_i$. Since $\sum_i e_if_i = e \sum_i f_i = ef = m$, we see that both $e$ and $f$ are odd. The assumption that each $w_i$ is tamely ramified means that $\Diff_{F \otimes K_v/K_v} = \prod_i w_i^{e-1}$. Thus $\Disc_{F \otimes K_v/K_v} = v^{(e-1)\sum_i f_i} = v^{(e-1)f} = v^{m-f}$. The local Hecke ideal is therefore 
\[H_{F\otimes K_v /K_v} = \prod_i \m_i^{e(m-f) - e + 1} = \prod_i \m_i^{em - m -e +1},\]
where $\m_i$ is the completion of $w_i$.
Since each exponent $em-m-e+1$ is even, this is the square of an ideal, contradicting the fact that there is a Hecke prime $w$ above $v$.  
\end{proof}

By Proposition \ref{prop: hecke primes divide the resolvent disc}, the partition of $\mathcal{F}_\Gamma = \mathcal{F}_\Gamma^1 \coprod \mathcal{F}_\Gamma^2$ amounts to imposing constraints on the possible $K_v$-algebras $F \otimes_K K_v$ for a {\it finite} set of primes $v$ of $K$, namely those $v$ dividing $(m!)\Disc_{R/K}$. In particular, the partition is defined by finitely many local conditions.

Assume now that $G \subset S_m$ is absolutely irreducible (as at the end of \S\ref{sec: random model for Cl[2]}).  Our heuristic is the following: for $F \in \mathcal{F}_\Gamma^2$ we predict Malle's distribution, i.e.\ for any finite $\F_2$-vector space $V$,
\begin{equation}\label{eq: malle moments II}
    \mathbb{E}(|\mathrm{Surj}(\Cl_{F/K}[2],V)|) = \dfrac{| \wedge^2 V|}{|V|^u},
\end{equation}
where $u$ is defined as in \cite{Malle2010}. For $F \in \mathcal{F}_\Gamma^1$ we predict the distribution
\begin{equation} \label{eqn: spin moments}
    \mathbb{E}(|\mathrm{Surj}(\Cl_{F/K}[2],V)|) = \dfrac{| \wedge^2 V|}{|V|^{u-1}},
\end{equation}
taking into account the vacuity of Hecke reciprocity in this subfamily and the random model of Section \ref{sec: random model for Cl[2]}. Thus, the distribution on  $\mathcal{F}_\Gamma$ is a linear combination of these two distributions, weighted by the densities of the two subfamilies. Of course, these densities may be sensitive to the way $\mathcal{F}_\Gamma$ is ordered, so it is better to view them as two different families.

As we show below, it often happens that $\mathcal{F}_\Gamma^1$ is empty and hence $\mathcal{F} = \mathcal{F}_\Gamma^2$. In this case our heuristic simply says that in $\mathcal{F}_\Gamma$ the group $\Cl_{F/K}[2]$ is distributed as in the larger family $\mathcal{F}_{\iota, K}$. Only when $\mathcal{F}_\Gamma^1$ is non-empty do the heuristics change, and in this case we say that $\Gamma$ is {\it aberrant}.  Sawin--Wood point out inconsistencies in Malle's conjecture when $G \subset S_m$ is not absolutely irreducible \cite[\S12.3.2]{SawinWood}, which is why we avoid that case. However, we only really use the rough shape of Malle's heuristic, the main point being that the parameter $u$ should be lowered by $1$ on the subfamily $\mathcal{F}_\Gamma^1$. In any case, the notion of $\Gamma$-being aberrant is well-defined even when $G$ is not absolutely irreducible.

\subsection{Examples} \label{subsec: Gamma-extensions heuristics examples}
We survey some examples of families $\mathcal{F}_\Gamma$ with $|G|$ even and $K = \Q$.    
\subsubsection{$G = C_3 \rtimes C_2 \simeq S_3$}
Here $\Gamma$ is a twist of $\underline{C_3}$ and $H =C_2$. The resolvent field $R/\Q$ is quadratic and $\mathcal{F}_\Gamma$ is the family of cubic fields with quadratic resolvent $R$. 

\begin{lemma}
    $\Gamma$ is aberrant if and only if $R = \Q(i),\Q(\sqrt{-3})$ or $\Q(\sqrt{3})$.
\end{lemma}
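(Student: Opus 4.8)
The plan is to reduce aberrance to local conditions and then isolate the distinct roles of $2$ and $3$. Recall that $\Gamma$ is aberrant exactly when the subfamily $\mathcal{F}_\Gamma^1$ of Hecke unramified cubic fields with resolvent $R$ is nonempty, and that Hecke ramification is detected prime-by-prime (Lemma \ref{lem: cubic characterization}). By Proposition \ref{prop: hecke primes divide the resolvent disc}, every Hecke prime of such an $F$ lies over a rational prime dividing $6\Disc_{R/\Q}$, so I would first dispose of the primes $p\neq 2,3$. If an odd prime $p\neq 3$ ramifies in $R$, then the inertia subgroup of $\Gal(\tilde F/\Q)\cong S_3$ at $p$ is tame and maps nontrivially to $\Gal(R/\Q)$, so it is generated by a transposition; as its normalizer in $S_3$ is itself, the decomposition group coincides with it and $F$ has splitting type $(1^21)$ at $p$. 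By the $p\neq 2$ case of Lemma \ref{lem: cubic characterization}, the ramified prime above $p$ is then a Hecke prime. Since this holds for \emph{every} $F$ with resolvent $R$, aberrance forces $R$ to be unramified outside $\{2,3\}$, leaving the finite list $\Q(\sqrt D)$ with $D\in\{-1,\pm2,\pm3,\pm6\}$.

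The heart of the argument is the contrast between $p=3$ and $p=2$, which stems from the fact that the Sylow $3$-subgroup $C_3\trianglelefteq S_3$ is normal while the Sylow $2$-subgroups are not. At $p=3$, because $C_3$ is a normal (wild) inertia subgroup, $\tilde F$ may be totally wildly ramified at $3$ with full inertia $S_3$, in which case $F$ is totally ramified of type $(1^3)$ and hence Hecke unramified by Lemma \ref{lem: unram or tot ram}. I would check that \emph{both} ramified quadratic extensions of $\Q_3$ occur as resolvents of such totally ramified cubics: $\Q_3[x]/(x^3-3)$ has resolvent $\Q_3(\sqrt{-3})$, while $\Q_3[x]/(x^3-3x-3)$ (Eisenstein, with $\disc\equiv 3\bmod\Q_3^{\times2}$) has resolvent $\Q_3(\sqrt3)$. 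Thus the local behavior at $3$ never obstructs Hecke unramifiedness, whether or not $R$ is ramified at $3$. At $p=2$ the situation is rigid: since the Sylow $2$-subgroup of $S_3$ is not normal, the wild inertia (a $2$-group) must be trivial, so inertia at $2$ is cyclic; if $R$ ramifies at $2$ this inertia is a transposition, forcing type $(1^21)$ with ramified quadratic factor $R\otimes\Q_2$. By the $p=2$ case of Lemma \ref{lem: cubic characterization}, such an $F$ is Hecke ramified exactly when $R\otimes\Q_2$ is of uniformizer type, i.e.\ when $v_2(\Disc_R)$ is odd. Hence aberrance additionally requires $v_2(\Disc_R)$ even.

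Running through the candidate list, the fields unramified outside $\{2,3\}$ with $v_2(\Disc_R)$ even are exactly $\Q(\sqrt{-3})$ ($\Disc_R=-3$), $\Q(i)$ ($\Disc_R=-4$), and $\Q(\sqrt3)$ ($\Disc_R=12$) -- the three quadratic subfields of $\Q(\zeta_{12})$ -- while $\Q(\sqrt{\pm2})$ and $\Q(\sqrt{\pm6})$ are excluded by the parity condition at $2$. This establishes the necessity of the stated criterion. For the converse I must exhibit, for each of the three resolvents, a genuinely Hecke unramified cubic field. For $\Q(\sqrt{-3})$ the wild pure cubic fields $\Q(\sqrt[3]n)$ with $n\not\equiv\pm1\pmod9$ already suffice (Lemma \ref{lem: Kummer Hecke ramification}), and for $\Q(i)$ the family of Example \ref{ex: one Hecke ramified prime} with $p=-1$, namely $\Q[x]/(x^3+nx^2+9x+n)$, has resolvent $\Q(i)$ and no Hecke primes.

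For $\Q(\sqrt3)$ I would patch the local conditions found above -- type $(1^21)$ of unit type at $2$, totally wildly ramified at $3$, and unramified or tamely totally ramified elsewhere -- into a global cubic field of resolvent $\Q(\sqrt3)$, and here I expect the main obstacle. The issue is purely one of global realizability: one must produce a cubic field whose discriminant square-class is $12$ (forcing a totally real signature) and whose completions at $2$ and $3$ match the prescribed Hecke unramified local algebras. I would resolve this either by extracting an explicit subfamily from the parametrization of cubic fields with fixed quadratic resolvent in \cite{BhargavaShnidman14}, or via the class field theory of the cyclic cubic extension $\tilde F/R$: since the prescribed local conditions are consistent with a single global square-class and $3$ is invertible, a suitable $\tilde F/R$ can be built by specifying its conductor and local ramification, and its fixed field under a point stabilizer gives the desired $F$. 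Once one such $F$ is produced, $\mathcal{F}_\Gamma^1$ is nonempty and $\Q(\sqrt3)$ is aberrant, completing the proof.
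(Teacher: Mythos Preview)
Your approach is essentially the paper's: reduce via Lemma~\ref{lem: cubic characterization} to primes of type $(1^21)$, observe that a prime $v\nmid 6$ ramified in $R$ forces this type (hence Hecke ramification) for every $F\in\mathcal{F}_\Gamma$, treat $2$ via the uniformizer/unit dichotomy, and then exhibit Hecke-unramified members for the three surviving resolvents.

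Two small points. First, your sentence ``the wild inertia (a $2$-group) must be trivial'' is not quite right: if inertia at $2$ is a transposition then wild inertia \emph{is} that transposition. The correct statement is that inertia cannot be all of $S_3$ (since $S_3$ has no normal $2$-Sylow, contradicting the structure of local inertia), hence is a proper and therefore cyclic subgroup; your conclusion survives.

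Second, you flag the construction of a Hecke-unramified $F$ with resolvent $\Q(\sqrt3)$ as ``the main obstacle'' and propose class field theory or the parametrization of \cite{BhargavaShnidman14}. This is overkill: one only needs a single example, and the paper simply writes down $x^3-15x-20$, which is Eisenstein at both $3$ and $5$ (so totally ramified there, hence Hecke unramified by Lemma~\ref{lem: unram or tot ram}), has discriminant $2700=2^2\cdot3^3\cdot5^2$ of square-class $3$, and at $2$ has the forced $(1^21)$ type with quadratic factor $\Q_2(\sqrt3)$ of unit type. Your proposed machinery would work, but a direct search for a polynomial is both faster and more convincing.
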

\begin{proof}
    For $F \in \mathcal{F}_\Gamma$, Lemma \ref{lem: cubic characterization} says that a prime $v$ is Hecke ramified in $F/\Q$ if and only if $v$ splits as $1^2 1$ and the ramified local quadratic extension is generated by the square-root of a uniformizer when $v = 2$. When $v \neq 3$, this happens if and only if $v$ ramifies in $R$ (and $\Disc_{R/\Q}$ is divisible by $8$ if $v = 2$). Indeed, if $v$ is totally ramified in $F$, then the Galois closure of $F \otimes \Q_v/\Q_v$ is obtained by adjoining third roots of unity, hence is unramified over $F_w$. It follows that $R \otimes \Q_v$ is unramified over $\Q_v$ as well.  We conclude that if $\Gamma$ is aberrant, then $R$ must be unramified away from $2$ and $3$, and $\Disc(R)$ must not be divisible by $8$. This cuts the possibilities down to the three listed $R$. 
    
    To finish the proof we must show that  $\mathcal{F}_\Gamma^1$ is non-empty in each of these three cases. For such a $\Gamma$, every $F \in \mathcal{F}_\Gamma$ is Hecke unramified away from $3$, so the question is what happens at $3$. For $R = \Q(i)$, there are no Hecke primes above $3$ (since if $F$ is ramified at $3$, it must be totally ramified and hence Hecke unramified by Lemma \ref{lem: locally unobstructed at unramified primes}).  Hence $\mathcal{F}_\Gamma^1 = \mathcal{F}$ in this case. For the other two cases, we use the fact that there exists a cubic field totally ramified at $3$ in each of these families, e.g. $\Q(\sqrt[3]{3})$ and $\Q(\theta)$, where $\theta$ is a root of $x^3 - 15x - 20$. 
\end{proof}

Let us analyze the three aberrant families in more detail. 
\begin{enumerate}
    \item 
When $R = \Q(i)$, the family $\mathcal{F}_\Gamma$ is given by $F_n = \Q[x]/(f_n)$, with $f_n = x^3 + nx^2 + 9x + n$. The proof above shows that $\mathcal{F}_\Gamma =\mathcal{F}_\Gamma^1$ (i.e.\ there are no Hecke primes in the whole family) and so we predict the distribution of $\Cl_F[2]$ for $F\in \mathcal{F}_\Gamma$ is $(\ref{eqn: spin moments})$.  See Table \ref{tbl: cubic field with Gaussian resolvent} for a comparison of this prediction with some data.
\item When $R = \Q(\sqrt{-3})$ or $\Q(\sqrt{3})$, there are two possibilities for the splitting type at $3$: $(1^3)$ or $(1^21)$. In the first case $F \in \mathcal{F}_\Gamma^1$ since there are no Hecke primes, and in the second case $F \in \mathcal{F}_\Gamma^2$, as there is a unique Hecke prime (above $3$). For $F \in \mathcal{F}_\Gamma^1$, we predict the distribution $(\ref{eqn: spin moments})$, while for $F \in \mathcal{F}_\Gamma^2$ we predict Malle's distribution $(\ref{eq: malle moments II})$. Theorem \ref{main thm: over Q} is evidence for this in the case $R = \Q(\sqrt{-3})$. For the case $R = \Q(\sqrt{3})$, which corresponds to the family of cubic polynomials $x^3 - 3nx^2 - 3x + n$, see Table \ref{tbl: cubic field with quadratic resolvent Q(sqrt(3))} for data backing up our prediction.

\end{enumerate}

If $R$ is not isomorphic to $\Q(i)$ or $\Q(\sqrt{\pm3})$, we expect the distribution $(\ref{eq: malle moments II})$ in the family $\mathcal{F}_\Gamma$.

\subsubsection{$G = D_p \subset S_p$}

One way to generalize the previous example is to consider for each prime $p \geq 3$ the dihedral group of order $2p$, and let $\mathcal{F}_{D_p}$ be the family of $D_p$-extensions $F/\Q$ of degree $p$.  Let $\Gamma$ be a $\Q$-group with underlying group $\Gamma_0 = C_p$ and suppose that $\Gal_K$ acts through a (non-trivial) quadratic character $\Gal_K \to C_2 \hookrightarrow \Aut(C_p) \simeq C_{p-1}$.  So again the resolvent field $R/\Q$ of $\Gamma$ is quadratic, and $\mathcal{F}_\Gamma \subset \mathcal{F}_{D_p}$ is the family of $D_p$-extensions $F/\Q$ with quadratic resolvent $R \subset \tilde F$. 

\begin{proposition}\label{prop: dihedral hecke}
    Suppose $v \notin \{2,p\}$ is a prime of $\Q$ which is ramified in $R/\Q$. Then for all $F \in \mathcal{F}_\Gamma$, there exists a Hecke prime $w \mid v$ in $F/\Q$. 
\end{proposition}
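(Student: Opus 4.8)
The plan is to pin down the splitting type of $v$ in $F$ using the group theory of $D_p$, and then read off the Hecke data by a direct local computation. First I would observe that since $v \notin \{2,p\}$ we have $\gcd(v,|D_p|) = \gcd(v,2p) = 1$, so $v$ is tamely ramified in $\tilde F/\Q$ and every inertia subgroup $I_w \subset D_p$ is cyclic. Because $v$ ramifies in the resolvent field $R = \tilde F^{C_p}$, the image of $I_w$ in $\Gal(R/\Q) = D_p/C_p = C_2$ is nontrivial, so $I_w \not\subset C_p$. A cyclic subgroup of $D_p$ not contained in the rotation subgroup $C_p$ must be one of the order-$2$ reflection subgroups; I fix $I_w = \langle s'\rangle$ with $s'$ a reflection.

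Next I would determine the decomposition group $D_w \supseteq I_w$. The crucial structural input is that $I_w \trianglelefteq D_w$. Since $p$ is an odd prime, the only normal subgroups of $D_p$ are $1$, $C_p$, and $D_p$, so the reflection subgroup $I_w$ is \emph{not} normal in $D_p$; hence $D_w \neq D_p$. As the only subgroups of $D_p$ containing a given reflection are that reflection subgroup itself and all of $D_p$, I conclude $D_w = I_w$, of order $2$. This group-theoretic exclusion of $D_w = D_p$ is the one nonobvious step and is the main obstacle; once it is in place the rest is routine.

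From here the splitting type is immediate. Writing $F = \tilde F^{\langle t\rangle}$ for a reflection $t$, the $p$ embeddings of $F$ correspond to the cosets $D_p/\langle t\rangle$, and the primes of $F$ above $v$ are the orbits of $D_w = \langle s'\rangle$ on this set. A short standard computation shows that any reflection acts on the $p$-element set $D_p/\langle t\rangle$ with exactly one fixed point and $(p-1)/2$ orbits of size $2$. Thus $v$ factors in $\O_F$ as $\m_0 \prod_{i=1}^{(p-1)/2}\m_i^2$, where $\m_0$ is unramified of residue degree $1$ and each $\m_i$ (for $i \ge 1$) is ramified with $e_i = 2$ and $f_i = 1$.

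Finally I would compute the local Hecke ideal. Tameness gives $d_i = e_i - 1$, so that $\Diff_{F\otimes\Q_v/\Q_v} = \prod_{i\ge1}\m_i$ and $\Disc_{F\otimes\Q_v/\Q_v} = v^{(p-1)/2}$; writing $d = (p-1)/2$, the exponent of $\m_i$ in $H_{F/\Q}$ is $h_i = d\,e_i - d_i = (p-1) - 1 = p-2$, which is odd since $p$ is odd. It then remains to verify the secondary square-class condition making $\m_i$ an actual Hecke prime. Here I would use that $e_i = 2$ is even, so by the norm computation recalled in the proof of Lemma \ref{lem: hecke ramified implies hecke prime} we have $\Nm_{L_i/\Q_v}(\epsilon_i) \in \Q_v^{\times 2}$, where $L_i$ is the completion at $\m_i$ and $\epsilon_i$ its unramified square-class. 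Consequently the class with $i$-th component $\epsilon_i$ and all other components trivial has square norm, hence lies in $V_v$ and witnesses $\m_i \notin F_{\m_i}^{\times 2}$; this exhibits $\m_i$ as a Hecke prime. Taking $w = \m_1$ proves the claim, and in fact shows that all $(p-1)/2$ ramified primes of $F$ above $v$ are Hecke primes.
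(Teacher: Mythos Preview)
Your proof is correct and follows essentially the same path as the paper's: identify the inertia as a reflection, deduce the splitting type $(1\,1^2\cdots 1^2)$, compute the Hecke exponent $p-2$ at each ramified prime, and verify the secondary condition via $e_i$ even. The one place where your argument differs is in showing $D_w = I_w$: you use that $I_w \trianglelefteq D_w$ together with the subgroup lattice of $D_p$ (a reflection subgroup is contained only in itself and in $D_p$, and is not normal in $D_p$), whereas the paper instead argues via the degree relation $\sum e_i f_i = p$ with $f_i \in \{1,p\}$ and at least one $e_i > 1$. Your route is arguably cleaner and avoids having to justify why $f_i \mid p$, but both arrive at the same conclusion with the same subsequent computation.
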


\begin{proof}
Since the inertia group of a prime $u$ above $v$ in $\tilde F$ is cyclic, $u$ has ramification index $2$, and the ramification indices $e_i$ of the primes $w_i$ of $F$ over $v$ are all either $1$ or $2$. At least one $e_i$ is greater than $1$, so the inertia degrees $f_i$, which a priori are either $1$ or $p$, must all equal $f_i = 1$. Thus the inertia group coincides with the decomposition group $D(u|v)$. The primes $w_i$ correspond to the orbits of $D(u|v)$ on $\{1,\ldots, p\}$. Since $D(u|v)$ is generated by a reflection with exactly one fixed point, there must be $N = (p-1)/2$ such primes. The splitting type of $v$ in $F$ is thus $(1 1^2 1^2 \ldots 1^2)$. 

Write $v = \prod w_i^{e_i}$ and $\Diff_{F/\Q} = \prod w_i^{d_i}$. The discriminant exponent $d = \sum d_i f_i$ is equal to $\sum d_i = (p-1)/2$.  The Hecke ideal is therefore $\prod w_i^{de_i - d_i} = \prod w_i^{e_i(p-1)/2 - d_i}$. The unramified prime $w_1$ has exponent $(p-1)/2$, which is odd if and only if $p \equiv 1\pmod{4}$, but even in that case $w_1$ fails the second technical condition in the definition of a Hecke prime. However, the $g := (p-1)/2$ ramified primes $w_2,\ldots, w_{g+1}$ have $e_i = 2$ and hence Hecke exponent $p-1 - 1 = p-2$, which is odd. These are indeed Hecke primes, as the technical condition is always satisfied by a prime such that $e_if_i$ is even (see the proof of Lemma \ref{lem: local selmer ratio}). 
\end{proof}

Thus, for $F \in \mathcal{F}_\Gamma$ to be Hecke unramified, $R$ must be unramified away from $\{2,p\}$. As in the cubic case, a more careful analysis at $2$ also rules out the case that $R \otimes \Q_2/\Q_2$ is ramified and generated by the square-root of a uniformizer. Thus, in degree $p$ there are three aberrant $\Gamma$, corresponding to $R = \Q(i)$, $\Q(\sqrt{p})$, or $\Q(\sqrt{-p})$, a direct generalization of the cubic case.  For other resolvent fields $R/\Q$, we expect the  distribution of $\Cl_F[2]$ for $F \in \mathcal{F}_\Gamma$ to match that for $F \in \mathcal{F}_{D_p}$. 

Note that $D_p \subset S_p$ is not absolutely irreducible in Malle's sense unless $p = 3$, so Malle's heuristics in this case are more involved and on shakier ground (as previously mentioned). However, see \cite{Malle2010}, who discusses the case $p = 5$ in some detail.

\subsubsection{$G \subset F_p \subset S_p$}

More generally, if $\Gamma$ has underlying group $C_p$, then there is some subgroup $H \subset C_{p-1} = \Aut(C_p)$ through which the $\Gal_K$-action on $\Gamma$ factors. Then $\mathcal{F}_\Gamma$ is a subfamily of $\mathcal{F}_{G, \Q}$, where $G = C_p \rtimes H \subset F_p \simeq C_p \rtimes C_{p-1}$ is a subgroup of the Frobenius group $F_p \subset S_p$ of order $p(p-1)$.  The resolvent field is a (cyclic) $H$-extension $R/\Q$.

If we fix $p$ and consider all twists $\Gamma$ of $\underline{C_p}$, then it is natural to guess, following Proposition \ref{prop: dihedral hecke}, that only finitely many such $\Gamma$ are aberrant. We saw above that this is true for $p = 3$ and it is true for all $p$ if we restrict to dihedral twists, but we do not know whether this is true in general. We simply note that one example of an aberrant $\Gamma$ with $H = C_{p-1}$ (i.e.\ $G = F_p$) is $\Gamma = \mu_p$. We study this example in more detail in the next section.

\subsubsection{Other examples}
The examples above cover all $\Gamma$-extensions of degree $|\Gamma|$ with $|G_\Gamma|$ even, up to  degree $7$. In degree $9$ we encounter other primitive transitive subgroups $G \subset S_9$, the smallest being  $C_3^2 \rtimes C_4$. 
More generally, one can consider a group $\Gamma_0$ of odd order and let $G$ be a subgroup of $\Gamma_0 \rtimes \Aut(\Gamma_0)$ containing $\Gamma_0$.  It would be interesting to collect data for $\Cl_{F/K}[2]$ for more such $\Gamma$.

It would also be desirable to relax some of our assumptions, e.g.\ that $F/K$ has degree $|\Gamma|$, that $\Gamma$ has odd order, or that $(m,[R \colon K]) = 1$. For example, what can one say about the heuristics for $\Cl_{F/K}[2]$ in degree $n$ families of $S_n$-extensions with a fixed quadratic resolvent field (corresponding to $A_n \subset S_n$)? Proposition \ref{prop: hecke primes divide the resolvent disc} may fail in this level of generality, so that one of the subfamilies $\mathcal{F}_\Gamma^1$ or $\mathcal{F}_\Gamma^2$ may be a non-empty thin subfamily.

\section{Heuristics for class groups of Kummer extensions}\label{sec: kummer heuristics}
 Let $K = \Q$ and let $\Gamma = \mu_p$, for an odd prime $p > 2$.  In the notation of the previous section, the group $G_\Gamma$ is the Frobenius group of order $p(p-1)$. By Kummer theory,  $\mathcal{F}_{\mu_p}$ is the family of pure degree $p$ fields $F_n = \Q(\sqrt[p]{n})$. In this section, we make the predictions of \S\ref{sec: Gamma-extension heuristics} explicit for this family. 
 
 \begin{lemma}
     $\mu_p$ is an aberrant $\Q$-group. Moreover, $\mathcal{F}_{\mu_p}^1 \subset\mathcal{F}_{\mu_p}$ is the subfamily of wildly ramified fields $F_n$, and $\mathcal{F}_{\mu_p}^2 \subset\mathcal{F}_{\mu_p}$ is the subfamily of tamely ramified fields $F_n$.  
 \end{lemma}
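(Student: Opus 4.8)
The plan is to pin down exactly which pure fields $F_n = \Q(\sqrt[p]{n})$ are Hecke ramified, and to observe that this is governed entirely by the behavior at the single prime $p$. First I would record the relevant group theory: the resolvent field of $\mu_p$ is $R = \Q(\zeta_p)$, so $G_\Gamma = \mu_p \rtimes \Gal(R/\Q)$ is the Frobenius group of order $p(p-1)$, which has even order since $p$ is odd. Thus we are genuinely in the ``$|G_\Gamma|$ even'' regime where aberrance is meaningful, and by definition it then remains to identify $\mathcal{F}_{\mu_p}^1$ and $\mathcal{F}_{\mu_p}^2$ with the wild and tame subfamilies and to check $\mathcal{F}_{\mu_p}^1 \neq \emptyset$.

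Next I would reduce the question to the prime $p$. By Lemma \ref{lem: Kummer Hecke ramification}, a prime $q \neq p$ of $\Q$ is never Hecke ramified in $F_n$: this is the $p \neq m$ case of that lemma, in which any ramified $F_n \otimes \Q_q$ is a totally ramified field and hence Hecke unramified by Lemma \ref{lem: field extensions}. Therefore $F_n/\Q$ is Hecke ramified if and only if the prime $p$ is Hecke ramified in $F_n$, so all the content sits at $p$.

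The crux is then the local analysis at $p$. Here the residue characteristic equals $m = p$, and $e = e(\Q_p(\zeta_p)/\Q_p) = p-1$, which is even precisely because $p$ is odd. Feeding this into Lemma \ref{lem: Kummer Hecke ramification}, the conditions ``$p = m$'' and ``$e$ even'' hold automatically, so $F_n \otimes \Q_p$ is Hecke ramified if and only if $F_n$ is tamely ramified at $p$. I would also note that every pure field $\Q(\sqrt[p]{n})$ (with $1 < n$ a $p$-th-power-free integer) is ramified at $p$ — tamely when $F_n \otimes \Q_p \simeq \Q_p \times \Q_p(\zeta_p)$, and wildly when $F_n \otimes \Q_p$ is a totally ramified field — so the dichotomy is clean, with no ``unramified at $p$'' exceptions to worry about. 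Combining these, $\mathcal{F}_{\mu_p}^2$ is exactly the subfamily of $F_n$ tamely ramified at $p$, and $\mathcal{F}_{\mu_p}^1$ the subfamily wildly ramified at $p$.

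Finally, to establish aberrance it suffices to exhibit a single wildly ramified member: $\Q(\sqrt[p]{p})$ works, since $x^p - p$ is Eisenstein at $p$, so $p$ is totally ramified with $p \mid e$ and hence wildly ramified. Thus $\mathcal{F}_{\mu_p}^1$ is nonempty and $\mu_p$ is aberrant. The only place I would be especially careful — the nearest thing to an obstacle — is the parity input $e = p-1$ even together with the no-exceptions dichotomy ensuring every $F_n$ is ramified at $p$; beyond these, the argument is a direct appeal to Lemma \ref{lem: Kummer Hecke ramification} and Lemma \ref{lem: field extensions}, and I do not expect any genuine difficulty since the needed local classification is already fully worked out in those lemmas.
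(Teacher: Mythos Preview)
Your proof is correct and follows essentially the same approach as the paper: both reduce everything to Lemma~\ref{lem: Kummer Hecke ramification}, so that Hecke ramification of $F_n/\Q$ is governed entirely by the tame/wild dichotomy at the single prime $p$. Your argument is in fact slightly more careful than the paper's, making explicit that $e(\Q_p(\zeta_p)/\Q_p) = p-1$ is even, that every $F_n$ is genuinely ramified at $p$, and exhibiting the wild field $\Q(\sqrt[p]{p})$ to witness $\mathcal{F}_{\mu_p}^1 \neq \emptyset$.
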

 \begin{proof}
     Lemma \ref{lem: Kummer Hecke ramification} shows that $F_n$ is Hecke unramified if and only if $F_n$ is wildly ramified at $p$. When $n$ is a $p$-th power in $\Q_p$, the field $F_n$ is tamely ramified at $p$, which shows that $\mathcal{F}_\Gamma^{1}$ is non-empty and hence $\mu_p$ is aberrant. 
 \end{proof}

The Galois group $G_\Gamma = G_{F_n/\Q}$ is isomorphic to the Frobenius group $G_p$ of order $p(p-1)$, i.e.\ the  semidirect product of $\Z/p\Z$ with $(\Z/p\Z)^\times$, with respect to the natural scalar action. (We have changed notation from $F_p$ to $G_p$ to avoid confusion with the fields $F_n$.) The short exact sequence
\[0 \to \Z/p\Z \to G_p \to (\Z/p\Z)^\times \to 0\]
corresponds to 
\[0 \to \Gal(\tilde F_n/\Q(\zeta_p)) \to G_{F_n/\Q} \to \Gal(\Q(\zeta_p)/\Q) \to 0.\]
We record the representation theory of $G_p$ below. 
\begin{lemma}\label{lem: rep theory of G_p}
    The irreducible $\C$-representations of $G_p$ are the $p-1$ characters $G_p \to (\Z/p\Z)^\times$ and the $(p-1)$-dimensional representation $\mathrm{cyc}$, which has a model over $\Q$ given by the natural action of $\mu_p \rtimes \Gal(\Q(\zeta_p)/K)$ on $\Q(\zeta_p)$, with $\mu_p \simeq \Z/p\Z$ acting by multiplication.  
\end{lemma}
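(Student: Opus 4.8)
The plan is to classify the irreducible $\C$-representations of $G_p = N \rtimes H$, where $N := \Z/p\Z$ is the normal subgroup and $H := (\Z/p\Z)^\times$ the complement acting by scalar multiplication, and then to match the unique nonlinear irreducible with $\mathrm{cyc}$. First I would compute the abelianization. Since $H$ acts on $N$ through nontrivial automorphisms, the commutator $[s,a] = (s-1)a$ (writing $N$ additively) ranges over all of $N$ as $s$ and $a$ vary, so $[G_p,G_p] = N$ and $G_p^{\mathrm{ab}} \cong H$ is cyclic of order $p-1$. This produces exactly $p-1$ one-dimensional characters, namely those inflated along $G_p \twoheadrightarrow (\Z/p\Z)^\times$, which is the first family in the statement.

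Next I would apply the Wigner--Mackey little-group method to the abelian normal subgroup $N$. The group $H$ acts on the character group $\widehat{N} \cong \Z/p\Z$ by $h\cdot\psi_s = \psi_{sh^{-1}}$, with exactly two orbits: the fixed trivial character $\psi_0$, and the set of all $p-1$ nontrivial characters, on which $H$ acts simply transitively. The orbit of $\psi_0$ recovers the linear characters found above. The orbit of a fixed nontrivial character $\psi := \psi_1$ has trivial stabilizer in $H$, so it contributes a single irreducible $\mathrm{Ind}_N^{G_p}\psi$ of dimension $[G_p:N] = p-1$, and no others. As a consistency check, $\sum_i d_i^2 = (p-1)\cdot 1 + (p-1)^2 = p(p-1) = |G_p|$, and the total number of irreducibles is $p$, matching the number of conjugacy classes of $G_p$ (the identity, the single class $N\setminus\{e\}$, and the $p-2$ classes $\{(a,s): a\in N\}$ for each $s\neq 1$).

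Finally I would identify $\mathrm{cyc}$, the action on $\Q(\zeta_p)$, with $\mathrm{Ind}_N^{G_p}\psi$. The action is well defined because multiplication $m_{\zeta_p}$ by $\zeta_p$ and the Galois automorphism $\sigma_s\colon \zeta_p\mapsto\zeta_p^s$ satisfy $\sigma_s\, m_{\zeta_p}\,\sigma_s^{-1} = m_{\zeta_p^s}$, which is exactly the semidirect-product relation defining $G_p$; this yields a $\Q$-rational representation of dimension $[\Q(\zeta_p):\Q]=p-1$. Complexifying via the $p-1$ embeddings $\zeta_p\mapsto e^{2\pi i s/p}$ diagonalizes the $N$-action, giving
\[
\left(\C\otimes_\Q\Q(\zeta_p)\right)\big|_N \;\cong\; \bigoplus_{s\in(\Z/p\Z)^\times}\psi_s,
\]
the sum of all nontrivial characters of $N$, each with multiplicity one. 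In particular $\langle \mathrm{cyc}|_N,\psi\rangle_N = 1$, so Frobenius reciprocity gives $\langle \mathrm{cyc}, \mathrm{Ind}_N^{G_p}\psi\rangle_{G_p}=1$; since $\mathrm{Ind}_N^{G_p}\psi$ is irreducible and both representations have dimension $p-1$, they are isomorphic. The rationality of the model is immediate from the construction.

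The computation is entirely standard, so I do not expect a serious obstacle. The only points requiring care are the bookkeeping of the semidirect-product and character-action conventions; the essential structural input is that the nontrivial characters of $N$ form a single $H$-orbit with trivial stabilizer, which simultaneously forces the existence of a unique $(p-1)$-dimensional irreducible and guarantees the irreducibility of $\mathrm{cyc}$.
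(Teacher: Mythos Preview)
Your proof is correct and complete. The paper's argument is essentially the same but much terser: it observes directly that $\mathrm{cyc}$ is irreducible because $G_p$ transitively permutes the $p-1$ one-dimensional constituents appearing in its restriction, and then invokes the sum-of-squares identity $(p-1)^2 + (p-1)\cdot 1^2 = p(p-1) = |G_p|$ to conclude that the list is exhaustive. You reach the same two ingredients via more machinery --- computing the abelianization, invoking Wigner--Mackey for the orbit structure on $\widehat N$, and identifying $\mathrm{cyc}$ with $\mathrm{Ind}_N^{G_p}\psi$ by Frobenius reciprocity --- which has the advantage of making the structure transparent and giving the identification of $\mathrm{cyc}$ as an induced representation explicitly, at the cost of being longer than strictly necessary for this elementary lemma.
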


\begin{proof}
Note that $\mathrm{cyc}$ is irreducible as  $\mu_p \rtimes \Gal(\Q(\zeta_p)/\Q)$-representation, since $G_p$ permutes the $p-1$ distinct irreducible $\Gal(\Q(\zeta_p)/\Q)$-subrepresentations. Since $(p-1)^2 + \sum_{i = 1}^{p-1} 1^2 = p(p-1) = |G_p|$, these are all the irreducible representations of $G_p$. 
\end{proof}

Let $G' \subset G_p$ be the subgroup (cyclic of order $p-1$) fixing $F_n$. 
\begin{lemma}\label{lem: kummer is absolutely irreducible}
    The induced representation $\mathrm{Ind}_{G'}^{G_p} 1_{G'}$ is isomorphic to $1_{G_p} \oplus \mathrm{cyc}$. 

\end{lemma}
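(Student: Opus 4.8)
The plan is to recognize $\mathrm{Ind}_{G'}^{G_p} 1_{G'}$ as the $p$-dimensional permutation representation attached to the action of $G_p \subset S_p$ on the $p$ roots of $x^p - n$, and then to read off its multiplicities against the irreducible characters classified in Lemma \ref{lem: rep theory of G_p}. Since $[G_p : G'] = p$, the induced representation has dimension $p$; and since the only irreducibles of $G_p$ are the $p-1$ characters factoring through $(\Z/p\Z)^\times \simeq G_p/C_p$ (where $C_p \simeq \Z/p\Z$ is the normal subgroup) together with the single $(p-1)$-dimensional representation $\mathrm{cyc}$, the decomposition will be forced once these multiplicities are pinned down.

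First I would record the structural fact that $G'$ is a complement to $C_p$ in $G_p$, i.e.\ $G' \cap C_p = \{e\}$ and $G' C_p = G_p$, so that the composite $G' \hookrightarrow G_p \twoheadrightarrow G_p/C_p$ is an isomorphism. Concretely, $G' = \Gal(\tilde F_n/F_n)$ and $C_p = \Gal(\tilde F_n/\Q(\zeta_p))$, and because $[F_n : \Q] = p$ and $[\Q(\zeta_p):\Q] = p-1$ are coprime one has $F_n \cap \Q(\zeta_p) = \Q$ and $F_n \Q(\zeta_p) = \tilde F_n$, which is exactly this complement statement. Next I would apply Frobenius reciprocity, $\langle \mathrm{Ind}_{G'}^{G_p} 1_{G'}, \rho \rangle_{G_p} = \langle 1_{G'}, \mathrm{Res}_{G'}\rho\rangle_{G'}$, to each irreducible $\rho$. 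For $\rho = 1_{G_p}$ the restriction is trivial, giving multiplicity $1$. For a nontrivial character $\chi$ of $(\Z/p\Z)^\times$, the complement claim shows that $\mathrm{Res}_{G'}\chi$ is a nontrivial character of $G'$, so its multiplicity is $0$. Finally, for $\rho = \mathrm{cyc}$ a dimension count suffices: the trivial summand accounts for dimension $1$, the nontrivial characters contribute $0$, leaving $p-1$ dimensions, which can only be supplied by the unique remaining irreducible $\mathrm{cyc}$ of dimension $p-1$; hence its multiplicity is exactly $1$. Assembling the three computations yields $\mathrm{Ind}_{G'}^{G_p} 1_{G'} \simeq 1_{G_p} \oplus \mathrm{cyc}$.

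This argument is essentially routine, so I do not anticipate a serious obstacle; the one point requiring care is the complement claim $G' \cap C_p = \{e\}$, since it is precisely what forces the nontrivial characters to restrict nontrivially to $G'$ (hence appear with multiplicity $0$). As a cross-check I could instead argue via Mackey's formula: writing $\mathrm{cyc} = \mathrm{Ind}_{C_p}^{G_p}\psi$ for a nontrivial character $\psi$ of $C_p$ and using $G' \cap C_p = \{e\}$ together with the single double coset $G' e\, C_p = G_p$, one obtains $\mathrm{Res}_{G'}\mathrm{cyc} \simeq \C[G']$, the regular representation of $G'$, in which $1_{G'}$ occurs exactly once. Both routes agree, and either one reflects the underlying fact that $G_p$ acts sharply $2$-transitively on the $p$ roots, so that its permutation character is the sum of the trivial character and a single irreducible of dimension $p-1$.
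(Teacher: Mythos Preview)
Your proof is correct and follows essentially the same route as the paper's: both use the classification of irreducibles from Lemma~\ref{lem: rep theory of G_p} together with Frobenius reciprocity and a dimension count. The paper's version is slightly more compressed---it first observes that $\mathrm{Ind}_{G'}^{G_p} 1_{G'}$ is a $p$-dimensional subrepresentation of the regular representation, hence (since there are only $p-1$ one-dimensional irreducibles) must have the form $\chi \oplus \mathrm{cyc}$ for some character $\chi$, and then invokes Frobenius reciprocity to pin down $\chi = 1$---whereas you compute each multiplicity directly and make the complement claim $G' \cap C_p = \{e\}$ explicit; but these are minor organizational differences, not different arguments.
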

\begin{proof}
    The representation $\mathrm{Ind}_{G'}^{G_p} 1$ is a $p$-dimensional subrepresentation of the regular representation. By the description of the irreducible representations of $G_p \simeq G_\Gamma$ given in Lemma \ref{lem: rep theory of G_p}, it must be isomorphic to $\chi \oplus \mathrm{cyc}$ for some character $\chi$. By Frobenius reciprocity, we have $\chi = 1$. 
\end{proof}

Lemma \ref{lem: kummer is absolutely irreducible} shows that $F_p \subset S_p$ is absolutely irreducible. Our prediction for the family $\mathcal{F}^1_\Gamma$ of wild $F_n$ therefore states that for any finite dimensional $\F_2$-vector space $V$, 
\begin{equation} \label{eqn: Kummer wild moments}
    \mathbb{E}(|\mathrm{Surj}(\Cl_F[2],V)|) = \dfrac{| \wedge^2 V|}{|V|^{(p-3)/2}},
\end{equation}
whereas for the tame fields $F_n \in \mathcal{F}_\Gamma^2$ we predict 
\begin{equation}\label{eq: Kummer tame moments}
    \mathbb{E}(|\mathrm{Surj}(\Cl_F[2],V)|) = \dfrac{| \wedge^2 V|}{|V|^{(p-1)/2}}.
\end{equation}

In Appendix \ref{sec: appendix tables}, we have tabulated some numerical evidence for these predictions. In the following subsection, we explain how these predictions suggest Conjecture \ref{conj: probability that h/h3 = 1}.
\subsection{Conjecture \ref{conj: probability that h/h3 = 1}}

By \cite{ShanksWilliams}, for primes $n$, the class number of $\Q(\sqrt[3]{n})$ is divisible by $3$ if and only if $n \equiv 1\pmod{3}$, so we assume $n \equiv 2\pmod{3}$. Now, $\Q(\sqrt[3]{n})$ forms a Hecke ramified family as $n$ varies over primes congruent to $8 \pmod{9}$ and a Hecke unramified family over primes congruent to $2,5 \pmod{9}$. 

Assuming that the distributions of the different $\ell^\infty$-parts of the class group are independent of one another, the probability that $h = 1$ should be the product over all primes $q$ of the probabilities that $\Cl_{\Q(\sqrt[3]{n})}[q^\infty]$ is trivial. By (\ref{eqn: altered Malle}), (\ref{eqn: Malle again}), and Proposition \ref{prop: main distribution}, the probability that $\Cl_{\Q(\sqrt[3]{n})}[2^\infty]$ is trivial is given by $\prod_{j \ge 2} (1+2^{-j})^{-1}$ in the Hecke ramified case and to $\frac{2}{3}\prod_{j \ge 2}(1+2^{-j})^{-1}$ in the Hecke unramified case. For all other $q \neq 3$, both cases should follow the distribution given by Malle's conjecture giving a probability of $\Cl_{\Q(\sqrt[3]{n})}[q^\infty]$ being trivial of $\prod_{j \ge 1}(1+q^{-j})^{-1}$, see \cite[Lemma 12.5. and Proposition 7.1.]{SawinWood}. In this way, we recover Conjecture \ref{conj: probability that h/h3 = 1}. 

The same reasoning shows that we also expect  (\ref{eqn: class number 1 probability tame}) and (\ref{eqn: class number 1 probability wild}) to give the probability that $\Q(\sqrt[3]{n})$ has prime-to-$3$ part of the class number $1$ as $n$ varies over all primes $\equiv \pm 1 \pmod{9}$ and $\not\equiv \pm 1 \pmod{9}$ respectively, without any restriction on the congruence class of $n \pmod{3}$. See Table \ref{tbl: probability of h/h3 equal to 1} for data backing up this prediction. 

\subsection{A more general conjecture using the heuristics of \cite{SiadVenkatesh}} \label{subsec: spin conjecture derivation}

Given a conjecture on the distribution of the $q^\infty$-parts of the class group for $q \neq 3$, the reasoning above generalizes in a straightforward manner to imply a generalization of Conjecture \ref{conj: probability that h/h3 = 1} predicting the probability that the prime-to-$3$ part of the class number of $\Q(\sqrt[3]{n})$ should be any given integer $C$: replace the probabilities that the $q^\infty$-part of the class group is trivial by the probability that it has size equal to the $q$-part $C_q$ of the given integer $C$. 

For $q \neq 2,3$, this probability is given by summing the $s=1$, $\epsilon = 0$, $r=1$ case of \cite[Proposition 7.1.]{SawinWood} over all $q$-groups of size $C_q$, corresponding to Malle's distribution, and for $q=2$ by summing the $s=1$, $\epsilon = 1$, $r=1$ case of \textit{loc.\ cit.}, corresponding to \cite{SiadVenkatesh}'s distribution (\ref{eqn: SV distribution}). 

To give a concrete example, as $n$ varies over primes congruent to $8 \pmod{9}$, the probability that $\Q(\sqrt[3]{n})$ has class number $2$ should be $\frac{3}{8} \cdot
\prod_{q \neq 3} \prod_{j \ge 2} (1+q^{-j})^{-1} \approx 0.2123$. On the other hand, as $n$ varies over primes congruent to $2,5 \pmod{9}$, the probability should be $\frac{1}{2} \cdot \prod_{q \neq 3} \prod_{j \ge 2} (1+q^{-j})^{-1} \approx 0.2831$.\footnote{A field has class number $2$ if it has unique length of factorizations, but not unique factorization! See \cite{Carlitz}. } See Table \ref{tbl: probability of h/h3 equal to 2} for data supporting this prediction.

\appendix

\section{Tables} \label{sec: appendix tables}

The tables of class group data below were computed using \textsc{Magma} \cite{Magma}. The computations were run on Intel(R) Xeon(R) Silver $4216$ CPU $@$ $2.10$GHz cores of the server of the School of Mathematics of the Institute for Advanced Study. The longest computation took roughly $12$ hours. 
The code and raw output used to produce these tables may be found on \href{https://github.com/ajsiad/Hecke-reciprocity-and-class-groups}{GitHub}. To run the computations in \textsc{Magma}, the GRH bound was turned on with \texttt{SetClassGroupBounds("GRH")} before calling the \texttt{ClassGroup()} function. Table entries are rounded to four decimal places.

In the tables below, various $H$-moments for $H$ a finite abelian $2$-group are presented. The shaded cells are the predictions of Equation (\ref{eqn: altered Malle}) of \S \ref{subsec: Gamma extension heuristics intro} on the behavior of $\Cl_{K/F}[2]$ when there are no Hecke primes. The rest of the predicted values are those coming either from Equation (\ref{eqn: Malle again}) of \S \ref{subsec: Gamma extension heuristics intro} or from Equation \ref{eqn: SV distribution} of \S \ref{sec: connection to spin structures}. The agreement between the $H$-moments we observe and the values we predict is less strong as $\lvert H \rvert$ grows. However, the ratios of the observed $H$-moments for type I and type II families tend to stay relatively close to the ratio we predict. We believe this likely arises from second order terms in the asymptotic expansions of the $H$-moments for both families which are comparable.

In the tables on pure prime degree extensions, the headings type I and type II refer to extensions wild and tame respectively at the prime degree, in accordance with Dedekind's division \cite{Dedekind1900}.

\subsection{Pure cubic fields} \label{tbl: pure cubic fields}

We sampled the $2$-primary part of the class group of $\Q(\sqrt[3]{n})$, for $n \geq 10^7$ and cube-free.  The sample consisted of the first $100000$ fields found, of which $76926$ were of type I and $23074$ of type II. The average discriminant was $\approx -10^{15}$.  \\

\begin{center}
\begin{tabular}{|c|c||c|c||c|c|}\hline
\multirow{2}{*}{\textbf{Moment}} & \multicolumn{2}{c|}{\textbf{Observed}} & \multicolumn{2}{c|}{\textbf{Predicted}} & \multirow{2}{*}{\textbf{Malle}} \\ \cline{2-5}
& \textit{type I} & \textit{type II} & \textit{type I} & \textit{type II} &  \\ \hline 
$\Z/2$ & 0.9118 & 0.4288 & \cellcolor{lightgray} 1 &  1/2 & 1/2 \\ \hline 
$\Z/4$ & 0.4589 & 0.2138 & 1/2 & 1/4 & 1/4 \\ \hline
$\Z/8$ & 0.2339 & 0.1037 & 1/4 & 1/8 & 1/8 \\ \hline
$\Z/2 \times \Z/2$ & 1.5214 & 0.3308 & \cellcolor{lightgray} 2 & 1/2 & 1/2 \\ \hline
$\Z/4 \times \Z/2$ & 0.8089 & 0.1619 & 1  & 1/4 & 1/4 \\ \hline
$\Z/4 \times \Z/4$ & 0.4942 & 0.0374 & 1/2 & 1/8 & 1/8 \\ \hline
$\Z/2 \times \Z/2 \times \Z/2$ & 4.4683 & 0.5097 & \cellcolor{lightgray} 8 &  1 & 1 \\ \hline
\end{tabular}
\end{center}

\subsection{Probability of prime-to-3 class number one in $\Q(\sqrt[3]{p})$} 
\label{tbl: probability of h/h3 equal to 1}

Let $h$ and $h_3$ denote the class number and the $3$-part of the class number respectively. We sampled the probability that $h/h_3 = 1$ for cubic fields  $\Q(\sqrt[3]{p})$, with $p$ prime. The sample consisted of the first $120000$ primes $p > 10^7$, split according to the congruence class of $p \pmod{9}$. The average discriminant of the sample was $\approx -10^{15}$. Each congruence class contained roughly $20000$ fields.

Contrast with the tables of \cite{ShanksWilliams} and \cite{TennenhouseWilliams}. The shaded cells come from Conjecture \ref{conj: probability that h/h3 = 1}. \\

\begin{center}
\begin{tabular}{|c|c|c|c|c|c|c|}\hline
\multirow{2}{*}{$\P(h/h_3 = 1)$} & \multicolumn{6}{c|}{\textbf{Congruence classes}} \\ \cline{2-7}
 & \textbf{1} & \textbf{2} & \textbf{4} & \textbf{5} & \textbf{7} & \textbf{8} \\ \hline
\textbf{Predicted} & 0.5662 & \cellcolor{lightgray} 0.3775  & 
0.3775 & \cellcolor{lightgray} 0.3775 &  0.3775 & \cellcolor{lightgray} 0.5662 \\ \hline 
\textbf{Observed} & 0.5863  & 0.3889 & 0.3826 & 0.3917 & 0.3943 & 0.5863 \\ \hline 
\end{tabular}
\end{center}

\subsection{Probability of prime-to-3 class number two in $\Q(\sqrt[3]{p})$} 
\label{tbl: probability of h/h3 equal to 2}

For the same sample as Table \ref{tbl: probability of h/h3 equal to 1}, the observed probability that $h/h_3 = 2$ is below. The predicted values come from \S \ref{sec: kummer heuristics}. \\

\begin{center}
\begin{tabular}{|c|c|c|c|c|c|c|}\hline
\multirow{2}{*}{$\P(h/h_3 = 2)$} & \multicolumn{6}{c|}{\textbf{Congruence classes}} \\ \cline{2-7}
 & \textbf{1} & \textbf{2} & \textbf{4} & \textbf{5} & \textbf{7} & \textbf{8} \\ \hline
\textbf{Predicted} & 0.2123 & 0.2831 & 0.2831 & 0.2831 & 0.2831 & 0.2123 \\ \hline 
\textbf{Observed} & 0.2026 & 0.2863 & 0.2833 & 0.2902 & 0.2826 & 0.2028 \\ \hline 
\end{tabular}
\end{center}

\subsection{Cubic fields with quadratic resolvent $\Q(i)$} \label{tbl: cubic field with Gaussian resolvent} We sampled the $2$-primary part of the class group of cubic fields with quadratic resolvent field equal to the Gaussian rationals $\Q(i)$. These fields correspond to the two-parameter family of cubic polynomials $f(x) = cx^3+bx^2+9cx+b$ for $b,c \in \Z$, see \S \ref{subsec: Gamma-extensions heuristics examples}. The sample consisted of the first $50000$ fields found iterating over $b$ in the range $[10^4,10^4+100]$ and $c$ in the range $[10^4,10^4+5 \cdot 10^3]$. The average discriminant was $\approx -10^{19}$. \\

\begin{center}
\begin{tabular}{|c|c|c|c|}\hline
\textbf{Moment} & \textbf{Observed} & \textbf{Predicted} & \textbf{Malle} \\ \hline
$\Z/2$ & 1.0614 & \cellcolor{lightgray} 1 & 1/2 \\ \hline 
$\Z/4$ & 0.5625 & 1/2  & 1/4 \\ \hline
$\Z/8$ & 0.2706 & 1/4 & 1/8 \\ \hline
$\Z/2 \times \Z/2$ & 2.4223  & \cellcolor{lightgray} 2 & 1/2 \\ \hline
$\Z/4 \times \Z/2$ & 1.5275 & 1  & 1/4 \\ \hline
$\Z/4 \times \Z/4$ & 1.5110 & 1/2 & 1/8 \\ \hline
$\Z/2 \times \Z/2 \times \Z/2$ & 15.4123 & \cellcolor{lightgray} 8 & 1 \\ \hline
\end{tabular}
\end{center}

\subsection{Cubic fields with quadratic resolvent $\Q(\sqrt{3})$} \label{tbl: cubic field with quadratic resolvent Q(sqrt(3))} We sampled the $2$-primary part of the class group of cubic fields with quadratic resolvent field $\Q(\sqrt{3})$. These fields correspond to the one-parameter family of cubic polynomials $f(x) = x^3 - 3ax^2-3x+a$ for $a \in \Z$, see \S \ref{subsec: Gamma-extensions heuristics examples}. The sample consisted of the first $50000$ fields found with $a$ in the range $[100,10^5]$, of which $33334$ were of type I and $16666$ of type II. The average discriminant was $\approx 10^{19}$. \\

\begin{center}
\begin{tabular}{|c|c||c|c||c|c|}\hline
\multirow{2}{*}{\textbf{Moment}} & \multicolumn{2}{c|}{\textbf{Observed}} & \multicolumn{2}{c|}{\textbf{Predicted}} & \multirow{2}{*}{\textbf{Malle}} \\ \cline{2-5}
& \textit{type I} & \textit{type II} & \textit{type I} & \textit{type II} &  \\ \hline
$\Z/2$ & 0.7063 & 0.3318 & \cellcolor{lightgray} 1/2 &  1/4 & 1/4 \\ \hline 
$\Z/4$ & 0.1696 & 0.0894 & 1/8 & 1/16 & 1/16 \\ \hline
$\Z/8$ & 0.0437 & 0.0254 & 1/32 & 1/64 & 1/64 \\ \hline
$\Z/2 \times \Z/2$ & 1.0632 & 0.2264 & \cellcolor{lightgray} 1/2 & 1/8 & 1/8 \\ \hline
$\Z/4 \times \Z/2$ & 0.2650 & 0.0658 & 1/8  & 1/32 & 1/32 \\ \hline
$\Z/4 \times \Z/4$ & 0.0403 & 0.0173 & 1/32 & 1/128 & 1/128 \\ \hline
$\Z/2 \times \Z/2 \times \Z/2$ & 3.0441 & 0.2722 & \cellcolor{lightgray} 1 &  1/8 & 1/8 \\ \hline
\end{tabular}
\end{center}

\subsection{Pure quintic fields} 
\label{tbl: pure quintic fields}

We sampled the $2$-primary part of the class group of pure quintic extensions $\Q(\sqrt[5]{n})/\Q$, for $n> 10^6$ fifth power free. The sample consisted of the first $100000$ fields found, of which $83995$ were of type I and $16005$ of type II. The average discriminant was $\approx 10^{27}$.  \\

\begin{center}
\begin{tabular}{|c|c||c|c||c|c|}\hline
\multirow{2}{*}{\textbf{Moment}} & \multicolumn{2}{c|}{\textbf{Observed}} & \multicolumn{2}{c|}{\textbf{Predicted}} & \multirow{2}{*}{\textbf{Malle}} \\ \cline{2-5}
& \textit{type I} & \textit{type II} & \textit{type I} & \textit{type II} &  \\ \hline 
$\Z/2$ & 0.5360 & 0.2789 & \cellcolor{lightgray} 1/2 &  1/4 & 1/4 \\ \hline 
$\Z/4$ & 0.1395 & 0.0699 & 1/8  & 1/16 & 1/16 \\ \hline
$\Z/8$ & 0.0379 & 0.0202 & 1/32 & 1/64 & 1/64 \\ \hline
$\Z/2 \times \Z/2$ & 0.6300 & 0.1927 & \cellcolor{lightgray} 1/2  & 1/8 & 1/8 \\ \hline
$\Z/4 \times \Z/2$ & 0.1831 & 0.0485 & 1/8 & 1/32 & 1/32 \\ \hline
$\Z/4 \times \Z/4$ & 0.0469 & 0.0000 & 1/32 & 1/128 & 1/128 \\ \hline
$\Z/2 \times \Z/2 \times \Z/2$ & 1.8241 & 0.3674 & \cellcolor{lightgray} 1 & 1/8  & 1/8 \\ \hline
\end{tabular}
\end{center}

\subsection{Pure septic fields} 
\label{tbl: pure septic fields}

We sampled the $2$-primary part of the class group of pure cubic extensions $\Q(\sqrt[7]{n})/\Q$, for $n > 10^7$ seventh power free. The sample consisted of the first $50000$ fields found, of which $43879$ were of type I and $6121$ of type II. The average discriminant was $\approx -10^{33}$. \\

\begin{center}
\begin{tabular}{|c|c||c|c||c|c|}\hline
\multirow{2}{*}{\textbf{Moment}} & \multicolumn{2}{c|}{\textbf{Observed}} & \multicolumn{2}{c|}{\textbf{Predicted}} & \multirow{2}{*}{\textbf{Malle}} \\ \cline{2-5}
& \textit{type I} & \textit{type II} & \textit{type I} & \textit{type II} &  \\ \hline 
$\Z/2$ & 0.2758 & 0.1505 & \cellcolor{lightgray} 1/4 & 1/8 & 1/8 \\ \hline 
$\Z/4$ & 0.0347 & 0.0186 & 1/32 & 1/64 & 1/64 \\ \hline
$\Z/8$ & 0.0055 & 0.0020 & 1/256 & 1/512 & 1/512 \\ \hline
$\Z/2 \times \Z/2$ & 0.2133 & 0.0480 & \cellcolor{lightgray} 1/8 & 1/32 & 1/32 \\ \hline
$\Z/4 \times \Z/2$ & 0.0202 & 0.0039 & 1/64 & 1/256 & 1/256 \\ \hline
$\Z/4 \times \Z/4$ & 0.0044 &  0.0000 & 1/512 & 1/2048 & 1/2048 \\ \hline
$\Z/2 \times \Z/2 \times \Z/2$ & 1.0491 & 0.0274 & \cellcolor{lightgray} 1/8 & 1/64 & 1/64 \\ \hline
\end{tabular}
\end{center}

\subsection{Pure cubic extensions of $\Q(\sqrt{3})$} \label{tbl: pure cubic extensions of Q(sqrt(3))}

We sampled the $2$-primary part of the relative class group of pure cubic extensions $K(\sqrt[3]{n})/\Q(\sqrt{3})$, for $(n)$ a cubefree ideal with $n \in \O_{\Q(\sqrt{3})}$ of the form $a+b\sqrt{3}$, $a,b \in \Z$, with $a$ in the range $[10^4,10^4+500]$ and $b \in [10^4+10^3,10^4+10^3+300]$. The sample consisted of the first $100000$ fields found, of which $92321$ were of type I and $7679$ of type II. The average absolute discriminant was $\approx 10^{23}$.  \\

\begin{center}
\begin{tabular}{|c|c||c|c||c|c|}\hline
\multirow{2}{*}{\textbf{Moment}} & \multicolumn{2}{c|}{\textbf{Observed}} & \multicolumn{2}{c|}{\textbf{Predicted}} & \multirow{2}{*}{\textbf{Malle}} \\ \cline{2-5}
& \textit{type I} & \textit{type II} & \textit{type I} & \textit{type II} &  \\ \hline 
$\Z/2$ & 0.4717 & 0.4876 & \cellcolor{lightgray} 1/2 & \cellcolor{lightgray} 1/2 & 1/4 \\ \hline 
$\Z/4$ & 0.2378 & 0.2469 & 1/8 & 1/8 & 1/16 \\ \hline
$\Z/8$ & 0.0623 & 0.0672 & 1/32 & 1/32 & 1/64 \\ \hline
$\Z/2 \times \Z/2$ & 0.4245 & 0.4344 & \cellcolor{lightgray} 1/2 & \cellcolor{lightgray} 1/2 & 1/8 \\ \hline
$\Z/4 \times \Z/2$ & 0.2125 & 0.2084 & 1/8 & 1/8 & 1/32 \\ \hline
$\Z/4 \times \Z/4$ & 0.1955 & 0.2000 & 1/32 & 1/32 & 1/128 \\ \hline
$\Z/2 \times \Z/2 \times \Z/2$ & 0.7188 & 0.5251 & \cellcolor{lightgray} 1 & \cellcolor{lightgray} 1 & 1/8 \\ \hline
\end{tabular}
\end{center}

\subsection{Pure cubic extensions of $\Q(\sqrt{-2})$} 
\label{tbl: pure cubic extensions of Q(sqrt(-2))}

We sampled the $2$-primary part of the relative class group of pure cubic extensions $K(\sqrt[3]{n})/\Q(\sqrt{-2})$, for $(n)$ a cubefree ideal with $\Nm(n)$ in the range $[10^7+3 \cdot 10^3,10^7+60 \cdot 10^3]$. The letters W, T, and their order, indicate the ramification type of the two primes of $\Q(\sqrt-2)$ above $3$ in $K(\sqrt[3]{n})$. The sample consisted of the first $100000$ fields found, of which $59231$ were of type WW, $17740$ of type WT, $17740$ of type TW, and $5289$ of type TT. The average absolute discriminant was $\approx -10^{19}$.  \\

\begin{adjustbox}{center}
\begin{tabular}{|c|c||c||c||c|c||c||c|c|}\hline
\multirow{2}{*}{\textbf{Moment}} & \multicolumn{4}{c|}{\textbf{Observed}} & \multicolumn{3}{c|}{\textbf{Predicted}} & \multirow{2}{*}{\textbf{Malle}} \\ \cline{2-8}
& \textit{WW} & \textit{WT} & \textit{TW} & \textit{TT} & \textit{WW} & \textit{WT/TW} & \textit{TT} & \\ \hline
$\Z/2$ & 0.4492 & 0.2169 & 0.2168 & 0.1993 & \cellcolor{lightgray} 1/2 &  1/4 &  1/4 & 1/4 \\ \hline 
$\Z/4$ & 0.1120 & 0.0545 & 0.0545 & 0.0393 & 1/8 & 1/16 & 1/16 & 1/16 \\ \hline
$\Z/8$ & 0.0272 & 0.0079 & 0.0079 & 0.0061 & 1/32 & 1/64 & 1/64 & 1/64 \\ \hline
$\Z/2 \times \Z/2$ & 0.3534 & 0.1015 & 0.1011 & 0.0545 & \cellcolor{lightgray} 1/2 &   1/8 &  1/8 & 1/8 \\ \hline
$\Z/4 \times \Z/2$ & 0.0948 & 0.0257 & 0.0257 & 0.0182 & 1/8 & 1/32 & 1/32 & 1/32 \\ \hline
$\Z/4 \times \Z/4$ & 0.0162 & 0.0108 & 0.0108 & 0.0000 & 1/32 & 1/128 & 1/128 & 1/128 \\ \hline
$\Z/2 \times \Z/2 \times \Z/2$ & 0.4680 & 0.0758 & 0.0758 & 0.0000 & \cellcolor{lightgray} 1 &  1/8 &
 1/8 & 1/8 \\ \hline
\end{tabular}
\end{adjustbox}

 \bibliographystyle{amsalpha}
 \bibliography{references}

\end{document}